\documentclass[11pt, a4paper]{amsart}
%%%% Packages %%%%

\usepackage[foot]{amsaddr}  %address on first page

% Mise en Page
\usepackage[margin=2.54cm]{geometry}
\usepackage{fancyhdr}

% Classiques
\usepackage{amsmath, amssymb, amsthm}
\usepackage{array}
\usepackage{latexsym}
\usepackage{nicematrix}
\usepackage{bigstrut}
\usepackage{enumerate}
\usepackage{paralist}

% Fonts
% \usepackage{stmaryrd}  
\usepackage{mathrsfs}
% \usepackage{esint} 
% \usepackage{lmodern}
% \usepackage{textcomp}
% \usepackage{faktor}
%\usepackage{cancel}

%\usepackage{bbm} 

% Figures
\usepackage{float}   %choose the emplacement of figures
\usepackage{subcaption}  %for subfigures
\usepackage{pgf,tikz}
\usetikzlibrary{decorations.pathreplacing,calligraphy}
\usepackage{graphicx}
\usepackage[all]{xy}
\usetikzlibrary{arrows}
\usetikzlibrary{arrows.meta}
%\DeclareGraphicsExtensions{.eps,.pdf,.jpeg,.png} % Ça sert à quoi?

% References
\usepackage{url}
\usepackage[colorlinks=true,citecolor=cyan,backref=page]{hyperref}
\usepackage[capitalise]{cleveref}

% Commentaires
\usepackage{fdsymbol}
\usepackage{todonotes}
\setuptodonotes{size=\tiny}

\makeatletter
\define@key{todonotes}{AS}[]{%
	\setkeys{todonotes}{color=blue!50}}%

\makeatother
 %commentaire dans la marge
  %comentaire dans texte

% Autres
% \usepackage{fdsymbol}
% \usepackage{comment}
% \usepackage{dirtytalk}

% widehat
% \usepackage{stackengine}
% \usepackage{scalerel}

% \setcounter{tocdepth}{4} % Ça sert à quoi?

% Notation

%for the arrows of the FTFSDL
% \newcommand{\into}{\hookrightarrow}
% \newcommand{\onto}{\twoheadrightarrow}

\renewcommand{\int}{{\mathord{\hookrightarrow}}}

\newcommand{\onto}{%
  \mathrel{\tikz[baseline=-0.5ex]{
    \draw[->>,line width=0.2pt] (0,0) -- (0.4,0);
  }}%
}

\newcommand{\into}{%
  \mathrel{\tikz[baseline=-0.5ex]{
    \draw[right hook->,line width=0.2pt] (0,0) -- (0.4,0);
  }}%
}

\newcommand{\twoheadrighthookarrow}{%
  \mathrel{\tikz[baseline=-0.5ex]{
\draw[right hook->>,line width=0.3pt] (0,0) -- (0.4,0);
  }}%
}

\sloppy

\newtheorem{theorem}{Theorem}[section]
\newtheorem{corollary}[theorem]{Corollary}
\newtheorem{proposition}[theorem]{Proposition}
\newtheorem{lemma}[theorem]{Lemma}

\newtheorem{question}[theorem]{Question}

\theoremstyle{definition}
\newtheorem{definition}[theorem]{Definition}
\newtheorem{remark}[theorem]{Remark}

\title{Extremality in semidistributive lattices}

\author[A.~Segovia]{Adrien~Segovia}
\address{LACIM, Universit\'e du Qu\'ebec \`a Montr\'eal, Canada}
\email{segovia.adrien@courrier.uqam.ca}

\begin{document}

	\begin{abstract}
We establish several independent results concerning extremal, left modular, congruence uniform, and semidistributive lattices. An equivalent characterization of left modular lattices is obtained in terms of edge-labellings, together with necessary and sufficient conditions on the doubling steps in the construction of congruence normal lattices that ensure left modularity or extremality. We prove that a congruence uniform lattice is shellable if and only if it is extremal. We answer a question of Barnard by constructing a counterexample showing that an induced subcomplex of a canonical join complex need not itself be such a complex. Finally, we show that the order dimension of a semidistributive extremal lattice equals the chromatic number of the complement of its Galois graph, generalizing a theorem of Dilworth for distributive lattices. As an application, we determine the dimensions of generalizations of the Hochschild lattice, of the parabolic Tamari lattice, and of some lattices of torsion classes.
\end{abstract}
	
	\maketitle
	
	%  \bigskip
	%  \bigskip
	% \vspace*{\fill}
	
	% \hypersetup{linkcolor=black}
	\tableofcontents
	% }
% \vspace*{\fill}
% % 
% % 
% % 
% % 
%  \medskip

\section{Introduction}

\textit{In this article all posets and lattices are assumed to be finite. The definitions will be given in this context. All the notions will be defined in Section \ref{sec:Backgroundext}.}

A lattice is \emph{extremal} if the maximum size of a chain in the lattice corresponds to the number of join-irreducible and to the number of meet-irreducible elements, which are the elements that respectively cover or are covered by a unique element. It is known that the distributive lattices are the extremal and graded lattices \cite{MarkowskyExtremal}. We will be mainly interested in the \emph{semidistributive} lattices, which are a generalization of the distributive lattices, and to a subclass of them called \emph{congruence uniform} lattices. These latter are the lattices that can be obtained from the one element lattice by doubling intervals using the doubling construction of A. Day \cite{Daydoubling}. By doubling convex subsets, we obtain the \emph{congruence normal lattices}, which generalize the congruence uniform lattices but are not semidistributive, unless they are congruence uniform. Some examples of congruence uniform lattices are the distributive lattices, the Tamari lattice and most of its generalizations, and the weak order on a finite Coxeter group. With the exception of the weak order which is not an extremal lattice, the others are semidistributive and extremal and we will give more examples of such lattices in Section \ref{sec:applicationdimension}.

An element of a lattice is left modular if it cannot be obtained as the small side of a pentagonal sublattice. A lattice is \emph{left modular} if it contains a maximal chain made of left modular elements \cite{blass1997mobius,LIUSagan}. We call such a chain a left modular chain. As a distributive lattice does not have pentagonal sublattices, all its elements are left modular, in particular it is a left modular lattice. A poset is \emph{shellable} when its order complex is a shellable  simplicial complex, which is a nice topological property that is often difficult to establish. In fact, determining whether a given poset is shellable is NP-complete \cite{goaoc2019shellability}. Proofs of shellability of lattices often rely on finding a particular edge-labelling called an $EL$-labelling \cite{bjorner1983lexicographically}. Such a labelling can be obtained in a left modular lattice \cite{LiuLeftmodular}, thus a left modular lattice is shellable.

Let $L$ be a lattice. Choosing a chain $\psi : \hat{0}=x_0< \dots < x_k=\hat{1}$ that contains the minimum $\hat{0}$ and maximum $\hat{1}$ of $L$, we get a numbering of its join-irreducible and meet-irreducible elements, that enables us to define four edge-labellings $\gamma_{1,\psi},\gamma_{1',\psi},\gamma_{2,\psi},\gamma_{2',\psi}$ of $L$ (see Definition \ref{deflabelling}). We will prove:

\begin{theorem}  \label{thm:labellingsintro}
	For any lattice $L$, $\gamma_{1,\psi}=\gamma_{1',\psi}= \gamma_{2,\psi}=\gamma_{2',\psi}$ if and only if for all $i$, $x_i$ is left modular.
\end{theorem}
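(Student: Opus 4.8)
The plan is to unwind \Cref{deflabelling}, record the inequalities among the four labellings that hold in \emph{every} finite lattice, and then recognize left modularity of $x_0,\dots,x_k$ as precisely the condition collapsing them to equalities. Write a generic cover relation as $u\lessdot v$. From \Cref{deflabelling}, $\gamma_1(u\lessdot v)$ is the least $i$ with $v\le u\vee x_i$, $\gamma_2(u\lessdot v)$ is the least $i$ with $x_i\wedge v\not\le u$, and $\gamma_1'$, $\gamma_2'$ are the analogues obtained by routing through the numbering the chain induces on the join-irreducibles, resp.\ meet-irreducibles (a join-irreducible $p$ receiving the least $i$ with $p\le x_i$, dually for meet-irreducibles). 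One first verifies the a priori inequalities $\gamma_2'\le\gamma_1\le\gamma_2\le\gamma_1'$, valid at every cover of every lattice: $\gamma_1\le\gamma_2$ because if $x_i\wedge v\not\le u$ then, $u\lessdot v$ being a cover, $x_i\wedge v$ separates $u$ from $v$, so $u\vee(x_i\wedge v)=v$ and hence $v\le u\vee x_i$; the outer two follow by the analogous observations relating separating irreducibles to the $x_i$. Hence $\gamma_1=\gamma_1'=\gamma_2=\gamma_2'$ at $u\lessdot v$ is equivalent to this chain collapsing there.

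For the implication ``some $x_j$ is not left modular $\Rightarrow$ the labellings differ'' I argue by contraposition using the offending $x_j$. There are then $a\le b$ with $(a\vee x_j)\wedge b\ne a\vee(x_j\wedge b)$; putting $a'=a\vee(x_j\wedge b)$ and $b'=(a\vee x_j)\wedge b$ gives $a'<b'$, and one checks $a'\vee x_j=b'\vee x_j$ and $a'\wedge x_j=b'\wedge x_j$, so $\{a'\wedge x_j,\,a',\,b',\,x_j,\,a'\vee x_j\}$ is a pentagon with $x_j$ on its small side. Choose any cover $u\lessdot v$ with $u=a'$ and $v\le b'$. Then $u\vee x_j=a\vee x_j\ge b'\ge v$, so $\gamma_1(u\lessdot v)\le j$, whereas $x_i\wedge v\le x_j\wedge v\le x_j\wedge b\le a'=u$ for every $i\le j$, so $\gamma_2(u\lessdot v)>j$. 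Hence $\gamma_1(u\lessdot v)\ne\gamma_2(u\lessdot v)$, and the four labellings do not all agree.

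For the reverse implication I use the standard reformulation of left modularity: $m$ is left modular if and only if $(a\vee m)\wedge b=a\vee(m\wedge b)$ for all $a\le b$. Assume each $x_i$ is left modular, fix $u\lessdot v$, and set $\ell=\gamma_1(u\lessdot v)$, so $v\le u\vee x_\ell$ but $v\not\le u\vee x_{\ell-1}$. The modular identity at $x_\ell$ reads $(u\vee x_\ell)\wedge v=u\vee(x_\ell\wedge v)$, whose left side is $v$, so $x_\ell\wedge v\not\le u$; at $x_{\ell-1}$ it reads $(u\vee x_{\ell-1})\wedge v=u\vee(x_{\ell-1}\wedge v)$, whose left side lies in $\{u,v\}$ and is not $v$, hence is $u$, giving $x_i\wedge v\le x_{\ell-1}\wedge v\le u$ for all $i\le\ell-1$; thus $\gamma_2(u\lessdot v)=\ell$. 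Now a minimal element $p$ of $\{z\le x_\ell\wedge v:z\not\le u\}$ is join-irreducible (otherwise $p$ equals the join of its lower covers, each of them $<p$ and $\le x_\ell\wedge v$, hence $\le u$ by minimality, forcing $p\le u$); it separates $u$ from $v$, and $p\le x_\ell$ while $p\not\le x_{\ell-1}$ (else $p\le x_{\ell-1}\wedge v\le u$), so it receives the number $\ell$; since every separating join-irreducible receives a number at least $\gamma_2=\ell$, we conclude $\gamma_1'(u\lessdot v)=\ell$. A dual argument, applied to a maximal element of $\{z\ge u\vee x_{\ell-1}:z\not\ge v\}$ (nonempty because $v\not\le u\vee x_{\ell-1}$), produces a separating meet-irreducible forcing $\gamma_2'(u\lessdot v)=\ell$, so all four labels equal $\ell$.

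I expect the reverse implication to carry the real content: the crux is using the modular identity to position $x_\ell\wedge v$ and $x_{\ell-1}\wedge v$ relative to $u$, and then observing that minimality upgrades the resulting separator to a join-irreducible (and dually a meet-irreducible) carrying exactly the predicted number; the a priori inequalities and the pentagon construction are comparatively routine. As a consistency check, when the chosen chain is maximal the hypothesis is that $x_0\lessdot\dots\lessdot x_k$ is a left modular maximal chain, the setting in which \cite{LiuLeftmodular} provides an $EL$-labelling; the argument above may be read as tracking the join- and meet-irreducibles that realize the labels of that labelling.
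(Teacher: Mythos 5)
Your proof is correct, but it is based on a misreading of \Cref{deflabelling} that swaps the roles of the primed and unprimed labellings: in the paper, $\gamma_1,\gamma_2$ are the labellings defined via the numbering of join- and meet-irreducibles (your $\gamma_1',\gamma_2'$), while $\gamma_1',\gamma_2'$ are the ones defined directly from the chain (your $\gamma_1,\gamma_2$). Because the theorem only asserts that all four coincide, your argument survives the relabelling, but you should fix the names.

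Your route is genuinely different from the paper's. The paper first proves two unconditional facts: $\gamma_1=\gamma_1'$ and $\gamma_2=\gamma_2'$ always hold (\Cref{lemmaEgalityGamma}, using that each element is the meet of the meet-irreducibles above it), and $\gamma_2'\leq\gamma_1'$ always holds (\Cref{lemlabelling}). The theorem then reduces to ``$\gamma_1'=\gamma_2'$ iff each $x_i$ is left modular,'' which the paper gets in one stroke by translating $\gamma_2'(b\lessdot c)\leq i<\gamma_1'(b\lessdot c)$ directly into the cover-based characterization (3) of \Cref{Propleftmodular} (the pair $b\lessdot c$ is exactly a counterexample to the left modularity of $x_i$). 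You instead only claim a chain of a priori inequalities, never using the two unconditional equalities; in the forward direction you reconstruct the pentagon by hand from the modular identity (characterization (1)) and then pick a cover inside it, rather than invoking characterization (3) directly; and in the reverse direction you explicitly force all four labels to equal $\ell$, using the modular identity at $x_\ell$ and $x_{\ell-1}$ and then a nice minimality argument to manufacture a separating join-irreducible (and dually a meet-irreducible) carrying label $\ell$. That minimality step is, in effect, a left-modular-case re-proof of \Cref{lemmaEgalityGamma}. The net effect: your argument is self-contained and slightly more explicit about which irreducibles realize the labels, but it does not isolate the clean lemma that $\gamma_1=\gamma_1'$ and $\gamma_2=\gamma_2'$ hold unconditionally, which is a reusable fact in the paper and arguably the right way to organize the proof.

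Two small points to tighten. First, the clause ``the outer two follow by the analogous observations relating separating irreducibles to the $x_i$'' is asserting inequalities that in fact are equalities for every lattice; as written it is vague and also weaker than what is true. Second, in the pentagon construction you write ``choose any cover $u\lessdot v$ with $u=a'$ and $v\leq b'$''; you should say explicitly that such $v$ exists because $a'<b'$ and the lattice is finite.
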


This gives us a way to prove that lattices are left modular using edge-labellings. A first application, not present in this paper, is the proof of the left modularity of the $(P,\phi)$-Tamari lattices defined recently by the author \cite{segovia2025pphitamarilattices}.
Another application is a new short proof (Theorem \ref{thmlabelling}) of the fact that every semidistributive extremal lattice is left modular \cite[Theorem 1.4]{Thomas_2019}.
The converse statement, that any semidistributive left modular lattice is extremal, was proved in \cite[Theorem 3.2]{M_hle_2023}. Thus for semidistributive lattices, extremality and left modularity are equivalent properties. We were interested in understanding this equivalence in the case of the congruence uniform lattices. For this, we obtain necessary and sufficient conditions on the doubling steps in the construction of a congruence normal lattice that ensure extremality or left modularity. To state these results we need the following two definitions. The \emph{spine} of a lattice $L$ is the subset of elements that lie on some maximum length chain. We define the \emph{heart} of a subset $C$ of $L$ to be the subset of elements that are less than all the maximal elements of $C$ and bigger than all its minimal elements.

\begin{theorem} \label{thm:extremalleftmodularintro}
	A congruence uniform lattice is extremal if and only if at each doubling step in its construction, the interval to be doubled intersects the spine of the lattice we are doubling. A congruence normal lattice is left modular if and only if at each doubling step in its construction, the heart of the convex subset to be doubled intersects a left modular chain of the lattice we are doubling.
\end{theorem}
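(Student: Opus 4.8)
\emph{First statement (extremality, congruence uniform case).} The plan is to follow three quantities along a doubling construction $\mathbf{1}=L_0$, $L_{k+1}=L_k[C_k]$ (the doubling of $L_k$ along $C_k$), $L_N=L$, in which every $C_k$ is an interval of $L_k$: the length $\ell(L_k)$ of a longest chain, and the numbers of join- and of meet-irreducibles. Let $\pi\colon L_{k+1}\to L_k$ be the canonical surjection collapsing, for each doubled element $x$, its two copies $\iota_0(x)\le\iota_1(x)$ in $L_{k+1}$. One first checks that doubling an interval $[a,b]$ creates exactly one new join-irreducible, namely $\iota_1(a)$, and one new meet-irreducible, namely $\iota_0(b)$, every other irreducible of $L_{k+1}$ being an $\iota_0$- or $\iota_1$-image of an irreducible of $L_k$; hence $|\mathrm{JI}(L)|=|\mathrm{MI}(L)|=N$. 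One then checks that $\ell(L_{k+1})=\ell(L_k)+1$ if $C_k$ meets a longest chain of $L_k$ (i.e.\ the spine) and $\ell(L_{k+1})=\ell(L_k)$ otherwise: since $C_k\times\mathbf{2}$ carries the product order, a chain of $L_{k+1}$ contains at most one doubled pair $\{\iota_0(c),\iota_1(c)\}$, so it projects under $\pi$ onto a chain of $L_k$ with at most one repetition, while conversely a longest chain of $L_k$ meeting $C_k$ lifts, with one extra covering step inserted, to a chain of $L_{k+1}$ one longer. Since $\ell(\mathbf{1})=0$, iteration gives $\ell(L)=\#\{k:C_k\text{ meets }\mathrm{Spine}(L_k)\}\le N$, with equality exactly when every interval meets the spine; as extremality says precisely $\ell(L)=|\mathrm{JI}(L)|=|\mathrm{MI}(L)|=N$, this proves the first statement.

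\emph{Second statement (left modularity, congruence normal case).} The plan is an induction on the number of doubling steps, reducing everything to the single-step assertion: \emph{for a lattice $L$ and a convex subset $C$, the doubling $L[C]$ is left modular if and only if $L$ is left modular and $\mathrm{heart}(C)$ meets a left modular chain of $L$.} Granting this, $L_{k+1}=L_k[C_k]$ is left modular iff $L_k$ is left modular and the $k$-th heart condition holds, so iterating from the left modular lattice $\mathbf{1}$ yields that $L=L_N$ is left modular iff every heart condition holds. For the ``if'' half of the single-step assertion, fix a left modular chain $\mathcal{M}$ of $L$ and an element $h\in\mathrm{heart}(C)\cap\mathcal{M}$. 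As $h$ lies below every maximal element of $C$ and above every minimal element, the portion of $\mathcal{M}$ inside $C$ splits at $h$ into a lower part, every element of which lies below every maximal element of $C$, and an upper part, every element above every minimal element. Lifting $\mathcal{M}$ to $L[C]$ by applying $\iota_0$ to the lower part and to everything below $C$, applying $\iota_1$ to the upper part and to everything above $C$, and inserting the single cover $\iota_0(h)\lessdot\iota_1(h)$, one obtains a maximal chain $\mathcal{M}'$ of $L[C]$; the task is then to check that every element of $\mathcal{M}'$ is left modular in $L[C]$, which is a pentagonal-sublattice computation using the product order on $C\times\mathbf{2}$: $\iota_0(c)$ is left modular in $L[C]$ when $c$ is left modular in $L$ and lies below every maximal element of $C$, dually for $\iota_1(c)$, and an element of $L$ outside $C$ lying on such a chain retains left modularity.

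For the ``only if'' half, the canonical surjection $\pi\colon L[C]\to L$ takes left modular elements to left modular elements --- a pentagonal sublattice of $L$ with small side $m$ lifts, via the meet-embedding $\iota_0$ or the join-embedding $\iota_1$, to a pentagonal sublattice of $L[C]$ with small side $\iota_0(m)$, respectively $\iota_1(m)$ --- and it takes a left modular maximal chain of $L[C]$ that meets $C\times\mathbf{2}$ to a left modular maximal chain of $L$. The element $c\in C$ at which such a chain switches from the $\iota_0$-copy to the $\iota_1$-copy has $\iota_0(c)$ and $\iota_1(c)$ both left modular, and the converses of the pentagon computations above then force $c$ below every maximal and above every minimal element of $C$, so $c\in\mathrm{heart}(C)$, while $\pi(c)$ lies on a left modular chain of $L$; hence $\mathrm{heart}(C)$ meets a left modular chain of $L$.

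The main obstacle is exactly this pentagonal bookkeeping. Left modularity of an element is not preserved under doubling in general --- a doubling can stretch a four-element Boolean sublattice into a pentagon, destroying left modularity of its small side --- so the ``if'' argument must genuinely exploit that $\mathcal{M}$ runs through $\mathrm{heart}(C)$ in order to exclude all pentagons newly created along $\mathcal{M}'$, and the ``only if'' argument must establish that, when $L[C]$ is left modular, some left modular maximal chain of $L[C]$ actually enters $C\times\mathbf{2}$ (equivalently, that a left modular maximal chain can be chosen to pass through the copy of $C$), rather than running entirely outside the doubled region. This last point is the delicate one, and it is where the careful case analysis --- aided, if convenient, by the edge-labelling criterion of \cref{thm:labellingsintro} --- is needed.
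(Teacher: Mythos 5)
Your first statement (extremality) is handled correctly and by the same argument the paper uses: each doubling of an interval adds exactly one new join-irreducible and one new meet-irreducible (\cref{lemJIRR}), and adds one to the length precisely when the doubled interval meets the spine (\cref{lem:lengthdoublement}); extremality is then equality of these counts. For the second statement your framework also matches the paper's: the single-step reduction, the characterization of left modular elements of $L[C]$ via left modularity in $L$ and position relative to $C$ (your ``$\iota_0(c)$ left modular iff $c$ left modular and below all maximal elements of $C$'' is \cref{prop:TechniqueMod}), and lifting a left modular chain of $L$ through the heart (\cref{chainleftmodular}).

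However, for the converse direction you stop at exactly the lemma that carries all the weight, and explicitly defer it: you must show that if $L[C]$ is left modular then some maximal left modular chain actually goes through the doubling, i.e.\ you must rule out a maximal left modular chain of $L[C]$ running entirely outside the copy of $C$. The paper supplies this in \cref{lemChainemaxIncomparable}: any maximal chain of a lattice that avoids a convex subset $C$ contains an element that is neither below all maximal elements nor above all minimal elements of $C$ (and hence, by \cref{prop:TechniqueMod}, not left modular in $L[C]$). Its proof partitions the chain into the elements below all maxima of $C$, those above all minima, and the rest; shows the first two are disjoint by convexity of $C$; and, assuming the middle part were empty, finds a cover $a\lessdot b$ straddling the two parts, computes $a\vee m=b$ for a suitable minimal element $m$ of $C$, and derives $b\in C$ by convexity, a contradiction. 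This is a genuinely new idea, not routine ``pentagonal bookkeeping,'' and the edge-labelling criterion of \cref{thm:labellingsintro} is not a substitute for it; without it, the only-if half of your single-step claim, and hence the whole induction for the converse, does not close. A smaller imprecision worth flagging: $\iota_0$ and $\iota_1$ are not maps $L\to L[C]$, so ``lifting a pentagon via $\iota_0$ or $\iota_1$'' needs the more careful construction of \cref{lemModcompareLtoLC}, which lifts the offending cover $b\lessdot c$ via the canonical section $\psi$ with a separate case when $b\in C$ and $c\notin C$.
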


From Theorem \ref{thm:extremalleftmodularintro} we recover the equivalence between extremality and left modularity for congruence uniform lattices (Corollary \ref{corocongruenceuniformExtremal}), but we will be more precise and prove more general results in Section \ref{sec:congNormal}.

As any lattice can be seen as an interval in an extremal lattice \cite[Theorem 14]{MarkowskyExtremal} and shellability is preserved under taking intervals \cite[Proposition 4.2]{bjorner1980shellable}, being extremal is far from being equivalent to being shellable for general lattices.
However, building on the above result regarding the extremality of a congruence uniform lattice, we obtain: 

\begin{theorem}  \label{thm:principal}
	A congruence uniform lattice is shellable if and only if it is extremal.
\end{theorem}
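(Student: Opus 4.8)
The plan is to prove the two implications separately. The direction \emph{extremal $\Rightarrow$ shellable} needs nothing beyond what is recalled in the introduction: a congruence uniform lattice is semidistributive, so if $L$ is moreover extremal then $L$ is left modular by \cref{thmlabelling}, and a left modular lattice admits an $EL$-labelling \cite{LiuLeftmodular} and is therefore shellable \cite{bjorner1983lexicographically}. For the converse I would prove the contrapositive --- a congruence uniform lattice that is not extremal is not shellable --- by induction on the number of doubling steps in a construction of $L$.

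For the inductive step, write $L=K[C]$ where $C$ is the interval doubled at the last step and $K$ is the (congruence uniform, constructed in fewer steps) lattice just before it. By \cref{thm:extremalleftmodularintro}, either $C$ is disjoint from the spine of $K$, or $C$ meets the spine of $K$ and then a bad step lies among the earlier ones, so that $K$ itself is not extremal. The argument therefore splits into two claims. Claim (A): if $C$ is an interval of a congruence uniform lattice $K$ disjoint from the spine of $K$, then $K[C]$ is not shellable. Claim (B): if $K$ is congruence uniform and not shellable and $C$ is an interval meeting the spine of $K$, then $K[C]$ is not shellable. Given (A) and (B) the induction closes: in the first case $L=K[C]$ is not shellable by (A), and in the second the inductive hypothesis gives that $K$ is not shellable and (B) transfers this to $L$. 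Both claims I would settle by producing a non-shellable interval, since shellability is inherited by intervals \cite[Proposition 4.2]{bjorner1980shellable}.

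The heart is Claim (A), and its mechanism is already visible in the smallest non-extremal congruence uniform lattice, obtained by doubling the unique off-spine element $c$ of $N_5$: the proper part of the result is a disjoint union of two edges, a complex that is not Cohen--Macaulay (its pure $1$-skeleton is disconnected) and hence not shellable. In general, disjointness of $C=[a,b]$ from the spine of $K$ forces a maximal chain of $K$ through $[a,b]$ of length strictly below $\ell(K)$, together with a ``detour'' chain of length $\ell(K)$ that avoids $[a,b]$; in $K[a,b]$ the upper copy of $[a,b]$ then lies parallel to the image of such a detour, and one selects $u<v$ straddling the doubled region so that the interval $[u,v]$ of $K[a,b]$ contains two maximal chains with disjoint interiors. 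Its pure top skeleton is disconnected, so $[u,v]$ --- hence $K[a,b]$ --- is not sequentially Cohen--Macaulay, and not shellable.

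The step I expect to be the main obstacle is Claim (A) in full generality: turning ``$C$ is disjoint from the spine of $K$'' into an explicit non-shellable subinterval of $K[C]$. The delicate point is that the obstruction is combinatorial and not homological --- doubling a convex subset that avoids $\hat 0$ and $\hat 1$ does not change the homotopy type of the proper part (a Quillen fibre-lemma argument applied to the canonical surjection $K[C]\to K$), so non-shellability cannot be detected through reduced homology and must be certified by controlling the pure skeleta of the order complex, i.e.\ by exhibiting a failure of sequential Cohen--Macaulayness directly in the poset created by the doubling. Claim (B) should be comparatively routine but not empty: using again that doubling away from $\hat 0$ and $\hat 1$ preserves the homotopy type, one transports a non-shellable subinterval of $K$ through the doubling and checks that the induced local operation on it does not repair it, with a separate (easy) inspection of the doublings that involve $\hat 0$ or $\hat 1$.
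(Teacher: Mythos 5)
Your overall architecture mirrors the paper's: the easy direction is identical (extremal $+$ semidistributive $\Rightarrow$ left modular $\Rightarrow$ $EL$-shellable), and for the converse you take the same inductive route through the doubling steps, splitting on whether the last $C_i$ meets the spine. The divergence is in the technical certificate of non-shellability. The paper introduces the \emph{facet adjacency graph} $FA(L)$ (vertices are maximal chains, $F\to G$ when $|F\cap G|=|G|-1$) and shows that having two disjoint \emph{source sets} in $FA(L)$ (\cref{lem:shellableonesource}) is an obstruction to shellability that is \emph{created} by an off-spine doubling (\cref{lem:doublingoutsidespine}) and \emph{preserved} by every subsequent doubling (\cref{lem:doubleincreasesources}); this is a single global combinatorial invariant that carries the whole induction. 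You instead propose to certify non-shellability by exhibiting a non-shellable subinterval whose pure top skeleton is disconnected (failure of sequential Cohen--Macaulayness), and to push that subinterval through the later doublings.

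There are two real gaps. In Claim (A), the step ``$[u,v]$ contains two maximal chains with disjoint interiors, hence its pure top skeleton is disconnected'' is not justified and is false without length control: $N_5$ itself has two maximal chains with disjoint interiors yet is shellable. What makes $N_5[\{c\}]$ fail is that the doubling equalizes the lengths of the two chains; in general you would have to show that a \emph{top-length} chain through the doubled copy and a \emph{top-length} detour chain can be trapped in an interval where they are the only top-dimensional facets and share no interior vertex, and you give no argument for this. Worse, for Claim (B) there is no localization at all: in $N_5[\{c\}]$ every proper interval is a chain, so the non-shellability lives only in the full interval $[\hat 0,\hat 1]$. Transporting ``a non-shellable subinterval'' then reduces to transporting the whole lattice, i.e.\ to Claim (B) itself. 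Even when a proper non-shellable interval $[x,y]$ of $K$ does exist, the corresponding interval of $K[C]$ is isomorphic to $[x,y]$ only when $\psi(x),\psi(y)$ carry the same $\{0,1\}$-flag; in the straddling case $\epsilon_x=0,\epsilon_y=1$ the interval $[(x,0),(y,1)]$ is not $[x,y][C\cap[x,y]]$ in general (since $I_{[x,y]}(C\cap[x,y])\neq I_K(C)\cap[x,y]$), so neither the shape nor the homotopy type transports in the way you describe. Your Quillen observation --- that doubling a convex $C$ with $\hat 0,\hat 1\notin C$ preserves the homotopy type of the proper part, because $\pi^{-1}(\overline{L}_{\geq w})$ always has a minimum --- is correct and is a genuinely nice remark, but it cuts against your plan rather than for it: it shows the obstruction is invisible to homology, and it does not hand you a local invariant that survives further doublings. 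The paper's source-set device is precisely such an invariant, and that is the idea your proposal is missing.
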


We said that establishing shellability is often difficult, but Theorem \ref{thm:principal} proves that for congruence uniform lattices this is easy as extremality can be obtained in polynomial time for any lattice. Furthermore, with known results explained earlier, it follows from Theorem \ref{thm:principal}:

\begin{corollary} \label{thm:mainthmintro}
	Let $L$ be a congruence uniform lattice. The following properties are equivalent:
	\begin{itemize}
		\item[$(i)$] $L$ is extremal.
		\item[$(ii)$] $L$ is left modular.
		\item[$(iii)$] $L$ is $EL$-shellable.
		\item[$(iv)$] $L$ is shellable.
	\end{itemize}       
\end{corollary}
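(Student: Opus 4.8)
The plan is to close the cycle of implications $(i)\Rightarrow(ii)\Rightarrow(iii)\Rightarrow(iv)\Rightarrow(i)$, assembling results that are already available; the only substantive ingredient is \cref{thm:principal}, and the remaining arrows are bookkeeping from the literature. First, for $(i)\Rightarrow(ii)$ I would use that a congruence uniform lattice is in particular semidistributive, so if $L$ is moreover extremal it is left modular by the fact that every semidistributive extremal lattice is left modular (\cref{thmlabelling}, reproving \cite[Theorem~1.4]{Thomas_2019}).

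Next, $(ii)\Rightarrow(iii)$: fixing a left modular chain of $L$, the construction of \cite{LiuLeftmodular} produces an $EL$-labelling of $L$ from that chain, so $L$ is $EL$-shellable. Then $(iii)\Rightarrow(iv)$ is the standard fact that an $EL$-labelling of a bounded poset induces a shelling of its order complex \cite{bjorner1983lexicographically}, whence every $EL$-shellable lattice is shellable. Finally, $(iv)\Rightarrow(i)$ is precisely the ``shellable implies extremal'' direction of \cref{thm:principal} for congruence uniform lattices, which closes the cycle and yields the equivalence of all four properties.

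The main obstacle is entirely absorbed into \cref{thm:principal}, whose proof rests in turn on the characterization of extremality of a congruence uniform lattice via the interaction of the doubled intervals with the spine (\cref{thm:extremalleftmodularintro}); once that machinery is in place, the present corollary is a routine assembly of known implications. I would also remark in passing that one has the shortcut $(ii)\Rightarrow(i)$ directly from \cite[Theorem~3.2]{M_hle_2023} (a semidistributive left modular lattice is extremal), which already gives the equivalence $(i)\Leftrightarrow(ii)$ without any appeal to shellability; but the four-term cycle above is the most economical way to incorporate $(iii)$ and $(iv)$ simultaneously.
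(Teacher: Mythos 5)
Your proposal is correct and follows essentially the same route as the paper: close the four-term cycle by taking $(ii)\Rightarrow(iii)\Rightarrow(iv)$ from the standard theory of left modular and $EL$-shellable lattices, and supply $(iv)\Rightarrow(i)$ from the new shellability-implies-extremality result (in the paper this is \cref{cor:mainresultLBUB}, the engine behind \cref{thm:principal}). The only cosmetic difference is bookkeeping of citations for $(i)\Leftrightarrow(ii)$: you route through the general semidistributive facts (\cref{CorExtremalSD} and \cite[Theorem~3.2]{M_hle_2023}), whereas the paper invokes its own \cref{corocongruenceuniformExtremal}, which it had already proved directly for congruence uniform lattices; both are valid and give the same statement.
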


Part of what was known, that an extremal semidistributive lattice is left modular, has led to recent results on shellability of lattices \cite{fang2024parabolic,defant2024operahedron,luo2024lattice}. What we add with Theorem \ref{thm:principal} is a way to prove that certain lattices are not shellable, as now we know that any congruence uniform lattice that is not extremal is not shellable.

In a semidistributive lattice, each element can be obtained canonically as the join of some join-irreducible elements. These subsets of join-irreducible elements are called the canonical join representations, and the set of all of them form a simplicial complex called the \emph{canonical join complex} \cite{barnard2016canonical}. Different structural properties of these simplicial complexes were proved, but open questions remain. In Section \ref{sec:canonicaljoingraph}, we give a counterexample to \cite[Question 2.5.5]{Barnardthesis}, by exhibiting a join canonical complex that has an induced subcomplex which is not itself a canonical join complex. This counterexample is produced using the fundamental theorem of finite semidistributive lattices \cite{reading2021fundamental}.

Finally, we are interested in the (order) dimension of semidistributive extremal lattices. Let $(\mathbb{R}^n,\leq)$ be the poset defined by $(x_1,\dots,x_n)\leq (y_1,\dots,y_n)$ if $x_i\leq y_i$ for every $i\in [n]$. The \emph{dimension} of a poset $P$ is the smallest integer $n$ such that $P$ is isomorphic to a subposet of $(\mathbb{R}^n,\leq)$ (\cite{DushnikMiller}). This notion was extensively studied over the years (see the book \cite{trotter2002combinatorics}), but it is difficult to obtain the dimension in general, as it is even an NP-complete problem to determine if a given poset is of dimension $n$, for any $n\geq 3$. In the past, this problem was linked to the coloring of graphs and hypergraphs \cite{YannakakisNPcomplet,furedi1992interval,felsner2000dimension,READING2003265}, but often to obtain bounds on the dimension.

The \emph{Galois graph} of an extremal lattice is a directed graph on its join-irreducible elements. It is acyclic and from it we recover the lattice as the poset of inclusion on its maximal orthogonal pairs \cite{MarkowskyExtremal,reading2021fundamental,Thomas_2019,thomas2018independence}. 

Building on results of \cite{reading2002order} which relate the dimension to cover sets of a simplicial complex called the critical pairs simplicial complex, and results about the Galois graphs of extremal lattices, we obtain the following:

\begin{theorem} \label{thm:dimresultintro}
	The dimension of a semidistributive extremal lattice is the chromatic number of the complement of its Galois graph.
\end{theorem}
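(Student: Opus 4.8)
The plan is to combine two ingredients: Reading's description of the order dimension of a finite lattice through its critical simplicial complex \cite{reading2002order}, and Markowsky's encoding of an extremal lattice by its Galois graph \cite{MarkowskyExtremal}, which in the semidistributive case is governed by the map $\kappa$ of the fundamental theorem of finite semidistributive lattices \cite{reading2021fundamental,thomas2018independence}. Recall that the dimension of a finite poset is the least size of a realizer (a family of linear extensions whose intersection is the order), that a family of linear extensions is a realizer exactly when it reverses every critical pair, and that a set of critical pairs admits a common reversal precisely when it contains no alternating cycle \cite{trotter2002combinatorics}. Hence $\dim L$ equals the least number of faces of the critical complex $\mathrm{Crit}(L)$ — the complex whose vertices are the critical pairs of $L$ and whose faces are the alternating-cycle-free sets of critical pairs — needed to cover all of its vertices (with the usual convention that this number is $1$ when $L$ is a chain). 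So it suffices to establish, for $L$ semidistributive and extremal, that $(a)$ $\mathrm{Crit}(L)$ is a flag complex, and $(b)$ its one-skeleton, transported to $\mathrm{Ji}(L)$, is the underlying undirected graph of the Galois graph $G$ of $L$. Granting $(a)$ and $(b)$, the faces of $\mathrm{Crit}(L)$ are exactly the cliques of $G$ (up to the irrelevant isolated vertices of $\overline G$ discussed below), so the minimum face-cover of $\mathrm{Crit}(L)$ is the clique-cover number of $G$, which is $\chi(\overline G)$, and the theorem follows.

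For $(b)$ I would work in the maximal orthogonal pair model of $L$: each $x$ is the pair consisting of $\{j\in\mathrm{Ji}(L):j\le x\}$ and $\{m\in\mathrm{Mi}(L):m\ge x\}$, with the order given by containment of join-sides, and the orthogonality relation recording exactly the non-arrows of $G$. Unwinding the definition of a critical pair shows (for any finite lattice, using that every element strictly above a meet-irreducible $m$ lies above the unique cover $m^{*}$ of $m$) that the critical pairs of $L$ are precisely the pairs $(j,\kappa(j))$ for those join-irreducibles $j$ with $\kappa(j)\neq j_{*}$, where $j_{*}$ is the element covered by $j$; the remaining join-irreducibles produce no critical pair, but since $\kappa(j')\not\ge j'$ for every $j'$ one checks directly that such a $j$ has an arrow to or from every other join-irreducible, hence is isolated in $\overline G$ and irrelevant to $\chi(\overline G)$. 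A short computation with orthogonality then shows that two critical pairs, with associated join-irreducibles $j$ and $j'$, span an edge of $\mathrm{Crit}(L)$ — equivalently, admit a common reversal — if and only if $G$ has an arrow between $j$ and $j'$ in one direction or the other; this is $(b)$. Specialising to the distributive lattice $J(P)$ of order ideals of $P$ one recovers the classical picture: $\mathrm{Ji}(L)\cong P$, $G$ has underlying graph the comparability graph of $P$, and $\chi(\overline G)$ is the least number of chains covering $P$, namely $\mathrm{width}(P)$, so we recover Dilworth's theorem $\dim J(P)=\mathrm{width}(P)$.

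The heart of the argument — and the step I expect to be the main obstacle — is $(a)$: one must rule out higher obstructions in $\mathrm{Crit}(L)$, i.e.\ show that if no two of a collection of critical pairs form an alternating two-cycle, then the whole collection contains no alternating cycle. By $(b)$ such a collection corresponds to a clique $K$ of $G$; since $G$ is acyclic \cite{MarkowskyExtremal}, there is exactly one arrow between any two vertices of $K$, so $G$ restricted to $K$ is an acyclic tournament and hence induces a total order on $K$. Suppose for contradiction that the corresponding critical pairs $(a_1,b_1),\dots,(a_k,b_k)$ form an alternating cycle, so $k\ge 3$ and $a_i\le b_{i+1}$ in $L$ for all $i$ cyclically. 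The key point is that, via the orthogonality relation and the map $\kappa$, each relation $a_i\le b_{i+1}$ forces an arrow of $G$ between $j_i$ and $j_{i+1}$ of a determined direction; this is exactly where semidistributivity enters, since it is the $\kappa$-combinatorics of \cite{reading2021fundamental,thomas2018independence} that makes the translation between containment of orthogonal pairs and arrows of $G$ exact (and that fails for general extremal lattices). Concatenating these arrows around the cycle produces a directed cycle inside the transitive tournament $G|_{K}$, which is impossible. Hence every clique of $G$ is a face of $\mathrm{Crit}(L)$, $\mathrm{Crit}(L)$ is flag, and $(a)$ follows. The inequality $\dim L\le\chi(\overline G)$ can also be shown directly by turning a clique partition of $G$ into a realizer of that size, but the flag property yields both inequalities at once.
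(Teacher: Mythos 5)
Your proposal is correct and follows essentially the same route as the paper: it invokes Reading's critical-complex characterization of order dimension, identifies the critical pairs of a semidistributive lattice with the join-irreducibles $j$ satisfying $\kappa(j)\neq j_*$, observes that the pairs forming a two-cycle in the critical-pair digraph correspond precisely to edges of $\overline{G}$ (and that the $j$ with $\kappa(j)=j_*$ are isolated in $\overline{G}$, hence harmless), and then uses acyclicity of the Galois graph to show that a collection with no two-cycle can have no longer alternating cycle. This last step is exactly the content of the paper's Proposition \ref{prop:coversetindset}, where it is phrased in terms of cover sets and the auxiliary graph $K(L)$ rather than as flagness of the critical complex, but the underlying computation — that on a clique of $G$ the critical-pair digraph coincides with $G$ restricted to that clique, so it inherits acyclicity — is the same.
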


This generalizes a result of Dilworth that the dimension of a distributive lattice is the maximum size of an antichain of its subposet of join-irreducible elements (see Corollary \ref{cor:dilworth}).

The complement of the Galois graph is often explicitly described for semidistributive lattices, as it corresponds to the \emph{canonical join graph}, which is the $1$-skeleton of its canonical join complex \cite{barnard2016canonical,Thomas_2019}. Thus, using Theorem \ref{thm:dimresultintro} we are able to give the dimension of different families of semidistributive extremal lattices, and we expect that this result could be applied effectively to obtain the dimension of other families of such lattices. We will focus on some recent families of semidistributive extremal lattices appearing in algebraic combinatorics, mainly generalizations of the Hochschild lattices and of the Tamari lattice. For the latter, this includes results about the dimension of some lattices of torsion classes. In particular, we prove that the lattice of torsion classes of a gentle tree on $n$ vertices has dimension $n$ (Proposition \ref{prop:dimgentletree}).

We start with background on the relevant poset, lattice and representation theory in Section \ref{sec:Backgroundext}. Then Section \ref{sec:mainresults} gives our main results. In Section \ref{sec:EdgelabellingLeftModular} we prove Theorem \ref{thm:labellingsintro}. In Section \ref{sec:congNormal}, we prove Theorem \ref{thm:extremalleftmodularintro} and more results relating the doubling operation to extremality and left modularity. In Section \ref{sec:shellabilityresult}, we focus on shellability and prove Theorem \ref{thm:principal}. Then in Section \ref{sec:canonicaljoingraph} we give a counterexample to \cite[Question 2.5.5]{Barnardthesis}. We then focus on the dimension of semidistributive extremal lattices in Section \ref{sec:dimensionresult}, proving Theorem \ref{thm:dimresultintro}. Finally, we give applications of Theorem \ref{thm:dimresultintro} in Section \ref{sec:applicationdimension}, and finish by giving a list of open questions in Section \ref{sec:openquestions}.

\textit{Some of the results from Sections \ref{sec:EdgelabellingLeftModular} and \ref{sec:congNormal} appeared, without proofs, in an extended abstract accepted to FPSAC 2025 \cite{segovia2025pphitamarilattices}.}

\medskip
\noindent{\bf Acknowledgments} 

I am very grateful to my advisors Samuele Giraudo and Hugh Thomas. I also want to thank Antoine Abram for helpful discussions, Félix Reutenauer who used a supercomputer to run my program to obtain Proposition \ref{prop:computercambrian}, and Clément Chenevière for pointing out some known results on the Cambrian lattices. I was supported by NSERC Discovery Grants RGPIN-2022-03960 and RGPIN-2024-04465.

\section{Background} \label{sec:Backgroundext}

\subsection{Vocabulary on posets and lattices}
\label{sec:vocabulary}

We denote by $|E|$ the cardinality of a set $E$. For a positive integer $n$, we denote $\{1,2,\dots,n\}$ by $[n]$.
The terms \emph{smaller}, \emph{bigger}, \emph{below} and \emph{above} will always refer to weak relations.

If a partially ordered set (poset) $(P,\leq)$ has a minimum, it will always be denoted $\hat{0}$, and $\hat{1}$ for the maximum. The cover relations of $P$ are denoted $x\lessdot y$ and we say that $y$ \emph{covers} $x$ or $x$ \emph{is covered by} $y$. They form the set $E(P)$ of the edges of its Hasse diagram, which is drawn with smaller elements at the bottom. The interval of $P$ between $x$ and $y$ is denoted by $[x,y]$. 
% Its Möbius function is written $\mu(x,y)$.
A subset $C$ of $P$ is \emph{convex} if for all $x$ and $y$ in $C$, we have $[x,y]\subseteq C$.   An \emph{order ideal} of a poset $P$ is a subset $I$ such that for all $x,y\in P$, if $y\leq x$ and $x\in I$, then $y\in I$. The order ideal generated by a subset $C$ is denoted by $I_P(C):=\{y\in P\mid \exists x\in C,\,y\leq x\}$. If $C=\{x\}$, we write $I_P(x)$.
Dually, an \emph{order filter} is a subset $F$ such that for all $x,y\in P$, if $y\geq x$ and $x\in F$, then $y\in F$. The order filter generated by a subset $C$ is denoted by $F_P(C):=\{y\in P\mid \exists x\in C,\,y\geq x\}$. If $C=\{x\}$, we write $F_P(x)$.
The \emph{chains} are the totally ordered subsets of $P$. 

A chain $F$ of $n+1$ elements is said to have length $n$, which is denoted by $\ell(F)$. If we cannot add elements to a chain, it is called a \emph{maximal chain}. The \emph{length} of a poset, denoted $\ell(P)$, is the maximum length of a chain in $P$. The chains of length $\ell(P)$ are called \emph{longest chains}. The \emph{spine} of a poset is the subset of the elements that lie on some longest chain. 
If $P$ and $Q$ are two posets, then the \emph{direct product} $P\times Q$ is the poset on the Cartesian product $P\times Q$ defined by $(x,y)\leq (x',y')$ if $x\leq x'$ and $y\leq y'$.

A \emph{lattice} $L$ is a poset such that any pair of elements $\{x,y\}$ admits a least upper bound, called the \emph{join} and written $x\vee y$, and a greatest lower bound, called the \emph{meet} and written $x\wedge y$.  
A \emph{join-irreducible} $j\in L$ is an element that covers a unique element, denoted $j_*$, and a \emph{meet-irreducible} $m\in L$ is one that is covered by a unique element, denoted $m^*$. 
The sets of these elements are respectively denoted $\mathrm{JIrr}(L)$ and $\mathrm{MIrr}(L)$. An \emph{edge-labelling} of $L$ is a map $\gamma: E(L) \rightarrow P$ where $P$ is a poset.
We will say that $x\in L$ is \emph{incomparable} to a subset $Y\subseteq L$ if $x$ is incomparable to any element of $Y$.

\subsection{Extremal lattices}
\label{sec:extremallattices}

It is well known that in a lattice, an element is always the join of the join-irreducibles below it.

\begin{lemma}
	\label{joinextrem}
	For any lattice $L$ we have $\ell(L)\leq \min\big(|\mathrm{JIrr}(L)|,|\mathrm{MIrr}(L)|\big)$.  
\end{lemma}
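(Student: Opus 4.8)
The plan is to show $\ell(L) \leq |\mathrm{JIrr}(L)|$ directly, and then obtain $\ell(L) \leq |\mathrm{MIrr}(L)|$ by applying the same argument to the dual lattice $L^{\mathrm{op}}$, which has length $\ell(L)$ and whose join-irreducibles are exactly the meet-irreducibles of $L$. So the real content is a single inequality.

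Fix a longest chain $\hat{0} = x_0 \lessdot x_1 \lessdot \dots \lessdot x_k = \hat{1}$ in $L$, so $k = \ell(L)$. The strategy is to produce an injection from the set of ``steps'' $\{1, \dots, k\}$ into $\mathrm{JIrr}(L)$. For each $i$, I would choose a join-irreducible $j_i$ that ``witnesses'' the step from $x_{i-1}$ to $x_i$: since $x_i \not\leq x_{i-1}$, among the join-irreducibles below $x_i$ there must be at least one not below $x_{i-1}$ (otherwise $x_i$, being the join of the join-irreducibles below it, would lie below $x_{i-1}$, using the remark preceding the lemma). Pick such a $j_i$, so $j_i \leq x_i$ but $j_i \not\leq x_{i-1}$. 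To see that $i \mapsto j_i$ is injective, suppose $i < i'$; then $j_i \leq x_i \leq x_{i'-1}$, whereas $j_{i'} \not\leq x_{i'-1}$, so $j_i \neq j_{i'}$. Hence $k \leq |\mathrm{JIrr}(L)|$.

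I do not expect a genuine obstacle here; the one point requiring a little care is the justification that among the join-irreducibles below $x_i$ at least one fails to lie below $x_{i-1}$, which is exactly the contrapositive of the ``every element is the join of the join-irreducibles below it'' fact, applied to the covering $x_{i-1} \lessdot x_i$. One should also note the degenerate possibility that $\mathrm{JIrr}(L)$ or $\mathrm{MIrr}(L)$ could be empty only when $L$ is a single point, in which case $\ell(L) = 0$ and the inequality is trivial; otherwise the argument above goes through verbatim, and the dual argument closes the proof.
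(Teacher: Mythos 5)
Your proof is correct and takes essentially the same route as the paper: both find, for each covering step $x_{i-1} \lessdot x_i$ along a longest chain, a join-irreducible below $x_i$ but not below $x_{i-1}$, using that every element is the join of the join-irreducibles below it, and then dualize for meet-irreducibles. Your write-up is slightly more careful than the paper's, since you spell out why the chosen $j_i$ are pairwise distinct (the injectivity via $j_i \leq x_i \leq x_{i'-1}$), a step the paper leaves implicit.
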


\begin{proof}
	Denote $k=\ell(L)$. Let $\hat{0}=c_0\lessdot c_1\lessdot \dots \lessdot c_{k}=\hat{1}$ be a maximal chain of length $k$. For any $i\in [k-1]$, there is always a join-irreducible that is below $c_{i+1}$ but not $c_{i}$. If it was not the case, then all the join-irreducibles below $c_{i+1}$ would be below $c_i$, thus $c_{i+1}$ which is the join of them would be below $c_i$, which is absurd. It follows that  $k\leq |\mathrm{JIrr}(L)|$. A dual argument gives $k\leq |\mathrm{MIrr}(L)|$.
\end{proof} 

The previous result motivated the following definition by G. Markowsky \cite{MarkowskyExtremal}:

\begin{definition}
	A lattice $L$ is \emph{join-extremal} if $\ell(L)=|\mathrm{JIrr}(L)|$, \emph{meet-extremal} if $\ell(L)=~|\mathrm{MIrr}(L)|$ and \emph{extremal} if it is both join-extremal and meet-extremal. 
\end{definition}

Two notable results on these lattices are that the graded extremal lattices are the distributive lattices \cite[Theorem $17$]{MarkowskyExtremal}, and any lattice is isomorphic to an interval of an extremal lattice \cite[Theorem $14$]{MarkowskyExtremal}. G. Markowsky also proved a representation theorem of extremal lattices, that we now explain with recent terminology (see \cite[Section 2.3]{Thomas_2019}).
For an extremal lattice $L$, the choice of any longest chain $\psi: \,\hat{0}=x_0 \lessdot \dots \lessdot x_n=\hat{1}$ gives a numbering of the join and meet-irreducibles $j_1,j_2,\dots,j_n$ and $m_1,m_2,\dots,m_n$ such that 
$x_i=j_1\vee \dots \vee j_i = m_{i+1}\wedge \dots \wedge m_n$. 

\begin{definition}[\cite{Thomas_2019}]
\label{def:galoisgraph}
	Let $L$ be an extremal lattice.
	We define a directed graph $G(L)$, called the \emph{Galois graph}, whose vertices are $\mathrm{JIrr}(L)$ and with an edge $j_i\rightarrow j_k$ if $i\neq k$ and $j_i\not\leq m_k$.
\end{definition}

We defined the Galois graph $G(L)$ on the set $\mathrm{JIrr}(L)$ as that will be the way we see it in Section \ref{sec:dimensionresult}, but with our numbering of join and meet-irreducibles, it can be equivalently seen as the graph on $[n]$ with directed edges $i\rightarrow k$ if $i\neq k$ and $j_i\not\leq m_k$.

\begin{proposition}[{\cite[Theorem 11]{MarkowskyExtremal}}] \label{prop:acyextremal}
	For any extremal lattice $L$, the Galois graph $G(L)$ is acyclic.
\end{proposition}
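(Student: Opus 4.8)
The plan is to exploit the numbering of the join- and meet-irreducibles attached to a fixed longest chain and to show that, with respect to this numbering, every edge of $G(L)$ points from a larger index to a smaller one; acyclicity is then immediate.

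First I would fix a longest chain $\phi\colon \hat{0}=x_0 \lessdot x_1 \lessdot \cdots \lessdot x_n = \hat{1}$ and take the associated numbering $j_1,\dots,j_n$ of $\mathrm{JIrr}(L)$ and $m_1,\dots,m_n$ of $\mathrm{MIrr}(L)$, so that $x_i = j_1\vee\cdots\vee j_i = m_{i+1}\wedge\cdots\wedge m_n$ for all $i$. From the first equality we get $j_i\leq x_i$ for every $i$. From the second, applied to $i-1$, we get $x_{i-1} = m_i\wedge m_{i+1}\wedge\cdots\wedge m_n = m_i\wedge x_i$, and hence $x_{i-1}\leq m_i$ for every $i$.

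The key step is the observation that if $1\leq i<k\leq n$ then $j_i\leq m_k$. Indeed, since $i\leq k-1$ we have $x_i\leq x_{k-1}$ along $\phi$, so $j_i\leq x_i\leq x_{k-1}\leq m_k$, chaining the two inequalities obtained above. Consequently $G(L)$ contains no edge $j_i\to j_k$ with $i<k$: every edge of the Galois graph goes from a join-irreducible of larger index to one of strictly smaller index.

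Finally, a directed cycle $j_{i_1}\to j_{i_2}\to\cdots\to j_{i_r}\to j_{i_1}$ in $G(L)$ would force $i_1>i_2>\cdots>i_r>i_1$, which is impossible; therefore $G(L)$ is acyclic. I do not expect a genuine obstacle here: the whole content is the realization that $G(L)$ is a subgraph of the transitive tournament determined by the index order, which is forced by the compatibility of the two numberings with the single chain $\phi$, and everything else is a one-line verification.
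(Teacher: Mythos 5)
Your proof is correct, and it is the natural argument given the conventions in the paper: from $x_i=j_1\vee\cdots\vee j_i$ you deduce $j_i\leq x_i$, from $x_{k-1}=m_k\wedge\cdots\wedge m_n$ you deduce $x_{k-1}\leq m_k$, and chaining these with $x_i\leq x_{k-1}$ for $i<k$ gives $j_i\leq m_k$, so every edge $j_i\to j_k$ has $i>k$ and a directed cycle is impossible. The paper itself does not reproduce a proof but simply cites Markowsky's Theorem~11; your argument is essentially the standard one behind that citation (and it is also implicit in the convention, stated just before \cref{thm:extremalrepthm}, that Galois graphs of extremal lattices only have edges $i\to k$ with $i>k$), so there is no genuine divergence to report.
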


Let $G$ be a directed graph on $[n]$ without multiple edges and such that whenever $i\rightarrow k$ we have $i>k$. A \emph{maximal orthogonal pair} of $G$ is a pair $(X,Y)$ of disjoint subsets of $[n]$ with no directed edges from $X$ to $Y$, and which is maximal for this condition. With the partial order $(X,Y) \leq (X',Y')$ if $X\subseteq X'$, the maximal orthogonal pairs form a lattice denoted by $L(G)$ (see Figure \ref{fig:hoch3}).

\begin{theorem}[\cite{MarkowskyExtremal}]
	\label{thm:extremalrepthm}
	Every finite extremal lattice $L$ is isomorphic to $L(G(L))$. Conversely, if $G$ is a directed graph on $[n]$ without multiple edges and such that whenever $i\rightarrow k$ we have $i>k$, then $L(G)$ is an extremal lattice.
\end{theorem}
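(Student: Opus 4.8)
The plan is to treat the two directions separately; throughout I fix a longest chain $\phi\colon\hat{0}=x_0\lessdot\dots\lessdot x_n=\hat{1}$ of $L$ and begin by establishing the numbering recalled before the statement. For each $i\in[n]$, the argument used to prove \cref{joinextrem} produces a join-irreducible below $x_i$ but not below $x_{i-1}$; call such an element $j_i$. If $j_i=j_k$ with $i<k$ then $j_i\leq x_i\leq x_{k-1}$, contradicting $j_k\not\leq x_{k-1}$, so the $j_i$ are pairwise distinct, hence exhaust $\mathrm{JIrr}(L)$ by extremality, and $j_i\leq x_k$ if and only if $i\leq k$. Dually one obtains $m_1,\dots,m_n$ enumerating $\mathrm{MIrr}(L)$ with $m_i\geq x_k$ if and only if $k<i$. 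Since every element is the join of the join-irreducibles below it and those below $x_i$ are exactly $j_1,\dots,j_i$, we get $x_i=j_1\vee\dots\vee j_i$, and dually $x_i=m_{i+1}\wedge\dots\wedge m_n$. I would isolate the fact $j_i\not\leq m_i$ for repeated use: otherwise $m_i\geq x_{i-1}\vee j_i=x_i$ (the join equals $x_i$ because $x_{i-1}\lessdot x_i$), which is absurd. The same computation gives $j_i\leq x_{k-1}\leq m_k$ whenever $i<k$, so $G(L)$ has an edge $i\to k$ only if $i>k$; in particular $G(L)$ is loop- and multiedge-free and meets the orientation hypothesis, so $L(G(L))$ is defined (and \cref{prop:acyextremal} is recovered along the way).

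For the forward direction I would define $\Phi\colon L\to L(G(L))$ by $\Phi(z)=\big(X(z),Y(z)\big)$ with $X(z)=\{i:j_i\leq z\}$ and $Y(z)=\{i:z\leq m_i\}$. Transitivity makes the pair orthogonal, the fact $j_i\not\leq m_i$ makes $X(z)\cap Y(z)=\emptyset$, and maximality follows from $z=\bigvee_{i\in X(z)}j_i=\bigwedge_{i\in Y(z)}m_i$, because enlarging either coordinate would create a comparability contradicting the definition of $X(z)$ or of $Y(z)$. The map $\Phi$ is order-preserving and is injective since $z=\bigvee_{i\in X(z)}j_i$ recovers $z$ from $X(z)$. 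For surjectivity, given a maximal orthogonal pair $(X,Y)$ I put $z=\bigvee_{i\in X}j_i$; orthogonality together with $X\cap Y=\emptyset$ yields $z\leq m_k$ for all $k\in Y$, and then the maximality of $(X,Y)$ forces $X(z)=X$ and $Y(z)=Y$. As $\Phi^{-1}$ is visibly order-preserving, $\Phi$ is an isomorphism.

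For the converse, let $G$ be a multiedge-free digraph on $[n]$ with $i\to k$ only when $i>k$. The maps taking $X\subseteq[n]$ to the set of those $k\notin X$ with no edge from $X$ to $k$, and taking $Y\subseteq[n]$ to the set of those $i\notin Y$ with no edge from $i$ to $Y$, form an antitone Galois connection whose closed sets on the $X$-side are exactly the first coordinates of maximal orthogonal pairs; hence these closed sets form a lattice --- which is $L(G)$, with meet equal to intersection and join equal to the closure of the union --- and $(X,Y)\mapsto Y$ exhibits it as the order-dual of the lattice of closed sets on the $Y$-side. The orientation hypothesis is used precisely to show that each initial segment $\{1,\dots,t\}$ is closed: its set of out-neighbours lies in $\{1,\dots,t-1\}$, and the set of in-neighbours of its complement lies in $\{t+1,\dots,n\}$. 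This yields a chain of length $n$ in $L(G)$, so $\ell(L(G))\geq n$. Finally, writing $c$ for the closure operator, every closed set $X$ equals $\bigvee_{i\in X}c(\{i\})$, so each join-irreducible of $L(G)$ is one of the $n$ sets $c(\{i\})$, giving $|\mathrm{JIrr}(L(G))|\leq n$; dually $|\mathrm{MIrr}(L(G))|\leq n$. Combined with \cref{joinextrem} this forces $\ell(L(G))=|\mathrm{JIrr}(L(G))|=|\mathrm{MIrr}(L(G))|=n$, so $L(G)$ is extremal.

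The main work, I expect, lies in the bookkeeping of the first paragraph --- securing $x_i=j_1\vee\dots\vee j_i=m_{i+1}\wedge\dots\wedge m_n$ and the orientation of $G(L)$ --- together with the maximality and surjectivity verifications for $\Phi$. In the converse the only genuinely load-bearing point is the extraction of a longest chain, which is where the orientation hypothesis is indispensable; the lattice structure of $L(G)$ is itself a soft consequence of the Galois-connection formalism and requires no assumption on $G$.
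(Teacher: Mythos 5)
Your proof is correct. The paper does not prove this theorem itself---it cites Markowsky's original article---so there is no in-text argument to compare against; but your construction is the natural one and it holds up. In the forward direction you correctly set up the numbering $j_1,\dots,j_n$ and $m_1,\dots,m_n$ from a longest chain, establish $j_i\not\leq m_i$ and $j_i\leq m_k$ for $i<k$, and verify that $\Phi(z)=(X(z),Y(z))$ is an order-isomorphism onto maximal orthogonal pairs; the key checks (orthogonality via transitivity, disjointness via $j_i\not\leq m_i$, maximality via $z=\bigwedge_{i\in Y(z)}m_i$, surjectivity via maximality of the given pair) all go through as you state them. In the converse you encode both disjointness and orthogonality into an antitone Galois connection on $2^{[n]}$, use the orientation hypothesis to show each initial segment $\{1,\dots,t\}$ is closed (giving $\ell(L(G))\geq n$), and bound both $|\mathrm{JIrr}(L(G))|$ and $|\mathrm{MIrr}(L(G))|$ by $n$ since every closed set is a join (resp.\ meet) of closures of singletons; squeezing against \cref{joinextrem} then forces extremality. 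The one place a reader might want a bit more is the assertion that the Galois-closed sets on the $X$-side are exactly the first coordinates of the maximal orthogonal pairs: this is true (if $(X,Y)$ is maximal then $Y=f(X)$, since $(X,f(X))$ extends it, and then $X=g(Y)=gf(X)$; conversely $(X,f(X))$ with $X$ closed is maximal by the antitone property), but it is a genuine lemma rather than a restatement and is worth spelling out.
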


This result will enable us in Section \ref{sec:applicationdimension} to define quickly an extremal lattice by giving its Galois graph.

\subsection{Semidistributive lattices}
\label{sec:semidistributivelattices}

\begin{definition}
	A lattice $L$ is \emph{join-semidistributive} if for all $x,y,z\in L$, we have $x\vee y=x\vee z \Longrightarrow x\vee (y\wedge z)=x\vee y$. It is \emph{meet-semidistributive} if for all $x,y,z\in L$, we have $x\wedge y=x\wedge z \Longrightarrow x\wedge (y\vee z)=x\wedge y$. It is \emph{semidistributive} if it is both join-semidistributive and meet-semidistributive.   
\end{definition}

The following is well-known:

\begin{lemma}[{\cite[Section 5]{freese1995free}}]
	\label{lemEquiJSD}
	A lattice $L$ is join-semidistributive if and only if for all covers $b\lessdot c$, the set $I_L(c)\setminus I_L(b)$ has a minimum element, denoted $\gamma_{J}(b\lessdot c)$. It is meet-semidistributive if and only if for all covers $b\lessdot c$, the set $F_L(b)\setminus F_L(c)$ has a maximum element, denoted $\gamma_{M}(b\lessdot c)$. In these cases, the minimum is join-irreducible and the maximum meet-irreducible.
\end{lemma}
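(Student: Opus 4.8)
The plan is to establish the join-semidistributive equivalence together with the join-irreducibility of the minimum; the meet-semidistributive half is then obtained by the order-dual argument, so I will only spell out the "join" side.

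\emph{Join-semidistributive $\Rightarrow$ the minimum exists.} Fix a cover $b\lessdot c$ and set $S:=I_L(c)\setminus I_L(b)=\{x\in L: x\le c,\ x\not\le b\}$. Since $b<c$ we have $c\in S$, so $S\neq\emptyset$; as $L$ is finite it suffices to show $S$ is closed under meet, for then $\bigwedge S\in S$ is the minimum $\gamma_J(b\lessdot c)$. Let $x,y\in S$. Then $x\wedge y\le c$, and if $x\wedge y\le b$ I derive a contradiction: because $x\le c$, $x\not\le b$ and $b\lessdot c$, the join $x\vee b$ lies strictly above $b$ and at most $c$, hence $x\vee b=c$, and likewise $y\vee b=c$; applying join-semidistributivity to the triple $(b,x,y)$ yields $b\vee(x\wedge y)=b\vee x=c$, while $x\wedge y\le b$ gives $b\vee(x\wedge y)=b\neq c$. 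Hence $x\wedge y\in S$. I would then check that $m:=\gamma_J(b\lessdot c)$ is join-irreducible: if $z<m$ then $z\notin S$ by minimality, and since $z\le c$ this forces $z\le b$, hence $z\le m\wedge b<m$; thus $m\wedge b$ is the unique element covered by $m$, i.e.\ $m_*=m\wedge b$.

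\emph{The minimum exists $\Rightarrow$ join-semidistributive.} Assume every cover of $L$ admits such a minimum, and suppose $x\vee y=x\vee z=:u$. Put $w:=x\vee(y\wedge z)$, so $w\le u$, and I claim $w=u$. Otherwise, by finiteness pick $b$ with $w\le b\lessdot u$. Then $x\le w\le b$, while $y\le u$ with $y\not\le b$ (else $u=x\vee y\le b<u$), so $y\in I_L(u)\setminus I_L(b)$ and therefore $\gamma_J(b\lessdot u)\le y$; symmetrically $\gamma_J(b\lessdot u)\le z$, whence $\gamma_J(b\lessdot u)\le y\wedge z\le w\le b$, contradicting $\gamma_J(b\lessdot u)\not\le b$. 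Thus $x\vee(y\wedge z)=x\vee y$, proving join-semidistributivity.

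I do not expect a genuine obstacle here. Join-semidistributivity enters in exactly one place — showing that $S$ is closed under meet, via the elementary remark that a cover $b\lessdot c$ forces $x\vee b=c$ whenever $x\le c$ and $x\not\le b$ — and finiteness is used only to turn meet-closure into the existence of a minimum, to produce the cover $b\lessdot u$ in the converse, and (tacitly) to guarantee that an element strictly below $m$ sits under some element covered by $m$.
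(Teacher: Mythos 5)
Your proof is correct. Note that the paper does not actually supply a proof of this lemma — it is quoted from Freese–Je\v{z}ek–Nation with a citation — so there is no in-text argument to compare against; your argument is the standard one and is sound.

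A few small remarks. In the forward direction, the key observation that $x\in S$ forces $x\vee b=c$ (because $b\lessdot c$) is exactly where join-semidistributivity gets traction, and your meet-closure argument is the cleanest way to package it; finiteness then yields the minimum. Your join-irreducibility check is also complete: $\gamma_J(b\lessdot c)\not\le b$ rules out $\gamma_J(b\lessdot c)=\hat0$, and every $z<\gamma_J(b\lessdot c)$ lies below $\gamma_J(b\lessdot c)\wedge b$, so that element is the unique lower cover. In the converse direction, the use of finiteness to interpolate a cover $b\lessdot u$ above $w$ is necessary and correctly invoked. The appeal to order-duality for the meet-semidistributive half is legitimate, and in fact the paper's displayed definition of meet-semidistributivity contains a typo (it ends in $x\vee y$ where $x\wedge y$ is intended); your dualization implicitly uses the correct statement.
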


Thus by Lemma \ref{lemEquiJSD}, if $L$ is semidistributive we have two edge-labellings $\gamma_J$ and $\gamma_M$. In this case, the map $\kappa: j\in \mathrm{JIrr}(L) \mapsto \gamma_M(j_*\lessdot j)\in \mathrm{MIrr}(L)$ is a bijection \cite[Theorem 2.54]{freese1995free}.

\begin{proposition}[{\cite{barnard2016canonical} and \cite[Proposition 6.2]{Thomas_2019}}]
	\label{prop:kappaSD}
	For all $b\lessdot c$ in a semidistributive lattice $L$, we have $\kappa \big(\gamma_J(b\lessdot c)\big) = \gamma_M(b\lessdot c)$.  
\end{proposition}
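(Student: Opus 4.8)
The plan is to fix a cover $b \lessdot c$ and write $j := \gamma_J(b \lessdot c) \in \mathrm{JIrr}(L)$ and $m := \gamma_M(b \lessdot c) \in \mathrm{MIrr}(L)$, which exist by \cref{lemEquiJSD} since $L$ is semidistributive. By the definition of $\kappa$, the claimed equality $\kappa(\gamma_J(b \lessdot c)) = \gamma_M(b \lessdot c)$ is exactly $\gamma_M(j_* \lessdot j) = m$, so the whole statement reduces to comparing the two values $\gamma_M(j_* \lessdot j)$ and $\gamma_M(b \lessdot c)$. First I would record three elementary facts forced by the minimality defining $j$ (recall $j \le c$, $j \not\le b$, and every element strictly below $j$ is $\le j_*$): (i) $j_* \le b$, for otherwise $j_*$ would be an element of $I_L(c) \setminus I_L(b)$ strictly below $j$, contradicting minimality; (ii) $b \wedge j = j_*$, since $b \wedge j < j$ forces $b \wedge j \le j_*$ while (i) gives $j_* \le b \wedge j$; and (iii) $b \vee j = c$, since $b < b \vee j \le c$ and $b \lessdot c$.

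Next I would prove the inequality $m \le \gamma_M(j_* \lessdot j)$ by showing $m$ lies in $F_L(j_*) \setminus F_L(j)$, the set whose maximum is $\gamma_M(j_* \lessdot j)$. Indeed $m \ge b \ge j_*$ by (i), and $m \not\ge j$ since $m \ge j$ together with $m \ge b$ would give $m \ge b \vee j = c$ by (iii), contradicting $m \not\ge c$.

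For the reverse inequality, set $n := \gamma_M(j_* \lessdot j)$, so $n \ge j_*$ and $n \not\ge j$; as in (ii) this yields $n \wedge j = j_*$. Here comes the one step that genuinely uses the hypothesis: applying meet-semidistributivity to $j$, $n$, $b$ turns $j \wedge n = j \wedge b$ (both equal $j_*$) into $j \wedge (n \vee b) = j_*$, so $j \not\le n \vee b$. Since $n \vee b \ge n \ge j_*$, the element $n \vee b$ also lies in $F_L(j_*) \setminus F_L(j)$, and maximality of $n$ in that set forces $n \vee b = n$, i.e.\ $b \le n$. Combined with $n \not\ge c$ (because $j \le c$ and $n \not\ge j$), this places $n$ in $F_L(b) \setminus F_L(c)$, whose maximum is $m$; hence $n \le m$, and therefore $m = n = \gamma_M(j_* \lessdot j) = \kappa(j)$.

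I expect the only non-routine point to be the identification $b \le n$ in the last paragraph: the other steps are just bookkeeping with the characterizations of $\gamma_J$ and $\gamma_M$ from \cref{lemEquiJSD}, whereas this one combines the join-absorption consequence of meet-semidistributivity with the maximality built into $\gamma_M(j_* \lessdot j)$. A more structural alternative would recast the argument in terms of the intervals $[j_*, j]$ and $[b, c]$ and the bijection $\kappa$, but the elementary version above seems the shortest.
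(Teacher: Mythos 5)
Your proof is correct. The paper itself does not prove \cref{prop:kappaSD}; it is cited to Barnard and to Thomas--Williams, so there is no in-paper argument to compare against. Your argument is a clean, self-contained derivation from \cref{lemEquiJSD}: facts (i)--(iii) are standard consequences of the minimality of $j=\gamma_J(b\lessdot c)$ together with $b\lessdot c$; the inequality $m\le\gamma_M(j_*\lessdot j)$ is immediate from (i) and (iii); and for the reverse inequality you correctly isolate where meet-semidistributivity enters, namely in upgrading $j\wedge n=j\wedge b=j_*$ to $j\wedge(n\vee b)=j_*$, which combined with maximality of $n$ in $F_L(j_*)\setminus F_L(j)$ yields $b\le n$, hence $n\in F_L(b)\setminus F_L(c)$ and $n\le m$. (Minor note: the paper's displayed definition of meet-semidistributivity contains a typo, $x\wedge(y\vee z)=x\vee y$ instead of $x\wedge y$; you use the intended, correct form.) One could also argue by duality once the inequality $m\le\gamma_M(j_*\lessdot j)$ is established, using the dual statement $\kappad(\gamma_M(b\lessdot c))=\gamma_J(b\lessdot c)$ and the fact that $\kappa$ and $\kappad$ are inverse bijections, but your direct computation is shorter and avoids invoking that machinery.
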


Recall as explained earlier that if $L$ is extremal, choosing a longest chain of $L$ gives a numbering of the join-irreducible elements $j_1,\dots, j_n$ and the meet-irreducible elements $m_1,\dots,m_n$. 
The next statement follows from Proposition \ref{prop:kappaSD}: 

\begin{proposition}
	\label{prop:kappaextremalSD}
	Let $L$ be a semidistributive extremal lattice. We have $\kappa(j_i)=m_i$ for all $i\in [n]$. Thus the Galois graph $G(L)$ is the directed graph whose vertices are $\mathrm{JIrr}(L)$ and there is an edge $i\rightarrow j$ if $i\neq j$ and $i\not\leq \kappa(j)$ (we use the same convention as explained after Definition \ref{def:galoisgraph}; the join-irreducible $j_i$ being replaced by the integer $i$ for all $i\in [n]$).
\end{proposition}

Let $L$ be a semidistributive lattice. For any $x\in L$, we denote $D(x):=\{\gamma_J (y\lessdot x) \mid y\in~L,\, y\lessdot x\}$ and call it the \emph{downward label set} of $x$. The canonical join complex is often defined using the notion of canonical join representations, but the following definition is equivalent:

\begin{definition}[\cite{barnard2016canonical}]
The \emph{canonical join complex} of a semidistributive lattice is the set of downward label sets.  
\end{definition}

It is more generally defined for join-semidistributive lattices, but in this article we will always assume  that the canonical join complexes come from a semidistributive lattice.
The name is justified by the fact that it is a simplicial complex (\cite[Proposition $2.2$]{readingcanonical}). The \emph{clique complex} of a graph $G$ is the simplicial complex whose faces are the cliques of $G$.
The canonical join complex is flag (\cite[Theorem 1]{barnard2016canonical}), which by definition means that it is the clique complex of its 1-skeleton. This 1-skeleton is called the \emph{canonical join graph} of the lattice. 

The \emph{complement} of a (directed) graph $H$, denoted by $\overline{H}$, is the simple undirected graph on the same vertices where we have an edge between two vertices if there are no (directed) edges between the two in $H$. Thus, the cliques of $\overline{H}$ are the independence sets of $H$.

\begin{lemma}[{\cite[Corollary 6.7]{Thomas_2019}}]
\label{lem:canonicaljoingraphiscompgalois}
The canonical join graph of a semidistributive extremal lattice is the complement of its Galois graph.    
\end{lemma}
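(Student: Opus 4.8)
The plan is to rephrase the edge sets of both graphs as conditions on the bijection $\kappa\colon \mathrm{JIrr}(L)\to\mathrm{MIrr}(L)$ and then check that these conditions agree. First I would note that the two graphs have the same vertex set $\mathrm{JIrr}(L)$: for $\overline{G(L)}$ this is the definition, while for the canonical join graph it holds because $D(j)=\{j\}$ for every $j\in\mathrm{JIrr}(L)$, the only element of $I_L(j)\setminus I_L(j_*)$ being $j$ itself. By \cref{prop:kappaextremalSD}, a pair $\{j,j'\}$ of distinct join-irreducibles is an edge of $\overline{G(L)}$ precisely when $j\leq\kappa(j')$ and $j'\leq\kappa(j)$. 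On the other side, since the canonical join complex is a simplicial complex whose faces are the subsets of the downward label sets, $\{j,j'\}$ is an edge of the canonical join graph precisely when $\{j,j'\}\subseteq D(x)$ for some $x\in L$. So the statement reduces to proving, for distinct $j,j'\in\mathrm{JIrr}(L)$, that there is an $x$ with $\{j,j'\}\subseteq D(x)$ if and only if $j\leq\kappa(j')$ and $j'\leq\kappa(j)$.

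For the forward implication I would take $x$ with $\{j,j'\}\subseteq D(x)$ and fix a lower cover $y\lessdot x$ with $\gamma_J(y\lessdot x)=j$; recall $\gamma_J(y\lessdot x)=\min\bigl(I_L(x)\setminus I_L(y)\bigr)$. Every $d\in D(x)$ with $d\neq j$ then lies below $y$: since $d\leq x$, if $d\not\leq y$ we would have $d\in I_L(x)\setminus I_L(y)$, hence $d\geq j$, which is impossible in the antichain $D(x)$. In particular $j'\leq y$. Now \cref{prop:kappaSD} gives $\gamma_M(y\lessdot x)=\kappa(j)$, and \cref{lemEquiJSD} gives $\gamma_M(y\lessdot x)=\max\bigl(F_L(y)\setminus F_L(x)\bigr)\geq y\geq j'$, so $j'\leq\kappa(j)$; the symmetric argument yields $j\leq\kappa(j')$.

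For the converse, assume $j\leq\kappa(j')$ and $j'\leq\kappa(j)$. I would first observe that $j$ and $j'$ are incomparable, since $j\leq j'$ would force $j\leq j'\leq\kappa(j)$, contradicting $j\not\leq\kappa(j)$ (which holds because $\kappa(j)=\gamma_M(j_*\lessdot j)\in F_L(j_*)\setminus F_L(j)$). Setting $x:=j\vee j'$, it is enough to show $D(x)=\{j,j'\}$. Here I would use that $D(x)$ is the canonical join representation of $x$, i.e.\ the least antichain of join-irreducibles with join $x$ in the refinement order; as $\{j,j'\}$ is such an antichain, every element of $D(x)$ lies below $j$ or below $j'$. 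If $j\notin D(x)$, then each $d\in D(x)$ with $d\leq j$ in fact satisfies $d<j$, hence $d\leq j_*$, so, splitting $D(x)$ accordingly, $x=\bigvee D(x)\leq j_*\vee j'$; using $j_*\leq\kappa(j)$ and the hypothesis $j'\leq\kappa(j)$ this gives $j\leq x\leq\kappa(j)$, a contradiction. Therefore $j\in D(x)$, and symmetrically $j'\in D(x)$; being an antichain each of whose members lies below $j$ or $j'$, $D(x)$ must equal $\{j,j'\}$.

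Together these two implications show that the canonical join graph and $\overline{G(L)}$ have the same vertices and the same edges, hence coincide. The point that needs the most care is not a computation but the invocation of the right description of $D(x)$: the forward direction rests on the downward-label formula together with \cref{prop:kappaSD}, while the converse rests on the characterization of $D(x)$ as the $\ll$-least join representation of $x$ (the equivalence between the two standard definitions of the canonical join complex); both directions also lean on the elementary fact that any element strictly below a join-irreducible $j$ lies below its unique lower cover $j_*$.
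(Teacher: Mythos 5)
The paper does not prove this lemma; it is cited as \cite[Corollary 6.7]{Thomas_2019} without an argument, so there is no in-paper proof to compare against. Your proof is correct and follows the natural route: identify both vertex sets with $\mathrm{JIrr}(L)$, use \cref{prop:kappaextremalSD} to translate edges of $\overline{G(L)}$ into the condition $j\leq\kappa(j')$ and $j'\leq\kappa(j)$, and then match that against membership in a common downward label set via \cref{prop:kappaSD}, \cref{lemEquiJSD}, and the characterization of $D(x)$ as the $\ll$-minimal join representation. One small point worth making explicit: in the converse direction, the inclusion ``every $d\in D(x)$ lies below $j$ or $j'$'' uses that $D(x)$ is $\ll$-below the join representation $\{j,j'\}$ of $x=j\vee j'$; the paper only asserts (without proof) that the downward-label and canonical-join-representation definitions agree, so your argument leans on that equivalence exactly as stated in \cref{sec:semidistributivelattices}. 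Everything else (the antichain property of $D(x)$, the fact that $z<j$ forces $z\leq j_*$, and $j\not\leq\kappa(j)$) is standard and correctly invoked.
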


\subsection{Left modular lattices}
\label{sec:leftmodularlattices}

The left modular lattices were first defined and studied by A. Blass and B. Sagan \cite{blass1997mobius}.

\begin{definition}
	An element $a\in L$ is \emph{left modular} if for all $b<c$ in $L$, we have $(b\vee a) \wedge c = b\vee (a\wedge c)$ (we gave an equivalent definition in the introduction, which is part of the next result).
	A maximal chain made of left modular elements is called a \emph{left modular chain}. The lattice $L$ is called left modular if there exists a left modular chain. 
\end{definition}

\begin{proposition}[\cite{LIUSagan}]
	\label{Propleftmodular}
	Let $a\in L$ be an element of a lattice $L$. The following statements are equivalent :
	\begin{itemize}
		\item[$(1)$] The element $a$ is left modular.
		\item[$(2)$] For all $b<c$, we have $a\vee b \neq a\vee c$ or $a\wedge b \neq a\wedge c$.
		\item[$(3)$] For all $b\lessdot c$, we have $a\vee b\neq a\vee c$ or $a\wedge b\neq a\wedge c$.
		\item[$(4)$] There is no pentagonal sublattice $N_5$ of $L$ such that $a$ is the middle element on the smaller maximal chain of $N_5$.
	\end{itemize}
\end{proposition}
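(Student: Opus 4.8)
The plan is to prove the four conditions equivalent through condition~$(2)$, via $(1)\Leftrightarrow(2)$, $(2)\Rightarrow(3)\Rightarrow(2)$ and $(2)\Leftrightarrow(4)$; each implication will be handled by contraposition, since the negation of $(2)$ is a clean existential statement. For $(1)\Leftrightarrow(2)$: if $(2)$ fails there are $b<c$ with $a\vee b=a\vee c$ and $a\wedge b=a\wedge c$, and then absorption gives $(b\vee a)\wedge c=(a\vee c)\wedge c=c$ while $b\vee(a\wedge c)=b\vee(a\wedge b)=b$, so $a$ is not left modular; conversely, if $(1)$ fails for some $b<c$ then the always-valid inequality $b\vee(a\wedge c)\le(b\vee a)\wedge c$ is strict, and putting $b':=b\vee(a\wedge c)$, $c':=(b\vee a)\wedge c$ yields $b'<c'$ together with $a\vee b'=a\vee b=a\vee c'$ and $a\wedge b'=a\wedge c=a\wedge c'$ (the two non-immediate inequalities coming from $b\le b'\le c'$ and from $b'\le c$), which contradicts~$(2)$.

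The implication $(2)\Rightarrow(3)$ is trivial, a cover being in particular a strict inequality. For $(3)\Rightarrow(2)$, suppose $b<c$ witnesses the failure of $(2)$ and choose a maximal chain $b=z_0\lessdot z_1\lessdot\dots\lessdot z_k=c$ in the interval $[b,c]$, with $k\ge 1$. Then $a\vee z_i\in[a\vee b,a\vee c]=\{a\vee b\}$ and $a\wedge z_i\in[a\wedge b,a\wedge c]=\{a\wedge b\}$ for every $i$, so already the single cover $z_0\lessdot z_1$ violates~$(3)$.

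It remains to treat $(2)\Leftrightarrow(4)$. If $L$ has a sublattice isomorphic to $N_5$ with $a$ the middle element of its shorter maximal chain, then the longer maximal chain of that sublattice provides $b<c$ with $a\vee b=a\vee c$ and $a\wedge b=a\wedge c$ — meets and joins inside a sublattice being those of $L$ — so $(2)$ fails. Conversely, from $b<c$ witnessing the failure of $(2)$, I would show that the five elements $a\wedge b$, $b$, $c$, $a$, $a\vee b$ span such a sublattice: one checks that they are pairwise distinct, that $a$ is incomparable to $b$ and to $c$, that $a\wedge b$ is strictly below and $a\vee b$ strictly above each of $a,b,c$, and that the set is closed under $\vee$ and $\wedge$ in $L$; all of this is a short case analysis using only $a\vee b=a\vee c$, $a\wedge b=a\wedge c$ and $b<c$. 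This realizes $a$ as the middle element of the shorter chain $a\wedge b<a<a\vee b$ of an $N_5$, contradicting~$(4)$.

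Every computation here is a one- or two-step absorption identity, so no step is genuinely hard; the one place that needs care is the last, where one must rule out all degenerate possibilities and confirm that the five chosen elements really do span a copy of $N_5$ rather than collapse. I expect that bookkeeping — distinctness, the two incomparabilities, and closure under the operations — to be the main, if modest, obstacle, and I would present it as an explicit short list of cases.
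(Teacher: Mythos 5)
Your proof is correct throughout, and the absorption computations as well as the construction of the explicit $N_5$ sublattice $\{a\wedge b,\, b,\, c,\, a,\, a\vee b\}$ all check out (the case analysis showing $a$ incomparable to $b$ and $c$, distinctness, and closure under $\vee$ and $\wedge$ are exactly the right bookkeeping). Note that the paper does not prove this proposition but simply cites it from Liu--Sagan, so there is no in-text argument to compare against; your proof is a standard and clean self-contained derivation of the equivalence, organized sensibly around condition~$(2)$, and in particular the passage from a failure of left modularity to a pair $(b',c')$ with $b'=b\vee(a\wedge c)$, $c'=(b\vee a)\wedge c$ violating~$(2)$ is the usual device for this result.
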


If $b\lessdot c$ is such that $(3)$ of Proposition \ref{Propleftmodular} is not satisfied for $a$, we will say that $b\lessdot c$ is a \emph{counterexample to the left modularity} of $a$.

H. Thomas defined a lattice to be \emph{trim} if it is extremal and left modular \cite{thomas2005analoguedistributivityungradedlattices}. Trim lattices form a class of lattices, closed under taking intervals and quotients, that include in particular the Tamari lattices and more generally the finite Cambrian lattices. 
With N. Williams, they obtained additional results on them, including the fact that extremal semidistributive lattices are trim \cite[Theorem 1.4]{Thomas_2019}. We will give a new proof of this result in Section \ref{sec:EdgelabellingLeftModular}.

\subsection{Congruence normal lattices}
\label{sec:congruencenormallattices}

We will use the following doubling construction of A. Day \cite{Daydoubling}:

\begin{definition}
	\label{defDoublementDay} 
	Let $C$ be a convex subset of a lattice $L$. Let $\{0,1\}$ be the chain on two elements with order $0<1$. We define the \emph{doubling} $L[C]$ to be the subposet of $L\times \{0,1\}$ on the subset
	$$\Big(I_L(C) \times \{0\}\Big) \,\bigsqcup \,\Big[\Big( \big(L\setminus I_L(C) \big)\cup C\Big) \times \{1\} \Big].$$
	
	In fact $L[C]$ is a lattice with meet and join given for $(x,\epsilon),(y,\epsilon')\in L[C]$ by
	\begin{align*}
		(x,\epsilon) \vee (y,\epsilon') &=\left\{\begin{array}{cc}
			(x\vee y, \mathrm{max}(\epsilon,\epsilon')) & \text{if}\,\,x\vee y\in I_L(C)\\
			(x\vee y,1) & \text{otherwise,}
		\end{array}
		\right.   \\
		(x,\epsilon) \wedge (y,\epsilon') &=\left\{\begin{array}{cc}
			(x\wedge y, \mathrm{min}(\epsilon,\epsilon')) & \text{if}\,\,x\wedge y\in\left((L\setminus I_L(C))\bigcup C\right)\\
			(x\wedge y,0) & \text{otherwise.}
		\end{array}
		\right.   \\
	\end{align*}
\end{definition}

We have a surjective poset morphism $L[C]\rightarrow L$ that sends $(x,\epsilon)$ to $x$, and an injective poset morphism $\psi:L \rightarrow L[C]$ that sends $a\in L$ to $(a,0)$ if $a\in I_L(C)$ or to $(a,1)$ otherwise.
If $F$ is a maximal chain of $L[C]$, we say that $F$ \emph{goes through the doubling} of $C$ if there exists $c\in C$ such that $\{(c,0),(c,1)\} \subseteq F$. 
See Figure \ref{fig:doublingsLeftmodular} for examples of the doubling construction. 

From the definition of the doubling construction, the next two results follow.

\begin{lemma}[{\cite[Lemma 1]{CASPARD200471}}]
	\label{lemJIRR}
	The join-irreducible elements of $L[C]$ are of two types;  each $j\in \mathrm{JIrr}(L)$ gives $(j,\epsilon) \in \mathrm{JIrr}(L[C])$ with $\epsilon = 0$ if $j\in I_L(C)$ and $\epsilon =1$ otherwise, and we also get $(p,1)\in \mathrm{JIrr}(L[C])$ for any minimal element $p$ of $C$. 
	
	Similarly the meet-irreducible elements of $L[C]$ are of two types;  each $m\in \mathrm{MIrr}(L)$ gives $(m,\epsilon) \in \mathrm{MIrr}(L[C])$ with $\epsilon = 0$ if $m\in I_L(C)\setminus C$ and $\epsilon =1$ otherwise, and we also get $(q,0)\in \mathrm{MIrr}(L[C])$ for any maximal element $q$ of $C$. 
\end{lemma}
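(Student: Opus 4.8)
The plan is to reduce the lemma to a description of the cover relations of $L[C]$, since in a finite lattice an element is join-irreducible exactly when it covers a unique element, and meet-irreducible exactly when it is covered by a unique one. Write $\pi\colon L[C]\to L$, $(x,\epsilon)\mapsto x$, for the natural surjection; its fibres are $\{(c,0),(c,1)\}$ for $c\in C$ and singletons otherwise. The key structural claim I would establish first is that every cover of $L[C]$ is either of the form $(c,0)\lessdot(c,1)$ with $c\in C$, or projects under $\pi$ to a cover $x\lessdot y$ of $L$; and that the covers projecting to a given $x\lessdot y$ are: $(x,0)\lessdot(y,0)$ when $y\in I_L(C)$; $(x,0)\lessdot(y,1)$ when $x\in I_L(C)\setminus C$ and $y\notin I_L(C)$; and $(x,1)\lessdot(y,1)$ when $(x,1),(y,1)\in L[C]$. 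That each listed pair is a cover is a routine ``nothing strictly in between'' check using the explicit order on $L[C]\subseteq L\times\{0,1\}$. The converse is the main point: if $(x,\epsilon)<(y,\epsilon')$ with $x<y$ in $L$ but $x\not\lessdot y$, I would pick $w$ with $x<w<y$ and produce a lift $(w,\delta)\in L[C]$ strictly between $(x,\epsilon)$ and $(y,\epsilon')$. Taking $\delta=0$ if $w\in I_L(C)$ and $\delta=1$ otherwise works immediately unless $\epsilon=\epsilon'=1$, and that remaining case is where convexity of $C$ enters, in the form: if $z\in C$, $w\in I_L(C)$ and $z\le w$, then $z\le w\le c$ for some $c\in C$, hence $w\in C$. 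I expect this case analysis, together with keeping track of which lift of each neighbour actually appears as a cover, to be the main obstacle; everything after it is counting.

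Given the cover relations, the join-irreducibility statement follows by counting lower covers. The down-set of $(x,0)$ (which forces $x\in I_L(C)$) is order-isomorphic to the principal ideal of $x$ in $L$, so $(x,0)$ covers exactly the $(y,0)$ with $y\lessdot x$, and is join-irreducible iff $x\in\mathrm{JIrr}(L)$. If $x\notin I_L(C)$, then $(x,1)$ covers exactly one lift of each lower cover of $x$ (namely $(y,1)$ when $(y,1)\in L[C]$, otherwise $(y,0)$), so again $(x,1)$ is join-irreducible iff $x\in\mathrm{JIrr}(L)$. If $x\in C$, then $(x,1)$ covers $(x,0)$ together with the $(y,1)$ for $y\lessdot x$ with $y\in C$; the subtlety here is that a lower cover $y$ of $x$ lying outside $C$ contributes nothing, as its lift $(y,0)$ sits below $(x,0)$. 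Hence $(x,1)$ with $x\in C$ is join-irreducible iff $x$ has no lower cover inside $C$, that is, iff $x$ is a minimal element of $C$. Assembling the three cases gives exactly the asserted list, the two families being disjoint because a minimal element of $C$ lies in $I_L(C)$, whereas a join-irreducible $j$ of $L$ is lifted with second coordinate $1$ only when $j\notin I_L(C)$.

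For the meet-irreducible half I would run the order-dual argument, counting upper covers; the roles of $I_L(C)$ and $F_L(C)$, and of the minimal and maximal elements of $C$, are interchanged. The delicate case becomes $(x,0)$ with $x\in C$: it is covered by $(x,1)$ together with the $(y,0)$ for $y\gtrdot x$ with $y\in C$ (an upper cover $y\notin C$ again contributes nothing, its lift $(y,1)$ lying above $(x,1)$), so it is meet-irreducible iff $x$ is a maximal element of $C$; while $(x,0)$ with $x\in I_L(C)\setminus C$, and $(x,1)$ with $x\notin I_L(C)$ or $x\in C$, are meet-irreducible iff $x\in\mathrm{MIrr}(L)$. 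Collecting these, exactly as before, yields the second half of the lemma.
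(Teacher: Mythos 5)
Your argument is correct. The paper gives no proof of this lemma---it cites \cite{CASPARD200471} and states that it ``follows from the definition of the doubling construction''---so there is nothing to compare against; your write-up simply fills in the verification the paper delegates to the reference, and it does so by the natural route: classify the cover relations of $L[C]$ and count lower (resp. upper) covers.

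The classification you give is accurate, including the two places where one has to be careful. The first is the convexity step that rules out a ``lift gap'' when interpolating between $(x,1)$ and $(y,1)$: your observation that $x\in C$, $x\le w$, $w\in I_L(C)$ forces $w\in C$ is exactly what makes type~4 covers genuine covers and makes the converse (no $w$ strictly between) go through. The second is the case $x\in C$: there you correctly note that a lower cover $y\lessdot x$ lying outside $C$ only admits the lift $(y,0)$ (since $y<x\in C$ forces $y\in I_L(C)$, and $y\notin C$ rules out $(y,1)\in L[C]$), and $(y,0)$ factors through $(x,0)$, so only lower covers inside $C$ matter---whence $(x,1)$ with $x\in C$ is join-irreducible iff $x$ is minimal in $C$, by convexity again (no strict lower bound in $C$ iff no lower cover in $C$). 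Your disjointness remark, that a minimal element of $C$ lies in $I_L(C)$ while a join-irreducible of $L$ is lifted with second coordinate $1$ only when it lies outside $I_L(C)$, correctly shows the two families in the statement are distinct. The dual computation for meet-irreducibles, with the parallel observation that $x\in C$, $x\lessdot y$, $y\in I_L(C)$ forces $y\in C$, is equally sound. The only small thing worth flagging, purely for exposition, is that your type-2 condition ``$y\in I_L(C)$'' already subsumes $y\in C$, which is the case that matters for the upper covers of $(x,0)$ with $x\in C$; spelling out that $y\gtrdot x$ with $x\in C$ and $y\in I_L(C)$ forces $y\in C$ would make that sub-case read a little more transparently.
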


\begin{lemma} \label{lem:lengthdoublement}
	We have $\ell(L[C])=\ell(L)$ if $C$ does not intersect the spine of $L$, or $\ell(L[C])=\ell(L)+1$ otherwise.   
\end{lemma}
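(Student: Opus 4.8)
The plan is to establish the two-sided bound $\ell(L)\le\ell(L[C])\le\ell(L)+1$ and then pin down exactly when the upper value is attained, using the two poset morphisms from the discussion after \cref{defDoublementDay}: the surjection $\pi\colon L[C]\to L$, $(x,\epsilon)\mapsto x$, and the order-embedding $\psi\colon L\to L[C]$. The lower bound is immediate: apply $\psi$ to a longest chain of $L$; since $\psi$ is order-preserving and injective, its image is a chain of $L[C]$ of length $\ell(L)$.

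For the upper bound I would analyse how a chain $F$ of $L[C]$ projects under $\pi$. As $\pi$ is order-preserving, $S:=\pi(F)$ is a chain of $L$, so $|S|\le\ell(L)+1$. The restriction $\pi|_F$ is injective except possibly on pairs $\{(c,0),(c,1)\}$, and such a pair forces $c\in C$ (indeed $(c,0)\in L[C]$ requires $c\in I_L(C)$, while $(c,1)\in L[C]$ requires $c\in(L\setminus I_L(C))\cup C$, so $c\in C$). Moreover at most one such pair lies in $F$: if $c<c'$ both had both copies in $F$, then $(c,1)$ and $(c',0)$ would be two elements of the chain $F$ that are nonetheless incomparable in $L\times\{0,1\}$ (comparability would require $1\le 0$ or $c'\le c$), a contradiction. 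Hence $|F|\le|S|+1\le\ell(L)+2$, giving $\ell(L[C])\le\ell(L)+1$, and $|F|=|S|+1$ precisely when $F$ goes through the doubling of some $c\in C$.

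Now I would settle the dichotomy. If $\ell(L[C])=\ell(L)+1$, choose $F$ with $|F|=\ell(L)+2$; then both inequalities above are tight, so $S=\pi(F)$ is a longest chain of $L$ and $F$ goes through the doubling of some $c\in C$, whence $c\in\pi(F)=S$ and $C$ meets the spine. Conversely, suppose $c\in C$ lies on a longest chain $\hat0=a_0\lessdot\cdots\lessdot a_n=\hat1$, say $c=a_k$. For $i<k$, $a_i<c$ gives $a_i\in I_L(C)$, so $(a_i,0)\in L[C]$; for $i>k$, either $a_i\notin I_L(C)$, or else $a_i\le c'$ for some $c'\in C$ and then $c=a_k<a_i\le c'$ forces $a_i\in[c,c']\subseteq C$ by convexity — in both cases $(a_i,1)\in L[C]$. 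Therefore
\[
(a_0,0)<\cdots<(a_k,0)<(a_k,1)<(a_{k+1},1)<\cdots<(a_n,1)
\]
is a chain of $L[C]$ with $n+2$ elements, so $\ell(L[C])\ge n+1=\ell(L)+1$, hence equality. Combining the two implications with the bound $\ell(L)\le\ell(L[C])\le\ell(L)+1$ yields the statement.

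The computations here are all routine unfoldings of \cref{defDoublementDay}; the only points requiring a little care are the observation that a single chain of $L[C]$ can ``double'' at most one element of $C$ (the incomparability argument) and the convexity step that keeps the upper half $(a_i,1)$, $i>k$, inside $L[C]$. I do not expect any genuine obstacle.
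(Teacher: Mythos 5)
Your proof is correct and complete. The paper itself gives no proof of this lemma, asserting only that it ``follows from the definition of the doubling construction,'' so there is nothing explicit to compare against; your write-up supplies exactly the intended routine unfolding. The two points you flag as requiring care are indeed the only nontrivial steps: the incomparability of $(c,1)$ and $(c',0)$ for $c<c'$ in $C$, which caps the number of ``doubled'' elements in a chain at one, and the convexity argument showing that $a_i\in I_L(C)$ with $a_i>c$ forces $a_i\in C$, so that the upper half of the lifted chain stays in $L[C]$. Both are handled correctly, and the two implications combined with the bound $\ell(L)\le\ell(L[C])\le\ell(L)+1$ give the stated dichotomy.
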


A subset $C$ is a \emph{lower pseudo-interval} if $C$ is a union of intervals sharing the same minimum element. It is an \emph{upper pseudo-interval} if it is a union of intervals sharing the same maximum element. A lattice $L$ is \emph{congruence normal} if it is obtained from the one element lattice by successive doublings of convex subsets. We write $L=E[C_1,C_2,\dots,C_n]$ for the lattice obtained from the one element lattice $E$ by doubling successively $C_1, C_2,\dots,C_n$, which means $E[C_1,\dots,C_{i+1}]=E[C_1,\dots,C_i][C_{i+1}]$ for all $i$ where $C_{i+1}$ is a convex subset of $E[C_1,\dots,C_i]$. We will always assume that $C_i\neq \emptyset$ for all $i$. If $C_i$ is a lower pseudo-interval for all $i$ then $L$ is \emph{join-congruence uniform}. If all the $C_i$ are upper pseudo-intervals then $L$ is \emph{meet-congruence uniform}. If all the $C_i$ are intervals, $L$ is called \emph{congruence uniform}\footnote{\emph{Join-congruence uniform} and \emph{meet-congruence uniform} lattices are often called respectively \emph{lower-bounded} and \emph{upper-bounded} in the literature, but it conflicts with the term \emph{bounded} poset which usually refers to a poset with a minimum and a maximum.}. 

\begin{lemma}[\cite{Daydoubling}] \label{lem:congisSD}
	A join-congruence uniform lattice is join-semidistributive. A meet-congruence uniform lattice is meet-semidistributive. A congruence uniform lattice is semidistributive.
\end{lemma}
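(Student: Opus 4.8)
The plan is to deduce all three assertions, by induction on the number of doubling steps, from the single claim $(\star)$: \emph{if $L$ is join-semidistributive and $C\subseteq L$ is a lower pseudo-interval, then $L[C]$ is join-semidistributive}. The one-element lattice has no cover relations, hence is trivially join- and meet-semidistributive by \cref{lemEquiJSD}, and granting $(\star)$ a join-congruence uniform lattice $E[C_1,\dots,C_n]$ is join-semidistributive by induction on $n$. The meet-semidistributive statement is then obtained by the order-dual argument (equivalently, $(\star)$ applied to opposite lattices, Day's doubling being self-dual in the sense $L[C]^{\mathrm{op}}\cong L^{\mathrm{op}}[C]$, with an upper pseudo-interval in $L$ becoming a lower pseudo-interval in $L^{\mathrm{op}}$). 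A congruence uniform lattice, being both join- and meet-congruence uniform, is therefore semidistributive.

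To prove $(\star)$, write $A:=I_L(C)$ and $B:=L\setminus A$ (an order filter), so that the elements of $L[C]$ are the pairs $(x,0)$ with $x\in A$ together with the pairs $(x,1)$ with $x\in B\cup C$. The structural fact I would isolate first is that, $C$ being a union of intervals $[p,q_i]$ with common bottom $p=\min C$, one has $C=\{x\in A:x\ge p\}$: indeed if $x\in A$ and $x\ge p$ then $p\le x\le q$ for some maximal element $q$ of $C$, so $x\in[p,q]\subseteq C$. Using this one checks that every cover relation of $L[C]$ is of one of four kinds: a cover $(b,0)\lessdot(c,0)$ or $(b,1)\lessdot(c,1)$ induced by a cover $b\lessdot c$ of $L$, the ``new'' cover $(x,0)\lessdot(x,1)$ for $x\in C$, or a ``mixed'' cover $(x,0)\lessdot(y,1)$ with $x\lessdot y$ in $L$, $x\in A\setminus C$ and $y\in B$ (checking that a top-copy cover really descends from a cover of $L$, rather than arising from some $b<c$ all of whose intermediate elements lie in $A\setminus C$, is where the description of $C$ is used for the first time). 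For each kind I would verify the criterion of \cref{lemEquiJSD} by exhibiting the minimum of the relevant difference of order ideals: in the three kinds attached to a cover $u\lessdot v$ of $L$ this minimum is $(\gamma_{J}(u\lessdot v),\epsilon)$, where $\gamma_{J}(u\lessdot v)$ is available from the inductive hypothesis that $L$ is join-semidistributive and $\epsilon\in\{0,1\}$ is determined by whether this join-irreducible lies in $A$ or in $B$ (if it lies in $B$, every element above it lies in $B$, forcing second coordinate $1$, so the claimed element is indeed below all others); for the cover $(x,0)\lessdot(x,1)$ the difference $I_{L[C]}((x,1))\setminus I_{L[C]}((x,0))$ equals $\{(w,1):w\in C,\ w\le x\}$, with minimum $(p,1)$ — which is join-irreducible in $L[C]$, in agreement with \cref{lemJIRR}. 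In the mixed case one also needs that the part of $I_{L[C]}((y,1))\setminus I_{L[C]}((x,0))$ at second coordinate $1$ is empty, which is again immediate from $x\in A\setminus C\Rightarrow x\not\ge p$. This establishes $(\star)$.

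I expect no step to be conceptually difficult: the argument is a finite case analysis, and the only labor is to enumerate the cover relations of $L[C]$ accurately and, for each, to compute the order ideal $I_{L[C]}((x,\epsilon))$ and read off the minimum of the difference. The single load-bearing idea is the identity $C=\{x\in I_L(C):x\ge\min C\}$ describing a lower pseudo-interval: it is exactly what eliminates the pathological configurations among the ``top copy'' covers $(b,1)\lessdot(c,1)$ and the ``mixed'' covers $(x,0)\lessdot(y,1)$, and its failure for a general convex subset is precisely the reason a congruence normal lattice need not be semidistributive.
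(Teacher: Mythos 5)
The paper does not prove \cref{lem:congisSD}; it cites it to Day's original paper \cite{Daydoubling}, so there is no in-text argument to compare against. Your induction via claim $(\star)$, the description $C=\{x\in I_L(C): x\ge\min C\}$ of a lower pseudo-interval, and the cover-by-cover verification of the criterion from \cref{lemEquiJSD} form a sound, self-contained argument, and the dualization to get the meet statement is standard. The structural fact worth stating explicitly, which you use only implicitly when you rule out spurious top-copy covers, is that $\bigl(L\setminus I_L(C)\bigr)\cup C$ is an \emph{order filter} of $L$ when $C$ is a lower pseudo-interval; this is exactly why every cover $(b,1)\lessdot(c,1)$ of $L[C]$ descends from a cover $b\lessdot c$ of $L$, and why in the three cases attached to a cover of $L$ the element $\gamma_J(b\lessdot c)$, lifted with the appropriate second coordinate, is below every element of the relevant difference of ideals.

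One intermediate claim needs correction. In the mixed case you assert that the part of $I_{L[C]}((y,1))\setminus I_{L[C]}((x,0))$ at second coordinate $1$ is empty; it is not, since $(y,1)$ itself lies there. What the hypothesis $x\in I_L(C)\setminus C$ (hence $x\not\ge p$) actually yields, combined with $I_L(C)$ being an ideal, is $I_L(x)\cap\bigl((L\setminus I_L(C))\cup C\bigr)=\emptyset$: no $w$ owning a second-coordinate-$1$ copy lies below $x$. Consequently every $w$ contributing a $(w,1)$ to the difference satisfies $w\le y$ and $w\not\le x$, hence $w\ge\gamma_J(x\lessdot y)$, and the lift of $\gamma_J(x\lessdot y)$ is indeed the minimum. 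With that rephrasing your case analysis closes.
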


\subsection{Shellability}
\label{sec:shellability}

We refer the reader to the papers of A. Björner and M. Wachs \cite{bjorner1983lexicographically,bjorner1996shellable,bjorner1997shellable} for details related to the following topological notions. The \emph{order complex} $\Delta(P)$ of a poset $P$ is the simplicial complex on vertex set $P$ whose faces are the chains of $P$. Thus the facets of $\Delta(L)$ are the maximal chains of $L$. The \emph{dimension} of a face $F$ is $\dim(F):=|F|-1$. For any face $F$ of a simplicial complex, we denote by $\langle F \rangle:=\{G \mid G\subseteq F\}$ the subcomplex generated by $F$. A simplicial complex is \emph{shellable} if there exists a linear order of its facets $F_1,F_2,\dots,F_k$ such that for any $1\leq j<k$, the complex 
\begin{align}
	\big(\bigcup_{i=1}^j \langle F_i \rangle\big) \bigcap \langle F_{j+1} \rangle  \label{eqdefshelling}   
\end{align}
is pure of dimension $\dim(F_{j+1})-1$. We call such a linear order on the facets a \emph{shelling} of $\Delta(L)$. Equivalently, $F_1,F_2,\dots,F_k$ is a shelling if and only if for all $1\leq i<j\leq k$, there exists $l<j$ such that $F_i\,\cap F_j \subseteq F_l\,\cap F_j$ and $|F_l\,\cap F_j|=|F_j|-1$. We will say that $L$ is shellable if $\Delta(L)$ is shellable. It is an easy fact that the first simplex of a shelling is always of longest length. If we have an edge-labelling, a maximal chain gives a word by concatenating the labels of the edges we see by following the chain from bottom to top. In this context we call a maximal chain \emph{increasing} if the associated word is increasing. 
An \emph{$EL$-labelling} of a lattice $L$ is an edge-labelling such that in any interval, when reading the labels following the chains from bottom to top, there is a unique maximal increasing chain and the label word of the increasing chain lexicographically precedes the label word of any other maximal chains. Denote $\overline{L}:=L\setminus \{\hat{0},\hat{1}\}$. If $L$ admits an $EL$-labelling, then the order complex $\Delta(\overline{L})$ is shellable and homotopy equivalent to a wedge of spheres.
Any left modular lattice admits an $EL$-labelling (see Remark \ref{rmkEL}). For any left modular chain, there exists a shelling of $\Delta(L)$ that starts with this chain, thus the left modular chains are always of longest length. It follows from the fact that all semidistributive extremal lattice are trim, thus left modular, that $(i)\Longrightarrow (ii) \Longrightarrow (iii) \Longrightarrow (iv)$ of Theorem \ref{thm:mainthmintro} is true for semidistributive lattices. By Lemma \ref{lem:congisSD}, these implications are true in particular for congruence uniform lattices.

\subsection{Order dimension}
\label{sec:orderdimension}

We now recall what is the (order) dimension of a poset $P$. The book \cite{trotter2002combinatorics} of W.T. Trotter is a good reference.

\begin{definition}[\cite{DushnikMiller}]
Let $(\mathbb{R}^n,\leq)$ be the poset defined by $(x_1,\dots,x_n)\leq (y_1,\dots,y_n)$ if $x_i\leq y_i$ for every $i\in [n]$. The \emph{dimension} of a poset $P$ is the smallest integer $n$ such that $P$ is isomorphic to a subposet of $(\mathbb{R}^n,\leq)$.
\end{definition}

Equivalently, this is the minimum number of linear extensions whose intersection is $P$, where the intersection is the poset on the set $P$ whose relations are $x\leq y$ if in each of the chosen linear extensions we have $x\leq y$.
The posets of dimension smaller than $3$ include, for example, the planar posets with a minimum \cite{TROTTERplanarTrees}, the interval orders \cite{RABINOVITCH197850} and the posets whose Hasse diagrams contain at most one cycle \cite{abram2025dimension}. It seems that for families of lattices the dimension is easier to determine. For example, a lattice is planar if and only if its dimension is smaller than $2$ \cite{BakerDim2}. This is false for posets since there exist planar posets of any dimension \cite{KELLY1981135}. 
% Also, it is a long-standing conjecture in dimension theory that if $P$ and $Q$ be two posets, then $\dim(P\times Q) \geq \dim(P)+\dim(Q)-2$.  (see for example \cite{bergman2023some} for more details).
% But if $P$ and $Q$ are lattices, we always have $\dim(P\times Q)=\dim(P)+\dim(Q)$. This is a result of K. Baker that can be found in \cite{trotter2002combinatorics}. 
For some families of semidistributive extremal lattices, we will be able to find their dimension in Section \ref{sec:applicationdimension}.

We finish by recalling a general bound on the dimension of a poset $P$. The \emph{width} of a poset $P$, denoted by $\mathrm{width}(P)$, is the biggest size of an antichain of $P$.
Denote by $\mathrm{Dis}(P)$ the subposet of $P$ on its dissectors, where a \emph{dissector} is an element $x$ such that $P\setminus [x,\hat{1}]$ has a maximum element.

\begin{proposition}[{\cite[Theorem 6]{reading2002order}}]
	\label{prop:bounddimreading}
	For any poset $P$, we have 
	$$\mathrm{width}(\mathrm{Dis}(P)) \leq \dim(P) \leq \mathrm{width}(\mathrm{JIrr}(P)) .$$
\end{proposition}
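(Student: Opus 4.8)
The plan is to establish the two inequalities independently, since they rely on quite different mechanisms: the lower bound comes from an explicit obstruction built out of dissectors, in the spirit of the \emph{standard example}, while the upper bound comes from a chain decomposition of the join-irreducibles together with subadditivity of dimension over direct products.

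For the lower bound $\mathrm{width}(\mathrm{Dis}(P)) \leq \dim(P)$, I would start from an antichain $x_1,\dots,x_k$ of dissectors, with $k\geq 2$ (the case $k\leq 1$ being trivial). For each $i$, let $m_i$ be the maximum of $P\setminus F_P(x_i)$, which exists because $x_i$ is a dissector. The key observation is that the antichain condition forces, for $i\neq j$, that $x_j\not\geq x_i$, hence $x_j\in P\setminus F_P(x_i)$, hence $x_j\leq m_i$; while $x_i\not\leq m_i$ by construction. Thus the elements $x_i,m_i$ reproduce the incidence pattern of the standard example: $x_i\leq m_j$ for $i\neq j$ and $x_i\not\leq m_i$. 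Now let $\ell_1,\dots,\ell_d$ be any realizer of $P$ by linear extensions; for each $i$, the relation $x_i\not\leq m_i$ forces some index $t(i)$ with $m_i <_{\ell_{t(i)}} x_i$. I would then show that $t$ is injective: if $t(i)=t(j)=t$ with $i\neq j$, then, using that $\ell_t$ extends $P$, we get $m_i <_{\ell_t} x_i \leq_{\ell_t} m_j <_{\ell_t} x_j \leq_{\ell_t} m_i$, a cycle in a total order, which is absurd. Hence $d\geq k$, and taking $k=\mathrm{width}(\mathrm{Dis}(P))$ finishes this direction.

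For the upper bound $\dim(P)\leq \mathrm{width}(\mathrm{JIrr}(L))$, I would write $w=\mathrm{width}(\mathrm{JIrr}(L))$ and use Dilworth's theorem to partition $\mathrm{JIrr}(L)$ into $w$ chains $C_1,\dots,C_w$. For each $s$, define $f_s\colon L\to\{0,1,\dots,|C_s|\}$ by letting $f_s(x)$ be the number of elements of $C_s$ below $x$; as $C_s$ is a chain, these form an initial segment of $C_s$. Each $f_s$ is order-preserving, so $\Phi=(f_1,\dots,f_w)\colon L\to\prod_s\{0,\dots,|C_s|\}$ is order-preserving. To see that $\Phi$ reflects order, suppose $f_s(x)\leq f_s(y)$ for all $s$: any join-irreducible $j\leq x$ lies in some $C_s$ among its bottom $f_s(x)\leq f_s(y)$ elements, hence $j\leq y$; since $x$ is the join of the join-irreducibles below it, $x\leq y$. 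So $\Phi$ is an order-embedding of $L$ into a product of $w$ chains, and since dimension is monotone under order-embeddings and subadditive over direct products, each factor having dimension at most $1$, we get $\dim(L)\leq w$.

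The main obstacle is the injectivity step in the lower bound: one must correctly extract from the antichain hypothesis exactly the relations $x_j\leq m_i$ ($i\neq j$) and $x_i\not\leq m_i$, and recognize that this is precisely the configuration that makes the standard example $S_k$ have dimension $k$; once isolated, the cyclic contradiction is routine, but some care is needed with the degenerate cases where the $x_i$ and $m_i$ coincide. A secondary point is that the upper bound uses the lattice identity $x=\bigvee\{j\in\mathrm{JIrr}(L):j\leq x\}$, so for a poset that is not a lattice one should interpret $\mathrm{JIrr}$, $\mathrm{Dis}$ and the statement in a suitable completion; in all our applications $P$ is a finite lattice, so no such care is required.
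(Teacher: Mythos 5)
Your proof is correct, but note that the paper does not actually prove this result; it imports it directly from Reading's 2002 paper on order dimension, where Theorem 6 is established in essentially the two ways mentioned here (one direct, one via the critical-pair complex machinery that the paper recalls later in \cref{sec:dimensionresult}). Your lower-bound argument matches the direct approach: from an antichain $x_1,\dots,x_k$ of dissectors you correctly extract the standard-example incidence pattern $x_j \leq m_i$ for $i\neq j$ and $x_i \not\leq m_i$, and the cycle $m_i <_{\ell_t} x_i \leq_{\ell_t} m_j <_{\ell_t} x_j \leq_{\ell_t} m_i$ shows that no single linear extension can witness two of the non-relations $x_i \not\leq m_i$, forcing the index map $t$ to be injective. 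The degenerate worry about $m_i < x_i$ never materializes for $k\geq 2$: combining $m_i < x_i$ with $x_i \leq m_j$ and $x_j \leq m_i$ would yield $x_j < m_j$ in $P$, contradicting $x_j \not\leq m_j$. The upper bound via a Dilworth chain decomposition of $\mathrm{JIrr}(L)$, embedding $L$ into a product of $w$ chains by counting join-irreducibles below each element, is also correct; the fact that $x = \bigvee\{j\in\mathrm{JIrr}(L) : j\leq x\}$ is exactly what makes the map reflect order. Your remark about the mixed $P$ and $L$ in the statement is apt: the result is meant for finite lattices, and the $P$ is a typo in the paper's transcription of Reading's theorem.
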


N. Reading gives two proofs of his Proposition \ref{prop:bounddimreading}. One of these proofs use results that we will recall in Section \ref{sec:dimensionresult}.
This implies the following result, which is proved similarly in \cite[Lemma 4.22]{mcconville2024bubble}.

\begin{lemma}
	\label{prop:dimSDbigger}
	Let $L$ be a semidistributive lattice such that the maximum number of elements that cover an element of $L$ is $n$. Then $\dim(L)\geq n$.   
\end{lemma}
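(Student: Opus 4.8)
The plan is to deduce \cref{prop:dimSDbigger} from the lower bound $\mathrm{width}(\mathrm{Dis}(L)) \leq \dim(L)$ in \cref{prop:bounddimreading} by exhibiting, inside $\mathrm{Dis}(L)$, an antichain of size $n$. Suppose $x \in L$ is covered by exactly $n$ elements $y_1 \lessdot \dots$, i.e.\ $x \lessdot y_i$ for all $i \in [n]$ with the $y_i$ pairwise distinct; we want to show each $y_i$ is a dissector and that $\{y_1,\dots,y_n\}$ is an antichain in $L$. For the dissector property, I would use meet-semidistributivity: by \cref{lemEquiJSD}, for the cover $x \lessdot y_i$ the set $F_L(x) \setminus F_L(y_i)$ has a maximum element $m_i := \gamma_M(x \lessdot y_i)$, which is meet-irreducible with unique cover $m_i^*$. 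The key claim is then that $m_i$ is the maximum of $L \setminus [y_i, \hat 1] = L \setminus F_L(y_i)$, which would exactly say $y_i \in \mathrm{Dis}(L)$.

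To see this claim, I would argue that $L \setminus F_L(y_i) = I_L(m_i)$. One inclusion is immediate from $m_i \notin F_L(y_i)$ together with downward closure of the complement of a filter. For the reverse inclusion, take $z \notin F_L(y_i)$; I want $z \leq m_i$. Consider $z \wedge x$ versus $z$: if $z \geq x$ then $z \in F_L(x) \setminus F_L(y_i)$, so $z \leq m_i$ by maximality of $m_i$ in that set. If $z \not\geq x$, then $z \wedge x < x \lessdot y_i$, and I would leverage that $x$ is the meet of the $m_j$'s associated to the other covers — more precisely, the standard fact that $x = \bigwedge_{j} m_j$ over all $j$ with $x \lessdot y_j$ (each such $m_j \geq x$ and their meet cannot be strictly above $x$ since that would force the meet above some $y_j$, contradicting $m_j \notin F_L(y_j)$; a routine check, using that any element strictly above $x$ lies above some cover $y_j$). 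Then $z \leq \bigwedge_j m_j$ would be needed — but that is $x$, not quite what we want. The cleaner route: show directly that $z \vee y_i \neq z \vee x$ is forced when $z \not\leq m_i$, using meet-semidistributivity at $x \lessdot y_i$ applied to $x \wedge z$ and $x \wedge y_i = x$. I expect the precise bookkeeping here to be the main obstacle, and it may be smoother to cite the known fact (as the paper itself notes, "this is proved similarly in \cite[Lemma 4.22]{mcconville2024bubble}") that in a meet-semidistributive lattice, for every cover $x \lessdot y$ the element $y$ is a dissector witnessed by $m = \gamma_M(x \lessdot y)$.

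Granting that each $y_i$ is a dissector, it remains to check $\{y_1,\dots,y_n\}$ is an antichain. This is automatic: distinct elements covering a common element $x$ are pairwise incomparable, since if $y_i < y_j$ then the interval $[x, y_j]$ would have length at least $2$, contradicting $x \lessdot y_j$. Hence $\mathrm{width}(\mathrm{Dis}(L)) \geq n$, and \cref{prop:bounddimreading} gives $\dim(L) \geq \mathrm{width}(\mathrm{Dis}(L)) \geq n$, which is the assertion.

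In summary, the steps are: (1) fix an element $x$ covered by $n$ elements $y_1,\dots,y_n$; (2) for each $i$, set $m_i = \gamma_M(x \lessdot y_i)$ and prove $L \setminus [y_i,\hat 1] = I_L(m_i)$, so $y_i$ is a dissector; (3) observe the $y_i$ form an antichain; (4) conclude via the left inequality of \cref{prop:bounddimreading}. The heart of the argument is step (2), the identification of the complement of the principal filter $[y_i,\hat 1]$ with the principal ideal below the meet-irreducible $m_i$ — a direct consequence of meet-semidistributivity that I would either prove by the $\gamma_M$-manipulation sketched above or invoke from \cite{mcconville2024bubble}.
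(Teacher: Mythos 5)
Your strategy (exhibit an $n$-element antichain of dissectors, then apply the left inequality of \cref{prop:bounddimreading}) is the same as the paper's, but the specific claim you try to establish in step (2) is false, and the attempt to prove it cannot be salvaged without the key move you are missing.

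The claim in question is that $y_i$ is a dissector \emph{of $L$}, i.e.\ that $L \setminus [y_i,\hat 1]$ has a maximum. Meet-semidistributivity only gives you a maximum $m_i$ of $F_L(x)\setminus F_L(y_i)$, and this set is strictly smaller than $L\setminus F_L(y_i)$ as soon as $x\neq\hat 0$: the latter also contains elements not above $x$, which need not lie below $m_i$. Concretely, take $L=N_5$ with $\hat 0<a<\hat 1$ and $\hat 0<b<c<\hat 1$, and put $x=b$, $y_1=c$. Then $F_L(b)\setminus F_L(c)=\{b\}$ has maximum $m_1=b$, but $L\setminus[c,\hat 1]=\{\hat 0,a,b\}$ has two maximal elements $a$ and $b$ and thus no maximum, so $c$ is \emph{not} a dissector of $L$. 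The ``standard fact'' you propose to invoke as a fallback --- that in a meet-semidistributive lattice every cover $y$ of an arbitrary $x$ is a dissector --- is therefore not a fact.

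The fix, which is exactly what the paper does, is to pass to the interval $L'=[x,\hat 1]$ before talking about dissectors. In $L'$ the element $x$ is the minimum, so $F_{L'}(x)=L'$ and hence $F_{L'}(x)\setminus F_{L'}(y_i)=L'\setminus[y_i,\hat 1]_{L'}$; meet-semidistributivity of $L'$ (intervals of semidistributive lattices are semidistributive) then shows every atom $y_i$ of $L'$ is a dissector \emph{of $L'$}. Your step (3) is fine and shows these atoms form an antichain in $\mathrm{Dis}(L')$. The conclusion then requires one extra observation you did not need in your version but the paper's does: $\dim(L)\geq\dim(L')$ because $L'$ is a subposet of $L$. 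Chaining $\dim(L)\geq\dim(L')\geq\mathrm{width}(\mathrm{Dis}(L'))\geq n$ finishes the argument.
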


\begin{proof}
	Let $x\in L$ be an element covered by $n$ elements. Let $L'=[x,\hat{1}]$. Then $L'$ is a semidistributive lattice. Any atom of $L'$ is a dissector. Indeed, let $y$ be an atom of $L'$. We have $F_{L'}(x)\setminus F_{L'}(y) = L'\setminus [y,\hat{1}]$. Since $L'$ is semidistributive and $x\lessdot y$ is a cover relation, it follows from Lemma \ref{lemEquiJSD} that $F_{L'}(x)\setminus F_{L'}(y)$ has a maximum element. Thus $L'\setminus [y,\hat{1}]$ has a maximum, which means that $y$ is a dissector. Since each of the $n$ atoms of $L'$ is a dissector, then $\dim(L)\geq \dim(L') \geq \mathrm{width}(\mathrm{Dis}(L'))\geq n$, which finishes the proof.
\end{proof}

\subsection{Representation theory of algebras}
\label{sec:representationtheoryalgebra}

We finish this background by recalling some vocabulary of the representation theory of associative algebras, which will be only used in Section \ref{sec:torsionclassesgentletree}. A good reference is \cite{Assem_Skowronski_Simson_2006}, and for gentle algebras we recommend \cite{brustle2020combinatorics,palu2021non}.

Let $A$ be a finite-dimensional $\mathbb{K}$-algebra, where $\mathbb{K}$ is an algebraically closed field. We denote by $\mathrm{mod} A$ the category of finite-dimensional right $A$-modules. Subcategories are assumed to be full and closed under finite direct sums and summands. The \emph{torsion classes} are the subcategories of $\mathrm{mod} A$ closed by extensions and quotients. They form a complete lattice with meet given by intersection, denoted by $\mathrm{Tors}(A)$, that was extensively studied \cite{demonet2023lattice,thomas2021introduction}. 

A module is called \emph{indecomposable} if it is not isomorphic to a direct sum of two non-zero modules. A \emph{brick} $B$ of $\mathrm{mod} A$ is a $A$-module such that $\mathrm{End}(B)\cong \mathbb{K}$. Two bricks $B$ and $B'$ are \emph{hom-orthogonal} if $\mathrm{Hom}(B,B') = 0$ and $\mathrm{Hom}(B',B) = 0$. A \emph{semibrick} is a set $\{B_i\}_{i\in I}$ of pairwise hom-orthogonal bricks.  The algebra $A$ is \emph{representation finite} if it has a finite number of non-isomorphic indecomposable modules. A \emph{cycle} in the category $\mathrm{mod}\, A$ is a sequence of indecomposable modules $M_0,M_1,\dots,M_r=M_0$ and non-isomorphisms $f_i:M_i\rightarrow M_{i+1}$ for $0\leq i\leq r-1$ where $r>0$. 
It is known that if $A$ is representation finite and has no cycles, then its indecomposable modules are the bricks \cite{draexler1991}. Moreover, in this case $\mathrm{Tors}(A)$ is finite \cite{demonet2019tilting}.

A \emph{quiver} $Q=(Q_0,Q_1,s,t)$ is a directed graph whose vertices are $Q_0$ and its edges, called arrows, are $Q_1$. Moreover, $s$ and $t$ are two maps from $Q_1$ to $Q_0$ that respectively specify what is the source and target of an arrow. We will assume that $Q$ is finite and connected. A \emph{path} of length $m\geq 1$ is a finite sequence of arrows $\gamma_{1,\psi} \gamma_{2} \dots \gamma_m$ with $t(\gamma_{i})=s(\gamma_{i+1})$ for all $i\in [m-1]$. We denote by $\mathbb{K}Q$ the path algebra of $Q$ and by $R_Q$ its ideal generated by arrows. A zero relation in $Q$ is given by a path $\gamma_{1,\psi} \gamma_{2} \dots \gamma_m$ with $m\geq 2$. A two sided ideal $I$ of $\mathbb{K}Q$ is an \emph{admissible ideal} if there exists $m\geq 2$ such that $R_Q^m \subseteq I \subseteq R_Q^2$. In the sequel $I$ is always assumed to be an admissible ideal generated by zero relations. For background on gentle algebras see \cite{brustle2020combinatorics,palu2021non}.

\begin{definition}
An algebra $A=\mathbb{K}Q/I$ is a \emph{gentle algebra} if all the following properties are satisfied:
\begin{enumerate}
    \item[$\bullet$] The ideal $I$ is generated by zero relations of length two.
    \item[$\bullet$] There are at most two incoming and two outgoing arrows at every vertex of $Q$, and there is at most one arrow $\beta_{\psi}$ and one arrow $\gamma$ such that $0\neq \alpha \beta \in I$ and $0\neq \gamma \alpha \in I$.
    \item[$\bullet$] For every arrow $\alpha$, there is at most one arrow $\beta_{\psi}$ and one arrow $\gamma$ such that $\alpha \beta \not\in I$ and $\gamma \alpha \not\in I$.
\end{enumerate}
\end{definition}

The only examples of gentle algebras that we will see are the \emph{gentle trees}, which are the gentle algebras whose underlying undirected graph is a tree (see Figure \ref{fig:gentletreeexample}). 
The gentle trees are representation finite and their indecomposables are certain modules called string modules (see Section \ref{sec:torsionclassesgentletree}). 

A \emph{representation} $M$ of $Q$ is defined by the following data: to each vertex $x\in Q_0$ we associate a $\mathbb{K}$-vector space $M_x$, and to each arrow $\alpha:x\rightarrow y$ in $Q_1$ we associate a $\mathbb{K}$-linear map $\varphi_{\alpha}:M_x \rightarrow M_y$. Such a representation is denoted by $(M_{x},\varphi_{\alpha})$.
If $Q$ has zero relations, we ask additionally that the composition of maps corresponding to a zero relation gives the zero linear map. If $(M_{x},\varphi_{\alpha})$ and $(M'_{x},\varphi'_{\alpha})$ are two representations of $Q$, a morphism $f$ between these two representations is a family $f=(f_x)_{x \in Q_0}$ of $\mathbb{K}$-linear maps that satisfy $\varphi'_{\alpha} f_x = f_y \varphi_{\alpha}$ for all $\alpha : x\rightarrow y$ in $Q_1$.
The representations of $Q$ with their morphisms form an abelian category which is equivalent to the category $\mathrm{mod}\, \mathbb{K}Q$. Thus we will allow ourselves to switch between the two languages.

\section{The main results}
\label{sec:mainresults}

\subsection{Edge-labellings and left modularity} \label{sec:EdgelabellingLeftModular}

In this section we give a result that characterizes left modularity as the equality of some edge-labellings. 

\begin{definition}
	\label{deflabelling}
	Let $L$ be a lattice. Denote by $\psi :\, \hat{0}=x_0 <\dots <x_k=\hat{1}$ a chain containing $\hat{0}$ and $\hat{1}$. Using $\psi$, we have four labellings $\gamma_{1,\psi}$, $\gamma_{1',\psi}$, $\gamma_{2,\psi}$ and $\gamma_{2',\psi}$ of the cover relations of $L$ (the edges of its Hasse diagram). For $j\in \mathrm{JIrr}(L)$, denote $\delta_{\psi}(j):=\mathrm{min}\{i\,|\,j\leq x_i\}$. For $m\in \mathrm{MIrr}(L)$, denote $\beta_{\psi}(m):=\mathrm{max}\{i\,|\,m\geq x_{i-1}\}$. Then, for a cover relation $b\lessdot c$ we define
	\begin{align*}
		\gamma_{1,\psi} (b\lessdot c) &:= \mathrm{min}\{ \delta_{\psi}(j) \,|\, \text{$j$ join-irreducible},\,j\leq c,\,j\not\leq b\}, \\
		\gamma_{1',\psi} (b\lessdot c) &:= \mathrm{max}\{i \,|\, c \wedge x_{i-1} \leq b\} ,\\
		\gamma_{2,\psi}  (b\lessdot c) &:= \mathrm{max}\{ \beta_{\psi}(m) \,|\, \text{$m$ meet-irreducible},\,m\geq b,\,m\not\geq c\} ,\\
		\gamma_{2',\psi} (b\lessdot c) &:= \mathrm{min}\{i \,|\, b \vee x_i \geq c\} .
	\end{align*}
\end{definition}

\begin{remark} \label{rmk:gammaSDgammaLM}
	Recall the edge-labellings $\gamma_J$ and $\gamma_M$ from Section \ref{sec:Backgroundext}. If $L$ is semidistributive, then $\gamma_J(b\lessdot c)=j \Longrightarrow  \gamma_{1,\psi}(b\lessdot c)=\delta_{\psi}(j)$ and $\gamma_M(b\lessdot c)= m \Longrightarrow \gamma_{2,\psi}(b\lessdot c) = \beta_{\psi}(m)$.
\end{remark}

The following was partly stated as Theorem \ref{thm:labellingsintro} in the introduction:

\begin{theorem}
	\label{thmlabelling}
	Using the notations from Definition \ref{deflabelling}, for any lattice $L$, we have $\gamma_{2,\psi}  = \gamma_{2',\psi} \leq \gamma_{1,\psi} = \gamma_{1',\psi}$. Moreover $\gamma_{1',\psi} = \gamma_{2',\psi}$ if and only if for all $i$, $x_i$ is left modular.
\end{theorem}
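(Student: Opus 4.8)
The plan is to prove the chain of (in)equalities $\gamma_2 = \gamma_2' \le \gamma_1 = \gamma_1'$ first, and then characterize when the outer equality $\gamma_1' = \gamma_2'$ holds.

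\textbf{Step 1: The equalities $\gamma_1 = \gamma_1'$ and $\gamma_2 = \gamma_2'$.} I would show $\gamma_1(b \lessdot c) = \gamma_1'(b \lessdot c)$ by a direct argument about the chain $\phi$. Write $i_0 = \gamma_1'(b \lessdot c) = \max\{i \mid c \wedge x_{i-1} \le b\}$. On one hand, if $j$ is join-irreducible with $j \le c$, $j \not\le b$, and $\delta(j) = d$, then $j \le x_d$, so $j \le c \wedge x_d$; since $j \not\le b$ we get $c \wedge x_d \not\le b$, forcing $d \ge i_0$ (as $c \wedge x_{i_0 - 1} \le b$ but $c \wedge x_{i_0} \not\le b$, and the sets $c \wedge x_i$ are increasing in $i$). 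Thus $\gamma_1(b \lessdot c) \ge i_0$. Conversely, $c \wedge x_{i_0} \not\le b$, so there is a join-irreducible $j \le c \wedge x_{i_0}$ with $j \not\le b$ (every element is the join of the join-irreducibles below it); then $\delta(j) \le i_0$, giving $\gamma_1(b \lessdot c) \le i_0$. Hence equality. The identity $\gamma_2 = \gamma_2'$ is the order-dual statement, proved the same way using filters and meet-irreducibles.

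\textbf{Step 2: The inequality $\gamma_2' \le \gamma_1'$.} Set $p = \gamma_1'(b \lessdot c)$ and $q = \gamma_2'(b \lessdot c) = \min\{i \mid b \vee x_i \ge c\}$. I want $q \le p$, i.e.\ $b \vee x_p \ge c$. We know $c \wedge x_{p-1} \le b$. If $b \vee x_p \not\ge c$, then $b \vee x_p < b \vee x_p \vee c$ would be strict — actually the cleanest route: since $b \lessdot c$, either $b \vee x_p \ge c$ or $b \vee x_p \not\ge c$; in the latter case I claim $c \wedge x_p \le b$ as well, which would contradict maximality of $p$ unless $p = k$, but $x_k = \hat 1 \ge c$ so $b \vee x_k \ge c$ giving $q \le k = p$ anyway. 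To see the claim, note that if $b \vee x_p \not\ge c$ then $b \vee x_p$ is some element $\ge b$ not above $c$, and $c \wedge x_p \le c \wedge (b \vee x_p)$; one then argues via a short sublattice computation (or by noting $c \wedge x_p$ and $c \wedge x_{p-1}$ both lie in the interval $[b \wedge x_{p-1}, c]$ and comparing with $b$) that $c \wedge x_p \le b$. I expect this step to need a careful case split but no deep input.

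\textbf{Step 3: The characterization $\gamma_1' = \gamma_2'$ iff every $x_i$ is left modular.} For the ``if'' direction: assume all $x_i$ are left modular, fix $b \lessdot c$, and set $p = \gamma_1'(b \lessdot c)$, $q = \gamma_2'(b \lessdot c)$, so $q \le p$ by Step 2. Suppose $q < p$, so $q \le p - 1$ and hence $b \vee x_{p-1} \ge c$ (the set $\{i : b \vee x_i \ge c\}$ is upward closed). We also have $c \wedge x_{p-1} \le b$. Then $(b \vee x_{p-1}) \wedge c = c$ while $b \vee (x_{p-1} \wedge c) = b \vee (c \wedge x_{p-1}) = b$, and $b \ne c$ since $b \lessdot c$ — this violates left modularity of $x_{p-1}$ applied to the pair $b < c$. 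So $q = p$. For the ``only if'' direction: suppose some $x_i$ is not left modular, so by \cref{Propleftmodular}(3) there is a cover relation $b \lessdot c$ with $x_i \vee b = x_i \vee c$ and $x_i \wedge b = x_i \wedge c$. From $x_i \vee b = x_i \vee c \ge c$ we get $\gamma_2'(b \lessdot c) \le i$, and from $x_i \wedge c = x_i \wedge b \le b$, combined with the fact that $x_{i-1} \le x_i$ forces $c \wedge x_{i-1} \le c \wedge x_i \le b$, we get $\gamma_1'(b \lessdot c) \ge i+1 > i \ge \gamma_2'(b \lessdot c)$, so $\gamma_1' \ne \gamma_2'$.

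The main obstacle is Step 2 (equivalently, the careful sublattice bookkeeping needed to pin down $c \wedge x_p$ versus $b$ when $b \vee x_p \not\ge c$); everything in Step 3 is then a short dualization-free deduction from the definitions and \cref{Propleftmodular}. I would also double-check the monotonicity facts used throughout: $i \mapsto c \wedge x_i$ is weakly increasing and $i \mapsto b \vee x_i$ is weakly increasing (immediate from $x_{i-1} \le x_i$), which is what makes the ``$\max$'' and ``$\min$'' in the definitions of $\gamma_1'$ and $\gamma_2'$ describe genuine thresholds.
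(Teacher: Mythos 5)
Your proposal is correct and follows essentially the same decomposition as the paper: the equalities $\gamma_1 = \gamma_1'$, $\gamma_2 = \gamma_2'$ are the content of Lemma~\ref{lemmaEgalityGamma}, the inequality $\gamma_2' \le \gamma_1'$ is Lemma~\ref{lemlabelling}, and the left-modularity characterization is the body of the theorem's proof. The one step you flag as a possible obstacle --- the ``sublattice computation'' in Step~2 --- needs no case split beyond the $p = k$ observation you already make: if $b \vee x_p \not\ge c$, then $b \le c \wedge (b \vee x_p) \le c$; since $b \lessdot c$ and $c \wedge (b \vee x_p) \ne c$ (otherwise $c \le b \vee x_p$), we get $c \wedge (b \vee x_p) = b$, hence $c \wedge x_p \le b$, contradicting the maximality defining $p$. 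The paper's Lemma~\ref{lemlabelling} runs the dual one-liner starting from $i = \gamma_2'(b \lessdot c)$ and showing $b \vee (c \wedge x_{i-1}) = c$, which is the same trick. In Step~3 you invoke condition~$(3)$ of \cref{Propleftmodular} directly, whereas the paper reads off the pentagon sublattice and uses condition~$(4)$; these are formally equivalent, so this is only a stylistic difference (and a tiny indexing note: in your ``only if'' direction you only need $c \wedge x_i \le b$ itself, not the detour through $x_{i-1}$, to conclude $\gamma_1'(b\lessdot c) \ge i+1$).
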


Before giving the proof of Theorem \ref{thmlabelling}, we give two lemmas.

\begin{lemma}
	\label{lemmaEgalityGamma}
	Using the same notations from Definition \ref{deflabelling}, we have $\gamma_{1,\psi} = \gamma_{1',\psi}$ and $\gamma_{2,\psi} = \gamma_{2',\psi}$.
\end{lemma}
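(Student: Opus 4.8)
The plan is to prove the identity $\gamma_1=\gamma_1'$ directly, by establishing $\gamma_1(b\lessdot c)\le\gamma_1'(b\lessdot c)$ and $\gamma_1'(b\lessdot c)\le\gamma_1(b\lessdot c)$ for an arbitrary cover relation $b\lessdot c$, and then to obtain $\gamma_2=\gamma_2'$ from $\gamma_1=\gamma_1'$ by order-duality. Throughout I would use the elementary fact recalled just before \cref{joinextrem}: every element of $L$ is the join of the join-irreducibles below it, and dually the meet of the meet-irreducibles above it.

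Fix a cover $b\lessdot c$ and set $r:=\gamma_1'(b\lessdot c)=\max\{i\mid c\wedge x_{i-1}\le b\}$. Since $c\wedge x_0=\hat0\le b$ the defining set is nonempty, and since $c\wedge x_k=c\not\le b$ it is contained in $\{1,\dots,k\}$; moreover it is downward closed, because $i\mapsto x_{i-1}$, and hence $i\mapsto c\wedge x_{i-1}$, is non-decreasing. So $r\in[1,k]$, $x_{r-1}$ and $x_r$ are both defined, $c\wedge x_{r-1}\le b$, and $c\wedge x_r\not\le b$.

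For $\gamma_1(b\lessdot c)\le r$: since $c\wedge x_r\not\le b$ and $c\wedge x_r$ is the join of the join-irreducibles below it, some join-irreducible $j$ satisfies $j\le c\wedge x_r$ and $j\not\le b$ (otherwise all those join-irreducibles lie below $b$, forcing $c\wedge x_r\le b$). This $j$ is eligible in the minimum defining $\gamma_1(b\lessdot c)$, and $j\le x_r$ gives $\delta(j)\le r$, whence $\gamma_1(b\lessdot c)\le\delta(j)\le r$. For the reverse inequality, note that $c\not\le b$ makes the minimum defining $\gamma_1(b\lessdot c)$ a minimum over a nonempty set; pick a join-irreducible $j$ realizing it, so $j\le c$, $j\not\le b$, and $\delta(j)=\gamma_1(b\lessdot c)=:s\le k$. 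Then $j\le c\wedge x_s$ and $j\not\le b$, so $c\wedge x_s\not\le b$; that is, the index $s+1$ fails the condition defining $r$, so by downward-closedness $r\le s$. Combining gives $\gamma_1(b\lessdot c)=\gamma_1'(b\lessdot c)$.

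Finally, $\gamma_2=\gamma_2'$ is the order-dual statement: applying $\gamma_1=\gamma_1'$ to the opposite lattice $L^{\mathrm{op}}$ with the reversed chain $x_k<\dots<x_0$ — under which $\mathrm{JIrr}(L^{\mathrm{op}})=\mathrm{MIrr}(L)$, one has $\delta^{\mathrm{op}}(m)=k+1-\beta(m)$, and the labels $\gamma_1^{\mathrm{op}},(\gamma_1')^{\mathrm{op}}$ on the cover $c\lessdot^{\mathrm{op}}b$ become $k+1-\gamma_2(b\lessdot c)$ and $k+1-\gamma_2'(b\lessdot c)$ — yields the claim at once; equivalently one repeats the displayed steps verbatim in dual form, using that every element is the meet of the meet-irreducibles above it. I expect the only genuine care needed is the boundary bookkeeping that keeps $r$ and $s$ inside $[1,k]$ and the monotonicity remark turning ``the extreme index fails'' into ``all larger indices fail''; the duality reindexing is slightly fiddly but conceptually empty, so I would state it tersely.
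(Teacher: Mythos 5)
Your proof is correct and is essentially the dual mirror of the paper's: the paper proves $\gamma_2=\gamma_2'$ directly (bounding $\gamma_2$ above by $\gamma_2'$ via a contradiction that produces $m\geq b\vee x_{i_0}\geq c$, and below by exhibiting a witness meet-irreducible using the fact that any element is the meet of the meet-irreducibles above it), then declares $\gamma_1=\gamma_1'$ dual; you prove $\gamma_1=\gamma_1'$ by the exactly dual two-step argument and then dualize. The only substantive difference is that you spell out the opposite-lattice reindexing and the boundary bookkeeping that the paper leaves implicit.
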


\begin{figure}
	\begin{subfigure}{.5\textwidth}
		\centering
		\begin{tikzpicture}
			\begin{scope}[scale= 0.8]
				\draw (0,0)--(1,2)--(0,4)--(-1,2.8)--(-1,1.2)--(0,0);
				\draw (0,0) node[below]{$x_0$};
				\draw (0,0) node[red]{$\bullet$};
				\draw (0,4) node[above]{$x_3$};
				\draw (0,4) node[red]{$\bullet$};
				\draw (-1,2.8) node[left]{$x_2$};
				\draw (-1,2.8) node[red]{$\bullet$};
				\draw (-1,1.2) node[left]{$x_1$};
				\draw (-1,1.2) node[red]{$\bullet$};
				
				\draw (0.5,1) node[below right]{$3$};
				\draw (0.5,1) node[above left]{$3$};
				\draw (0.5,3) node[above right]{$1$};
				\draw (0.5,3) node[below left]{$1$};
				\draw (-0.6,3.4) node[above left]{$3$};
				\draw (-0.6,3.4) node[below right]{$3$};
				\draw (-1,2) node[left]{$2$};
				\draw (-1,2) node[right]{$2$};
				\draw (-0.5,0.6) node[below left]{$1$};
				\draw (-0.5,0.6) node[above right]{$1$};
			\end{scope}
			
			\begin{scope}[xshift = 4cm, scale=0.9]
				\draw (0,0)--(2,1)--(0,5)--(-2,1)--(0,0);
				\draw (0,0)--(0,1)--(1,3);
				\draw (0,1)--(-1,3);
				
				\draw (0,0) node[red]{$\bullet$};
				\draw (2,1) node[red]{$\bullet$};
				\draw (1,3) node[red]{$\bullet$};
				\draw (0,5) node[red]{$\bullet$};
				
				\draw (0,0) node[below]{$x_0$};
				\draw (2,1) node[right]{$x_1$};
				\draw (1,3) node[above right]{$x_2$};
				\draw (0,5) node[above]{$x_3$};
				
				\draw[blue, ultra thick] (-2,1)--(-1,3);
				\draw (-2,1) node[left,blue]{$b$};
				\draw (-1,3) node[above left,blue]{$c$};
				
				\draw (1,0.5) node[below right]{$1$};
				\draw (1,0.5) node[above left]{$1$};
				\draw (0,0.5) node[right]{$2$};
				\draw (0,0.5) node[left]{$2$};
				\draw (1.5,2) node[above right]{$2$};
				\draw (1.5,2) node[below left]{$2$};
				\draw (-1.5,2) node[above left, blue]{$2$};
				\draw (-1.5,2) node[below right, blue]{$1$};
				\draw (0.5,2) node[above left]{$1$};
				\draw (0.5,2) node[below right]{$1$};
				\draw (-0.5,4) node[above left]{$1$};
				\draw (-0.5,4) node[below right]{$1$};
				\draw (0.5,4) node[above right]{$3$};
				\draw (0.5,4) node[below left]{$3$};
				\draw (-0.5,2) node[above right]{$3$};
				\draw (-0.5,2) node[below left]{$3$};
				\draw (-1,0.5) node[above right]{$3$};
				\draw (-1,0.5) node[below left]{$3$};
			\end{scope}
		\end{tikzpicture}
		
		\caption{$\gamma_{1',\psi}$ at the left of an edge, $\gamma_{2',\psi}$ at the right.\\
			On the right $x_1$ is not left modular \\
			because of $b\lessdot c$.}
		\label{fig:sfig1}
	\end{subfigure}%
	\begin{subfigure}{.5\textwidth}
		\centering
		\begin{tikzpicture}
			\begin{scope}[scale=0.7]
				\draw (0,0)--(2,2)--(2,4)--(0,6)--(-2,4)--(-2,2)--(0,0);
				\draw (-2,2)--(2,4);
				\draw (2,2)--(-2,4);
				\draw[blue, ultra thick] (0,0)--(-2,2)--(-2,4)--(0,6);
				\draw (-2.5,3) node[left,blue]{$\phi$};
				\draw[red] (2,3) ellipse (0.6cm and 1.7cm);
				\draw (3,2) node[red]{$C$};
				\draw (2,2) node[red]{$\bullet$};
				\draw (2,4) node[red]{$\bullet$};
			\end{scope}
		\end{tikzpicture}
		\caption{The elements of the blue maximal chain $\phi$ are all comparable to at least one element in $C$.}
		\label{fig:sfig2phi}
	\end{subfigure}
	\caption{}
	\label{fig:fig}
\end{figure}

\begin{proof}
	Let $b\lessdot c$ be a cover relation.
	We only prove $\gamma_{2,\psi}  = \gamma_{2',\psi}$ as the argument for $\gamma_{1,\psi}  = \gamma_{1',\psi}$ is dual.
	
	Denote $i_0:=\gamma_{2',\psi} (b\lessdot c) =\mathrm{min}\{i \,|\, b \vee x_i \geq c\}$. It means that $b\vee x_{i_0} \geq c$ but $b\vee x_{i_0-1} \not\geq c$. Let us prove that $i_0=\gamma_{2,\psi}(b\lessdot c)$.
	
	First, let us prove that $\gamma_{2,\psi}(b\lessdot c)\leq i_0$. We just have to prove that for any $m$ meet-irreducible such that $m\geq b$ and $m\not\geq c$, we have $\beta_{\psi}(m)\leq i_0$. Seeking a contradiction, suppose $\beta_{\psi}(m)> i_0$. Then by definition of $\beta_{\psi}$ we have $m\geq x_{i_0}$ since the $x_i$'s form a chain. Since $m\geq b$ and $m\geq x_{i_0}$, then $m\geq b\vee x_{i_0} \geq c$. It is absurd because $m\not\geq c$, which proves that $\gamma_{2,\psi}(b\lessdot c)\leq i_0$.
	
	Now, let us prove that $i_0 \leq \gamma_{2,\psi}(b\lessdot c)$. By definition of $\gamma_{2,\psi}$, we want to find a meet-irreducible $m$ such that $m\geq b$, $m\not\geq c$ and such that $i_0\leq \beta_{\psi}(m)$. The latter condition is equivalent to $m\geq x_{i_0-1}$. So we want to find a meet-irreducible $m$ such that $m\geq b\vee x_{i_0-1}$ and $m\not\geq c$. Such a meet-irreducible exists because $b\vee x_{i_0-1} \not\geq c$, as it was noted in the beginning. If such a meet-irreducible does not exist, then all the meet-irreducibles above $b\vee x_{i_0-1}$ would also be above $c$, and since an element is the meet of the meet-irreducibles above it, then $b\vee x_{i_0-1} \geq c$, which is absurd. Thus $i_0 \leq \gamma_{2,\psi}(b\lessdot c)$. This finishes the proof of $\gamma_{2,\psi}=\gamma_{2',\psi}$. 
\end{proof}

\begin{lemma}
	\label{lemlabelling}
	Using the same notation as in Definition \ref{deflabelling}, we have $\gamma_{2',\psi} \leq \gamma_{1',\psi}$.
\end{lemma}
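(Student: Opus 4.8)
The plan is to work directly from the definitions of $\gamma_1'$ and $\gamma_2'$ in \cref{deflabelling}. Fix a cover relation $b\lessdot c$ and set $i_0 := \gamma_1'(b\lessdot c) = \max\{i \mid c\wedge x_{i-1}\leq b\}$. First I would observe that this index is well-defined and lies in $\{1,\dots,k\}$: the value $i=1$ always belongs to the set since $c\wedge x_0 = \hat 0\leq b$, while $i=k+1$ never does since $c\wedge x_k = c\not\leq b$. In particular $i_0\leq k$, so $x_{i_0}$ is defined, and by maximality of $i_0$ the index $i_0+1$ fails the defining condition, i.e.\ $c\wedge x_{i_0}\not\leq b$.

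The heart of the argument is the element $b\vee(c\wedge x_{i_0})$. Since $b\leq c$ and $c\wedge x_{i_0}\leq c$, this join satisfies $b\vee(c\wedge x_{i_0})\leq c$; and since $c\wedge x_{i_0}\not\leq b$, it is strictly above $b$. As $b\lessdot c$ is a cover relation, the only element $z$ with $b<z\leq c$ is $z=c$, so $b\vee(c\wedge x_{i_0}) = c$. Monotonicity of the join then gives $b\vee x_{i_0}\geq b\vee(c\wedge x_{i_0}) = c$, so $i_0$ belongs to the set $\{i\mid b\vee x_i\geq c\}$ whose minimum is $\gamma_2'(b\lessdot c)$. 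Hence $\gamma_2'(b\lessdot c)\leq i_0 = \gamma_1'(b\lessdot c)$, which is exactly the claim.

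I do not expect a genuine obstacle here; the only point requiring a little care is extracting the strict inequality $c\wedge x_{i_0}\not\leq b$ from the maximality defining $\gamma_1'$, and then exploiting the cover relation to collapse $b\vee(c\wedge x_{i_0})$ down to $c$. Everything else is bookkeeping with the quantifiers in \cref{deflabelling}. Note that, combined with \cref{lemmaEgalityGamma}, this lemma yields the full chain $\gamma_2 = \gamma_2' \leq \gamma_1' = \gamma_1$, which is the first assertion of \cref{thmlabelling}.
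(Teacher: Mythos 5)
Your proof is correct and rests on the same key step as the paper's: observing that since $b\lessdot c$ is a cover, the element $b\vee(c\wedge x_j)$ must collapse to $c$ whenever $c\wedge x_j\not\leq b$. The only cosmetic difference is that you start from $i_0=\gamma_1'(b\lessdot c)$ and argue directly that $b\vee x_{i_0}\geq c$, whereas the paper starts from $i=\gamma_2'(b\lessdot c)$ and derives a contradiction from $c\wedge x_{i-1}\not\leq b$; these are dual phrasings of the same argument.
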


\begin{proof}
	Let $b\lessdot c$ be a cover relation. Denote $i:=\gamma_{2',\psi}(b\lessdot c)$. It means that $b\vee x_i \geq c$ and $b\vee x_{i-1} \not\geq c$. We want to prove that $\gamma_{1',\psi}(b\lessdot c) \geq i$, which is equivalent to $c\wedge x_{i-1} \leq b$. Seeking a contradiction, suppose that $c\wedge x_{i-1} \not\leq b$.
	Since $b<c$ and $c\wedge x_{i-1} \leq c$, we have $b\leq b\vee (c\wedge x_{i-1}) \leq c$. Since $c\wedge x_{i-1} \not\leq b$, then $b\vee (c\wedge x_{i-1}) \neq  b$. Thus $b \vee (c\wedge x_{i-1}) = c$. But $b\vee x_{i-1} \geq b\vee (c\wedge x_{i-1}) =c$, which is absurd. 
\end{proof}

\begin{proof}[Proof of Theorem \ref{thmlabelling}]
	Lemma \ref{lemmaEgalityGamma} and Lemma \ref{lemlabelling} give the first part of Theorem \ref{thmlabelling}. For the rest, using Lemma \ref{lemlabelling} we have    
	\begin{align*}
		\gamma_{1',\psi} \neq \gamma_{2',\psi} \iff \exists i,\,\exists \, b\lessdot c, \,\,
		\left\{\begin{array}{cc}
			\gamma_{2',\psi}(b\lessdot c)\leq i\\
			\gamma_{1',\psi}(b\lessdot c)\geq i+1 .
		\end{array}
		\right.  
	\end{align*}
	
	By definition of $\gamma_{1',\psi}$ and $\gamma_{2',\psi}$, it is also equivalent to the fact that there exist an $i$ and a cover $b\lessdot c$ such that the following subposet of $L$ is a sublattice:
	
	\centering
	\begin{tikzpicture}
		\begin{scope}[scale=0.6]
			\draw[dashed] (0,0)--(1,2)--(0,4)--(-1,2.8)--(-1,1.2)--(0,0);
			\draw (-1,2.8)--(-1,1.2);
			\draw (0,0) node[below]{$c\wedge x_i$};
			\draw (1,2) node[right]{$x_i$};
			\draw (0,4) node[above]{$b\vee x_i$};
			\draw (-1,2.8) node[left]{$c$};
			\draw (-1,1.2) node[left]{$b$};
		\end{scope}    
	\end{tikzpicture}
	
	Using $(4)$ of Proposition \ref{Propleftmodular}, we deduce that $\gamma_{1',\psi}\neq \gamma_{2',\psi}$ is equivalent to the fact that there exists $i$ such that $x_i$ is not left modular. This finishes the proof.
\end{proof}

\begin{remark} \label{rmkEL}
	S.-C. Liu proved in \cite{LiuLeftmodular}, for the case of a longest chain $\psi$, that $\gamma_{2',\psi}\leq \gamma_{1,\psi}$ and that if for all $i$, $x_i$ is left modular, then $\gamma_{2',\psi}=\gamma_{1,\psi}$. He also proved that the equal labellings that we obtain starting from a left modular chain are $EL$-labellings. What we add to the story, other than an easier proof (by considering $\gamma_{1',\psi}$ we also add some symmetry), is a way to use these labellings to prove that lattices are left modular.
	
	P. McNamara and H. Thomas \cite{MCNAMARA2006101} proved that the above equal labellings of a left modular lattice are interpolating labellings (an $EL$-labelling with an additional property), and also proved that if a finite lattice has an interpolating labelling, then it is left modular. Our Theorem \ref{thmlabelling} is then of the same flavor, helping us prove left modularity, but without mentioning $EL$-labelling.
\end{remark}

Recall the definition of the bijection $\kappa$ from Section \ref{sec:Backgroundext}. We can use Theorem \ref{thmlabelling} to give a short proof of the following result:

\begin{corollary}[{\cite[Theorem 1.4]{Thomas_2019}}]
	\label{CorExtremalSD}
	Semidistributive extremal lattices are left modular.
\end{corollary}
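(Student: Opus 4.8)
The plan is to use \cref{thmlabelling} by choosing the chain $\phi$ in \cref{deflabelling} to be a longest chain of $L$. Since $L$ is extremal, such a longest chain $\phi:\,\hat 0=x_0\lessdot\dots\lessdot x_n=\hat 1$ realizes the Markowsky numbering: it induces a numbering $j_1,\dots,j_n$ of $\mathrm{JIrr}(L)$ and $m_1,\dots,m_n$ of $\mathrm{MIrr}(L)$ with $x_i=j_1\vee\dots\vee j_i=m_{i+1}\wedge\dots\wedge m_n$. By \cref{thmlabelling}, to conclude that every $x_i$ is left modular — hence that $\phi$ is a maximal left modular chain and $L$ is left modular — it suffices to prove $\gamma_1'=\gamma_2'$, and for this it is enough to prove $\gamma_1\leq\gamma_2$ (since $\gamma_2=\gamma_2'\leq\gamma_1=\gamma_1'$ always holds by \cref{thmlabelling}, equality amounts to the reverse inequality).

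So the core of the argument is: for every cover $b\lessdot c$ of $L$, show $\gamma_1(b\lessdot c)\leq\gamma_2(b\lessdot c)$. Here I would invoke \cref{rmk:gammaSDgammaLM}: since $L$ is semidistributive, if $\gamma_J(b\lessdot c)=j$ then $\gamma_1(b\lessdot c)=\delta(j)$, and if $\gamma_M(b\lessdot c)=m$ then $\gamma_2(b\lessdot c)=\beta(m)$. By \cref{prop:kappaSD} we have $m=\kappa(j)$, and by \cref{prop:kappaextremalSD} the extremality forces $\kappa(j_i)=m_i$ for all $i$. Thus if $\gamma_J(b\lessdot c)=j_p$ then $\gamma_M(b\lessdot c)=m_p$, so $\gamma_1(b\lessdot c)=\delta(j_p)$ and $\gamma_2(b\lessdot c)=\beta(m_p)$. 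It therefore remains to check that $\delta(j_p)\leq\beta(m_p)$ for the Markowsky-numbered irreducibles, i.e.\ that $\mathrm{min}\{i\mid j_p\leq x_i\}\leq\mathrm{max}\{i\mid m_p\geq x_{i-1}\}$.

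For this last inequality I would argue directly from $x_i=j_1\vee\dots\vee j_i$ and $x_i=m_{i+1}\wedge\dots\wedge m_n$. First, $j_p\leq x_p$ since $x_p$ is the join of $j_1,\dots,j_p$, so $\delta(j_p)\leq p$. Second, $x_{p-1}=m_p\wedge m_{p+1}\wedge\dots\wedge m_n\leq m_p$, so $p$ belongs to the set $\{i\mid m_p\geq x_{i-1}\}$, whence $\beta(m_p)\geq p$. Combining, $\delta(j_p)\leq p\leq\beta(m_p)$, which gives $\gamma_1(b\lessdot c)\leq\gamma_2(b\lessdot c)$ for every cover, hence $\gamma_1=\gamma_1'=\gamma_2=\gamma_2'$, and \cref{thmlabelling} yields that every $x_i$ is left modular, so $L$ is left modular.

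The only subtle points, and the place where I'd be most careful, are the bookkeeping around \cref{lemEquiJSD} and \cref{rmk:gammaSDgammaLM}: one must make sure that $\gamma_J(b\lessdot c)$ genuinely exists and is join-irreducible (it does, as $L$ is semidistributive, by \cref{lemEquiJSD}) so that $\delta$ is applied to an honest join-irreducible, and similarly that $\gamma_M(b\lessdot c)$ is meet-irreducible so $\beta$ applies; then one must confirm that the join-irreducible output of $\gamma_J$ and the meet-irreducible output of $\gamma_M$ are matched by $\kappa$ via \cref{prop:kappaSD}, and that this $\kappa$ is exactly the index-preserving bijection $j_i\mapsto m_i$ of \cref{prop:kappaextremalSD}. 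Once that identification is in place the rest is the elementary chain inequality above, so there is no real obstacle beyond assembling these cited facts correctly.
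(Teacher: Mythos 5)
Your proof is correct and follows essentially the same route as the paper: pick a longest chain to get the Markowsky numbering, use \cref{prop:kappaSD} and \cref{prop:kappaextremalSD} to conclude $\gamma_J(b\lessdot c)=j_p \Leftrightarrow \gamma_M(b\lessdot c)=m_p$, then compare $\delta(j_p)$ and $\beta(m_p)$ via \cref{rmk:gammaSDgammaLM}, and finish with \cref{thmlabelling}. The only cosmetic difference is that you derive the inequalities $\delta(j_p)\leq p\leq\beta(m_p)$ and invoke the already-known inequality $\gamma_2\leq\gamma_1$, whereas the paper just records the exact equalities $\delta(j_i)=i$ and $\beta(m_i)=i$ (which follow since $j_i\not\leq x_{i-1}$ and $m_i\not\geq x_i$, lest the longest chain stall); both work equally well.
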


\begin{proof}
	Since $L$ is extremal, the choice of any longest chain $\psi: \,\hat{0}=x_0 \lessdot \dots \lessdot x_n=\hat{1}$ gives a numbering of the join and meet-irreducibles $j_1,j_2,\dots,j_n$ and $m_1,m_2,\dots,m_n$ such that 
	$x_i=j_1\vee \dots \vee j_i = m_{i+1}\wedge \dots \wedge m_n$. We have $\kappa(j_i)=m_i$ for all $i\in [n]$ by Proposition \ref{prop:kappaextremalSD}. Moreover $\delta_{\psi}(j_i)=i$ and $\beta_{\psi}(m_i)=i$.
	Since $L$ is semidistributive, using Proposition \ref{prop:kappaSD} we have that $\gamma_J(b\lessdot c)=j_i$ if and only if $\gamma_M(b\lessdot c)=m_i$, for any $b\lessdot c$. Thus with Remark \ref{rmk:gammaSDgammaLM}, we have $\gamma_{1,\psi}=\gamma_{2,\psi}$. Then Theorem \ref{thmlabelling} gives that $L$ is left modular.
\end{proof}

\begin{remark}
	Theorem \ref{thmlabelling} was proved and first used in order to obtain the left modularity of the $(P,\phi)$-Tamari lattices (defined by the author in \cite{segovia2025pphitamarilattices}).
\end{remark}

\subsection{Extremality and left modularity of congruence normal lattices} \label{sec:congNormal}

In this section, we characterize the congruence normal lattices that are extremal or left modular by necessary and sufficient conditions on each doubling step.
In the sequel, unless stated otherwise, $C$ is a convex subset of a lattice $L$.

\begin{definition}
	We call the $\emph{heart}$ of $C$, written $H(C)$, all the elements of $C$ that are less than all the maximal elements of $C$ and bigger than all its minimal elements.
    \end{definition}

\begin{lemma}
	\label{lemHeart}
	For any convex subset $C$ of $L$,
	\begin{itemize}
		\item[$(i)$] $H(C)\neq \emptyset$ if and only if the meet of the maximal elements is bigger than the join of the minimal elements of $C$. In this case, if these elements are respectively given by $M_i$ for $i\in I$ and $m_j$ for $j\in J$, we have $H(C)=[\vee_{i\in I} m_i \,,\, \wedge_{j\in J} M_j]$.
		\item[$(ii)$] $H(C)$ is convex.
		\item[$(iii)$] $H(C)=C$ if and only if $C$ is an interval.
		\item[$(iv)$] $H(C)=\{a\in C\,|\,\forall b\in C,\,a\vee b\in C \,\,\text{and}\,\,a\wedge b\in C \}$.
	\end{itemize}
\end{lemma}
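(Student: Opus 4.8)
The plan is to fix notation once and then prove $(i)$ first, deducing $(ii)$ and $(iii)$ from it, while $(iv)$ is handled separately. Since $L$ is finite and we may assume $C\neq\emptyset$, write $\{M_i\}_{i\in I}$ for the set of maximal elements of $C$ and $\{m_j\}_{j\in J}$ for the set of minimal elements, and put $x:=\bigvee_{j\in J}m_j$ and $y:=\bigwedge_{i\in I}M_i$. The first thing to record is the tautological reformulation $a\in H(C) \iff a\in C,\ x\leq a\leq y$, together with the two finiteness facts that every element of $C$ lies below some $M_i$ and above some $m_j$, so that, by convexity of $C$, $[m_j,M_i]\subseteq C$ whenever $m_j\leq M_i$.

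For $(i)$, one implication is free: if $a\in H(C)$ then $x\leq a\leq y$, so $x\leq y$. For the converse, assuming $x\leq y$, I would pick a minimal element $m_{j_0}$ of $C$ and a maximal element $M_{i_0}\geq m_{j_0}$; then $m_{j_0}\leq x\leq y\leq M_{i_0}$ puts both $x$ and $y$ in $[m_{j_0},M_{i_0}]\subseteq C$, hence $[x,y]\subseteq C$ by convexity of $C$. Combined with the reformulation above this yields $H(C)=[x,y]\neq\emptyset$, which is both the nonemptiness criterion and the explicit interval claimed. Then $(ii)$ is immediate: $H(C)$ is empty or, by $(i)$, an interval, hence convex. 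And $(iii)$ follows since an interval has a unique maximal and a unique minimal element (so $H$ of an interval is the whole interval), while conversely $H(C)=C$ forces $H(C)\neq\emptyset$, whence $C=H(C)=[x,y]$ is an interval by $(i)$.

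For $(iv)$, denote the right-hand side by $H'(C)$. To see $H(C)\subseteq H'(C)$: given $a\in H(C)$ and $b\in C$, choose $M_{i_1}\geq b$ maximal and $m_{j_1}\leq b$ minimal in $C$; then $a\leq M_{i_1}$ gives $a\vee b\in[a,M_{i_1}]\subseteq C$ and $a\geq m_{j_1}$ gives $a\wedge b\in[m_{j_1},a]\subseteq C$. For the reverse inclusion, if $a\in H'(C)$ then for each maximal $M_i$ the element $a\vee M_i$ lies in $C$ and dominates $M_i$, so $a\vee M_i=M_i$, i.e. $a\leq M_i$; dually $a\wedge m_j=m_j$ for each minimal $m_j$, i.e. $a\geq m_j$, hence $a\in H(C)$.

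The only delicate point is making sure $x=\bigvee_{j\in J} m_j$ and $y=\bigwedge_{i\in I} M_i$ actually lie in $C$ in the converse half of $(i)$: $C$ is only convex, not a sublattice, so joins and meets of its elements need not stay inside it, and the fix is precisely the sandwiching of $x$ and $y$ between a comparable minimal--maximal pair described above (the same trick that makes $a\vee b$ and $a\wedge b$ land in $C$ in $(iv)$). Everything else is a direct unwinding of the definitions.
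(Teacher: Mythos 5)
Your proof is correct and follows essentially the same route as the paper: you reformulate $H(C)$ via the elements $x=\bigvee m_j$, $y=\bigwedge M_i$, deduce $(ii)$ and $(iii)$ from $(i)$, and prove $(iv)$ by sandwiching $a\vee b$ and $a\wedge b$ inside intervals $[a,M]$ and $[m,a]$ that sit in $C$ by convexity, exactly as the paper does. The only difference is cosmetic: the paper dismisses $(i)$--$(iii)$ as easy and only writes out $(iv)$, whereas you spell out the sandwiching argument $m_{j_0}\leq x\leq y\leq M_{i_0}$ that puts $[x,y]$ inside $C$, which is the genuinely nontrivial content of $(i)$ and worth recording.
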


\begin{proof}
	The statement $(i)$ is easy to prove and implies $(ii)$ and $(iii)$. Let us prove $(iv)$.
	
	Let $a\in H(C)$. Let $b\in C$. There exists a minimal element $m\in C$ and a maximal element $M\in C$ such that $m\leq b\leq M$. Since $a\in H(C)$, we have $m\leq a\leq M$. Then $a\leq a\vee b \leq M$, thus by convexity of $C$ we have $a\vee b\in C$. Also $m \leq a\wedge b \leq a$ implies by convexity that $a\wedge b\in C$. Thus $a\in \{a\in C\,|\,\forall b\in C,\,a\vee b\in C \,\,\text{and}\,\,a\wedge b\in C \}$.
	
	Now let $a\in C$ such that for all $b\in C$, we have $a\vee b\in C$ and $a\wedge b\in C$. Let $M$ be a maximal element and $m$ a minimal element of $C$. We have by hypothesis that $a\vee M\in C$ and $a\wedge m \in C$. But since $M$ is maximal and $m$ is minimal in $C$, this happens if and only if $a\leq M$ and $a\geq m$. Then $a$ is smaller than all the maximal elements of $C$ and bigger than all the minimal elements of $C$. Thus $a\in H(C)$.
\end{proof}

The next two results were partly stated as Theorem \ref{thm:extremalleftmodularintro} in the introduction:

\begin{proposition}
	\label{PropExtremality}
	Let $L=E[C_1,\dots,C_n]$ be a congruence normal lattice. Then
	\begin{itemize}
		\item[$(1)$] $L$ is join-extremal if and only if it is join-congruence uniform and $C_i$ intersects the spine of $E[C_1,\dots,C_{i-1}]$, for all $i$.
		\item[$(2)$] We have the dual result of $(1)$ for meet-extremality by replacing \emph{join} by \emph{meet}.
		\item[$(3)$] $L$ is extremal if and only if it is congruence uniform and $C_i$ intersects the spine of $E[C_1,\dots,C_{i-1}]$, for all $i$.
	\end{itemize}
\end{proposition}

\begin{proof}
	Since $L$ is congruence normal we construct it by successive doublings of convex subsets starting from the one element lattice, which is extremal. By Lemma \ref{lemJIRR}, at each doubling of the convex subset $C_i$, we add exactly as many join-irreducibles as the number of minimal elements of $C_i$, and we add as many meet-irreducibles as the number of maximal elements of $C_i$. By Lemma \ref{lem:lengthdoublement} the doubling of $C_i$ will add $0$ or $1$ to the length of $L$; $0$ if $C_i$ does not contain an element of the spine and $1$ otherwise. Thus, we keep the initial join-extremality through the doubling procedure if and only if at each step $C_i$ is a lower-pseudo interval that contains an element of the spine. Indeed, these are the subsets that have a minimum element. Similarly for meet-extremality with upper pseudo-intervals. Finally, for extremality we need $C_i$ to have a minimum and a maximum, and since it is convex this means that it has to be an interval. This finishes the proof.
\end{proof}

\begin{theorem}
	\label{thmCongruencenormal}
	Let $L=E[C_1,\dots,C_n]$ be a congruence normal lattice. Then $L$ is left modular if and only if $H(C_i)$ has an element that lies on a left modular chain of $E[C_1,\dots,C_{i-1}]$, for all $i$. 
\end{theorem}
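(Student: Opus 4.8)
The plan is to proceed by induction on $n$, the number of doubling steps, proving both directions by analyzing how left modularity of a single element interacts with one doubling $L \rightsquigarrow L[C]$. Write $L' = E[C_1,\dots,C_{n-1}]$ and $C = C_n$, so $L = L'[C]$. For the forward direction, I would show that if $L$ is left modular, with a maximal left modular chain $F$, then the image of $F$ under the projection $L[C] \to L'$ gives a maximal left modular chain $F'$ of $L'$ (one must check that collapsing a possible pair $(c,0) \lessdot (c,1)$ still leaves a maximal chain of $L'$, and that left modularity descends along the projection — this should follow from the explicit meet/join formulas in \cref{defDoublementDay}); by induction $L'$ is left modular, and one must additionally show $F$ passes through the doubling of $C$ at some $c \in H(C)$. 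The key point here is that a maximal chain of $L[C]$ made of left modular elements is forced to go through the doubling at an element of the heart: if it entered and left the doubled region at different elements, or touched it only at a boundary element not in $H(C)$, one could produce a pentagon, contradicting \cref{Propleftmodular}(4). Establishing exactly this — that left modularity of every element of the chain forces the chain to ``thread'' $H(C)$ — is where I expect the real work to lie, and it will use \cref{lemHeart}(iv): an element $a \in C$ lies in $H(C)$ iff $C$ is closed under meet and join with $a$.

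For the converse, suppose by induction $L'$ is left modular with maximal left modular chain $x_0 \lessdot \cdots \lessdot x_k$ and that $H(C)$ contains some $x_r$. I would build a maximal chain of $L = L'[C]$ by lifting: take $\psi(x_0) \lessdot \cdots$, insert the pair $(x_r, 0) \lessdot (x_r, 1)$, and continue. Then I must verify each element of this lifted chain is left modular in $L[C]$. There are three kinds of elements: those below $(x_r,0)$, those above $(x_r,1)$, and the two copies of $x_r$ itself. For each I would use the cover-relation criterion \cref{Propleftmodular}(3): given a cover $b \lessdot c$ in $L[C]$, use the meet/join formulas to reduce the equation $(a \vee b) \wedge c = b \vee (a \wedge c)$ to the corresponding equation downstairs in $L'$, invoking left modularity of $x_i$ there. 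The case analysis splits according to whether $b, c$ lie in $I_{L'}(C) \times \{0\}$ or in the $\{1\}$-part, and whether $a$ is a $0$-copy or $1$-copy; the fact that $x_r \in H(C)$ (so $C$ is closed under $\vee x_r$ and $\wedge x_r$ by \cref{lemHeart}(iv)) is exactly what makes the two copies of $x_r$ left modular and prevents a pentagon from forming across the doubling.

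The main obstacle, I expect, is the bookkeeping in the forward direction: showing that a maximal left modular chain of $L[C]$ necessarily projects to a maximal chain of $L'$ and necessarily meets $H(C)$. The delicate sub-point is that the projection of a maximal chain of $L[C]$ could a priori fail to be maximal in $L'$ if the chain ``skips'' part of a doubled interval; ruling this out, together with ruling out that the chain touches the doubled region only along its boundary minus the heart, is the crux. I would isolate this as a lemma: \emph{if $a \in L$ and $(a,\epsilon)$ is left modular in $L[C]$ for both copies that exist, then $a$ is left modular in $L$, and moreover if $F$ is a maximal chain of $L[C]$ through two copies $(c,0),(c,1)$ then $c \in H(C)$} — the second half being the heart of the matter, proved by exhibiting an explicit $N_5$ using a maximal element $M$ of $C$ with $c \not\leq M$ (or dually a minimal element), contradicting \cref{Propleftmodular}(4). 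Everything else is routine manipulation of Day's formulas.
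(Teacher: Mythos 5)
Your treatment of the ``if'' direction is essentially the paper's: lift a maximal left modular chain of $L'$ through a heart element $x_r \in H(C)$ by inserting the pair $(x_r,0)\lessdot(x_r,1)$, and verify left modularity of every element of the lifted chain via the cover criterion of \cref{Propleftmodular}(3) and the meet/join formulas of the doubling. This is \cref{chainleftmodular} together with \cref{prop:TechniqueMod} in the paper, and your observation that \cref{lemHeart}(iv) is what makes the two copies of $x_r$ left modular is exactly right.

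The ``only if'' direction has a genuine gap. Your case analysis — the chain ``enters and leaves the doubled region at different elements'' or ``touches it only at a boundary element not in $H(C)$'' — describes scenarios that cannot occur. If a maximal chain of $L[C]$ does not go through the doubling, then its unique crossing from the $\{0\}$-layer to the $\{1\}$-layer is a cover $(b,0)\lessdot(c,1)$ with $b\in I_L(C)\setminus C$ and $c\notin I_L(C)$, and convexity of $C$ then forces the chain to avoid $C\times\{0,1\}$ \emph{entirely}. So the real case to rule out is a maximal left modular chain that never touches the doubled region, and your proposed strategy — ``exhibit an explicit $N_5$ using a maximal element $M$ of $C$ with $c\not\leq M$'' — has no foothold, since no $c\in C$ appears on the chain. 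What is actually needed is the paper's \cref{lemChainemaxIncomparable}: any maximal chain that avoids a convex subset must contain an element that is neither below all the maxima nor above all the minima of that subset. Only then does \cref{prop:TechniqueMod} produce the non-left-modular element and the contradiction. This lemma is not obvious (it fails for general posets, as the paper notes) and is the crux you are missing. Separately, the paper's own proof does not even argue directly that every maximal left modular chain of $L[C]$ must thread the doubling: it instead proves \cref{lemChainNOTleftmodular} (if $C$ misses every maximal left modular chain of $L$, then $L[C]$ is not left modular), and then invokes the external fact that maximal left modular chains are of longest length (from $EL$-shellability), so that the doubling increases the length by one and a longest chain of $L[C]$ is forced through the doubling pair. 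A more direct route of the kind you envision can be made to work, but it still requires \cref{lemChainemaxIncomparable}, and your proposed lemma as stated (``if $F$ is a \emph{maximal} chain of $L[C]$ through two copies $(c,0),(c,1)$ then $c\in H(C)$'') is false without the qualifier ``left modular.''
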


To prove Theorem \ref{thmCongruencenormal}, we will need some lemmas. The next two results follow from Definition \ref{defDoublementDay}.

\begin{lemma}
	\label{lemElmmodDoublement}
	Let $c\in C$ and $(a,\epsilon)\in L[C]$. We have 
	\begin{align*}
		(a,\epsilon)\vee (c,0) \neq (a,\epsilon)\vee (c,1) &\iff \left(a\vee c \in C\,\,\text{and}\,\,\epsilon=0\right)\\
		(a,\epsilon)\wedge (c,0) \neq (a,\epsilon)\wedge (c,1) &\iff \left(a\wedge c \in C\,\,\text{and}\,\,\epsilon=1\right)
	\end{align*}
\end{lemma}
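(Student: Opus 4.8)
The plan is a direct case analysis using the explicit formulas for meet and join in $L[C]$ from \cref{defDoublementDay}; I would only write out the join statement, the meet statement being entirely dual. The one point requiring a moment's thought is the translation between membership in $I_L(C)$ (the set appearing in the definition of the join) and membership in $C$ itself. Since $c\in C$ and $c\leq a\vee c$, convexity of $C$ gives that $a\vee c\in I_L(C)$ if and only if $a\vee c\in C$: indeed, if $a\vee c\in I_L(C)$ there is $x\in C$ with $c\leq a\vee c\leq x$, so $a\vee c\in[c,x]\subseteq C$, and the converse is immediate. I would record this small observation first.

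With that in hand, both $(a,\epsilon)\vee(c,0)$ and $(a,\epsilon)\vee(c,1)$ have first coordinate $a\vee c$, so I split on whether $a\vee c\in C$. If $a\vee c\notin C$ (equivalently $a\vee c\notin I_L(C)$ by the observation), the defining formula makes both joins equal $(a\vee c,1)$, hence they agree. If $a\vee c\in C$, the formula gives $(a,\epsilon)\vee(c,0)=(a\vee c,\max(\epsilon,0))=(a\vee c,\epsilon)$ and $(a,\epsilon)\vee(c,1)=(a\vee c,\max(\epsilon,1))=(a\vee c,1)$, and these differ precisely when $\epsilon=0$. Combining the two cases yields $(a,\epsilon)\vee(c,0)\neq(a,\epsilon)\vee(c,1)$ if and only if $a\vee c\in C$ and $\epsilon=0$.

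For the meet statement the dual observation is that $a\wedge c\leq c\in C\subseteq I_L(C)$, so $a\wedge c$ always lies in $I_L(C)$, and therefore $a\wedge c\in(L\setminus I_L(C))\cup C$ if and only if $a\wedge c\in C$. The same two-case computation, now with $\min$ in place of $\max$ and the roles of $0$ and $1$ interchanged, gives that $(a,\epsilon)\wedge(c,0)$ and $(a,\epsilon)\wedge(c,1)$ coincide when $a\wedge c\notin C$, while for $a\wedge c\in C$ they differ exactly when $\epsilon=1$. I do not expect any genuine obstacle here: the proof is a routine unwinding of \cref{defDoublementDay}, and the only subtlety is the pair of membership translations above, which the two convexity observations dispatch.
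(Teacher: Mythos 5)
Your proof is correct, and since the paper offers no explicit argument for this lemma—it simply notes that it follows from \cref{defDoublementDay}—your direct computation from the definition is exactly the intended approach, with the two convexity observations (that $a\vee c\in I_L(C)\Leftrightarrow a\vee c\in C$ given $c\in C$, and dually that $a\wedge c\in(L\setminus I_L(C))\cup C\Leftrightarrow a\wedge c\in C$) being the only nontrivial points and correctly handled.
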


\begin{lemma} \label{lem:coverdoubllem}
	In $L[C]$ we have a cover $(b,0)\lessdot (c,1)$ if and only if $b=c\in C$, or $b\in I_L(C) \setminus C$, $c\not\in I_L(C)$ and $b\lessdot c$ is a cover in $L$.    
\end{lemma}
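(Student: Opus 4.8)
The plan is to unwind \cref{defDoublementDay} directly. Since $L[C]$ is a subposet of $L\times\{0,1\}$, we have $(x,\epsilon)\le(y,\eta)$ iff $x\le y$ in $L$ and $\epsilon\le\eta$; the only subtlety is which pairs lie in $L[C]$, namely $(x,0)\in L[C]$ iff $x\in I_L(C)$, and $(x,1)\in L[C]$ iff $x\in(L\setminus I_L(C))\cup C$. I will also use repeatedly that $I_L(C)\cap\bigl((L\setminus I_L(C))\cup C\bigr)=C$.

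For the ``if'' direction I treat the two cases. If $b=c\in C$, both $(b,0),(b,1)$ lie in $L[C]$, and any $(z,\epsilon)$ with $(b,0)<(z,\epsilon)<(b,1)$ forces $z=b$ and $\epsilon$ strictly between $0$ and $1$, which is impossible; hence $(b,0)\lessdot(b,1)$. If instead $b\in I_L(C)\setminus C$, $c\notin I_L(C)$ and $b\lessdot c$ in $L$, then $(b,0),(c,1)\in L[C]$ and $(b,0)<(c,1)$; were some $(z,\epsilon)$ strictly between, then $b\le z\le c$ in $L$, and one checks $b<z<c$ (the case $z=b$ would force $\epsilon=1$ and $b\in(L\setminus I_L(C))\cup C$, contradicting $b\in I_L(C)\setminus C$; the case $z=c$ would force $\epsilon=0$ and $c\in I_L(C)$, a contradiction). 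But $b<z<c$ contradicts $b\lessdot c$ in $L$, so $(b,0)\lessdot(c,1)$.

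For the ``only if'' direction, suppose $(b,0)\lessdot(c,1)$ in $L[C]$. Membership forces $b\in I_L(C)$, $c\in(L\setminus I_L(C))\cup C$ and $b\le c$. If $b=c$, then $b\in I_L(C)\cap\bigl((L\setminus I_L(C))\cup C\bigr)=C$, giving the first alternative. If $b<c$, I rule out the remaining configurations by exhibiting an intermediate element: if $c\in C$ then $(c,0)\in L[C]$ satisfies $(b,0)<(c,0)<(c,1)$; and if $b\in C$ then $(b,1)\in L[C]$ satisfies $(b,0)<(b,1)<(c,1)$. Hence $c\notin C$ (so $c\notin I_L(C)$) and $b\notin C$ (so $b\in I_L(C)\setminus C$). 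Finally, if $b\lessdot c$ failed in $L$, choose $z$ with $b<z<c$: if $z\in I_L(C)$ then $(z,0)\in L[C]$ lies strictly between $(b,0)$ and $(c,1)$, and if $z\notin I_L(C)$ then $(z,1)\in L[C]$ does; either way contradicting the cover. Thus $b\lessdot c$ in $L$, which is the second alternative.

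The whole argument is a short finite case analysis and presents no real obstacle; the one point that must be handled with care at every step is verifying that each proposed intermediate element --- $(z,0)$, $(z,1)$, $(c,0)$, or $(b,1)$ --- genuinely belongs to $L[C]$, since the underlying set of $L[C]$ is a proper subset of $L\times\{0,1\}$ and skipping that check would invalidate the witnesses. No input beyond \cref{defDoublementDay} is required.
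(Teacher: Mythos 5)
Your proof is correct and is exactly the direct verification from \cref{defDoublementDay} that the paper relies on (the paper states this lemma without proof, asserting it follows from the definition of the doubling). Your case analysis, including the careful checks that each proposed intermediate element actually lies in $L[C]$, is complete and accurate.
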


\begin{lemma}
	\label{lemModcompareLtoLC}
	The element $(a,\epsilon)\in L[C]$ does not have a cover $(b,\alpha)\lessdot (c,\alpha')$ with $b\neq c$ that is a counterexample to its left modularity if and only if $a\in L$ is left modular.
\end{lemma}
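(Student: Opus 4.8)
The plan is to argue coordinatewise in $L[C]$, using two facts that follow from \cref{defDoublementDay}: the surjection $\pi\colon L[C]\to L$, $(x,\epsilon)\mapsto x$, is order-preserving, and the first coordinate of $(x,\epsilon)\vee(y,\delta)$ is always $x\vee y$ while the first coordinate of $(x,\epsilon)\wedge(y,\delta)$ is always $x\wedge y$. For the implication ``$a$ left modular $\Rightarrow$ no such counterexample'', assume $a$ is left modular in $L$ and let $(b,\alpha)\lessdot(c,\alpha')$ be any cover of $L[C]$ with $b\neq c$; then $b<c$ in $L$, so by part $(2)$ of \cref{Propleftmodular} we have $a\vee b\neq a\vee c$ or $a\wedge b\neq a\wedge c$, and comparing first coordinates gives $(a,\epsilon)\vee(b,\alpha)\neq(a,\epsilon)\vee(c,\alpha')$ or $(a,\epsilon)\wedge(b,\alpha)\neq(a,\epsilon)\wedge(c,\alpha')$, so this cover is not a counterexample to the left modularity of $(a,\epsilon)$.

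For the converse, argued contrapositively, assume $a$ is not left modular; by part $(3)$ of \cref{Propleftmodular} fix a cover $b\lessdot c$ of $L$ with $a\vee b=a\vee c$ and $a\wedge b=a\wedge c$, and lift it to a counterexample for $(a,\epsilon)$ with distinct first coordinates. The key observation is that any lift of the form $(b,\delta)\lessdot(c,\delta)$ with \emph{equal} second coordinates is automatically such a counterexample: on each of the two equalities the first coordinates match by the remark above, and since the conditions ``$a\vee b\in I_L(C)$'' and ``$a\vee c\in I_L(C)$'' coincide (and likewise for the meet conditions), the join and meet formulas of \cref{defDoublementDay} make the second coordinates match as well. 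Since $b<c$ and $I_L(C)$ is an order ideal, there are three cases. If $c\in I_L(C)$, then $b,c\in I_L(C)$ and $(b,0)\lessdot(c,0)$ is a lift of the required form. If $c\notin I_L(C)$ and $b\in(L\setminus I_L(C))\cup C$, then $(b,1)$ and $(c,1)$ both lie in $L[C]$ and $(b,1)\lessdot(c,1)$ is a lift of the required form. The remaining case is $c\notin I_L(C)$ and $b\in I_L(C)\setminus C$, where by \cref{lem:coverdoubllem} the unique lift of $b\lessdot c$ is $(b,0)\lessdot(c,1)$, whose second coordinates differ.

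This last case carries all the content. To handle it, set $u=a\vee c$ and $v=a\wedge b$. Since $u\geq c\notin I_L(C)$ and $I_L(C)$ is an order ideal, $u\notin I_L(C)$, so \cref{defDoublementDay} forces $(a,\epsilon)\vee(b,0)=(u,1)=(a,\epsilon)\vee(c,1)$. Since $v\leq b$ and $b\in I_L(C)\setminus C$, we have $v\notin(L\setminus I_L(C))\cup C$: indeed $v\in L\setminus I_L(C)$ would contradict $v\leq b\in I_L(C)$, and if $v\in C$ then, picking $x\in C$ with $b\leq x$ (possible since $b\in I_L(C)$), convexity of $C$ would give $b\in[v,x]\subseteq C$, a contradiction. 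Hence $(a,\epsilon)\wedge(b,0)=(v,0)=(a,\epsilon)\wedge(c,1)$, so $(b,0)\lessdot(c,1)$ is indeed a counterexample for $(a,\epsilon)$, with distinct first coordinates. The main obstacle is exactly this third case, and within it the convexity argument ruling out $v\in C$; the first two cases and the first direction are essentially formal manipulations with the join and meet formulas of \cref{defDoublementDay}.
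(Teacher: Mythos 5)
Your proof is correct and takes essentially the same approach as the paper: both argue coordinatewise in $L[C]$, and for the contrapositive both lift the bad cover $b\lessdot c$ of $L$ to a bad cover of $L[C]$ (choosing the second coordinates according to where $b$ and $c$ sit relative to $I_L(C)$ and $C$) and verify via the doubling meet/join formulas that it is a counterexample for $(a,\epsilon)$. Your explicit convexity argument showing $a\wedge b\notin C$ in the third case is a worthwhile elaboration of a step the paper leaves implicit when it simply asserts $a\wedge b\not\in(L\setminus I_L(C))\cup C$.
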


\begin{proof}
	Suppose that $a\in L$ is not left modular because of $b\lessdot c$. By $(3)$ of Proposition \ref{Propleftmodular} this means $a\vee b = a\vee c$ and $a\wedge b=a\wedge c$. We use the injection $\psi: L\rightarrow L[C]$ following Definition \ref{defDoublementDay} and define $\alpha$ and $\alpha'$ by $(b,\alpha)=\psi(b)$ and $(c,\alpha')=\psi(c)$, unless $b\in C$ and $c\not \in C$, in which case we define $\alpha=\alpha'=1$. These choices guarantee that $(b,\alpha)\lessdot (c,\alpha')$ is a cover of $L[C]$. We have:
	
	\begin{align}
		(a,\epsilon)\vee (b,\alpha) \neq (a,\epsilon)\vee (c,\alpha') &\iff (a\vee c\in I_L(C),\,\epsilon=0,\,\alpha=0\,\,\text{and}\,\,\alpha'=1) \label{eq:mod1}   \\
		(a,\epsilon)\wedge (b,\alpha) \neq (a,\epsilon)\wedge (c,\alpha') &\iff (a\wedge b\in \big(L\setminus I_L(C)\big)\cup C,\,\epsilon=1,\,\alpha=0\,\,\text{and}\,\,\alpha'=1)  \label{eq:mod2}
	\end{align}
	
	We want to conclude that $(a,\epsilon)\vee (b,\alpha) = (a,\epsilon)\vee (c,\alpha')$ and $(a,\epsilon)\vee (b,\alpha) = (a,\epsilon)\vee (c,\alpha')$, so it suffices to show that one of the conditions on the RHS of \eqref{eq:mod1} and \eqref{eq:mod2} is false.
	Suppose that $\alpha=0$ and $\alpha'=1$. Then by Lemma \ref{lem:coverdoubllem}, since $b\neq c$ we have $b\in I_L(C)\setminus C$ and $c\not\in I_L(C)$. Thus $a\vee c\not\in I_L(C)$ and $a\wedge b \not\in \big(L\setminus I_L(C)\big)\cup C$, which concludes.
	
	It follows from $(3)$ of Proposition \ref{Propleftmodular} that $(b,\alpha)\lessdot (c,\alpha')$ with $b\neq c$ is a counterexample to the left modularity of $(a,\epsilon)$ in $L[C]$.
	
	Conversely, assume that $a\in L$ is left modular. Let $(b,\alpha)\lessdot (c,\alpha')$ be a cover in $L[C]$ such that $b\neq c$. Since $b\neq c$, we have by Lemma \ref{lem:coverdoubllem} that $b\lessdot c$ is a cover in $L$. Since $a$ is left modular it implies by $(3)$ of Proposition \ref{Propleftmodular} that $a\vee b\neq a\vee c$ or $a\wedge b\neq a\wedge c$. By definition of the join in $L[C]$, it is clear that $(a,\epsilon)\vee (b,\alpha) \neq (a,\epsilon)\vee (c,\alpha')$ or $(a,\epsilon)\wedge (b,\alpha) \neq (a,\epsilon)\wedge (c,\alpha')$, which again using $(3)$ of Proposition \ref{Propleftmodular} finishes the proof.
\end{proof}

\begin{figure}
	\centering
	\begin{tikzpicture}
		\begin{scope}[scale=0.9]
			\draw (0,0)--(1,1)--(0,2)--(-1,1)--(0,0);
			\draw[ultra thick, red] (-1,1)--(0,0)--(1,1);
			\draw (0,0) node[blue]{$\bullet$};
			\draw (0,2) node[blue]{$\bullet$};
			\draw (1,1) node[blue]{$\bullet$};
			\draw (-1,1) node[blue]{$\bullet$};
			\draw[thick] (0,0) circle (0.3);
			
			\begin{scope}[xshift=3cm, yshift= 1.5cm]
				\draw (0,0)--(1,1)--(0,2)--(-1,1)--(0,0);  
				\draw (-1,1)--(-1,-0.5)--(0,-1.5)--(1,-0.5)--(1,1);
				\draw[ultra thick, red] (1,-0.5)--(1,1);
				\draw (0,0) node[blue]{$\bullet$};
				\draw (0,2) node[blue]{$\bullet$};
				\draw (1,1) node[blue]{$\bullet$};
				\draw (-1,1) node[blue]{$\bullet$};
				\draw (0,-1.5)--(0,0);
				\draw (0,-1.5) node[blue]{$\bullet$};
				\draw[thick] (1,0.25) ellipse (0.4cm and 1.1cm);
			\end{scope}
			
			\begin{scope}[xshift=6.5cm, yshift= 1.5cm]
				\draw (-1,1)--(0,0)--(1,1);  
				\draw (0,0)--(0,-1.5);
				\draw (-1,1)--(-1,-0.5)--(0,-1.5)--(1,-0.5)--(1,1)--(2,2)--(2,0.5)--(1,-0.5);   
				\draw (-1,1)--(1,3)--(2,2);
				\draw[ultra thick,red] (1,1)--(1,-0.5)--(2,0.5);
				\draw (0,0) node[blue]{$\bullet$};
				\draw (0,-1.5) node[blue]{$\bullet$};
				\draw (1,1) node[blue]{$\bullet$};
				\draw (2,2) node[blue]{$\bullet$};
				\draw (1,3) node[blue]{$\bullet$};
				\draw[thick] (1,-0.5) circle (0.3);
			\end{scope}
			
			\begin{scope}[xshift=10.5cm, yshift= 1.5cm]
				\draw (-1,1)--(0,0)--(1,1); 
				\draw (0,0)--(0,-1.5);
				\draw (-1,1)--(-1,-0.5)--(0,-1.5)--(1,-0.5);
				\draw (1,1)--(1,-0.5)--(1.8,0.3);
				\draw (1.5,2.2)--(1.5,0.7)--(2.3,1.5);
				\draw (1,1)--(1.5,2.2);  
				\draw (1,-0.5)--(1.5,0.7);  
				\draw (1.8,0.3)--(2.3,1.5);  
				\draw (1.5,2.2)--(2.3,3);
				\draw (2.3,1.5)--(2.3,3);
				\draw (-1,1)--(1.3,4)--(2.3,3);  
				\draw (1.5,2.2) node[blue]{$\bullet$};
				\draw (0,-1.5) node[blue]{$\bullet$};
				\draw (2.3,3) node[blue]{$\bullet$};
				\draw (1.3,4) node[blue]{$\bullet$};
			\end{scope}
		\end{scope}
	\end{tikzpicture}
	\caption{We represent $3$ successive doublings. The left modular elements are the blue dots. The thick red edges form the convex subsets $C$ that we double and we circled the elements of $H(C)$.}
	\label{fig:doublingsLeftmodular}
\end{figure}
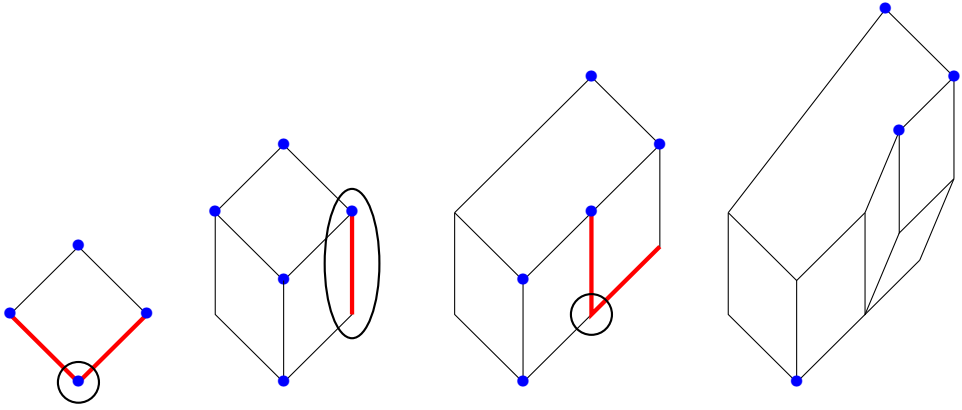

We can now prove a characterization of the left modular elements of the doubling $L[C]$.

\begin{proposition}
	\label{prop:TechniqueMod}
	We have $(a,0)\in L[C]$ is left modular if and only if $a\in L$ is left modular and $a$ is smaller than all the maximal elements of $C$.
	We have $(a,1)\in L[C]$ is left modular if and only if $a\in L$ is left modular and $a$ is bigger than all the minimal elements of $C$.
	Thus $(a,0)\lessdot (a,1)$ with $a\in C$ are two left modular elements if and only if $a\in L$ is left modular and $a\in H(C)$. 
\end{proposition}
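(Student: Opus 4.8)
The plan is to reduce everything to the ``vertical'' covers of $L[C]$, i.e. those of the form $(b,0)\lessdot(b,1)$ with $b\in C$, using \cref{lem:coverdoubllem} and \cref{lemModcompareLtoLC}, and then to compute directly. By \cref{lem:coverdoubllem}, every cover of $L[C]$ is either of the form $(b,\alpha)\lessdot(c,\alpha')$ with $b\neq c$, or a vertical cover $(b,0)\lessdot(b,1)$ with $b\in C$. By \cref{lemModcompareLtoLC}, the element $(a,\epsilon)$ has no counterexample to its left modularity among the covers of the first kind if and only if $a\in L$ is left modular. Hence $(a,\epsilon)$ is left modular in $L[C]$ if and only if $a\in L$ is left modular \emph{and} no vertical cover $(b,0)\lessdot(b,1)$ with $b\in C$ is a counterexample to its left modularity.

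The next step is to analyse these vertical covers. Fix $b\in C$. By $(3)$ of \cref{Propleftmodular}, the cover $(b,0)\lessdot(b,1)$ is a counterexample to the left modularity of $(a,\epsilon)$ precisely when $(a,\epsilon)\vee(b,0)=(a,\epsilon)\vee(b,1)$ and $(a,\epsilon)\wedge(b,0)=(a,\epsilon)\wedge(b,1)$. Applying \cref{lemElmmodDoublement} with $c=b$, the first equality fails iff $a\vee b\in C$ and $\epsilon=0$, and the second fails iff $a\wedge b\in C$ and $\epsilon=1$. A case split on $\epsilon$ now gives: for $\epsilon=0$ the cover $(b,0)\lessdot(b,1)$ is a counterexample iff $a\vee b\notin C$, and for $\epsilon=1$ it is a counterexample iff $a\wedge b\notin C$. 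Combining with the reduction of the previous paragraph, $(a,0)$ is left modular in $L[C]$ iff $a\in L$ is left modular and $a\vee b\in C$ for every $b\in C$, and dually $(a,1)$ is left modular in $L[C]$ iff $a\in L$ is left modular and $a\wedge b\in C$ for every $b\in C$.

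It remains to identify these closure conditions with the stated inequalities. I would show that, for any $a$ with $(a,0)\in L[C]$, the condition ``$a\vee b\in C$ for all $b\in C$'' is equivalent to ``$a$ is below every maximal element of $C$'': taking $b$ to be a maximal element $M$ of $C$, the relations $M\le a\vee M\in C$ force $a\vee M=M$, hence $a\le M$; conversely, given $b\in C$, choosing a minimal element $m$ of $C$ with $m\le b$ and a maximal element $M$ with $b\le M$ yields $m\le b\le a\vee b\le M$, so $a\vee b\in C$ by convexity. The meet version is dual. This is the argument already used for \cref{lemHeart}$(iv)$, the only change being that here $a$ need not lie in $C$. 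Feeding this back in proves the first two statements, and the third is then immediate: for $a\in C$, by definition $a\in H(C)$ iff $a$ is below every maximal and above every minimal element of $C$, which by the first two statements is exactly the conjunction ``$(a,0)$ left modular'' and ``$(a,1)$ left modular'' (each already incorporating ``$a\in L$ left modular''). The computations are light once the doubling lemmas are available; the only point that needs care is the bookkeeping of which covers can witness a failure of left modularity, namely the reduction via \cref{lem:coverdoubllem} and \cref{lemModcompareLtoLC} to the vertical covers and the correct reading of the negated conditions of \cref{lemElmmodDoublement}.
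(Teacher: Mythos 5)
Your proposal is correct and follows essentially the same route as the paper: reduce via \cref{lemModcompareLtoLC} to the vertical covers $(b,0)\lessdot(b,1)$ with $b\in C$, apply \cref{lemElmmodDoublement} together with $(3)$ of \cref{Propleftmodular}, and then translate the closure condition ``$a\vee b\in C$ for all $b\in C$'' into the inequality against maximal elements. The only thing you add is the explicit case split on $\epsilon$ when negating the conditions of \cref{lemElmmodDoublement} and a written-out proof of the closure equivalence; the paper states both in passing, but your bookkeeping is accurate and matches the intended argument.
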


\begin{proof}
	Lemma \ref{lemModcompareLtoLC} tells us that the left modularity of $(a,\epsilon) \in L[C]$ is seen, if $a\in L$ is left modular, by looking only at the covers of the doubling $(c,0)\lessdot (c,1)$ with $c\in C$. We described what happens with these covers in Lemma \ref{lemElmmodDoublement}. Thus by $(3)$ of Proposition \ref{Propleftmodular} we obtain that $(a,0)\in L[C]$ is left modular if and only if $a\in L$ is left modular and $a\vee c \in C$ for all $c\in C$. This latter condition is equivalent to the fact that $a$ is smaller than all the maximal elements of $C$. We deduce similarly the case of $(a,1)$. The last part with $(a,0)\lessdot (a,1)$ is a direct consequence of the first part.
\end{proof}

Now that we understand the left modularity of the elements of $L[C]$, we still need to look at the left modular chains. 
The next result is true for any lattice. A particular case gives that any maximal chain that does not intersect an interval has an element incomparable to that interval, which is not true in a general poset (see Figure \ref{fig:sfig2phi}).

\begin{lemma}
	\label{lemChainemaxIncomparable}
	Let $C$ be a convex subset of $L$.
	Any maximal chain of $L$ that does not intersect $C$ has an element that is neither smaller than all the maximal elements of $C$ nor bigger than all its minimal elements. 
\end{lemma}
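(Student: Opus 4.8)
Let $\phi: \hat 0 = y_0 \lessdot y_1 \lessdot \dots \lessdot y_k = \hat 1$ be a maximal chain of $L$ that does not intersect $C$. Write $m_j$ ($j \in J$) for the minimal elements of $C$ and $M_i$ ($i \in I$) for the maximal elements of $C$, and set $p := \bigvee_{j\in J} m_j$ and $q := \bigwedge_{i\in I} M_i$. Seeking a contradiction, suppose every element of $\phi$ is either below all the $M_i$ or above all the $m_j$; equivalently, every $y_t$ satisfies $y_t \leq q$ or $y_t \geq p$. Since $y_0 = \hat 0 \leq q$ and $y_k = \hat 1 \geq p$, we may let $t$ be the largest index with $y_t \leq q$; then $t < k$ and $y_{t+1} \not\leq q$, so by our assumption $y_{t+1} \geq p$. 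The plan is to show this forces $y_t \in C$ or $y_{t+1}\in C$ (or produces an element of $\phi$ in $C$), contradicting that $\phi$ avoids $C$.

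The key point is to look at the interval $[y_t \vee p,\; y_{t+1} \wedge q]$, or rather at the relationship between $y_t, y_{t+1}, p, q$. First I would record the easy facts: $p \leq q$ need not hold a priori, but here we have $p \leq y_{t+1}$ and $y_t \leq q$, and $y_t \lessdot y_{t+1}$. Consider $z := y_t \vee p$. Then $y_t \leq z \leq y_{t+1}$ (using $p \leq y_{t+1}$), so by the cover relation $z = y_t$ or $z = y_{t+1}$. If $z = y_{t+1}$, i.e. $y_{t+1} = y_t \vee p$, then since $y_t \leq q$ and each $m_j \leq p \leq y_{t+1}$, one checks $y_{t+1} \wedge q \geq y_t$; but also $y_{t+1}\wedge q \leq q$. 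Here I would instead argue directly that $y_{t+1}$ lies between a minimal element of $C$ and... — the cleaner route is: dually set $w := y_{t+1} \wedge q$, so $y_t \leq w \leq y_{t+1}$ (using $y_t \leq q$), hence $w = y_t$ or $w = y_{t+1}$. Combining the two dichotomies gives four cases, and the productive case is $y_t \vee p = y_{t+1}$ together with $y_{t+1}\wedge q = y_t$ — but this says $y_t, y_{t+1}, p, q$ form (the skeleton of) an $N_5$-type configuration, and more importantly, in the other cases we get $p \leq y_t$ (so $y_t \geq p$ and $y_t \leq q$, forcing $y_t \in H(C) \subseteq C$) or $y_{t+1} \leq q$ (so $y_{t+1} \in C$), each a contradiction.

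So the crux is the remaining case $y_t \vee p = y_{t+1}$ and $y_{t+1} \wedge q = y_t$. Here I would use that $[p, q] = H(C) \subseteq C$ is nonempty exactly when $p \leq q$ (\cref{lemHeart}$(i)$); but we do \emph{not} know $p \leq q$ in general. However, in this case consider an arbitrary minimal element $m_{j_0}$ and maximal element $M_{i_0}$: we have $m_{j_0} \leq p \leq y_{t+1}$ and $y_t \leq q \leq M_{i_0}$, with $m_{j_0} \leq M_{i_0}$ (since $C$ is convex and nonempty, any minimal element lies below some maximal element; and here I need: every minimal element lies below every maximal element — this is false in general for a convex set, which is the subtlety). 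This is where the argument must be done carefully: when $H(C) = \emptyset$, one uses instead that $C$ is convex to locate an element of $\phi$ inside $C$ by a direct interval-chasing argument. Concretely, I would pick $c \in C$, let $a$ be a minimal element of $C$ with $a \leq c$ and $M$ a maximal element with $c \leq M$; then run the "largest $t$ with $y_t \leq M$" argument relative to this single pair $(a,M)$, getting $y_t \leq M$, $y_{t+1}\not\leq M$, hence (by hypothesis) $y_{t+1} \geq p \geq a$... and now $a \leq y_{t+1}$, $y_t \leq M$. The main obstacle — and what I'd spend the most care on — is precisely handling the case where the heart is empty: showing that convexity of $C$ still forces some $y_t$ into $C$. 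I expect the resolution is that the hypothesis "every $y_s$ is $\leq$ all maximals or $\geq$ all minimals" is strong enough that picking the transition index for the meet of maximals versus the join of minimals pins $y_t$ between a specific minimal and maximal element of $C$, and then $y_t \in C$ by convexity; the statement's phrasing ("neither smaller than all the maximal elements nor bigger than all its minimal elements") is exactly the negation that drives this.
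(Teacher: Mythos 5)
Your plan coincides with the paper's once you unpack it: the transition index $t$ (largest index with $y_t \leq q$) gives a cover $y_t \lessdot y_{t+1}$ with $y_t$ below every maximal of $C$ and $y_{t+1}$ above every minimal, which is exactly the cover the paper extracts after partitioning $\phi$ into the elements below all maximals and those above all minimals. Your extra dichotomy on $z = y_t \vee p$ and $w = y_{t+1}\wedge q$ accomplishes little: the case $w = y_{t+1}$ is immediately impossible by the choice of $t$ (not because $y_{t+1}\in C$, which does not follow from $y_{t+1}\leq q$ alone), and the case $z=y_t$ is just a special instance of the general argument below where the single pair $(m,M)$ happens to be comparable to $y_t$.

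The genuine gap is that the crux case is never carried out, and your sketch even lands on the wrong element (the contradiction is $y_{t+1}\in C$, not $y_t\in C$). You rightly observe that one cannot appeal to $H(C)$ since $p\leq q$ may fail; the fix, which the paper uses, is to work with a single pair rather than with $p$ and $q$: pick $c\in C$, a minimal $m\leq c$ and a maximal $M\geq c$, so that $m\leq M$. Then $m\leq p\leq y_{t+1}$ and $y_t\leq q\leq M$, both strict since $\phi$ avoids $C$. If $m\leq y_t$ then $m\leq y_t\leq M$ forces $y_t\in C$ by convexity; if $y_t< m$ then $y_t< m< y_{t+1}$ contradicts $y_t\lessdot y_{t+1}$; so $m$ and $y_t$ are incomparable, and $y_t< y_t\vee m\leq y_{t+1}$ gives $y_t\vee m = y_{t+1}$. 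Since $M$ is above both $m$ and $y_t$, we get $M\geq y_{t+1}$, so $m< y_{t+1}< M$ and $y_{t+1}\in C$ by convexity, a contradiction. This comparability argument is the entire content of the lemma, and your proposal correctly locates where it must go but does not supply it.
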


\begin{proof}
	Let $\Delta$ be a maximal chain of $L$ that does not intersect $C$.
	The elements of this chain are split between $3$ disjoint totally ordered sets : the set $A$ of these that are less than all the maximal elements of $C$, the set $I$ of these that are not less than all the maximal elements nor bigger than all the minimal elements of $C$, and the set $B$ of these that are bigger than all the minimal elements of $C$. By convexity of $C$, we have $A\cap B=\emptyset$. Since $\Delta$ is maximal it contains $\hat{0}$ and $\hat{1}$, thus $A\neq \emptyset$ and $B\neq \emptyset$. We want to prove that $I\neq \emptyset$. Seeking a contradiction, suppose that $I=\emptyset$. Then $\Delta$ is partitioned between $A$, and $B$ which consists of the bigger elements. Since $\Delta$ is maximal we have a cover relation $a \lessdot b$ with $a\in A$ and $b\in B$, giving us the following situation where $m$ and $M$ are minimal and maximal elements of $C$ respectively such that $m\leq M$, $a<M$ and $m<b$:
	
	\begin{center}
		\begin{tikzpicture}
			\begin{scope}[scale=0.6]
				\draw (0,0)--(0,2);
				\draw[dashed] (0,0)--(3,3);
				\draw[dashed] (3,-1)--(3,3);
				\draw[dashed] (3,-1)--(0,2);
				\draw (0,0) node[below]{$a$};
				\draw (0,2) node[above]{$b$};
				\draw (3,-1) node[below]{$m$};
				\draw (3,3) node[above]{$M$};
				\draw (3,1) ellipse (1cm and 2.8cm);
				\draw (4.5,0) node{$C$};
			\end{scope}
		\end{tikzpicture}    
	\end{center}

	If $a< m$, then $a< m < b$, which is absurd since $a\lessdot b$ is a cover relation. Thus $a\not < m$.
	If $m<a$, then by convexity of $C$ we have $m<a<M$ implies $a\in C$, which is absurd. Thus $m\not < a$. Thus, $a$ and $m$ are incomparable. Then $a\vee m = b$. Since $M$ is bigger than $a$ and $m$, it implies $M\geq a\vee m=b$. Thus $m< b< M$, which by convexity of $C$ implies $b\in C$. This is absurd, which proves that $I\neq \emptyset$. 
\end{proof}

\begin{lemma}
	\label{lemChainNOTleftmodular}
	Let $L$ be a lattice and $C$ be a convex subset that does not contain an element of any left modular chain of $L$.  Then $L[C]$ is not left modular.
\end{lemma}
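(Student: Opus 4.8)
The plan is to argue by contradiction: assume $L[C]$ is left modular, take a maximal left modular chain $\widetilde{\Delta}$ of $L[C]$, and derive a maximal left modular chain of $L$ that meets $C$, contradicting the hypothesis. First I would consider the image $\Delta$ of $\widetilde{\Delta}$ under the surjective poset morphism $\pi: L[C] \to L$ sending $(x,\epsilon) \mapsto x$. Since $\pi$ is order-preserving and $\widetilde{\Delta}$ is a chain, $\Delta$ is a chain in $L$; because $\widetilde{\Delta}$ is maximal it contains $(\hat 0, 0)$ and $(\hat 1, 1)$, so $\Delta$ contains $\hat 0$ and $\hat 1$. The one subtlety is that $\pi$ is not injective precisely on the doubled elements: two consecutive elements $(c,0) \lessdot (c,1)$ of $\widetilde{\Delta}$ with $c \in C$ collapse to a single element $c$. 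So $\Delta$ may fail to be maximal only at such a collapsed spot, but in any case I can extend $\Delta$ to a maximal chain $\Delta'$ of $L$.

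Next I would show that each element of $\Delta$ is left modular in $L$. This is immediate from \cref{prop:TechniqueMod} (or more directly \cref{lemModcompareLtoLC}): if $(a,\epsilon) \in \widetilde{\Delta}$ is left modular in $L[C]$, then $a \in L$ is left modular in $L$. Hence every element of $\Delta$ is a left modular element of $L$. The key remaining point is to show that $\widetilde{\Delta}$ must actually go through the doubling of some $c \in C$, i.e. contains both $(c,0)$ and $(c,1)$ for some $c \in C$. Suppose not. Then $\widetilde{\Delta}$ does not go through the doubling, so its image $\Delta$ is already a maximal chain of $L$ (no collapsing occurs, and maximality is preserved because the fibers of $\pi$ over non-collapsed points are singletons and cover relations lift correctly by \cref{lem:coverdoubllem}). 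Moreover $\Delta$ cannot intersect $C$: if some $c \in C$ lay on $\Delta$, then $(c, \epsilon) \in \widetilde{\Delta}$ for $\epsilon = 0$ or $1$; if $\epsilon = 0$ then by \cref{prop:TechniqueMod} the left modular element $(c,0)$ is smaller than all maximal elements of $C$, and since $c \in C$ this forces $c$ to be below $\wedge_{j} M_j$; dually one argues about minimal elements, and combining with the case $\epsilon=1$ one finds $c \in H(C)$ is a left modular element of $L$ lying on the maximal left modular chain $\Delta$ of $L$ — contradicting that $C$ contains no element of any maximal left modular chain. (Alternatively, and more cleanly: $\Delta$ is a maximal left modular chain of $L$; by hypothesis it misses $C$; but \cref{lemChainemaxIncomparable} then says $\Delta$ has an element $a$ that is neither $\leq$ all maximal elements of $C$ nor $\geq$ all minimal elements of $C$, and one checks via \cref{prop:TechniqueMod} that no lift of such an $a$ to $L[C]$ can be left modular, contradicting that every element of $\widetilde\Delta$ is left modular.)

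So $\widetilde{\Delta}$ goes through the doubling of some $c \in C$. Then $(c,0) \lessdot (c,1)$ are both on $\widetilde{\Delta}$ and are left modular in $L[C]$, so by \cref{prop:TechniqueMod} we get $c \in H(C) \subseteq C$ and $c$ is left modular in $L$. Finally, $\Delta$ (possibly after extending to $\Delta'$ as above) is a maximal chain of $L$ all of whose elements are left modular, hence a maximal left modular chain of $L$ containing $c \in C$. This contradicts the assumption that $C$ contains no element of any maximal left modular chain, completing the proof. I expect the main obstacle to be the bookkeeping around the surjection $\pi$: precisely identifying when the image of $\widetilde{\Delta}$ fails to be maximal and checking that in the "no doubling" case the image really is maximal and really misses $C$ — this is where \cref{lem:coverdoubllem}, \cref{lemChainemaxIncomparable}, and \cref{prop:TechniqueMod} all have to be combined carefully.
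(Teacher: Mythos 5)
Your proposal is correct and follows essentially the same strategy as the paper's proof: project a hypothetical maximal left modular chain of $L[C]$ down to $L$, split on whether it goes through the doubling, and in the two cases use \cref{lemModcompareLtoLC}/\cref{prop:TechniqueMod} (for the "goes through" case) and \cref{lemChainemaxIncomparable} together with \cref{prop:TechniqueMod} (for the "doesn't go through" case), respectively. The parenthetical "cleaner" argument is the one to keep --- the earlier digression via $H(C)$ is both unnecessary (one only needs $c\in C$ lying on a maximal left modular chain of $L$, not $c\in H(C)$) and, as stated, not justified in the single-$\epsilon$ case --- and the "extend to $\Delta'$" step is superfluous since the projection of a maximal chain of $L[C]$ is always already a maximal chain of $L$ after collapsing $(c,0)\lessdot(c,1)$ to $c$.
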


\begin{proof}
	We need to prove that $L[C]$ does not have a left modular chain. 
	
	To begin with, no such chain goes through the doubling, meaning no such chain contains an edge $(c,0)\lessdot (c,1)$ with $c\in C$. Indeed, if it was the case, take the projection of this chain to $L$, meaning we replace the elements $(a,\epsilon)$ of this chain by $a$. By forgetting the repetition of $c$ coming from $(c,0)\lessdot (c,1)$, by Lemma \ref{lemModcompareLtoLC} this would give a left modular chain of $L$. It is absurd because $C$ would contain the element $c$ of this chain but $C$ does not contain any element of a left modular chain of $L$ by hypothesis.
	
	It remains to prove that we do not have a left modular chain of $L[C]$ that does not go through the doubling. By Lemma \ref{lemChainemaxIncomparable} for $C\times \{0\}$, if such a chain exists we would have an element $(a,\epsilon)$ of this chain that is neither smaller than all the maximal elements of $C\times \{0\}$ nor bigger than all its minimal elements. It means that $a$ is neither smaller than all the maximal elements of $C$ nor bigger than all its minimal elements. By Lemma \ref{prop:TechniqueMod} the element $(a,\epsilon)$ would not be left modular in $L[C]$. Then such a left modular chain does not exist.
\end{proof}

Now we know that if $L[C]$ is left modular it is necessary that there exists a left modular chain $\Delta$ in $L[C]$ that goes through the doubling, meaning it comes from a left modular chain $\Delta$ in $L$ that intersects $C$ and by Lemma \ref{prop:TechniqueMod} this intersection contains an element of $H(C)$. The following lemma proves that this is also a sufficient condition.

\begin{lemma}
	\label{chainleftmodular}
	Let $L$ be a lattice and $C$ a convex subset of $L$. Suppose that there exists $a\in H(C)$ that lies on a left modular chain $\Delta:\,x_0\lessdot x_1 \lessdot \dots \lessdot x_l=a\lessdot \dots \lessdot x_n$ of $L$. Then the maximal chain 
	\begin{align}
		\label{chainMod}
		(x_0,0)\lessdot \dots \lessdot (x_{l-1},0) \lessdot (a,0) \lessdot (a,1)\lessdot (x_{l+1},1) \lessdot \dots \lessdot (x_n,1)
	\end{align}
	of $L[C]$ is a left modular chain. Thus $L[C]$ is left modular.
\end{lemma}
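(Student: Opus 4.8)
The plan is to verify two things: first, that the chain displayed in \eqref{chainMod} is genuinely a maximal chain of $L[C]$; second, that every one of its elements is left modular in $L[C]$, so that it is a maximal left modular chain, whence $L[C]$ is left modular by definition.

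\textbf{Step 1: the chain is maximal.} Each consecutive pair must be a cover in $L[C]$. For the pairs $(x_i,0)\lessdot(x_{i+1},0)$ with $i<l-1$: since $a=x_l\in H(C)\subseteq C$ and $x_i<x_{i+1}\le a$, the convexity of $I_L(C)$ (it is an order ideal) forces $x_i,x_{i+1}\in I_L(C)$, so both sit at level $0$ and $x_i\lessdot x_{i+1}$ in $L$ lifts to a cover in $L[C]$ via the embedding $\psi$. The middle pair $(a,0)\lessdot(a,1)$ is a cover by \cref{lem:coverdoubllem} (the case $b=c=a\in C$). For the pairs $(x_i,1)\lessdot(x_{i+1},1)$ with $i\ge l+1$: here I claim $x_i\notin I_L(C)$ for all $i>l$. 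Indeed if some $x_i\in I_L(C)$ with $i>l$, then $x_i\le M$ for some maximal $M\in C$, but also $x_i>a$ and $a\in H(C)$ means $a\ge m$ for every minimal $m$; combining, $m\le a<x_i\le M$ with $a,x_i$ strictly between a minimal and a maximal element of $C$ — convexity of $C$ would put $x_i\in C\subseteq I_L(C)\setminus$(top), which is fine, but then we must re-examine: actually the cleaner route is that $\psi(x_i)=(x_i,1)$ precisely when $x_i\notin I_L(C)$, and when $x_i,x_{i+1}$ both lie outside $I_L(C)$ the cover $x_i\lessdot x_{i+1}$ lifts directly; the only subtle case is $x_l=a\in C$ with $x_{l+1}\notin I_L(C)$, which is exactly \cref{lem:coverdoubllem} again. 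I would also use \cref{lem:lengthdoublement}: since $a\in H(C)\subseteq C$ lies on a longest chain (left modular chains are longest), $C$ meets the spine, so $\ell(L[C])=\ell(L)+1=n+1$, matching the length $n+1$ of \eqref{chainMod}; hence it is not only maximal but longest.

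\textbf{Step 2: every element is left modular.} By \cref{lemModcompareLtoLC}, an element $(b,\epsilon)$ whose underlying $b\in L$ is left modular can only fail left modularity in $L[C]$ on a cover of the form $(c,0)\lessdot(c,1)$ with $c\in C$, i.e.\ exactly on the doubled edge. For the bottom portion $(x_i,0)$ with $i\le l$: each $x_i\le a$ (for $i<l$) or $x_i=a$; in all cases $x_i$ is left modular in $L$ (it lies on $\Delta$), and $x_i\le a$ together with $a\in H(C)$ (so $a\le M$ for every maximal $M\in C$) gives $x_i\le M$ for every maximal $M\in C$ — hence by \cref{prop:TechniqueMod}, $(x_i,0)$ is left modular in $L[C]$. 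Dually, for the top portion $(x_i,1)$ with $i\ge l$: each $x_i\ge a$, and $a\in H(C)$ gives $a\ge m$ for every minimal $m\in C$, so $x_i\ge m$ for every minimal $m\in C$; again \cref{prop:TechniqueMod} yields that $(x_i,1)$ is left modular. Finally the two elements $(a,0)$ and $(a,1)$ are covered by the ``Thus'' clause of \cref{prop:TechniqueMod}: $a\in L$ is left modular and $a\in H(C)$, so both are left modular. Combining Steps 1 and 2, \eqref{chainMod} is a maximal left modular chain, so $L[C]$ is left modular.

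\textbf{Main obstacle.} The routine-but-fiddly part is Step 1, checking that every consecutive pair is actually a cover in $L[C]$ rather than a non-edge — in particular keeping straight, for indices above $l$, whether each $x_i$ lands in $I_L(C)$ or not, and making sure the transition at $x_l=a$ uses \cref{lem:coverdoubllem} correctly rather than the naive ``lift a cover of $L$'' argument. Everything in Step 2 is then a clean application of \cref{prop:TechniqueMod} once one observes that the bottom half of $\Delta$ lies below $a$ and the top half above $a$, so the two one-sided conditions in \cref{prop:TechniqueMod} are met automatically; no real difficulty remains there.
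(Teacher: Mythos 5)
Your Step~2 is exactly the paper's proof: apply \cref{prop:TechniqueMod} to $(a,0)\lessdot(a,1)$ using $a\in H(C)$, and to every other element $(x_i,\epsilon)$ of the chain using that $x_i$ is left modular in $L$ and is comparable to $a\in H(C)$. The paper treats the maximality of the chain \eqref{chainMod} as self-evident and does not include a Step~1 at all, so your Step~2 alone would reproduce the paper's argument.

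Your Step~1, however, goes astray. The opening claim ``$x_i\notin I_L(C)$ for all $i>l$'' is false: if $C$ is, say, an interval and $a$ sits near its bottom, then several $x_i$ with $i>l$ still lie inside $C\subseteq I_L(C)$. You actually notice this (you derive $x_i\in C$ from convexity mid-sentence), but the ``cleaner route'' you pivot to only treats the case where $x_i,x_{i+1}$ both lie outside $I_L(C)$, plus the single transition at $a$ — it never handles the case $x_i\in C$ with $i>l$, which is precisely the case that refutes the original claim. The right statement is weaker and is all you need: for $i>l$ one only needs $(x_i,1)\in L[C]$, i.e.\ $x_i\notin I_L(C)\setminus C$. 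This does follow from $x_i>a\geq m$ for every minimal $m\in C$ together with the convexity of $C$ (any $x_i\in I_L(C)$ is then squeezed between a minimal and a maximal element of $C$, hence lands in $C$, not in $I_L(C)\setminus C$). Once every displayed element is known to lie in $L[C]$, covers with a fixed second coordinate restrict from covers of $L$ because no element of $L[C]$ can interpose (the second coordinate would have to equal the common one and the first would contradict $x_i\lessdot x_{i+1}$), so maximality follows without the false universal claim.
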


\begin{proof}
	Since $a\in L$ is left modular and $a\in H(C)$, we know using Proposition \ref{prop:TechniqueMod} that $(a,0)\lessdot (a,1)$ is a cover relation made of two left modular elements of $L[C]$. Since \eqref{chainMod} is a maximal chain, it just remains to prove that any of its elements other than $(a,0)$ and $(a,1)$ are left modular in $L[C]$. It follows from Lemma \ref{prop:TechniqueMod} since for any other element $(x_i,\epsilon)$ of the chain \eqref{chainMod} we have $x_i\in L$ is left modular and either smaller or bigger than $a\in H(C)$. 
\end{proof}

We can now prove Theorem \ref{thmCongruencenormal}.

\noindent
{\it Proof of Theorem \ref{thmCongruencenormal}.}
	By Lemma \ref{chainleftmodular} we know that if for all $i$ we have that $H(C_i)$ has an element that lies on a left modular chain of $E[C_1,\dots,C_{i-1}]$, then $L$ is left modular. 
	
	Conversely, suppose that $E[C_1,\dots,C_n]$ is left modular. Let $i\leq n$. We want to prove that $H(C_i)$ has an element that lies on a left modular chain of $E[C_1,\dots,C_{i-1}]$. By Lemma \ref{lemChainNOTleftmodular}, it is required that $C_i$ contains an element of a left modular chain of $E[C_1,\dots,C_{i-1}]$. Indeed, if at some point no such chain exists in our lattice then it is too late; we will never recover a left modular lattice by doublings. 
	As recalled at the end of Section \ref{sec:Backgroundext}, all left modular chains are of longest length. With the previous requirement, we know that at each doubling step we add one to the length of the lattice. Then to have a left modular chain after the doubling, since it is a longest chain, we need it to go through the doubling. Thus by Lemma \ref{prop:TechniqueMod} it is forced that $H(C_i)$ contains an element that lies on a left modular chain of $E[C_1,\dots,C_{i-1}]$. \qed

Before giving some corollaries of our results, we give a lemma.

\begin{lemma}
	\label{lemSpineCong}
	Let $L=E[C_1,\dots,C_n]$ be an extremal congruence uniform lattice. Then all the elements of the spine of $L$ are left modular elements.
\end{lemma}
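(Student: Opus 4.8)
The plan is to induct on the number $n$ of doubling steps in the given representation $L = E[C_1,\dots,C_n]$. By \cref{propExtremality}$(3)$ each $C_i$ is an interval meeting the spine of $E[C_1,\dots,C_{i-1}]$, so each doubling raises the length by one (\cref{lem:lengthdoublement}) and $\ell(L) = n$. The case $n = 0$ is vacuous. For the inductive step write $L = L'[C_n]$ with $L' := E[C_1,\dots,C_{n-1}]$; since $C_i$ is an interval meeting the spine of $E[C_1,\dots,C_{i-1}]$ for every $i \le n-1$, the same criterion (\cref{propExtremality}$(3)$) shows that $L'$ is an extremal congruence uniform lattice, and we have $\ell(L) = \ell(L')+1$ with $C_n = [p,q]$ an interval meeting the spine of $L'$. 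By the inductive hypothesis every element of the spine of $L'$ is left modular in $L'$.

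The key step is to show that \emph{every longest chain of $L$ goes through the doubling of $C_n$}. Every cover of $L[C_n]$ is either internal to one of the two ``levels'' $I_{L'}(C_n)\times\{0\}$ and $\bigl((L'\setminus I_{L'}(C_n))\cup C_n\bigr)\times\{1\}$, or of the form $(b,0)\lessdot(c,1)$. These two levels are respectively (isomorphic to) an order ideal and an order filter of $L'$, so internal covers project to covers of $L'$; and a cover $(b,0)\lessdot(c,1)$ with $b \neq c$ likewise projects to a cover of $L'$ by \cref{lem:coverdoubllem}. Hence a maximal chain of $L$ that avoids every doubling edge $(c,0)\lessdot(c,1)$ projects onto a maximal chain of $L'$, so has length at most $\ell(L') = \ell(L)-1$ and is not longest. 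It follows that every longest chain of $L$ has the form
$$(x_0,0)\lessdot\dots\lessdot(x_l,0)\lessdot(x_l,1)\lessdot\dots\lessdot(x_{n-1},1),$$
where $x_l \in C_n$ and $x_0 \lessdot \dots \lessdot x_{n-1}$ is a longest chain of $L'$ (the doubling edge collapsing to the single element $x_l$); moreover everything below the doubling edge lies at level $0$ and everything above it at level $1$.

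To conclude, let $(a,\epsilon)$ lie on the spine of $L$ and choose a longest chain of $L$ through it of the above form, say $a = x_i$. Then $a$ lies on the longest chain $x_0 \lessdot \dots \lessdot x_{n-1}$ of $L'$, so $a$ belongs to the spine of $L'$ and, by the inductive hypothesis, is left modular in $L'$. If $\epsilon = 0$ then $i \le l$, hence $a \le x_l \le q$; if $\epsilon = 1$ then $i \ge l$, hence $a \ge x_l \ge p$. Since $C_n = [p,q]$ is an interval, $q$ is its unique maximal element and $p$ its unique minimal element, so \cref{prop:TechniqueMod} yields that $(a,\epsilon)$ is left modular in $L$. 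This closes the induction.

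The main obstacle is the structural claim that every longest chain of $L$ passes through the doubling of $C_n$; this rests on \cref{lem:lengthdoublement} (the length increases by exactly one) together with the observation that the two levels of $L[C_n]$ form an order ideal and an order filter of $L'$, so that a maximal chain avoiding the doubling edge really does project to a maximal chain of $L'$. Everything after that is routine bookkeeping with \cref{prop:TechniqueMod} and the inductive hypothesis.
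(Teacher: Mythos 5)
Your proof is correct and follows essentially the same strategy as the paper's: induction on the number of doublings, the observation that every longest chain of $L = L'[C_n]$ must cross the doubling edge, and then \cref{prop:TechniqueMod} to transfer left modularity from $L'$ to $L$. The paper phrases the inductive conclusion in terms of chains (citing \cref{chainleftmodular} to produce maximal left modular chains of $L$ from those of $L'$), whereas you work element-by-element, but both conclusions rest on \cref{prop:TechniqueMod} and are interchangeable. One genuine value-added in your write-up is the careful justification of the structural claim that a maximal chain of $L[C_n]$ avoiding all doubling edges projects to a maximal chain of $L'$ and hence cannot be longest --- the paper asserts this as a consequence of \cref{lem:lengthdoublement} without spelling out why, and your argument via the ideal/filter structure of the two levels together with \cref{lem:coverdoubllem} fills that small gap cleanly.
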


\begin{proof}
	We prove this by induction on the number of doublings. For the one element lattice, the spine is the lattice itself and the only element is left modular. By induction, assume that all the elements of the spine of $E[C_1,\dots,C_{i-1}]$ are left modular. We want to prove that all the elements of the spine of $E[C_1,\dots,C_i]=E[C_1,\dots,C_{i-1}][C_i]$ are left modular.
	We have the equivalence between the fact that all the elements of the spine are left modular and the fact that any longest chain is a left modular chain. Thus the induction hypothesis gives us that the longest chains of $E[C_1,\dots,C_{i-1}]$ are left modular chains.
	By $(3)$ of Proposition \ref{PropExtremality}, we have that $C_i$ intersects the spine of $E[C_1,\dots,C_{i-1}]$. It follows by Lemma \ref{lem:lengthdoublement} that the longest chains of $E[C_1,\dots,C_i]$ go through the doubling; these correspond to the longest chains of $E[C_1,\dots,C_{i-1}]$ intersecting $C_i$. Precisely, these are the chains \eqref{chainMod} of Lemma \ref{chainleftmodular} for a longest chain $\Delta: x_0\lessdot x_1 \lessdot \dots \lessdot x_l:=a\lessdot \dots \lessdot x_r$ of $E[C_1,\dots,C_{i-1}]$, where $a\in C_i$. Since $C_i$ is an interval, by $(iii)$ of Lemma \ref{lemHeart} we have $H(C_i)=C_i$. Thus $a\in H(C_i)$. As $\Delta$ is a left modular chain by hypothesis, by Lemma \ref{chainleftmodular} the longest chains \eqref{chainMod} are also left modular chains, which finishes the proof.
\end{proof}

\begin{corollary}
	\label{coroCongruentUniformMod}
	If $L=E[C_1,\dots,C_n]$ is congruence uniform, then $L$ is left modular if and only if $C_i$ contains an element of the spine of $E[C_1,\dots,C_{i-1}]$, for all $i$.
\end{corollary}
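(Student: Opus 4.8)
The plan is to deduce the corollary from \cref{thmCongruencenormal}, using the fact that in the congruence uniform case the doubled subsets are intervals. Since $L$ is congruence uniform, every $C_i$ is an interval, so part $(iii)$ of \cref{lemHeart} gives $H(C_i)=C_i$ for all $i$. Hence \cref{thmCongruencenormal} already says that $L$ is left modular if and only if, for every $i$, the interval $C_i$ contains an element lying on a maximal left modular chain of $E[C_1,\dots,C_{i-1}]$. So the remaining task is to show that this condition is equivalent to $C_i$ containing an element of the spine of $E[C_1,\dots,C_{i-1}]$ for all $i$.

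For the forward implication I would invoke the fact, recalled at the end of \cref{sec:Background}, that any maximal left modular chain is a longest chain. Therefore, if $C_i$ meets a maximal left modular chain of $E[C_1,\dots,C_{i-1}]$, that intersection point automatically lies on a longest chain, hence on the spine; this already shows that the condition coming from \cref{thmCongruencenormal} implies the spine condition, so $L$ left modular implies $C_i$ meets the spine for all $i$.

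For the converse, suppose $C_i$ meets the spine of $E[C_1,\dots,C_{i-1}]$ for every $i$. The key point is that this hypothesis passes to all the intermediate lattices: for each $j\le n$, the lattice $E[C_1,\dots,C_j]$ is congruence uniform and each $C_i$ with $i\le j$ meets the spine of $E[C_1,\dots,C_{i-1}]$, so by part $(3)$ of \cref{propExtremality} it is extremal. In particular each $E[C_1,\dots,C_{i-1}]$ is an extremal congruence uniform lattice, so \cref{lemSpineCong} applies and every element of its spine is left modular; consequently every longest chain of $E[C_1,\dots,C_{i-1}]$ is a maximal left modular chain. Now pick an element $a\in C_i$ lying on the spine of $E[C_1,\dots,C_{i-1}]$: it lies on some longest chain, which is a maximal left modular chain, and since $a\in C_i=H(C_i)$, the hypothesis of \cref{thmCongruencenormal} is met at step $i$. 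As $i$ was arbitrary, \cref{thmCongruencenormal} yields that $L$ is left modular.

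The only delicate point — the step I would be most careful with — is making sure \cref{lemSpineCong} is applicable at every step, which requires each intermediate lattice $E[C_1,\dots,C_{i-1}]$ to be extremal; this is precisely what combining the spine hypothesis with part $(3)$ of \cref{propExtremality} delivers, so the proof amounts to assembling \cref{thmCongruencenormal}, \cref{lemHeart}$(iii)$, \cref{propExtremality}$(3)$, and \cref{lemSpineCong} in the right order, with no further computation needed.
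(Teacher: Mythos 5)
Your proof is correct and follows essentially the same route as the paper: reduce to \cref{thmCongruencenormal} via $H(C_i)=C_i$, get the forward direction from the fact that maximal left modular chains are longest, and get the converse from \cref{lemSpineCong}. You are in fact slightly more careful than the paper's terse proof in explicitly noting that \cref{lemSpineCong} requires extremality of each intermediate lattice $E[C_1,\dots,C_{i-1}]$, which you correctly obtain from the spine hypothesis via \cref{propExtremality}$(3)$.
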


\begin{proof}
	By $(iii)$ of Lemma \ref{lemHeart}, we have $H(C_i)=C_i$, for all $i$. Thus Theorem \ref{thmCongruencenormal} tells us that $L$ is left modular if and only if $C_i$ has an element that lies on a left modular chain of $E[C_1,\dots,C_{i-1}]$, for all $i$. Then using the fact that the left modular chains are of longest length gives one direction of Corollary \ref{coroCongruentUniformMod}, the other direction is obtained using additionally Lemma \ref{lemSpineCong}.
\end{proof}

\begin{corollary} 
	\label{corocongruenceuniformExtremal}
	Join-congruence uniform left modular lattices are join-extremal. Similarly meet-congruence uniform left modular lattices are meet-extremal.
	For congruence uniform lattices, extremality and left modularity are equivalent.
\end{corollary}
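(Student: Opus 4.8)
The plan is to assemble \cref{propExtremality}, \cref{thmCongruencenormal}, and \cref{coroCongruentUniformMod}, together with the fact recalled at the end of \cref{sec:Background} that every maximal left modular chain has longest length, and hence that each of its elements lies in the spine.

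\textbf{First part.} Let $L = E[C_1,\dots,C_n]$ be join-congruence uniform and left modular, so each $C_i$ is a lower pseudo-interval of $E[C_1,\dots,C_{i-1}]$. Being a union of intervals, each $C_i$ is convex, so $L$ is congruence normal and \cref{thmCongruencenormal} applies: for every $i$, the heart $H(C_i)$ contains an element lying on a maximal left modular chain of $E[C_1,\dots,C_{i-1}]$. Such a chain is a longest chain, so that element belongs to the spine; since $H(C_i)\subseteq C_i$, the subset $C_i$ intersects the spine of $E[C_1,\dots,C_{i-1}]$. As $L$ is already assumed join-congruence uniform, \cref{propExtremality}$(1)$ then gives that $L$ is join-extremal. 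The meet-congruence uniform case is entirely dual, using \cref{propExtremality}$(2)$: the heart, the notion of maximal left modular chain, and the property of being a longest chain are all self-dual, so the same argument applies after reversing the order.

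\textbf{Second part.} Let $L$ be congruence uniform and fix a representation $L = E[C_1,\dots,C_n]$ with each $C_i$ an interval. Since being left modular and being extremal are intrinsic properties of $L$, it suffices to observe that both are characterized by the same condition on this representation: \cref{coroCongruentUniformMod} says $L$ is left modular if and only if $C_i$ contains an element of the spine of $E[C_1,\dots,C_{i-1}]$ for every $i$, and \cref{propExtremality}$(3)$ says $L$ is extremal under exactly this condition (as $L$ is already congruence uniform). Hence the two properties are equivalent. Alternatively, the implication from left modularity to extremality also follows at once by applying the first part to both the join- and meet-congruence uniform structures of $L$.

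I expect the only real subtlety to be that for a merely \emph{join}-congruence uniform lattice the doubled subsets $C_i$ need not be intervals, so one cannot invoke \cref{coroCongruentUniformMod} directly and must instead go through the general \cref{thmCongruencenormal}, using $H(C_i)\subseteq C_i$ and the longest-length property of maximal left modular chains to transfer the spine condition from $H(C_i)$ to $C_i$; everything else is a direct reading of \cref{propExtremality}.
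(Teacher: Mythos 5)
Your proof is correct and follows essentially the same route as the paper: Theorem~\ref{thmCongruencenormal} together with the longest-length property of maximal left modular chains and Proposition~\ref{propExtremality} for the one-sided statements, and comparing Proposition~\ref{propExtremality}$(3)$ with Corollary~\ref{coroCongruentUniformMod} for the congruence uniform equivalence. You correctly cite part $(3)$ of Proposition~\ref{propExtremality} where the paper's proof (apparently a typo) writes $(2)$, and you are slightly more explicit than the paper in noting that $H(C_i)\subseteq C_i$ is what transfers the spine condition and that Proposition~\ref{propExtremality}$(1)$ closes the first argument.
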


\begin{proof}
	The first statements on join and meet-congruence uniform lattices are a direct consequence of Theorem \ref{thmCongruencenormal} with the fact that left modular chains are of longest length. The statement on congruence uniform lattices is the comparison of $(2)$ of Proposition \ref{PropExtremality} and Corollary \ref{coroCongruentUniformMod}.
\end{proof}

\begin{remark}
	Lemma \ref{lemSpineCong} is a special case of the combination of \cite[Lemma 6]{thomas2005analoguedistributivityungradedlattices} and of \cite[Theorem 1.4]{Thomas_2019}, but is proved in our context by a simple induction.
	
	The first statements of Corollary \ref{corocongruenceuniformExtremal} relative to join and meet-congruence uniform lattices are a special case of \cite[Theorem 3.2]{M_hle_2023}, as these lattices are respectively join and meet-semidistributive as recalled in Lemma \ref{lem:congisSD}. Combining this result with \cite[Theorem 1.4]{Thomas_2019} gives, as was observed in \cite{M_hle_2023}, a generalization of the last statement of Corollary \ref{corocongruenceuniformExtremal} to all semidistributive lattices. 
\end{remark}

\subsection{Shellability of congruence normal lattices} \label{sec:shellabilityresult}

Recall the definitions related to shellability from Section \ref{sec:shellability}.
In the sequel $C$ is always a non-empty convex subset of $L$.
We give a necessary condition for a congruence normal lattice to be shellable.

\begin{theorem}  \label{thm:CNextremalshellable}
	Let $L=E[C_1,\dots,C_n]$ be a congruence normal lattice. If $L$ is shellable, then $C_{i}$ intersects the spine of $E[C_1,\dots,C_{i-1}]$, for all $i$.    
\end{theorem}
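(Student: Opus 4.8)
The plan is to argue by contradiction: suppose $L = E[C_1,\dots,C_n]$ is shellable but some $C_i$ fails to intersect the spine of $L_{i-1}:=E[C_1,\dots,C_{i-1}]$. The idea is to exploit two facts recalled in the excerpt: shellability is inherited by intervals \cite[Proposition 4.2]{bjorner1980shellable}, and (by the remarks at the end of \cref{sec:shellability}) the first facet of any shelling is a longest chain, so a shellable lattice has all its maximal chains of maximal length in the sense that... actually more precisely, I would isolate the correct consequence: if $\Delta(\overline{L})$ is shellable then the first facet is longest, hence shelling a lattice forces a certain homogeneity. Let me instead use the cleaner route through \cref{lem:lengthdoublement} and the surjection $L[C]\ont L$.

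First I would set up the reduction. If $C_i$ does not meet the spine of $L_{i-1}$, then by \cref{lem:lengthdoublement} we have $\ell(L_i) = \ell(L_{i-1})$, where $L_i = L_{i-1}[C_i]$. The injection $\psi\colon L_{i-1}\to L_i$ from \cref{defDoublementDay} embeds $L_{i-1}$ as a ``copy'' inside $L_i$, and $C_i\times\{0\}$ gives a convex subset of $L_i$ disjoint (after projection) from this copy in a controlled way; the doubled elements $(c,0)\lessdot(c,1)$ with $c\in C_i$ create maximal chains of $L_i$ of length $\ell(L_{i-1})+1 > \ell(L_i)$ — wait, that contradicts \cref{lem:lengthdoublement}, so such doubled chains must themselves fail to be maximal or must skip enough steps. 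The correct observation is: because $C_i$ misses the spine, no longest chain of $L_i$ goes through the doubling; the longest chains of $L_i$ are exactly the (copies of) longest chains of $L_{i-1}$ that miss $C_i$. Meanwhile, picking any $c\in C_i$ and a maximal chain of $L_{i-1}$ through $c$ and pushing it through the doubling produces a maximal chain of $L_i$ that is strictly shorter than $\ell(L_i)$. So $L_i$ has maximal chains of two different lengths passing through comparable data, and I want to amplify this into a failure of shellability.

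The key step, and the one I expect to be the main obstacle, is turning ``$C_i$ misses the spine'' into a genuine non-shellability statement for $L$ itself rather than for $L_i$. For this I would argue: pick $c\in C_i$; the interval $[\psi(c_*),\psi(c^*)]$-type localization around the doubled edge $(c,0)\lessdot(c,1)$ in $L_i$ — more usefully, in $L$ — should contain a sublattice witnessing non-shellability. Concretely I'd look at the interval of $L$ obtained by localizing at the image of the doubled pair: an interval of the form $[(a,0),(b,1)]$ in $L_i$ (then tracked forward through the remaining doublings $C_{i+1},\dots,C_n$) that is isomorphic to, or retracts onto, a lattice whose order complex is disconnected or not Cohen--Macaulay in the right dimension — typically a ``bowtie'' / the doubling of a single edge not on the spine, whose $\Delta(\overline{L})$ is a wedge missing a sphere of the top dimension, hence not shellable by the homotopy-type criterion (shellable $\Rightarrow$ homotopy equivalent to a wedge of top-dimensional spheres). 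The delicate point is producing the right interval: one must ensure the localization survives the later doublings $C_{i+1},\dots,C_n$ as an interval of the final $L$, which I would handle by choosing $i$ to be the \emph{last} index at which the spine condition fails, or by an induction on $n-i$ showing that later doublings of convex sets only add to length along the spine and hence preserve the offending interval up to isomorphism. Once such an interval $I$ of $L$ is identified with a non-shellable lattice, \cite[Proposition 4.2]{bjorner1980shellable} gives that $L$ is not shellable, contradiction. I would present the interval-extraction as the central lemma and push the routine bookkeeping about doubling and intervals (via \cref{lem:lengthdoublement}, \cref{lemJIRR}, \cref{lem:coverdoubllem}) into short sublemmas.
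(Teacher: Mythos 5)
Your proposal takes a genuinely different route from the paper, and unfortunately it has a gap at exactly the place you flagged as ``the delicate point''. Let me explain why the interval-extraction strategy does not straightforwardly work, and how the paper gets around this.

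The core difficulty is that the non-shellability you are trying to detect is a \emph{global} phenomenon, not a local one. The paper's argument goes through the facet adjacency graph $FA(L)$: when $C_i$ misses the spine, $FA(L_i)$ acquires two disjoint \emph{source sets} (the maximal chains through the doubling, and the longest chains that avoid it), and a shellable complex cannot have two disjoint source sets because the ordering forces every later facet to have an in-neighbor among earlier ones. Crucially, this is a statement about the whole facet set, and it is preserved under further doublings (\cref{lem:doubleincreasesources}) by an explicit and elementary calculation on $FA(L[C])$. Your proposal instead tries to find a proper \emph{interval} of $L$ witnessing non-shellability. Two things go wrong. First, $L_i = E[C_1,\dots,C_i]$ is not an interval of $L=E[C_1,\dots,C_n]$ when $i < n$ -- the subsequent doublings $C_{i+1},\dots,C_n$ do not leave $L_i$ sitting inside $L$ as an interval, so even if $L_i$ is not shellable you cannot invoke \cite[Proposition 4.2]{bjorner1980shellable}. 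Your proposed fixes (pick the last bad $i$, or argue that later doublings ``preserve the offending interval up to isomorphism'') do not hold: a later $C_j$ can intersect, cover, or refine any given interval of $L_{j-1}$ and change it substantially. Second, even at the level of $L_i$ itself, it is not clear that a \emph{proper} interval is non-shellable; in small examples such as the non-extremal congruence uniform lattice of \cref{fig:figALnotextremal}, all the proper intervals can be quite simple lattices. The obstruction lives in the whole order complex, which is exactly why the paper works with $FA(L)$ rather than intervals.

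A secondary issue: you invoke ``shellable $\Rightarrow$ homotopy equivalent to a wedge of top-dimensional spheres''. This is the conclusion only for \emph{pure} shellable complexes. When $C_i$ misses the spine, $\Delta(\overline{L_i})$ is non-pure (it has facets of different dimensions), and non-pure shellable complexes are wedges of spheres of \emph{various} dimensions. So the homotopy type will in general not rule out shellability, and in fact the ``missing a top sphere'' criterion you sketch cannot be the right one. The obstruction the paper uses is combinatorial, not topological: the two disjoint source sets in $FA(L)$ directly violate the shelling condition of \cref{lem:lemforsource}.

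To repair the proposal you would essentially have to invent the facet-adjacency/source-set machinery (\cref{lem:shellableonesource,lem:doubleincreasesources,lem:doublingoutsidespine}), because there is no localization to intervals that works. The preliminary reduction via \cref{lem:lengthdoublement} and the analysis of which chains go through the doubling are on the right track and do reappear in the paper's \cref{lem:doublingoutsidespine}, but the leap from there to ``find a non-shellable interval'' is where the argument breaks.
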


We will prove Theorem \ref{thm:CNextremalshellable} in the rest of this section, but we start with two corollaries:

\begin{corollary} \label{cor:mainresultLBUB}
	If a join-congruence uniform lattice is shellable, then it is join-extremal. If a meet-congruence uniform lattice is shellable, then it is meet-extremal.
\end{corollary}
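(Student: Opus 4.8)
The plan is to derive this immediately from \cref{thm:CNextremalshellable} together with \cref{propExtremality}. I will argue the join-congruence uniform case; the meet-congruence uniform case is strictly dual, replacing \emph{lower} by \emph{upper} and \cref{propExtremality}$(1)$ by its dual part $(2)$.

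First, let $L$ be join-congruence uniform and fix a doubling decomposition $L = E[C_1,\dots,C_n]$ witnessing this, so that every $C_i$ is a (non-empty) lower pseudo-interval, in particular a convex subset, of $E[C_1,\dots,C_{i-1}]$. Hence $L$ is congruence normal, and \cref{thm:CNextremalshellable} applies to this very decomposition: from the shellability of $L$ we conclude that $C_i$ intersects the spine of $E[C_1,\dots,C_{i-1}]$ for every $i$. Now apply the ``if'' direction of \cref{propExtremality}$(1)$ to the same decomposition: all of its $C_i$ are lower pseudo-intervals that meet the spine of the lattice being doubled, so $L$ is join-extremal, as desired.

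I do not expect any genuine obstacle here: all the difficulty is concentrated in \cref{thm:CNextremalshellable}, whose proof occupies the remainder of the section. The one point to be careful about is that both \cref{thm:CNextremalshellable} and \cref{propExtremality} are phrased relative to a chosen doubling decomposition of $L$, so one must commit to a single such decomposition for both invocations — namely one exhibiting join-congruence uniformity — which is precisely what is done above.
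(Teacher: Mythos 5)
Your proposal is correct and follows exactly the route the paper takes: the paper's own proof is the one-liner ``This follows from \cref{propExtremality,thm:CNextremalshellable},'' and you have simply filled in the details of that deduction, including the (correct) observation that one should commit to a single doubling decomposition witnessing join-congruence uniformity when invoking both results.
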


\begin{proof}
	This follows from Proposition \ref{PropExtremality} and Theorem \ref{thm:CNextremalshellable}.   
\end{proof}

The following was stated as Theorem \ref{thm:mainthmintro} in the introduction, and contains Theorem \ref{thm:principal}:

\begin{theorem} \label{thm:CUextremalshellable}
	Let $L$ be a congruence uniform lattice. The following properties are equivalent:
	\begin{itemize}
		\item[$(i)$] $L$ is extremal.
		\item[$(ii)$] $L$ is left modular.
		\item[$(iii)$] $L$ is $EL$-shellable.
		\item[$(iv)$] $L$ is shellable.
	\end{itemize}    
\end{theorem}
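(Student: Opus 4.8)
The plan is to close the cycle of implications $(i)\Longrightarrow(ii)\Longrightarrow(iii)\Longrightarrow(iv)\Longrightarrow(i)$. Three of these arrows are already available. The implications $(i)\Longrightarrow(ii)\Longrightarrow(iii)\Longrightarrow(iv)$ hold for every semidistributive lattice, as explained at the end of \cref{sec:shellability}: by \cref{CorExtremalSD} an extremal semidistributive lattice is left modular, any left modular lattice is $EL$-shellable by \cref{rmkEL}, and $EL$-shellability implies shellability. Since a congruence uniform lattice is semidistributive by \cref{lem:congisSD}, these three implications are free. So the entire content of the theorem is the remaining arrow $(iv)\Longrightarrow(i)$: a shellable congruence uniform lattice is extremal.

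\textbf{Proof of $(iv)\Longrightarrow(i)$.} Suppose $L=E[C_1,\dots,C_n]$ is congruence uniform and shellable. By \cref{thm:CNextremalshellable}, at each doubling step the interval $C_i$ intersects the spine of $E[C_1,\dots,C_{i-1}]$. Since $L$ is congruence uniform, every $C_i$ is an interval, so all hypotheses of part $(3)$ of \cref{propExtremality} are met: $L$ is congruence uniform and each $C_i$ meets the spine of the lattice being doubled. Part $(3)$ of \cref{propExtremality} then gives precisely that $L$ is extremal. This closes the cycle, and all four properties are equivalent.

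\textbf{The main obstacle} is not in assembling these implications — that is a one-line bookkeeping argument once \cref{thm:CNextremalshellable} is in hand — but rather it is the proof of \cref{thm:CNextremalshellable} itself, which the excerpt defers to the rest of \cref{sec:shellabilityresult}. The delicate point there is extracting, from a putative shelling of $\Delta(L[C])$ when $C$ fails to meet the spine of $L$, a contradiction: one must argue that doubling along a convex set disjoint from the spine does not increase the length (this is \cref{lem:lengthdoublement}), and then exploit the fact that the first facet of any shelling has longest length together with the combinatorial structure of \cref{eqdefshelling} to rule out shellability. I would expect the heart of that argument to involve analyzing how maximal chains of $L[C]$ that go through the doubling interact with those that do not, and showing the shelling condition $|F_l\cap F_j|=|F_j|-1$ cannot be satisfied for the relevant pair of facets; but since \cref{thm:CNextremalshellable} may be assumed here, the present proof of \cref{thm:CUextremalshellable} is short.
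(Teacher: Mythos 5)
Your proof is correct and follows essentially the same path as the paper's. The only cosmetic difference is that you close the cycle $(iv)\Rightarrow(i)$ by invoking \cref{thm:CNextremalshellable} and part $(3)$ of \cref{propExtremality} directly, whereas the paper routes through \cref{cor:mainresultLBUB} (which is itself nothing more than the combination of those two results, applied to the join- and meet-congruence uniform cases separately); the paper also cites \cref{corocongruenceuniformExtremal} for the equivalence $(i)\Leftrightarrow(ii)$ rather than the one-way \cref{CorExtremalSD}, but since you are running a cycle, the one-way implication suffices. Your diagnosis that the real content lies in \cref{thm:CNextremalshellable} is exactly right.
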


\begin{proof}
	Corollary \ref{corocongruenceuniformExtremal} gives $(i)\Longleftrightarrow (ii)$. As we recalled in Section \ref{sec:Backgroundext}, we have that $(ii)\Longrightarrow (iii) \Longrightarrow (iv)$ is true for any lattice. It remains to prove $(iv) \Longrightarrow (i)$, which follows from Corollary \ref{cor:mainresultLBUB} since a congruence uniform lattice is both join-congruence uniform and meet-congruence uniform.
\end{proof}

We are now interested in proving Theorem \ref{thm:CNextremalshellable}. 

\begin{definition}
	Let $F$ be a chain of $L$. If $F\cap C=\emptyset$, then $F$ corresponds to a chain in $L[C]$ that we denote by $(F,0)$. Otherwise, let $F\cap C=\{c_1,c_2,\dots,c_k\}$ and we denote by $(F,i)$ the chain 
	$$\big(F\setminus (F\,\cap C) \big)\,\cup \,\{(c_1,0),(c_2,0),\dots,(c_i,0)\} \,\cup \, \{(c_{i},1),(c_{i+1},1),\dots,(c_k,1)\}.$$
\end{definition}

Although it is in conflict with our convention that for $n\in \mathbb Z_{\geq 0}$, we have $[n]=\{1,\dots,n\}$, it turns out to be convenient to define $[0]=\{0\}$.

The following lemma follows from the definition of the doubling construction.

\begin{lemma} \label{lem:doublingordercomplex}
	The facets of $\Delta(L[C])$ are all the $(F,i)$ for any maximal chain $F$ of $L$ and $i\in [\,|F\cap C|\,]$. 
\end{lemma}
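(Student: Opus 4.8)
The plan is to unwind the definition of $L[C]$ and show that every maximal chain of $L[C]$ arises in exactly the claimed form. First I would recall that there is a surjective poset morphism $\pi: L[C] \to L$ sending $(x,\epsilon) \mapsto x$. Given any maximal chain $\widetilde F$ of $L[C]$, its image $F := \pi(\widetilde F)$ is a chain of $L$; the only way $\pi$ can fail to be injective on $\widetilde F$ is if both $(c,0)$ and $(c,1)$ lie in $\widetilde F$ for some $c \in C$ (these are the only pairs of distinct elements of $L[C]$ with the same image). So I would argue that $\pi$ restricted to $\widetilde F$ collapses a (possibly empty) set of such doubled pairs $\{c_{j_1}, \dots, c_{j_r}\}$ to single elements and is otherwise a bijection onto $F$. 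The first thing to check is that $F$ is in fact a \emph{maximal} chain of $L$: if $x \lessdot_L z$ strictly refined $F$ at some spot, one could lift this refinement back up to $L[C]$ (using $\psi$ and \cref{lem:coverdoubllem} to see which copy to pick) and contradict maximality of $\widetilde F$.

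Next I would pin down which elements of $F \cap C$ get doubled in $\widetilde F$. Write $F \cap C = \{c_1 < c_2 < \dots < c_k\}$ (these are consecutive in $F$ by convexity of $C$). The key local claim is: along $\widetilde F$, once we pass above $(c_t,1)$ for some $t$ we stay in the ``$\{1\}$-part'', and below $(c_t,0)$ we are in the ``$\{0\}$-part''; more precisely the set of $\epsilon$-coordinates along $\widetilde F$ is weakly increasing. This follows because $L[C]$ sits inside $L \times \{0,1\}$ with the product order, so $\epsilon$ is weakly monotone along any chain. Combined with the fact that $(c,0)$ and $(c,1)$ are both in $L[C]$ for every $c \in C$ while for $x \notin I_L(C)$ only $(x,1)$ exists and for $x \in I_L(C)\setminus C$ only $(x,0)$ exists (\cref{lemJIRR}-style bookkeeping, or directly from \cref{defDoublementDay}), one sees that $\widetilde F$ must contain $(c_t,0)$ for all $c_t$ below the ``switch point'' and $(c_t,1)$ for all $c_t$ at or above it, and by maximality it contains \emph{both} copies of exactly one $c_i$ (the switch happens at a cover $(c_i,0)\lessdot(c_i,1)$, which is a genuine cover by \cref{lem:coverdoubllem}). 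Hence $\widetilde F = (F,i)$ for a unique $i \in \{1,\dots,k\} = [|F\cap C|]$. When $F \cap C = \emptyset$ the chain lies entirely in the subposet isomorphic (via $\psi$) to $L$ split across the two levels with no doubling, giving $\widetilde F = (F,0)$, which matches the convention $[0] = \{0\}$.

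For the converse direction I would check that each $(F,i)$ with $F$ a maximal chain of $L$ and $i \in [|F \cap C|]$ (or $i = 0$ if $F \cap C = \emptyset$) is indeed a maximal chain of $L[C]$: it is a chain because consecutive elements are covers — inside the $\{0\}$-block and the $\{1\}$-block covers of $L$ lift to covers by \cref{defDoublementDay}, and the middle step $(c_i,0)\lessdot(c_i,1)$ is a cover by \cref{lem:coverdoubllem} — and it is maximal because its image under $\pi$ is the maximal chain $F$ and the only refinement $\pi$ could hide is inserting a doubled pair, which is already accounted for by the single switch at $c_i$.

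The main obstacle I anticipate is the careful case analysis at the ``switch point'': one must rule out that $\widetilde F$ skips a $c_t$ entirely (contains neither copy), or contains $(c_t,0)$ for some $t$ above where it already contains $(c_s,1)$ with $s<t$. Both are handled by the monotonicity of the $\epsilon$-coordinate together with the observation that $c_1,\dots,c_k$ are consecutive in $F$ (convexity of $C$) and that maximal chains cannot skip a cover; but stating this cleanly is where the real work lies. Everything else is routine unwinding of \cref{defDoublementDay,lem:coverdoubllem}.
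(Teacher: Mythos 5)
Your proof is correct. The paper gives no argument for this lemma, stating only that it ``follows from the definition of the doubling construction,'' so there is no in-paper proof to compare against; your route via the projection $\pi$, the weak monotonicity of the $\epsilon$-coordinate along chains in $L[C]\subseteq L\times\{0,1\}$, and the analysis of the unique switch point is a valid way to unwind it. The one step you explicitly flag does close cleanly: if the switch cover were of the non-doubling type in \cref{lem:coverdoubllem}, say $(a,0)\lessdot(b,1)$ with $a\in I_L(C)\setminus C$, $b\notin I_L(C)$, and $a\lessdot b$ in $L$, then $a$ and $b$ are consecutive in $F$ and neither lies in $C$, so each $c_t\in F\cap C$ would have to satisfy $c_t<a$ or $c_t>b$; the first is impossible by convexity (pick $c''\in C$ with $a\leq c''$, then $c_t<a\leq c''$ forces $a\in C$), and the second is impossible because $c_t\in I_L(C)$ and $I_L(C)$ is an order ideal, which would give $b\in I_L(C)$. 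Hence when $F\cap C\neq\emptyset$ the switch must be a doubled pair $(c_i,0)\lessdot(c_i,1)$, and it is unique since two distinct doubled pairs in a chain would contain incomparable elements. This, together with your observations about $\pi(\widetilde F)$ being maximal and the converse verification, completes the argument.
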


For any finite lattice $L$, we define a directed multigraph $FA(L)$, called the \emph{facet adjacency graph} of $L$, whose vertices are the maximal chains of $L$, and we have an edge $F\rightarrow G$ between two such chains $F$ and $G$ if $|F\,\cap G| = |G|-1$. A \emph{source set} of a directed multigraph $G$ is a non-empty subset of the vertices $X\subsetneq V(G)$ such that there are no $y\rightarrow x$ with $y\not\in X$ and $x\in X$. This means that no edges starting outside $X$ are pointing to a vertex of $X$. Two source sets $X$ and $Y$ are disjoint if $X\,\cap Y=\emptyset$. See Figure \ref{fig:congnonextremalFA(L)} for an example of a non-extremal congruence uniform lattice, and Figure \ref{fig:extremalcaselGL} for the case of an extremal congruence uniform lattice. In these figures we write the chains as words without including $\hat{0}$ and $\hat{1}$.

\begin{figure}
	\begin{subfigure}{.5\textwidth}
		\centering
		\begin{tikzpicture}
			\begin{scope}[scale=0.3]
				\draw (0,0)--(4,4)--(4,6)--(0,10)--(-4,6)--(-4,4)--(0,0);
				\draw (2,2)--(-2,8);
				\draw (-2,2)--(2,8);  
				\draw (-2,2) node[below left]{$a$};
				\draw (2,2) node[below right]{$b$};
				\draw (-4,4) node[left]{$c$};
				\draw (0,5) node[right]{$d$};
				\draw (4,4) node[right]{$e$};
				\draw (-4,6) node[left]{$f$};
				\draw (4,6) node[right]{$g$};
				\draw (-2,8) node[above left]{$h$};
				\draw (2,8) node[above right]{$i$};
			\end{scope}
		\end{tikzpicture}
		\caption{A non-extremal congruence\\
			uniform lattice $L$.}
		\label{fig:figALnotextremal}
	\end{subfigure}%
	\begin{subfigure}{.5\textwidth}
		\centering
		\begin{tikzpicture}
			\begin{scope}[scale=1.8]
				\node (A) at (0,0) {$begi$};
				\node (B) at (1,0) {$bdi$};
				\node (C) at (2,0) {$bdh$};
				\node (D) at (1,-1) {$adi$};
				\node (E) at (2,-1) {$adh$};
				\node (F) at (3,-1) {$acfh$};   
				
				\draw[->,>=latex] (A)--(B);
				\draw[->,>=latex] (B) to[bend left=10] (C);
				\draw[->,>=latex] (C) to[bend left=10] (B);
				\draw[->,>=latex] (D) to[bend left=10] (E);
				\draw[->,>=latex] (E) to[bend left=10] (D);
				\draw[->,>=latex] (B) to[bend left=10] (D);
				\draw[->,>=latex] (D) to[bend left=10] (B);
				\draw[->,>=latex] (C) to[bend left=10] (E);
				\draw[->,>=latex] (E) to[bend left=10] (C);
				\draw[->,>=latex] (F)--(E);
			\end{scope}
		\end{tikzpicture}
		\caption{The graph $FA(L)$ from Figure \ref{fig:figALnotextremal}. The sets $\{begi\}$ \\and $\{acfh\}$ are two disjoint source sets.}
		\label{fig:sfig2}
	\end{subfigure}
	\caption{}
	\label{fig:congnonextremalFA(L)}
\end{figure}

%%%%%%%%%%%%%%%%%%%%%%%%%%%%%%%%%%

\begin{figure}
	\begin{subfigure}{.5\textwidth}
		\centering
		\begin{tikzpicture}
			\begin{scope}[scale=0.5]
				\draw (2,0)--(2,2)--(2,8)--(-4,11)--(-4,7)--(-2,6)--(-2,2)--(2,0);
				\draw (2,2)--(-2,4);
				\draw (2,4)--(-2,6)--(-2,10);
				\draw (2,6)--(-2,8);
				\draw (0,7)--(0,9);    
				\draw (2,2) node[right]{$b$};
				\draw (2,4) node[right]{$d$};
				\draw (2,6) node[right]{$f$};
				\draw (2,8) node[right]{$j$};
				\draw (0,7) node[above right]{$h$};
				\draw (0,9) node[above right]{$k$};
				\draw (-2,2) node[left]{$a$};
				\draw (-2,4) node[left]{$c$};
				\draw (-2,6) node[below left]{$e$};
				\draw (-2,8) node[above right]{$i$};
				\draw (-2,10) node[above right]{$l$};
				\draw (-4,7) node[above right]{$g$};
				\begin{scope}[xshift=6.5cm,yshift=1cm]
					\draw (0,8) node{$A:=a\,c\,e\,i\,l$};
					\draw (-0.1,7) node{$B:=a\,c\,e\,g$};
					\draw (0,6) node{$C:=b\,c\,e\,i\,l$};
					\draw (-0.1,5) node{$D:=b\,c\,e\,g$};
					\draw (0,4) node{$E:=b\,d\,e\,i\,l$};
					\draw (-0.1,3) node{$F:=b\,d\,e\,g$};
					\draw (0.3,2) node{$G:=b\,d\,f\,h\,i\,l$};
					\draw (0.3,1) node{$H:=b\,d\,f\,h\,k\,l$};
					\draw (0.3,0) node{$I:=b\,d\,f\,j\,k\,l$};    
				\end{scope}
			\end{scope}
		\end{tikzpicture}
		\caption{An extremal congruence uniform lattice\\ $L$
			with a labeling of the maximal chains,\\
            omitting the extremum elements.}
		\label{fig:figAextremal}
	\end{subfigure}%
	\begin{subfigure}{.5\textwidth}
		\centering
		\begin{tikzpicture}
			\begin{scope}[scale=1.4]
				\node (I) at (0,0) {$I$};
				\node (H) at (1,0) {$H$};
				\node (G) at (2,0) {$G$};
				\node (E) at (3,0) {$E$};
				\node (C) at (4,0) {$C$};
				\node (A) at (5,0) {$A$}; 
				\node (F) at (3,-1) {$F$}; 
				\node (D) at (4,-1) {$D$};
				\node (B) at (5,-1) {$B$}; 
				
				\draw[->,>=latex] (G)--(E);
				\draw[->,>=latex] (I) to[bend left=10] (H);
				\draw[->,>=latex] (H) to[bend left=10] (I);
				\draw[->,>=latex] (H) to[bend left=10] (G);
				\draw[->,>=latex] (G) to[bend left=10] (H);
				\draw[->,>=latex] (E) to[bend left=10] (C);
				\draw[->,>=latex] (C) to[bend left=10] (E);
				\draw[->,>=latex] (C) to[bend left=10] (A);
				\draw[->,>=latex] (A) to[bend left=10] (C);
				\draw[->,>=latex] (F) to[bend left=10] (D);
				\draw[->,>=latex] (D) to[bend left=10] (F);
				\draw[->,>=latex] (D) to[bend left=10] (B);
				\draw[->,>=latex] (B) to[bend left=10] (D);
				\draw[->,>=latex] (E)--(F);
				\draw[->,>=latex] (C)--(D);
				\draw[->,>=latex] (A)--(B);
			\end{scope}
		\end{tikzpicture}
		\caption{The graph $FA(L)$ from Figure \ref{fig:figAextremal}, whose source \\sets are $\{I,H,G\}$ and $\{I,H,G,E,C,A\}$.}
		\label{fig:figBFA(L)extremal}
	\end{subfigure}
	\caption{}
	\label{fig:extremalcaselGL}
\end{figure}

The following is immediate from the definition of shellability:

\begin{lemma}  \label{lem:lemforsource}
	If $F_1,F_2,\dots ,F_k$ is a shelling of $\Delta(L)$, then for all $2\leq j\leq k$, there exists $i\in \{1,\dots,j-1\}$ such that $F_i \rightarrow F_j$.    
\end{lemma}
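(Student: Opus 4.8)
The statement is an immediate consequence of the equivalent reformulation of shellability recalled in \cref{sec:shellability}: a linear order $F_1,\dots,F_k$ on the facets is a shelling if and only if for all $1\le i<j\le k$ there exists $l<j$ with $F_i\cap F_j\subseteq F_l\cap F_j$ and $|F_l\cap F_j|=|F_j|-1$. So I would fix $j$ with $2\le j\le k$, apply this reformulation to the pair $1<j$, and obtain an index $l<j$ with $|F_l\cap F_j|=|F_j|-1$. By the definition of the facet adjacency graph $FA(L)$, the relation $|F_l\cap F_j|=|F_j|-1$ is exactly the assertion that $F_l\rightarrow F_j$; hence $i:=l\in[j-1]$ has the required property.

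\textbf{Alternative, if one prefers to argue from \eqref{eqdefshelling} directly.} One could instead note that for $j\ge 2$ the complex $\big(\bigcup_{i=1}^{j-1}\langle F_i\rangle\big)\cap\langle F_j\rangle$ is nonempty (every maximal chain of $L$ contains $\hat 0$ and $\hat 1$, so this complex contains the face $\{\hat 0,\hat 1\}$) and, by definition of a shelling, pure of dimension $\dim(F_{j})-1=|F_j|-2$. Pick any facet $G$ of it; then $|G|=|F_j|-1$ and $G\subseteq F_i$ for some $i\le j-1$, so $|F_i\cap F_j|\ge|F_j|-1$. Since distinct facets of $\Delta(L)$ cannot contain one another, $|F_i\cap F_j|=|F_j|-1$, i.e.\ $F_i\rightarrow F_j$.

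\textbf{Main obstacle.} There is essentially none: the only thing to be careful about is matching the chosen formulation of ``shelling'' with the definition of $FA(L)$, so I would simply use the combinatorial reformulation already stated in the text, making the proof a one-line unwinding of definitions.
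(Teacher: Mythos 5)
Your proof is correct, and it matches the paper's intent: the paper dismisses this as ``immediate from the definition of shellability,'' and your first argument is exactly the one-line unwinding (apply the combinatorial reformulation of shellability to the pair $1<j$ and translate $|F_l\cap F_j|=|F_j|-1$ into $F_l\to F_j$). Your alternative argument from the raw definition~\eqref{eqdefshelling} is also valid and usefully records why the intersected complex is nonempty, though the paper does not spell either out.
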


We deduce a necessary condition for shellability of $L$ on its facet adjacency graph.

\begin{lemma} \label{lem:shellableonesource}
	If $L$ is shellable, then $FA(L)$ cannot have two disjoint source sets.  
\end{lemma}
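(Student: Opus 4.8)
The plan is to argue by contradiction. Suppose $L$ is shellable and $FA(L)$ has two disjoint source sets $X$ and $Y$. Fix a shelling $F_1, F_2, \dots, F_k$ of $\Delta(L)$. The key observation is that a shelling is a linear order on the facets (maximal chains) of $L$, so among the vertices in $X$ there is a first one to appear in the shelling, say $F_j \in X$, and similarly a first one $F_{j'} \in Y$; since $X \cap Y = \emptyset$ these are distinct, and without loss of generality $j < j'$, so in particular $j \geq 2$ (indeed, one of the two first-appearing facets must have index at least $2$). Actually, more carefully: at most one facet can be $F_1$, so at least one of the two ``first representatives'' has index $\geq 2$; relabel so that this one is $F_j \in X$ with $j \geq 2$.

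By \cref{lem:lemforsource}, since $j \geq 2$ there exists $i \in [j-1]$ with $F_i \to F_j$ in $FA(L)$. Now $F_j \in X$ and $X$ is a source set, so by definition there are no edges $F_i \to F_j$ with $F_i \notin X$; hence $F_i \in X$. But $i < j$, contradicting the minimality of $j$ as the smallest index of a facet in $X$ appearing in the shelling. This contradiction shows $FA(L)$ cannot have two disjoint source sets.

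The one point that needs care is the very first step: making sure that the ``first representative'' we pick genuinely has index at least $2$, so that \cref{lem:lemforsource} applies. This is where disjointness of $X$ and $Y$ is used — since the two sets are disjoint, their first-appearing members are different facets, hence cannot both be $F_1$, so at least one has index $\geq 2$; we simply name that set $X$. I expect this bookkeeping to be the only subtlety; the rest is a direct unwinding of the definitions of source set and of \cref{lem:lemforsource}. Note we do not even need the full strength of both source sets being source sets — one suffices once we know its first representative is not $F_1$ — but the hypothesis that there are \emph{two disjoint} source sets is exactly what guarantees that.
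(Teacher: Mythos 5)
Your proof is correct and follows essentially the same route as the paper: both arguments take the first-appearing facet of each disjoint source set in the shelling, use disjointness to argue that at least one has index $\geq 2$, apply \cref{lem:lemforsource} to produce an earlier facet with an edge to it, and then derive a contradiction from the source-set property. The paper phrases the final contradiction as producing an edge from outside $T$ into $T$, whereas you phrase it as contradicting the minimality of the chosen index, but these are the same observation.
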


\begin{proof}
	Assume that $FA(L)$ has two disjoint source sets $S$ and $T$. Seeking a contradiction, suppose that $L$ is shellable. Thus we have a shelling $F_1,F_2,\dots,F_k$ of $\Delta(L)$. Let $F_l$ and $F_j$ be the left-most maximal chains of respectively $S$ and $T$ in our shelling. Since $F_l\neq F_j$ because the source sets $S$ and $T$ are disjoint, without loss of generality we can assume that $j\geq 2$. Using Lemma \ref{lem:lemforsource}, there exists $i\in [j-1]$ such that $F_i\rightarrow F_j$. Since $F_j$ is the left-most maximal chain of $T$ in our shelling, it implies that $F_i\not\in T$. Since $T$ is a source set, this is absurd. 
\end{proof}

We can obtain $FA(L[C])$ from $FA(L)$:

\begin{proposition}  \label{prop:algoFA(L[C])}
	The vertices of $FA(L[C])$ are the $(F,i)$ for any maximal chain $F$ of $L$ and $i\in [\,|F\cap C|\,]$. We give the edges of $FA(L[C])$ knowing these of $FA(L)$:
	\begin{itemize}
		\item We have $(F,i) \rightarrow (G,0)$ whenever $F\rightarrow G$ in $FA(L)$ with $G\cap C=\emptyset$, where $i\in [\,|F\cap C|\,]$.
		\item For $F\cap C\neq \emptyset$ and $G\cap C\neq \emptyset$, we have $(F,i)\rightarrow (G,j)$ whenever $F\rightarrow G$ in $FA(L)$ and $|(F,i)\cap (G,j)|=|G|$, for any $i\in [\,|F\,\cap C|\,]$ and $j\in [\,|G\,\cap C|\,]$.
		\item We have $(F,i)\rightarrow (F,i+1)$ and $(F,i+1)\rightarrow (F,i)$ whenever $|F\cap C|\geq 2$, for any $i\in [\,|F\cap C|-1]$.
	\end{itemize}
\end{proposition}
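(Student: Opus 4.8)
The plan is to work directly from \cref{lem:doublingordercomplex}, which already identifies the vertices of $FA(L[C])$ as the chains $(F,i)$, so the only task is to determine, for two vertices $(F,i)$ and $(G,j)$, when $|(F,i)\cap(G,j)| = |(G,j)|-1$. I would organize the argument by the three cases in the statement according to whether $F\cap C$ and $G\cap C$ are empty, using throughout the elementary fact that $|(G,j)| = |G| + [G\cap C\neq\emptyset]$ (adding one vertex only when $G$ meets $C$, since then exactly one element $c_j\in F\cap C$ is doubled into $\{(c_j,0),(c_j,1)\}$ while all the others are merely relabelled), together with the bookkeeping identity $|(F,i)\cap(G,j)| = |F\cap G| + |\{\text{common doubled/relabelled vertices that survive with matching second coordinate}\}|$.

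First I would dispose of the case $G\cap C=\emptyset$: then $(G,0)=(G,0)$ is just $G$ with every vertex labelled $0$, and $|(G,0)|=|G|$, so $(F,i)\to(G,0)$ iff $(F,i)$ and $(G,0)$ share $|G|-1$ vertices. A vertex $x\in F\cap G$ appears in $(G,0)$ as $(x,0)$ (since $x\notin C$ as $G\cap C=\emptyset$), and it appears in $(F,i)$ as $(x,0)$ as well, because $x\notin C$ forces its second coordinate to be $0$ regardless of $i$; hence $|(F,i)\cap(G,0)| = |F\cap G|$, which equals $|G|-1$ exactly when $F\to G$ in $FA(L)$ — giving the first bullet. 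Second, when both $F\cap C$ and $G\cap C$ are nonempty and $F\neq G$: here $|(G,j)| = |G|+1$, so we need $|(F,i)\cap(G,j)| = |G|$; I would argue that this forces $|F\cap G|\geq |G|-1$, i.e.\ $F\to G$ in $FA(L)$ or $F=G$ (the latter excluded), and then observe that among the one or two extra shared vertices coming from doubled elements the count works out precisely to the stated condition $|(F,i)\cap(G,j)| = |G|$; this is the bookkeeping I would spell out carefully but not belabor. Third, the "vertical" edges: for a fixed maximal chain $F$ with $|F\cap C|\geq 2$, the chains $(F,i)$ and $(F,i+1)$ differ only in that the element $c_{i+1}\in F\cap C$ is labelled $(c_{i+1},1)$ in $(F,i)$ and $(c_{i+1},0)$ in $(F,i+1)$; every other vertex agrees, so they share exactly $|F|$ vertices out of $|F|+1$, giving edges in both directions — the third bullet.

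The main obstacle I anticipate is the second bullet: one must check that when both chains meet $C$, the condition "$F\to G$ in $FA(L)$ \emph{and} $|(F,i)\cap(G,j)|=|G|$" is genuinely the right characterization, i.e.\ that $F\to G$ alone does not suffice (the choice of $i$ and $j$ can destroy a shared doubled vertex) and that no edges arise from pairs with $|F\cap G|<|G|-1$. I would handle this by a short case analysis on how the unique element of $G\setminus F$ (when $F\to G$) interacts with $C$, and on whether the elements of $F\cap C$ and $G\cap C$ that are common get matching labels under $i$ and $j$; since $C$ is convex and $F,G$ are maximal chains, the intersection $F\cap C$ (resp.\ $G\cap C$) is itself a chain, so the relabelling is monotone and the label-matching condition reduces to comparing the two threshold indices $i$ and $j$ against the position of the symmetric difference — a finite check that I would state cleanly rather than enumerate exhaustively.
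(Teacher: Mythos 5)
Your approach matches the paper's, whose proof is a two-sentence pointer to \cref{lem:doublingordercomplex}, the cardinality identity $|(F,i)|=|F|+1$ when $F\cap C\neq\emptyset$ and $|(F,0)|=|F|$ otherwise, and the definition of the doubling; you are simply unfolding those observations, and the argument goes through. Two local claims you make are inaccurate even though your conclusions survive. First, for $x\in F\cap G$ with $G\cap C=\emptyset$ you assert that the second coordinate of $\psi(x)$ is forced to be $0$; in fact $\psi(x)=(x,1)$ whenever $x\notin I_L(C)$, and what actually matters is only that $\psi(x)$ is computed the same way whether $x$ is regarded inside $(F,i)$ or inside $(G,0)$. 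Second, $(F,i)$ and $(F,i+1)$ do not differ by relabelling $c_{i+1}$: the chain $(F,i)$ doubles $c_i$ and contains $(c_i,1)$ but not $(c_{i+1},0)$, while $(F,i+1)$ doubles $c_{i+1}$ and contains $(c_{i+1},0)$ but not $(c_i,1)$; the count $|(F,i)\cap(F,i+1)|=|F|$ is unchanged, but the mechanism is the shift of the doubled element, not a relabelling. Finally, your case split omits $F\cap C=\emptyset$ together with $G\cap C\neq\emptyset$: there $(F,0)$ misses both $(c'_j,0)$ and $(c'_j,1)$ of $(G,j)$, so $|(F,0)\cap(G,j)|\leq|(G,j)|-2$ and no edges arise, which is needed to confirm that the three bullets of the proposition are exhaustive.
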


\begin{proof}
	We have $|(F,0)|=|F|$ and $|(F,i)|=|F|+1$ if $i\neq 0$. Then the result follows from Lemma \ref{lem:doublingordercomplex} and the definition of the doubling construction.
\end{proof}

\begin{lemma} \label{lem:doubleincreasesources}
	If $FA(L)$ has at least two disjoint source sets, it is also the case for $FA(L[C])$.   
\end{lemma}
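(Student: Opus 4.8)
The plan is to pull back the two source sets along the projection that forgets the doubling. Let $\pi\colon V(FA(L[C]))\to V(FA(L))$ be the map sending $(F,i)$ to $F$. Since for every maximal chain $F$ of $L$ the index set $[\,|F\cap C|\,]$ is non-empty (recall the convention $[0]=\{0\}$), every such $F$ has at least one lift, so $\pi$ is surjective.

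The key point, read off directly from \cref{prop:algoFA(L[C])}, is that $\pi$ carries each edge of $FA(L[C])$ to an edge or a loop of $FA(L)$: for an edge $u\to v$ of the first or second type one has $\pi(u)\to\pi(v)$ in $FA(L)$, and for an edge of the third type one has $\pi(u)=\pi(v)$. Consequently, if $S$ is a source set of $FA(L)$, then $\pi^{-1}(S)$ is a source set of $FA(L[C])$. Indeed, it is non-empty because $S$ is non-empty and $\pi$ is surjective; it is a proper subset because $S$ is proper, so picking a maximal chain $G$ of $L$ with $G\notin S$, the non-empty set $\pi^{-1}(\{G\})$ is disjoint from $\pi^{-1}(S)$; and if $u\to v$ is an edge of $FA(L[C])$ with $v\in\pi^{-1}(S)$, then either $\pi(u)\to\pi(v)\in S$, which forces $\pi(u)\in S$ since $S$ is a source set, or $\pi(u)=\pi(v)\in S$; in either case $u\in\pi^{-1}(S)$.

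Now let $S$ and $T$ be two disjoint source sets of $FA(L)$. By the above, $\pi^{-1}(S)$ and $\pi^{-1}(T)$ are source sets of $FA(L[C])$, and they are disjoint since $\pi^{-1}(S)\cap\pi^{-1}(T)=\pi^{-1}(S\cap T)=\emptyset$. This exhibits two disjoint source sets of $FA(L[C])$, which is what we wanted.

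I do not expect a genuine obstacle here; the only care needed is bookkeeping with \cref{prop:algoFA(L[C])} and the degenerate convention $[0]=\{0\}$, to make sure that every edge of $FA(L[C])$ really falls into one of the three listed types (so that it projects to an edge or a loop of $FA(L)$) and that $\pi$ is surjective (so that the lifts of chains outside $S$ and $T$ used above actually exist).
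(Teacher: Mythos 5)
Your proof is correct and is essentially the paper's argument in different clothing: your $\pi^{-1}(S)$ and $\pi^{-1}(T)$ are exactly the sets $S_C$ and $T_C$ the paper defines, and your key observation (every edge of $FA(L[C])$ projects under $\pi$ to an edge of $FA(L)$ or has equal endpoints) is the same use of \cref{prop:algoFA(L[C])} that the paper makes, stated contrapositively.
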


\begin{proof}
	Let $S$ and $T$ be two disjoint source sets of $FA(L)$. We define two subsets $S_C:=\{(F,i)\mid F\in S, i\in [\,|F\cap C|\,]\}$ and $T_C:=\{(G,i)\mid G\in T, i\in [\,|G\cap C|\,]\}$ of $FA(L[C])$. We conclude by proving that the sets $S_C$ and $T_C$ are disjoint source sets of $FA(L[C])$. The fact that they are disjoint nonempty proper subsets of $FA(L[C])$ is clear. By Proposition \ref{prop:algoFA(L[C])}, if we do not have $F\rightarrow G$ in $FA(L)$ with $F\neq G$, then we do not have $(F,i)\rightarrow (G,j)$ in $FA(L[C])$. It follows, from this observation and the fact that $S$ and $T$ are source sets, that $S_C$ and $T_C$ are source sets.
\end{proof}

\begin{lemma}  \label{lem:doublingoutsidespine}
	Suppose that $C$ does not intersect the spine of $L$. Then the two sets
	$$S:=\{(F,i)\mid F \text{ is a maximal chain of } L,\, F\cap C\neq \emptyset,\, i\in [\,|F\cap C|\,]\}$$    
	and 
	$$T:=\{(G,0) \mid G \text{ is a longest chain of } L\}$$  
	are two disjoint source sets of $FA(L[C])$.
\end{lemma}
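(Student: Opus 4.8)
The plan is to check directly the three requirements for $S$ and $T$: that both are nonempty proper subsets of the vertex set of $FA(L[C])$, that they are disjoint, and that each is closed under incoming edges. Throughout I would lean on \cref{lem:doublingordercomplex} — so the vertices of $FA(L[C])$ are exactly the chains $(F,i)$ with $i\in[|F\cap C|]$, noting that $(F,0)$ is a facet \emph{only} when $F\cap C=\emptyset$ — together with the complete list of edges of $FA(L[C])$ supplied by \cref{prop:algoFA(L[C])}. In particular $S$ is precisely the set of facets that go through the doubling, i.e. those with second index $\geq 1$.

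First, the easy book-keeping. Since $C\neq\emptyset$, extending any $c\in C$ to a maximal chain of $L$ produces a chain meeting $C$, so $S\neq\emptyset$; and $L$ has a longest chain, so $T\neq\emptyset$. Because $C$ does not meet the spine of $L$, every longest chain $G$ of $L$ satisfies $G\cap C=\emptyset$, so each $(G,0)\in T$ does not go through the doubling whereas every element of $S$ does; hence $S\cap T=\emptyset$, and consequently each of $S,T$ is proper (it omits the nonempty other set).

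Next, that $S$ is a source set. Its vertices are exactly the $(F,i)$ with $F\cap C\neq\emptyset$. Reading off \cref{prop:algoFA(L[C])}: the first bullet only creates edges whose target is some $(G,0)$ with $G\cap C=\emptyset$, which is never in $S$; the second creates edges $(F',i')\to(G,j)$ with both $F'\cap C\neq\emptyset$ and $G\cap C\neq\emptyset$, so the source already lies in $S$; the third creates edges inside a single family $(F',\cdot)$ with $|F'\cap C|\geq 2$, again with source in $S$. So every edge into $S$ starts in $S$.

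Finally, that $T$ is a source set, which is where the one genuinely new ingredient appears. Vertices of $T$ have second index $0$, and inspecting \cref{prop:algoFA(L[C])} only the first bullet produces edges with such a target; these have the form $(F,i)\to(G,0)$ with $F\to G$ in $FA(L)$, $G\cap C=\emptyset$ and $i\in[|F\cap C|]$. So fix $(G,0)\in T$, thus $G$ a longest chain of $L$, and an edge $(F,i)\to(G,0)$. The hard part is the sub-claim that \emph{any predecessor $F$ of a longest chain $G$ in $FA(L)$ is again a longest chain}: indeed $F\to G$ gives $|F\cap G|=|G|-1=\ell(L)$, so $|F|\geq\ell(L)$; since $F$ is a maximal chain $|F|\leq\ell(L)+1$; and $|F|=\ell(L)$ would force $F\cap G=F$, whence $F\subsetneq G$ (as $|F|<|G|$), contradicting the maximality of $F$, so $|F|=\ell(L)+1$ and $F$ is longest. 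Granting the sub-claim, $F\cap C=\emptyset$ (spine again), hence $|F\cap C|=0$ and $i=0$, so $(F,0)\in T$; thus every edge into $T$ starts in $T$. The only real obstacle is this sub-claim about $FA(L)$ together with keeping the case analysis over the three edge-types of \cref{prop:algoFA(L[C])} organized; everything else follows immediately from $C$ avoiding the spine.
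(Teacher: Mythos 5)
Your proof is correct, and the argument for $T$ being a source set is genuinely different from the paper's. The paper argues by contradiction: supposing an edge $(F,i)\to(G,0)$ with $(F,i)\notin T$, it runs a length count, invokes $\ell(L[C])=\ell(L)$ via \cref{lem:lengthdoublement} (which is where the spine hypothesis enters), concludes that $(F,i)$ must be a longest chain of $L[C]$ that goes through the doubling, and then observes that the two elements $(c,0),(c,1)\in(F,i)\setminus(G,0)$ make $|(F,i)\cap(G,0)|=|(G,0)|-1$ impossible. You instead isolate a clean sub-claim that lives entirely in $FA(L)$: any $FA(L)$-predecessor of a longest chain of $L$ is itself longest, proved using only that $F$ is a maximal chain. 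With the spine hypothesis this forces $F\cap C=\emptyset$, hence $i=0$ and $(F,0)\in T$ — a direct argument that never touches $L[C]$-lengths and never needs \cref{lem:lengthdoublement}. As a bonus, your sub-claim fills a small gap in the paper's phrasing: the line ``since $(F,i)\notin T$, we have $F\cap C\neq\emptyset$'' silently rules out the case $i=0$ with $F$ not a longest chain; that case indeed cannot occur, but the paper does not say why, whereas your sub-claim handles it explicitly. For $S$, the paper argues directly from the observation that every facet in $S$ contains a pair $(c,0),(c,1)$ while no facet outside $S$ does, whereas you go through the three edge-types of \cref{prop:algoFA(L[C])}; both are valid, and your preliminary checks on non-emptiness and disjointness match the paper's in substance.
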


\begin{proof}
	Since for any $(F,i)\in S$ there exists $c\in C$ such that $\{(c,0),(c,1)\} \subseteq (F,i)$, and no maximal chain of $L[C]$ outside of $S$ contains such elements, we know that no edges from a vertex outside $S$ is pointing to $(F,i)$ in $FA(L[C])$. This proves that $S$ is a source set of $FA(L[C])$. 
	
	Seeking a contradiction, suppose that there exist $(F,i)\not\in T$ and $(G,0)\in T$ such that $(F,i)\rightarrow (G,0)$. Since $(F,i)\not\in T$, we have $F\cap C\neq \emptyset$, meaning $i\neq 0$. We have $|(G,0)|=|G|$ and $|(F,i)|=|F|+1$ since $i\neq 0$. In general, if $F\rightarrow F'$ in $FA(L)$, it follows that $|F|\geq |F'|$. Then $(F,i)\rightarrow (G,0) \Longrightarrow |F|+1 \geq |G|$. Since $C$ does not intersect the spine of $L$, by Lemma \ref{lem:lengthdoublement} we have $\ell(L[C])=\ell(L)$. Thus $|G|\leq |F|+1=|(F,i)|\leq \ell(L[C])=\ell(L)=|G|$. 
	Then $|(F,i)|=|G|=\ell(L[C])$, thus $(F,i)$ is a longest chain of $L[C]$. Since $(F,i)\not\in T$, which means $F$ is not a longest chain of $L$, it implies that $(F,i)\in S$ since the chains of longest length of $L[C]$ are either longest chains of $L$ or go through the doubling of $C$.  Then, as observed in the first part of the proof, $(F,i)$ has at least two elements $(c,0)$ and $(c,1)$ not in $(G,0)$, which with $|(F,i)|=|(G,0)|$ makes it absurd to have $|(F,i)\,\cap (G,0)|=|(G,0)|-1$, which is implied by $(F,i)\rightarrow (G,0)$. It follows that $T$ is a source set.
	
	With $S\,\cap T=\emptyset$ since $C$ does not intersect the spine of $L$, this finishes the proof. 
\end{proof}

Now we can prove Theorem \ref{thm:CNextremalshellable}:

\begin{proof}[Proof of Theorem \ref{thm:CNextremalshellable}]
	Let $L=E[C_1,\dots,C_n]$ be a congruence normal lattice. Suppose that there exists $i$ such that $C_i$ does not intersect the spine of $E[C_1,\dots,C_{i-1}]$. Then by Lemma \ref{lem:doublingoutsidespine} the graph $FA(E[C_1,\dots,C_i])$ has at least two disjoint source sets. By Lemma \ref{lem:doubleincreasesources}, since $L=E[C_1,\dots,C_n]$ is obtained from $E[C_1,\dots,C_i]$ by doublings, we have that $L$ has also at least two disjoint source sets, which by Lemma \ref{lem:shellableonesource} implies that $L$ is not shellable.
\end{proof}

\subsection{Induced subgraphs of a canonical join graph}
\label{sec:canonicaljoingraph}

In this section we give, using the fundamental theorem of finite semidistributive lattices \cite{reading2021fundamental}, a counterexample to the following:

\begin{question}[{\cite[Question 2.5.5]{Barnardthesis}}]
\label{questionbarnard}
Is an induced subcomplex of a canonical join complex still a canonical join complex?
\end{question}

As explained in Section \ref{sec:Backgroundext}, for us a canonical join complex is always one coming from a semidistributive lattice (not just a join-semidistributive lattice). Question \ref{questionbarnard} was asked in this context.
The canonical join complex being the clique complex of its 1-skeleton, which is the canonical join graph, this question is equivalent to asking whether an induced subgraph of a canonical join graph is still such a graph. Indeed, an induced subcomplex on vertices $X$ of a flag simplicial complex $\Delta$ is still flag, and its $1$-skeleton is the induced subgraph of the $1$-skeleton of $\Delta$ on vertices $X$.

\begin{figure}
    \centering
\begin{tikzpicture}[every node/.style={circle, draw, minimum size=8mm, inner sep=0pt},scale=1.3]
    % Nœuds
    \node (v0) at (0,2) {$v_0$};
    \node (v1) at (1.5,2) {$v_1$};
    \node (v2) at (3,2) {$v_2$};
    \node (v3) at (4.5,2) {$v_3$};
    \node (v4) at (6,2) {$v_4$};
    
    \node (u0) at (0,0.5) {$u_0$};
    \node (u1) at (1.5,0.5) {$u_1$};
    \node (u2) at (3,0.5) {$u_2$};
    \node (u3) at (4.5,0.5) {$u_3$};
    \node (u4) at (6,0.5) {$u_4$};
    
    \node (w) at (3,-1) {$w$};
    \node (t) at (3,3.8){$t$};

    \draw[->>] (v2) -- (t);
    \draw[right hook->] (t) -- (v1);     
    
    \draw[<<-] (v0) .. controls (3,5) .. (v4);
    \draw[right hook->] (u0) .. controls (-2,4) and (3,3).. (v4);
    \draw[right hook->>] (u4) .. controls (8,4) and (3,3).. (v0);
    
    \foreach \i/\j in {0/1,1/2,2/3,3/4}
        \draw[<-right hook] (v\i) -- (u\j);
    \foreach \i/\j in {1/0,2/1,3/2,4/3}
        \draw[<-right hook] (v\i) -- (u\j);
        
    \draw[right hook->>] (u1) -- (v0);
    \draw[right hook->>] (u2) -- (v3);
    \draw[right hook->>] (u4) -- (v0);
    \draw[right hook->>] (u4) -- (v3);

    \draw[->>] (v2) -- (v3);
    \draw[->>] (v4) -- (v3);

    \draw[->,>=latex,very thick,red] (v2) -- (v1);
    
    \draw[->>] (v1) -- (v0);

    % Arêtes vers w
    \foreach \i in {0,1,2,3,4}
        \draw[right hook->>] (u\i) -- (w);
    
\end{tikzpicture}
    \caption{The Galois graph of a semidistributive lattice having $167$ elements. The arrows are of the three types $\rightarrow$, $\hookrightarrow$ and $\twoheadrightarrow$ as in \cite{reading2021fundamental} and they form a two-acyclic factorization system.}
    \label{fig:counterexampleinducedCJC}
\end{figure}
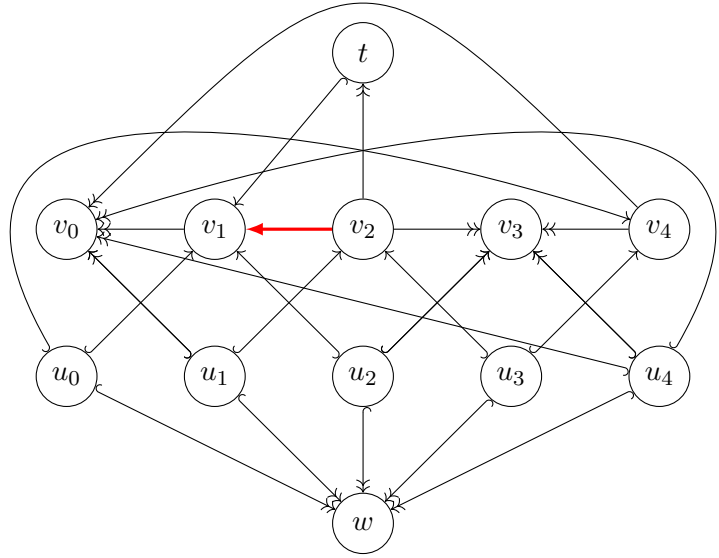

From now on in this section, all directed graphs and multigraphs are assumed to have a loop at every vertex, but they are not represented on the figures. An illustration of the following definitions is given in Figure \ref{fig:counterexampleinducedCJC}.

\begin{definition}
Let $(G,\rightarrow)$ be a directed multigraph. We deduce from it another directed multigraph $G_{A}$ by replacing the arrows $\rightarrow$ with up to three kinds of arrows $\onto, \into, \twoheadrighthookarrow$, where some arrows $\rightarrow$ can remain. An arrow $x\rightarrow y$ is replaced by $x \into y$ if for every $z\in G$ such that $z\rightarrow x$, we have an arrow $z\rightarrow y$. An arrow $x\rightarrow y$ is replaced by $x \onto y$ if for every $z\in G$ such that $y\rightarrow z$, we have an arrow $x\rightarrow z$. If an arrow $x\rightarrow y$ can be replaced by both the arrows $\into$ and $\onto$, then we replace it by $x\twoheadrighthookarrow y$. We understand this latter arrow to be both an $\into \,$-arrow and an $\onto\,$-arrow.
\end{definition}

\begin{definition}[{\cite{reading2021fundamental}}]
\label{def:twoacyclic}
Let $(G,\rightarrow)$ be a directed multigraph. We say that $G$ forms a \emph{two-acyclic factorization system} if $G_A$ satisfies the following conditions:
\begin{enumerate}
    \item[(i)] (order condition) We do not have $x\onto y \onto x$ or $x\into y\into x$ with $x\neq y$.
    \item[(ii)] (brick condition) We do not have $x\onto y\into x$ with $x\neq y$.
    \item[(iii)] (multiplication) For each $x\rightarrow z$ there exists $y$ such that $x\onto y \into z$.
\end{enumerate}
\end{definition}

If $G$ is a multigraph, we say that it admits a \emph{two-acyclic factorization system} if there exists an orientation of its edges such that the resulting directed multigraph $(G,\rightarrow)$ forms a two-acyclic factorization system.
The following is a reformulation of results of \cite{reading2021fundamental}:

\begin{proposition}
\label{prop:cjgmultigraph}
The canonical join graphs are the complements of the multigraphs that admit a two-acyclic factorization system. 
\end{proposition}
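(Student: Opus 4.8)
The plan is to read the proposition off the fundamental theorem of finite semidistributive lattices \cite{reading2021fundamental}, which provides a bijection between (isomorphism classes of) finite semidistributive lattices and directed multigraphs with a loop at every vertex that admit a two-acyclic factorization system. Under this bijection a semidistributive lattice $L$ corresponds to a directed multigraph $H_L$ on the vertex set $\mathrm{JIrr}(L)$ — the generalization to the non-extremal case of the Galois graph of \cref{sec:extremallattices}, from which $L$ is recovered as a poset of maximal orthogonal pairs just as in \cref{thm:extremalrepthm} — and conversely every such multigraph arises as $H_L$ for a unique $L$. The one substantive input I would extract from \cite{reading2021fundamental} is the translation of canonical join representations into this language: a subset of $\mathrm{JIrr}(L)$ is a face of the canonical join complex of $L$ if and only if it is an independent set of $H_L$, i.e.\ it contains no two vertices joined by an arrow of $H_L$ in either direction. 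For extremal lattices this is the content of \cref{lem:canonicaljoingraphiscompgalois} combined with the flagness of the canonical join complex, and \cite{reading2021fundamental} is exactly what extends it to arbitrary semidistributive lattices.

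Granting this, the canonical join graph of $L$ is precisely $\overline{H_L}$: by flagness a pair $\{j,j'\}$ of distinct join-irreducibles is an edge of the canonical join graph if and only if it is a face of the canonical join complex, which by the previous paragraph happens if and only if there is no arrow between $j$ and $j'$ in $H_L$ — and, as the definition of the complement of a multigraph makes clear, the multiplicities of the arrows of $H_L$ and its loops play no role in this condition. The proposition is then formal. If $G$ is a canonical join graph, say of the semidistributive lattice $L$, then $H := H_L$ admits a two-acyclic factorization system and $G = \overline{H}$, so $G$ is the complement of a multigraph admitting a two-acyclic factorization system. Conversely, if $G = \overline{H}$ for some directed multigraph $H$ with loops carrying a two-acyclic factorization system, then by the fundamental theorem $H = H_L$ for a finite semidistributive lattice $L$, and $G = \overline{H}$ is the canonical join graph of $L$; hence $G$ is a canonical join graph.

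I expect the only real obstacle to be the first step, namely pinning down in \cite{reading2021fundamental} the precise statements that (a) the assignment $L \mapsto H_L$ is the claimed bijection onto multigraphs with a two-acyclic factorization system and (b) the canonical join representations of $L$ are exactly the independent sets of $H_L$; the rest is bookkeeping around the definition of the complement of a multigraph. It is worth noting that passing to multigraphs rather than simple graphs is not a cosmetic choice: the complement operation erases multiplicities, so the class of complements of multigraphs admitting a two-acyclic factorization system can be strictly larger than the class of complements of simple directed graphs admitting one, and it is the former that coincides with the canonical join graphs.
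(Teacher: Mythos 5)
Your proposal is correct and takes essentially the same approach as the paper: both arguments rest on the fundamental theorem of finite semidistributive lattices from \cite{reading2021fundamental}. The only difference is one of citation granularity — the paper invokes \cite[Theorem 5.13]{reading2021fundamental} directly for the equivalence with canonical join graphs, whereas you re-derive that step from the independent-set description of canonical join representations together with flagness; your closing remark about multiplicities being erased by complementation is a correct and worthwhile observation.
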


\begin{proof}
By \cite[Theorem 1.2]{reading2021fundamental}, a multigraph $G$ admits a two-acyclic factorization system if and only if $G$ can be oriented such that the resulting directed multigraph $(G,\rightarrow)$ gives a semidistributive lattice by taking the associated lattice of maximal orthogonal pairs. By \cite[Theorem 5.13]{reading2021fundamental}, this is equivalent to $\overline{G}$ being a canonical join graph. This concludes. 
\end{proof}

In Figure \ref{fig:counterexampleinducedCJC}, we give an example of the Galois graph $G(L)$ of a semidistributive extremal lattice. Indeed, conditions $(i)$ and $(ii)$ of Definition \ref{def:twoacyclic} are satisfied since there are no cycles, and for condition $(iii)$ we have only one regular arrow $v_2\rightarrow v_1$ in $G(L)_A$; thus, this condition is satisfied since $v_2 \onto t \into v_1$. Let $H:=\overline{G(L)}$. By Proposition \ref{prop:cjgmultigraph}, $H$ is a canonical join graph.

For a graph $G=(V,E)$ and $X\subseteq V$, we denote by $G[X]$ the induced subgraph of $G$ on the elements of $X$.
Let $X:=V(H)\setminus \{t\}$, where $V(H)$ is the vertex-set of $H$. 
Our goal is now to prove that $H[X]$ is not a canonical join graph. Thus it will be a counterexample to Question \ref{questionbarnard}.

The following result is easy:

\begin{lemma}
\label{lem:inducedgraph}
Let $G=(V,E)$ be a multigraph and $X\subseteq V$. We have
$\overline{G[X]} = \overline{G}[X]$.
\end{lemma}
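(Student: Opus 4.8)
The statement to prove, Lemma~\ref{lem:inducedgraph}, asserts that for a multigraph $G=(V,E)$ and a subset $X\subseteq V$, one has $\overline{G[X]} = \overline{G}[X]$; that is, complementation and taking induced subgraphs commute. The plan is to argue directly from the definitions, checking that the two multigraphs have the same vertex set and the same edge multiset between each pair of vertices.

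First I would observe that both $\overline{G[X]}$ and $\overline{G}[X]$ have vertex set exactly $X$: the induced subgraph $G[X]$ has vertex set $X$, hence so does its complement, while $\overline{G}$ has vertex set $V$ and restricting to $X$ gives vertex set $X$. So it remains to compare the edges. Fix two vertices $x,y\in X$. Recall from the definition of complement (as given just before \cref{lem:canonicaljoingraphiscompgalois}, extended to multigraphs in \cref{sec:canonicaljoingraph}) that in the complement of a multigraph there is an edge between two distinct vertices precisely when there is no edge between them in the original multigraph, and that every vertex carries a loop. For the pair $\{x,y\}$ with $x\neq y$: there is an edge in $\overline{G[X]}$ between $x$ and $y$ iff there is no edge between $x$ and $y$ in $G[X]$; since $x,y\in X$, the edges of $G[X]$ between $x$ and $y$ are exactly the edges of $G$ between $x$ and $y$, so this holds iff there is no edge between $x$ and $y$ in $G$, i.e.\ iff there is an edge between $x$ and $y$ in $\overline{G}$, i.e.\ iff there is an edge between $x$ and $y$ in $\overline{G}[X]$. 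For the loops at a vertex $x\in X$, both $\overline{G[X]}$ and $\overline{G}[X]$ carry the loop by the standing convention that complements of multigraphs have a loop at every vertex. Hence the edge sets agree, and $\overline{G[X]} = \overline{G}[X]$.

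Since the complement is a \emph{simple} (loop-decorated) graph regardless of edge multiplicities in $G$, there is in fact nothing delicate about multiplicities here: the only data that matters is whether or not at least one edge is present between each pair, and ``induced subgraph'' preserves exactly the pairs lying inside $X$. I do not anticipate any real obstacle; the one point to be careful about is simply to use the same convention on loops on both sides, which is guaranteed by the blanket assumption stated at the start of the section that all directed graphs and multigraphs have a loop at every vertex. A short paragraph laying out the above chain of equivalences is all that is needed.
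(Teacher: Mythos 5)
Your proof is correct and is the obvious direct verification; the paper itself gives no proof at all (it simply calls the lemma ``easy''), so your argument supplies exactly the straightforward unfolding of the definitions that the author had in mind.
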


By definition, $\overline{H}=G(L)$. Thus by Lemma \ref{lem:inducedgraph}, we have $\overline{H[X]}=\overline{H}[X] = G(L)[X]$. By Proposition \ref{prop:cjgmultigraph}, $H[X]$ is a canonical join graph if and only if the underlying undirected graph of $G(L)[X]$, or any multigraph obtained from it by adding some edges between two elements of $G(L)[X]$ already joined by an edge, is a two-acyclic factorization system. The underlying undirected graph of $G(L)[X]$ is called the \emph{Grötzsch graph}; it is a triangle-free graph of chromatic number $4$. These two properties will play an important role.

\begin{lemma}
\label{lem:chroma3trianglefree}
Let $G$ be a simple graph. There exists an orientation of the edges of $G$ such that the resulting directed graph has no directed walk of length $3$ if and only if $\chi(G)\leq 3$. 
\end{lemma}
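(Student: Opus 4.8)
### Proof proposal

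The plan is to prove both directions of the biconditional, treating the statement as a purely graph-theoretic fact about orientations of a simple graph $G$ that avoid a directed walk of length $3$ (a directed walk $x_0 \to x_1 \to x_2 \to x_3$, where repeated vertices are allowed).

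\textbf{($\Leftarrow$) From a $3$-colouring to a good orientation.} Suppose $\chi(G) \le 3$ and fix a proper colouring $f : V(G) \to \{1,2,3\}$. The natural idea is to orient each edge from the lower colour to the higher colour. This has no directed \emph{path} of length $3$ since colours strictly increase along a directed path and there are only three colours. But we must rule out directed \emph{walks} of length $3$, which could revisit a vertex. A walk $x_0 \to x_1 \to x_2 \to x_3$ forces $f(x_0) < f(x_1) < f(x_2) < f(x_3)$, which is impossible with only $3$ colours regardless of whether the $x_i$ are distinct. So the linear orientation induced by any proper $3$-colouring already works, and this direction is essentially immediate.

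\textbf{($\Rightarrow$) From a good orientation to a $3$-colouring.} Conversely, suppose $G$ is oriented so that there is no directed walk of length $3$. Define $g : V(G) \to \{0,1,2\}$ by letting $g(v)$ be the length of the longest directed walk in $G$ ending at $v$; this is well-defined and takes values in $\{0,1,2\}$ precisely because there is no directed walk of length $3$ (in particular the orientation is acyclic, since a directed cycle would give arbitrarily long walks). If $u \to v$ is an edge, then any directed walk ending at $u$ extends by this edge to one ending at $v$, so $g(u) < g(v)$; hence $g$ is a proper colouring. Therefore $\chi(G) \le 3$.

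\textbf{Main obstacle.} There is no serious obstacle here; the only subtlety is being careful about the distinction between directed \emph{walks} and directed \emph{paths} in the easy direction, and checking that ``no walk of length $3$'' already forces acyclicity so that the longest-walk function is well-defined in the hard direction. Both points are handled by the remarks above. (I would also note in passing that the argument shows the equivalence more generally: $G$ admits an orientation with no directed walk of length $k$ if and only if $\chi(G) \le k$, with the case $k=3$ being what is needed to analyse the Grötzsch graph here.)
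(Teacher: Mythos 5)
Your proof is correct and follows essentially the same two-step argument as the paper: orient edges in increasing colour order for the easy direction, and colour each vertex by the length of the longest directed walk ending there for the converse, noting that the absence of walks of length $3$ forces acyclicity. The only difference is that you make the walk-versus-path distinction more explicit, which is a helpful clarification but not a change of method.
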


\begin{proof}
Suppose that $\chi(G)\leq 3$. Let us choose a proper coloring $C:V\rightarrow \{0,1,2\}$ of $G$. We deduce an orientation of the edges of $G$ by orienting an edge $x \rightarrow y$ if $C(x)<C(y)$. This is an orientation of $G$ without directed walks of length $3$.
Conversely, suppose that there exists an orientation of the edges of $G$ such that the resulting directed graph $(G,\rightarrow)$ has no directed walk of length $3$. Then $(G,\rightarrow)$ does not have cycles. This proves that the following coloring is well-defined; color any vertex $x$ with the longest length of a directed walk ending at $x$. This gives a coloring $C:V\rightarrow \{0,1,2\}$, which is proper since for each $x\rightarrow y$, we have $C(x)<C(y)$.
\end{proof}

\begin{lemma}
\label{lem:equiv2acyclic3directed}
A triangle-free simple directed graph $G$ forms a two-acyclic factorization system if and only if $G$ does not have a directed walk of length $3$.
\end{lemma}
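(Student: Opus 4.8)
The plan is to reduce everything to the behaviour of the $\onto$- and $\into$-arrows of $G_A$ under triangle-freeness, the point being that in a triangle-free simple graph a directed walk of length three is essentially a $2$-cycle. First I would record the structural observation: for a non-loop arrow $x\to y$ of $G$, the arrow $x\to y$ becomes an $\into$-arrow in $G_A$ if and only if every non-loop arrow into $x$ has source $y$ (indeed, any non-loop arrow $z\to x$ with $z\notin\{x,y\}$ would, by the definition of $\into$, force $z\to y$, making $\{x,y,z\}$ a triangle), and dually $x\to y$ becomes an $\onto$-arrow if and only if every non-loop arrow out of $y$ has target $x$. From this I would deduce that $G_A$ satisfies condition (iii) of \cref{def:twoacyclic} if and only if every non-loop arrow of $G$ is an $\into$- or an $\onto$-arrow: the ``if'' direction is immediate (for $x\to z$ an $\into$-arrow take the witness $y=x$, using the loop arrow $x\onto x$; for an $\onto$-arrow take $y=z$, using $z\into z$), while if $x\to z$ is neither $\into$ nor $\onto$ then any witness $y$ of $x\onto y\into z$ lies outside $\{x,z\}$, producing a path $x\to y\to z$ together with $x\to z$, i.e.\ a triangle.

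For the forward implication, assume $G$ has no directed walk of length three in its non-loop arrows. If a non-loop arrow $x\to y$ is not an $\onto$-arrow, choose a non-loop arrow $y\to w$ with $w\neq x$; then $x$ has no non-loop in-arrow $z\to x$, since otherwise $z\to x\to y\to w$ would be a walk of length three, and hence by the structural observation $x\to y$ is an $\into$-arrow. So every non-loop arrow is an $\into$- or an $\onto$-arrow, which gives condition (iii). For (i) and (ii), observe that each of the forbidden configurations ($x\onto y\onto x$, $x\into y\into x$, or $x\onto y\into x$, with $x\neq y$) requires both arrows $x\to y$ and $y\to x$, hence a $2$-cycle, hence the length-three walk $x\to y\to x\to y$, which is excluded. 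Thus $G$ forms a two-acyclic factorization system.

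For the reverse implication, suppose $G$ forms a two-acyclic factorization system and, for contradiction, that $w_0\to w_1\to w_2\to w_3$ is a directed walk of length three. By (iii) together with the reformulation above, the middle arrow $w_1\to w_2$ is an $\into$- or an $\onto$-arrow; in the first case the structural observation applied to the non-loop in-arrow $w_0\to w_1$ forces $w_0=w_2$, in the second case applied to the non-loop out-arrow $w_2\to w_3$ it forces $w_3=w_1$. Either way both $w_1\to w_2$ and $w_2\to w_1$ are arrows of $G$, and by (iii) again $w_2\to w_1$ is an $\into$- or an $\onto$-arrow. Running through the four combinations of types of this pair shows that each one realises a configuration forbidden by (i) or (ii): for instance, if $w_1\to w_2$ is $\onto$ and $w_2\to w_1$ is $\into$, then $w_1\onto w_2\into w_1$, contradicting (ii). This contradiction finishes the proof.

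I expect the only delicate point to be the bookkeeping around the (undrawn) loop at every vertex: the quantifier ``for every $z$ with $z\to x$'' in the definition of $\into$ ranges over $z=x$ as well, and it is this that makes ``$x\to y$ is an $\into$-arrow'' reduce to the clean statement that $x$ has no non-loop in-arrow except possibly from $y$. One should also fix the convention that ``directed walk of length three'' means a walk in the non-loop arrows, since otherwise the right-hand side of the equivalence is vacuously false. With these conventions pinned down, the argument is a short case analysis with no actual computation.
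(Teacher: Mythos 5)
Your proof is correct and follows essentially the same route as the paper: the heart of both arguments is that, in a triangle-free graph with implicit loops, condition (iii) of the two-acyclic factorization system is equivalent to every non-loop arrow being an $\into$- or $\onto$-arrow, and then unwinding the definitions of $\into$ and $\onto$ to identify a ``regular'' arrow with the middle arrow of a directed walk of length three. The paper dispatches conditions (i) and (ii) in one stroke by invoking simplicity (no $2$-cycles), whereas you derive them from the no-walk-of-length-three hypothesis in the forward direction and from the derived $2$-cycle together with (i),(ii) in the backward direction; this is a minor, slightly more self-contained variation that does not rely on the convention that ``simple'' excludes $2$-cycles, but it does not change the essence of the argument.
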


\begin{proof}
Since $G$ is simple, conditions $(i)$ and $(ii)$ of Definition \ref{def:twoacyclic} are satisfied. On the contrary, condition $(iii)$ cannot be satisfied if there are any $\rightarrow$ arrows since $G$ is triangle-free. Thus $G$ forms a two-acyclic factorization system if and only if $G_A$ does not have regular arrows $\rightarrow$. By the definitions of $\into$ and $\onto$ and the fact that $G$ is triangle-free, we have regular arrows $\rightarrow$ in $G_A$ if and only if $G$ does not have a directed walk of length $3$.
\end{proof}

The following result is a consequence of Lemmas \ref{lem:chroma3trianglefree} and \ref{lem:equiv2acyclic3directed}:

\begin{proposition}
\label{prop:trianglefreeftfsdl}
A triangle-free simple graph $G$ admits a two-acyclic factorization system if and only if $\chi(G)\leq 3$. 
\end{proposition}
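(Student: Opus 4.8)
The plan is simply to chain \cref{lem:chroma3trianglefree,lem:equiv2acyclic3directed} together, after unwinding the definition of ``admits a two-acyclic factorization system'' in the case of a simple graph. First I would recall that, by definition, a multigraph $G$ admits a two-acyclic factorization system precisely when there exists an orientation $(G,\rightarrow)$ of its edges such that the resulting directed multigraph forms a two-acyclic factorization system; for a \emph{simple} graph this means we only orient the edges already present, with no parallel edges introduced.

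Next I would observe that, since $G$ is simple and triangle-free, every orientation $(G,\rightarrow)$ is a triangle-free simple directed multigraph (with the standing loop-at-every-vertex convention, which is irrelevant to the argument), so \cref{lem:equiv2acyclic3directed} applies verbatim: $(G,\rightarrow)$ forms a two-acyclic factorization system if and only if $(G,\rightarrow)$ has no directed walk of length $3$. Hence $G$ admits a two-acyclic factorization system if and only if there is an orientation of the edges of $G$ whose resulting directed graph has no directed walk of length $3$. By \cref{lem:chroma3trianglefree}, this last condition holds if and only if $\chi(G)\le 3$, which is exactly the assertion.

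I do not expect any genuine obstacle here: the proof is a two-line concatenation of the preceding lemmas, and the only point requiring a moment of care is the bookkeeping remark that for a simple graph the relevant notion of ``admitting a two-acyclic factorization system'' ranges over orientations of the existing edge set only, so that \cref{lem:equiv2acyclic3directed} may be invoked directly without worrying about multigraph refinements.
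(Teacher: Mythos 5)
Your proposal is correct and coincides with the paper's own argument: the paper does not write out a proof at all, merely stating that the proposition is a consequence of \cref{lem:chroma3trianglefree,lem:equiv2acyclic3directed}, which is precisely the two-lemma concatenation you carry out. The only additional care you take—noting that for a simple graph every orientation is itself a simple directed graph, so that \cref{lem:equiv2acyclic3directed} applies to each candidate orientation—is the same unwinding the paper implicitly leaves to the reader.
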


\begin{proposition}
\label{prop:trianglefreemultitosimple}
Let $G$ be a triangle-free multigraph. Let $G'$ be its underlying simple graph, which is obtained by replacing multiple edges by a single edge. If $G$ admits a two-acyclic factorization system, then it is also the case of $G'$.   
\end{proposition}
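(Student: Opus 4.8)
The plan is to start from an orientation $\to$ of $G$ that realizes a two-acyclic factorization system, show that it never orients two parallel edges in opposite directions, and then push it forward to an orientation of $G'$; the key point will be that collapsing each bundle of parallel edges does not change any of the arrow types appearing in $G_A$. First I would record that $G_A$ has no regular arrow at all. This is the observation already used inside the proof of \cref{lem:equiv2acyclic3directed}, and it only uses triangle-freeness: if $x\to z$ were a regular arrow of $G_A$, the multiplication axiom would supply $w$ with $x\onto w\into z$; since $G$ is triangle-free this forces $w\in\{x,z\}$, and then (taking the loop at $x$ or at $z$ as the other factor) the edge $x\to z$ would itself be an $\into$- or an $\onto$-arrow, contradicting that it is regular.

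Next I would prove that all edges of $G$ joining two fixed adjacent vertices $x,y$ point the same way. Suppose not, so that both an edge $x\to y$ and an edge $y\to x$ occur. By the previous step each of these is an $\into$- or an $\onto$-arrow of $G_A$. If both are $\onto$ we obtain $x\onto y\onto x$, and if both are $\into$ we obtain $x\into y\into x$; either one contradicts the order axiom. If one is $\onto$ and the other $\into$ we obtain $x\onto y\into x$ (or symmetrically $y\onto x\into y$), contradicting the brick axiom. Hence $G$ has no anti-parallel edges, so there is a well-defined orientation $\to'$ of $G'$: orient the unique edge of $G'$ between adjacent $x,y$ in the common direction of all the $G$-edges between $x$ and $y$.

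Finally I would observe that whether an arrow $x\to y$ of $G$ gets replaced, in the passage to $G_A$, by $\into$, by $\onto$, or by neither, depends only on which ordered pairs of vertices carry an arrow — not on edge multiplicities. Consequently all parallel copies of $x\to y$ receive the same type, and $G'_A$ is obtained from $G_A$ simply by collapsing each parallel bundle to a single arrow of that same type. In particular $G'_A$, like $G_A$, has no regular arrow, so $(G',\to')$ satisfies the multiplication axiom vacuously; and since $G'$ is simple and triangle-free, the order and brick axioms hold automatically (two anti-parallel $\onto$- or $\into$-arrows, or an $\onto$/$\into$ pair between the same two vertices, would require two edges between those vertices, which $G'$ does not have). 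Therefore $(G',\to')$ is a two-acyclic factorization system, and $G'$ admits one.

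The hardest part is the second step, ruling out anti-parallel edges; this is exactly where triangle-freeness is indispensable, since in a general multigraph anti-parallel edges are perfectly compatible with a two-acyclic factorization system — which is precisely why multigraphs, rather than merely simple graphs, appear in \cref{prop:cjgmultigraph}. Everything after that is bookkeeping about how the replacement rule behaves under collapsing uniformly oriented, uniformly typed parallel bundles.
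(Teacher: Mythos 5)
Your proof is correct and follows the same high-level strategy as the paper: push the orientation of $G$ forward to $G'$, observe that $G_A$ has no regular arrows because of triangle-freeness, and conclude that $G'_A$ has none either, so the multiplication axiom holds vacuously while the order and brick axioms are automatic for a simple graph. Where you improve on the paper's exposition is your second step, the explicit verification that $(G,\to)$ cannot contain anti-parallel edges. The paper's proof just says ``choose arbitrarily one of the arrows between $x$ and $y$'' and then asserts ``by definitions of $\into$ and $\onto$'' that the regular-arrow-free property transfers to $G'_A$; but that transfer is justified precisely because the existence-of-arrow structure of $(G,\to)$ and $(G',\to')$ agree, which in turn relies on every parallel bundle being uniformly oriented. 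You prove this uniformity from the order and brick axioms (both $\onto$ gives an order violation, both $\into$ gives an order violation, a mixed pair gives a brick violation), which is exactly the fact the paper silently uses. So your argument is the same in essence but more careful, and the case analysis in your step 2 is complete once one remembers that a $\twoheadrighthookarrow$-arrow counts as both an $\into$- and an $\onto$-arrow.
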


\begin{proof}
Let $(G,\rightarrow)$ be an orientation of $G$ that forms a two-acyclic factorization system. From it we deduce an orientation $(G',\rightarrow')$ of $G'$; for multiple edges between $x$ and $y$ in $G$, choose arbitrarily one of the arrows between $x$ and $y$ in $(G,\rightarrow)$. Let us prove that $(G',\rightarrow')$ forms a two-acyclic factorization system. As seen in the proof of Lemma \ref{lem:equiv2acyclic3directed}, since $(G',\rightarrow')$ does not have $2$-cycles we only need to look at condition $(iii)$ of Definition \ref{def:twoacyclic}. As $G$ and $G'$ are triangle-free, condition $(iii)$ cannot be satisfied if there are any $\rightarrow$ arrows. This means that $G_A$, which is obtained from $(G,\rightarrow)$, does not have regular arrows $\rightarrow$. By definitions of $\into$ and $\onto$, it is also the case of $G'_A$. This proves that $G'$ admits a two-acyclic factorization system.
\end{proof}

Then the underlying undirected graph of $G(L)[X]$, which is triangle-free with a chromatic number equal to $4$, does not admit a two-acyclic factorization system by Proposition \ref{prop:trianglefreeftfsdl}. By Proposition \ref{prop:trianglefreemultitosimple}, any multigraph obtained from the underlying undirected graph of $G(L)[X]$ by adding some edges between two elements of $G(L)[X]$ already joined by an edge, does not admit either a two-acyclic factorization system. This proves that $H[X]$ is not a canonical join graph.

\subsection{Dimension of semidistributive extremal lattices}
\label{sec:dimensionresult}

In this section we prove Theorem \ref{thm:dimresultintro} from the introduction, that gives a way to compute the dimension of a semidistributive extremal lattice.

\begin{definition}
	Let $P$ be a poset. The \emph{critical pairs} of $P$ are the pairs $(a,b)\in P\times P$ such that $a$ and $b$ are incomparable, for any $x<a$ we have $x<b$, and for any $y>b$ we have $y>a$.
\end{definition}

\begin{definition}[\cite{reading2002order}]
	Let $P$ be a poset. The \emph{graph of critical pairs} $D(P)$ associated to $P$ is the directed graph whose vertices are the critical pairs and edges are $(a,b)\rightarrow (c,d)$ whenever $b\geq c$. 
	The \emph{critical pairs simplicial complex} $C(P)$ associated to $P$ is the simplicial complex whose faces correspond to the subsets of critical pairs that induce an acyclic subgraph of $D(P)$ (we mean that we need to avoid directed cycles). 
\end{definition}

A \emph{cover set} of a simplicial complex $\Delta$ is a subset $S$ of the faces of $\Delta$ such that every vertex of $\Delta$ is contained in a face of $S$. The next result follows from \cite[Chapter 4, Proposition 2.1]{trotter2002combinatorics}, and we restate it as it is done in \cite[Proposition 14 and the discussion that follows]{reading2002order}.

\begin{theorem}
\label{thm:readingdimcoversets}
For any poset $P$, the dimension of $P$ is equal to the minimum size of a cover set of $C(P)$. 
\end{theorem}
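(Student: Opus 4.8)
The plan is to reduce the statement to the classical characterization of order dimension in terms of critical pairs, and then translate that characterization almost verbatim into the language of cover sets of $C(P)$. First I would recall the standard fact (see \cite{trotter2002combinatorics}): a family $L_1,\dots,L_t$ of linear extensions of $P$ satisfies $\bigcap_i L_i=P$ if and only if for every critical pair $(a,b)$ of $P$ there is an index $i$ with $b<_{L_i}a$; in that situation I will say $L_i$ \emph{reverses} $(a,b)$. Consequently $\dim(P)$ equals the least number of linear extensions of $P$ such that every critical pair is reversed by at least one of them. For a linear extension $L$, write $R(L)$ for the set of critical pairs it reverses.

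Next I would show that $R(L)$ is always a face of $C(P)$, i.e.\ it induces an acyclic subgraph of $D(P)$. If not, there is a directed cycle $(a_1,b_1)\to(a_2,b_2)\to\cdots\to(a_k,b_k)\to(a_1,b_1)$ with all vertices in $R(L)$. By the definition of $D(P)$ we have $b_j\ge_P a_{j+1}$ for all $j$ (indices cyclic), hence $b_j\ge_L a_{j+1}$; and since $L$ reverses $(a_{j+1},b_{j+1})$ we get $a_{j+1}>_L b_{j+1}$, so $b_j>_L b_{j+1}$. Running around the cycle gives $b_1>_L b_1$, a contradiction. Thus every linear extension determines a face of $C(P)$.

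Conversely, I would establish that for every face $F$ of $C(P)$ there is a linear extension $L$ of $P$ with $F\subseteq R(L)$. Consider the relation obtained from $\le_P$ by adjoining $b<a$ for every $(a,b)\in F$, and let $Q$ be its transitive closure; it suffices to prove $Q$ is a partial order, since any linear extension of $Q$ then reverses every pair of $F$. If $Q$ were not a partial order it would contain a directed cycle, and after contracting the maximal $\le_P$-segments this cycle alternates adjoined arrows (``$b_{i_j}$ below $a_{i_j}$'' with $(a_{i_j},b_{i_j})\in F$) with relations $a_{i_j}\le_P b_{i_{j+1}}$. Each such relation reads $b_{i_{j+1}}\ge_P a_{i_j}$, which is precisely an edge $(a_{i_{j+1}},b_{i_{j+1}})\to(a_{i_j},b_{i_j})$ of $D(P)$; chaining these around produces a directed cycle of $D(P)$ supported on $F$, contradicting that $F$ is a face. (If only one adjoined arrow occurs, the $\le_P$-segment forces $a_{i_1}$ and $b_{i_1}$ to be comparable, contradicting that $(a_{i_1},b_{i_1})$ is a critical pair.)

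Finally I would combine the two directions. From a cover set $\{F_1,\dots,F_t\}$ of $C(P)$, choosing a linear extension $L_i$ with $F_i\subseteq R(L_i)$ yields a family that reverses all critical pairs, because $\bigcup_i F_i$ is the entire vertex set of $C(P)$; hence $\dim(P)\le t$. Conversely, if $L_1,\dots,L_t$ reverse all critical pairs, then $R(L_1),\dots,R(L_t)$ is a cover set of $C(P)$, so the minimum size of a cover set is at most $\dim(P)$. Taking minima gives the theorem. I expect the converse direction (faces give linear extensions) to be the main obstacle: the bookkeeping when extracting a directed cycle of $D(P)$ from a cycle of $Q$ requires care about degenerate configurations — two consecutive adjoined arrows, trivial $\le_P$-segments, and the one-arrow case above — and the whole argument rests on invoking the classical critical-pair characterization of the first step correctly.
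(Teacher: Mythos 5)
Your argument is correct, and it essentially reconstructs the proof in Reading's \emph{Order dimension, strong Bruhat order and lattice properties for posets}, which is the source this paper cites for the statement rather than giving its own proof. Both directions are handled correctly: that the reversed-pair sets $R(L_i)$ of a realizer are faces forming a cover set (via the $b_j >_L b_{j+1}$ chain around a hypothetical cycle), and that each face $F$ is contained in $R(L)$ for some linear extension $L$ (by showing the augmented relation $Q$ is acyclic, deducing a directed cycle of $D(P)$ inside $F$ otherwise, with the single-adjoined-arrow case correctly reduced to a contradiction with the definition of a critical pair).
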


The following is easy to prove.

\begin{lemma}
	Let $L$ be a semidistributive lattice. Then the critical pairs are exactly the pairs $(j,\kappa(j))$ for $j\in \mathrm{JIrr}(L)$ with $j_{*}\not\in \mathrm{MIrr}(L)$, which means $\kappa(j)\neq j_{*}$. 
\end{lemma}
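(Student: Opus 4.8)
The plan is to characterize the critical pairs of a semidistributive lattice $L$ directly from the definitions. Recall that by \cref{lemEquiJSD} and the discussion following it, semidistributivity gives us the bijection $\kappa:\mathrm{JIrr}(L)\to\mathrm{MIrr}(L)$ with $\kappa(j)=\gamma_M(j_*\lessdot j)$, and by \cref{prop:kappaSD} we have $\kappa(\gamma_J(b\lessdot c))=\gamma_M(b\lessdot c)$ for every cover $b\lessdot c$. The key facts I would use are: for a join-irreducible $j$, the element $\kappa(j)=\gamma_M(j_*\lessdot j)$ is the maximum of $F_L(j_*)\setminus F_L(j)$, so $j\not\leq\kappa(j)$ (they are incomparable, since $j$ is join-irreducible with unique lower cover $j_*\leq\kappa(j)$), $j_*\leq\kappa(j)$, and an element $x$ satisfies $x\geq j$ iff $x\not\leq\kappa(j)$ — more precisely, $x\not\leq \kappa(j)$ is equivalent to $x\geq j$ for $x$ ranging over $L$, because $\kappa(j)$ is the largest element not above $j$ that lies above $j_*$; one has to be slightly careful and use instead the standard fact that $\kappa(j)$ is the unique maximal element of $\{x\in L: x\not\geq j\}$ that contains $j_*$, together with the forcing description. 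I would isolate these as a short preliminary observation before the main argument.

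First I would show that each pair $(j,\kappa(j))$ with $\kappa(j)\neq j_*$ is a critical pair. Incomparability holds as noted. For $x<j$: since $j$ is join-irreducible, $x<j$ forces $x\leq j_*\leq\kappa(j)$, so $x<\kappa(j)$ (using $\kappa(j)\neq j_*$ we even get strict, but $x\leq j_* < j$ and $j\not\leq\kappa(j)$ already give $x\neq\kappa(j)$). For $y>\kappa(j)$: since $\kappa(j)$ is meet-irreducible with unique upper cover $\kappa(j)^*$, we have $y\geq\kappa(j)^*$; and $\kappa(j)^*\geq j$ because $\kappa(j)$ is maximal among elements not above $j$, so $y\geq j$, hence $y>j$ (as $y>\kappa(j)$ and $y=j$ would force $j>\kappa(j)$, contradicting incomparability). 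This verifies the critical-pair conditions.

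Conversely, suppose $(a,b)$ is a critical pair. The defining conditions say $b$ is a maximal element among those not above $a$ (if $b'>b$ then $b'>a$), and dually $a$ is minimal among those not below $b$. Maximality of $b$ subject to $b\not\geq a$ forces $b$ to be meet-irreducible: if $b$ had two distinct upper covers, both would be $>b$ hence $\geq a$, so their meet $=b$ would be $\geq a$, contradiction. Dually $a$ is join-irreducible. Now $a\not\leq b$, $a_*\leq b$ (since $a_*<a$ forces $a_*\leq b$... wait, the condition is $x<a\Rightarrow x<b$, so $a_*\le b$), and $b$ is maximal not above $a$, so $b$ is exactly the maximum of $\{x:x\geq a_*,\ x\not\geq a\}=F_L(a_*)\setminus F_L(a)$, which by \cref{lemEquiJSD} is $\gamma_M(a_*\lessdot a)=\kappa(a)$. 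Thus $(a,b)=(j,\kappa(j))$ with $j=a$. Finally, if $\kappa(j)=j_*$ then $j_*\in\mathrm{MIrr}(L)$ and $j_*<j$ are comparable, so this cannot be a critical pair; hence the condition $\kappa(j)\neq j_*$ (equivalently $j_*\notin\mathrm{MIrr}(L)$) is exactly what excludes the degenerate case.

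The main obstacle is getting the "forcing" facts about $\kappa(j)$ stated precisely and cited correctly — in particular that $\kappa(j)$ is the maximum of $F_L(j_*)\setminus F_L(j)$ and that its unique upper cover is $\geq j$ — but these all follow from \cref{lemEquiJSD}, \cref{prop:kappaSD}, and the elementary observation that an element is the meet of the meet-irreducibles above it; no new machinery is needed, so I expect the verification to be routine once the preliminary observation is in place.
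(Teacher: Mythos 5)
The paper states this lemma without proof, simply calling it easy, so there is no argument of the paper's to compare against; your proof is correct and takes the natural route: characterize $\kappa(j)=\gamma_M(j_*\lessdot j)$ as the maximum of $F_L(j_*)\setminus F_L(j)$ via \cref{lemEquiJSD}, check the three defining conditions of a critical pair for $(j,\kappa(j))$, and conversely show that any critical pair $(a,b)$ forces $a\in\mathrm{JIrr}(L)$, $b\in\mathrm{MIrr}(L)$, and $b=\kappa(a)$. One small slip worth fixing: in the forward direction, your parenthetical alternative justification that $x\neq\kappa(j)$ (``$x\leq j_*<j$ and $j\not\leq\kappa(j)$ already give $x\neq\kappa(j)$'') invokes the wrong half of incomparability --- you need $\kappa(j)\not\leq j$ there, not $j\not\leq\kappa(j)$ --- but your primary justification (that $j_*\lneq\kappa(j)$ once $\kappa(j)\neq j_*$) is already complete, so this is harmless. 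You also correctly flag your preliminary ``iff'' statement ($x\geq j$ iff $x\not\leq\kappa(j)$) as imprecise; indeed it fails in one direction, but the weaker claim you actually use --- that anything strictly above $\kappa(j)$ lies above $j$ --- does follow from $\kappa(j)$ being the maximum of $F_L(j_*)\setminus F_L(j)$.
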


Thus, for $L$ a semidistributive lattice, we identify a critical pair with the join-irreducible element given by its first coordinate. Then the graph $D(L)$ is the graph on vertices the join-irreducible elements $j$ such that $\kappa(j)\neq j_{*}$, with an edge $i\rightarrow j$ whenever $\kappa(i)\geq j$. Looking at Proposition \ref{prop:kappaextremalSD}, when $L$ is a semidistributive extremal lattice, we see that $D(L)$ and the Galois graph $G(L)$ are similar. We make this observation precise in the next result.

\begin{figure}
	\begin{subfigure}{.5\textwidth}
		\centering
		\begin{tikzpicture}[xscale=0.85,yscale=0.85]
			\node (empty) at (0,0) [circle,fill,inner sep=1.5pt]{};
			\node (1) at (-1,1) {$a$};
			\node (12) at (-2,2) {$b$};
			\node (123) at (-4,4) {$\kappa(c)=g$\hspace*{24pt}};
			\node (124) at (-1,3) {$c$};
			\node (6) at (2,2) {$d$};
			\node (67) at (3,3) {\hspace*{24pt}$e=\kappa(a)$};
			\node (16) at (1,3) {$\kappa(b)$};
			\node (1246) at (0,4) {$\kappa(e)$};
			\node (12467) at (1,5) {$\kappa(f)$};
			\node (1245) at (-2,4) {$f$};
			\node (12345) at (-3,5) {$\kappa(d)$};
			\node (124567) at (0,6) {$\kappa(g)$};
			\node (1234567) at (-1,7) [circle,fill,inner sep=1.5pt]{};
			\draw[-] (empty) -- (1);
			\draw[-] (1) -- (12);
			\draw[-] (12) -- (123);
			\draw[-] (empty) -- (6);
			\draw[-] (6) -- (67);
			\draw[-] (1) -- (16);
			\draw[-] (6) -- (16);
			\draw[-] (16) -- (1246);
			\draw[-] (12) -- (124);
			\draw[-] (124) -- (1246);
			\draw[-] (1246) -- (12467);
			\draw[-] (124) -- (1245);
			\draw[-] (1245) -- (12345);
			\draw[-] (123) -- (12345);
			\draw[-] (12345) -- (1234567);
			\draw[-] (67) -- (12467);
			\draw[-] (12467) -- (124567);
			\draw[-] (124567) -- (1234567);
			\draw[-] (1245) -- (124567);
		\end{tikzpicture}
		\caption{A semidistributive extremal lattice $L$} 
		\label{Pairs fig}
	\end{subfigure}%
	\begin{subfigure}{.5\textwidth}
		\centering
		\begin{tikzpicture}[xscale=1.2,yscale=0.75]
			\node (3) at (0,3) {$g$};
			\node (5) at (-2,2) {$f$};
			\node (7) at (1.5,1) {$e$};
			\node (6) at (2.25,0) {$d$};
			\node (4) at (1.5,-1) {$c$};
			\node (2) at (-2,-2) {$b$};
			\node (1) at (0,-3) {$a$};
			\draw[->] (2) -- (1);
			\draw[->] (3) -- (1);
			\draw[->] (3) -- (2);
			\draw[->] (3) -- (5);
			\draw[->] (3) -- (7);
			\draw[->] (4) -- (1);
			\draw[->] (4) -- (2);
			\draw[->] (5) -- (1);
			\draw[->] (5) -- (2);
			\draw[->] (5) -- (4);
			\draw[->] (5) -- (7);
			\draw[->] (6) -- (4);
			\draw[->] (7) -- (2);
			\draw[->] (7) -- (4);
			\draw[->] (7) -- (6);
			\draw[<->,red,thick] (5) -- (6);
			\draw[<->,red,thick] (3) to[bend left=30] (6);
			\draw[<->,red,thick] (2) -- (6);
			\draw[<->,red,thick] (1) to[bend right=30] (6);
			\draw[<->,red,thick] (1) -- (7);
			\draw[<->,red,thick] (3) -- (7);
		\end{tikzpicture}
		\caption{In black the Galois graph $G(L)$ of the lattice \\ from Figure \ref{Pairs fig}. With the extra 
			red 2-cycles it is $D(L)$.} 
		\label{2afs fig}
	\end{subfigure}
	\caption{}
	\label{fig:DLGL}
\end{figure}

A 2-cycle on two vertices $x$ and $y$ in a directed graph will be represented by $x \longleftrightarrow y$. See Figure \ref{fig:DLGL}, which is partly taken from \cite[Figures 1 and 2]{reading2021fundamental}, for an illustration of the following result in a case where there are no $j$ such that $\kappa(j)=j_*$.

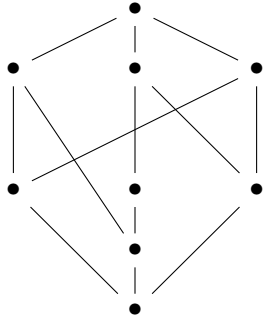
\begin{figure}
    \centering
\begin{tikzpicture}[scale=0.8]
\draw (0,0) node(s0){$\bullet$};  
\draw (0,1) node(s1){$\bullet$};  
\draw (-2,2) node(s2){$\bullet$};  
\draw (0,2) node(s3){$\bullet$};  
\draw (2,2) node(s4){$\bullet$};  
\draw (-2,4) node(s5){$\bullet$};  
\draw (0,4) node(s6){$\bullet$};  
\draw (2,4) node(s7){$\bullet$};  
\draw (0,5) node(s8){$\bullet$}; 

\draw (s0) -- (s1);
\draw (s0) -- (s2);
\draw (s0) -- (s4);
\draw (s1) -- (s3);
\draw (s1) -- (s5);
\draw (s2) -- (s5);
\draw (s2) -- (s7);
\draw (s3) -- (s6);
\draw (s4) -- (s6);
\draw (s4) -- (s7);
\draw (s5) -- (s8);
\draw (s6) -- (s8);
\draw (s7) -- (s8);
\end{tikzpicture}
    \caption{An extremal lattice $L$ that is not semidistributive. }
    \label{fig:counterexamplenotSD}
\end{figure}

\begin{lemma} \label{lem:Galoistocritical}
	Let $L$ be a semidistributive extremal lattice.
	The graph of critical pairs $D(L)$ is obtained from the Galois graph $G(L)$ by adding a $2$-cycle between all pairs of join-irreducible elements $i$ and $j$ that form an independent set of $G(L)$, and then removing the vertices $j$ such that $\kappa(j)=j_{*}$.   
\end{lemma}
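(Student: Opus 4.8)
The plan is to reduce the statement to the two explicit combinatorial descriptions recalled just above it and then to verify directly that the resulting edge sets coincide, the only non-formal ingredient being the acyclicity of the Galois graph (\cref{prop:acyextremal}).

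First I would fix the descriptions. By the lemma preceding the statement, the vertices of $D(L)$ are exactly the $j\in\mathrm{JIrr}(L)$ with $\kappa(j)\neq j_{*}$, and $i\to j$ is an arrow of $D(L)$ if and only if $j\leq\kappa(i)$; by \cref{prop:kappaextremalSD}, $G(L)$ has vertex set $\mathrm{JIrr}(L)$ and $i\to j$ is an arrow of $G(L)$ if and only if $i\neq j$ and $i\not\leq\kappa(j)$. Let $G'$ denote the directed graph obtained from $G(L)$ by adding, for every two-element independent set $\{i,j\}$ of $G(L)$, the two arrows $i\to j$ and $j\to i$, and then deleting every vertex $j$ with $\kappa(j)=j_{*}$. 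Deleting these vertices only removes arrows incident to them, so $G'$ and $D(L)$ have the same vertex set, and it suffices to compare, for two fixed distinct vertices $i,j$ of $D(L)$ (so $\kappa(i)\neq i_{*}$ and $\kappa(j)\neq j_{*}$), when $i\to j$ is an arrow of $G'$ and when it is an arrow of $D(L)$.

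Next I would unwind the definition of $G'$ on such a pair. A pair $\{i,j\}$ with $i\neq j$ is an independent set of $G(L)$ precisely when neither $i\to j$ nor $j\to i$ is an arrow of $G(L)$, that is, when $i\leq\kappa(j)$ and $j\leq\kappa(i)$. Hence $i\to j$ is an arrow of $G'$ exactly when $i\not\leq\kappa(j)$ (the arrow is inherited from $G(L)$) or else $i\leq\kappa(j)$ and $j\leq\kappa(i)$ (the arrow is one half of an added $2$-cycle); case-splitting on whether $i\leq\kappa(j)$, this condition is equivalent to ``$i\not\leq\kappa(j)$ or $j\leq\kappa(i)$''. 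Now I invoke acyclicity: by \cref{prop:acyextremal}, $G(L)$ has no $2$-cycle, so $i\not\leq\kappa(j)$ and $j\not\leq\kappa(i)$ cannot both hold; equivalently, $i\not\leq\kappa(j)$ implies $j\leq\kappa(i)$. Therefore ``$i\not\leq\kappa(j)$ or $j\leq\kappa(i)$'' reduces to ``$j\leq\kappa(i)$'', which is exactly the condition for $i\to j$ to be an arrow of $D(L)$. Since $i$ and $j$ were arbitrary, $G'=D(L)$, as claimed.

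I expect no serious obstacle here: the argument is essentially a Boolean identity, once one has the $\kappa$-descriptions of $D(L)$ and $G(L)$ and the acyclicity of $G(L)$ in hand (these are where the hypothesis ``extremal'', rather than merely ``semidistributive'', is used, namely through \cref{prop:kappaextremalSD} and \cref{prop:acyextremal}). The one step that genuinely requires care is that an arrow $i\to j$ of $G(L)$, whose defining condition $i\not\leq\kappa(j)$ looks unrelated to the defining condition $j\leq\kappa(i)$ of an arrow of $D(L)$, nevertheless yields such an arrow — and this is exactly the point at which acyclicity of $G(L)$ is invoked. A secondary bookkeeping point is the order of the two operations building $G'$ (add $2$-cycles on all of $G(L)$ first, then delete vertices), which is harmless since we only ever compare the induced subgraphs on the surviving vertices.
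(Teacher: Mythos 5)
Your proof is correct and follows essentially the same route as the paper's: both reduce to the $\kappa$-descriptions of arrows in $D(L)$ and $G(L)$ (via \cref{prop:kappaextremalSD}) and then invoke acyclicity of $G(L)$ (\cref{prop:acyextremal}) to close the argument. The paper states the key equivalence more compactly as ``$i\to j$ in $D(L)$ iff $j\to i$ is not in $G(L)$,'' whereas you carry out the Boolean bookkeeping on the constructed graph $G'$ explicitly; the content is the same.
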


\begin{proof}
Let us prove that if $i,j \in \mathrm{JIrr}(L)$ such that $\kappa(i)\neq i_*$ and $\kappa(j)\neq j_*$, then we have $i\rightarrow j$ in $D(L)$ if and only if $j\rightarrow i$ is not in $G(L)$. This follows from the definitions of the arrows in each graph; $i\rightarrow j$ is in $D(L)$ means $\kappa(i)\geq j$, whereas by Proposition \ref{prop:kappaextremalSD}, $j\rightarrow i$ is not in $G(L)$ means $j\leq \kappa(i)$.
Thus if we have $2$-cycles in $G(L)$ they disappear in $D(L)$, but since $G(L)$ is acyclic by Proposition \ref{prop:acyextremal}, $G(L)$ does not have $2$-cycles. The Lemma follows.
\end{proof}

\begin{lemma} \label{lem:noncritiquesnotimportant}
	Let $L$ be a semidistributive extremal lattice. The isolated vertices of $\overline{G(L)}$ are the join-irreducible elements $i\in \mathrm{JIrr}(L)$ such that $\kappa(i)=i_{*}$.
\end{lemma}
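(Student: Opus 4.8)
The plan is to translate ``$i$ is an isolated vertex of $\overline{G(L)}$'' into the language of downward label sets. By \cref{lem:canonicaljoingraphiscompgalois}, $\overline{G(L)}$ is the canonical join graph, i.e.\ the $1$-skeleton of the canonical join complex, whose faces are exactly the sets $D(x)$ for $x\in L$; since this is a simplicial complex, a pair $\{i,j\}$ is an edge iff $\{i,j\}\subseteq D(x)$ for some $x$. Because each $D(x)$ is an antichain with $\bigvee D(x)=x$, and $D(i)=\{\gamma_J(i_*\lessdot i)\}$ is a singleton (the join-irreducible $i$ having the unique lower cover $i_*$), the vertex $i$ is isolated in $\overline{G(L)}$ if and only if
\[
\text{for all } x\neq i,\ i\notin D(x).
\]
So it suffices to prove that this condition is equivalent to $\kappa(i)=i_*$. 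Two routine facts will be used repeatedly: (a) if $\gamma_J(y\lessdot x)=i$ then $i\le x$, $i\not\le y$, and $x=y\vee i$ (since $y<y\vee i\le x$ and $y\lessdot x$), and moreover $\kappa(i)=\gamma_M(y\lessdot x)$ by \cref{prop:kappaSD}, hence $y\le\kappa(i)$ as $y\in F_L(y)\setminus F_L(x)$; and (b) for every $m\in\mathrm{MIrr}(L)$ one has $\gamma_M(m\lessdot m^*)=m$ (because $F_L(m)\setminus F_L(m^*)=\{m\}$, as $m$ has the unique upper cover $m^*$), so \cref{prop:kappaSD} together with the bijectivity of $\kappa$ gives $\gamma_J(m\lessdot m^*)=\kappa^{-1}(m)$.

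For the direction $\kappa(i)=i_*\Rightarrow i$ isolated: I would suppose $i\in D(x)$, say $\gamma_J(y\lessdot x)=i$, and apply fact (a) to get $y\le\kappa(i)=i_*$, whence $x=y\vee i\le i_*\vee i=i$, forcing $x=i$. Thus $i$ lies in no $D(x)$ with $x\neq i$. For the converse $\kappa(i)\neq i_*\Rightarrow i$ not isolated: I would put $m:=\kappa(i)$ and $x:=m^*$; fact (b) gives $\gamma_J(m\lessdot m^*)=\kappa^{-1}(m)=i$, so $i\in D(x)$. Also $x\neq i$, since $m^*=i$ would make $m\lessdot i$ a lower cover of the join-irreducible $i$, forcing $m=i_*$, contrary to assumption. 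Then $D(x)$ contains $i$ but has join $x\neq i$, so it contains some $j\neq i$; hence $\{i,j\}$ is an edge of $\overline{G(L)}$ and $i$ is not isolated.

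I do not expect a genuine obstacle here: once the reduction in the first paragraph is in place, each implication is a one-line computation with $\gamma_J$, $\gamma_M$, $\kappa$ and the covers of $i$. The only step needing a little care is that reduction itself, which must correctly invoke \cref{lem:canonicaljoingraphiscompgalois}, the fact that the canonical join complex is a (flag) simplicial complex, and the standard properties $\bigvee D(x)=x$ and $|D(i)|=1$ for $i$ join-irreducible; all of these are available from \cref{sec:semidistributivelattices}.
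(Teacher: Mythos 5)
Your proof is correct, but it follows a genuinely different route from the paper's. The paper proves the direction $\kappa(i)=i_*\Rightarrow i$ isolated directly in the language of the Galois graph: by \cref{prop:kappaextremalSD}, an independent pair $\{i,j\}$ of $G(L)$ satisfies $i\leq\kappa(j)$ and $j\leq\kappa(i)=i_*$, hence $j<i\leq\kappa(j)$, contradicting $\kappa(j)\not\geq j$. For the converse, the paper invokes the equality of the sets of downward and upward label sets (\cite[Corollary 6.4]{Thomas_2019}), observes $i\in U(i_*)$, and argues that $\kappa(i)\neq i_*$ forces $|U(i_*)|\geq 2$. By contrast, you reduce everything to the clean combinatorial condition ``$i\in D(x)$ for some $x\neq i$'' and then carry out both implications by direct computation with $\gamma_J$, $\gamma_M$, $\kappa$, and the covers of $i$ and $\kappa(i)$: in particular your converse direction constructs the explicit witness $x=\kappa(i)^*$ and uses $\gamma_J(m\lessdot m^*)=\kappa^{-1}(m)$, sidestepping the upward/downward label set equivalence entirely. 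What your approach buys is greater self-containment (you only need \cref{lem:canonicaljoingraphiscompgalois}, \cref{prop:kappaSD}, and the defining property $\bigvee D(x)=x$); what the paper's buys is brevity, especially in the first direction where the $\kappa$-characterization of $G(L)$-edges gives a one-line contradiction. Both are valid proofs of the lemma.
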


\begin{proof}
	Suppose that $i\in \mathrm{JIrr}(L)$ such that $\kappa(i)=i_{*}$. Let us prove that $i$ is an isolated vertex of $\overline{G(L)}$, which means that for every $j\in \mathrm{JIrr}(L)$, $\{i,j\}$ is not an independent set. Seeking a contradiction, assume that $\{i,j\}$ is an independent set of $G(L)$, which means $i\leq \kappa(j)$ and $j\leq \kappa(i)=i_{*}$. This gives $j<\kappa(j)$, which is absurd.
	
	Suppose that $i\in \mathrm{JIrr}(L)$ such that $\kappa(i)\neq i_{*}$. We want to prove that $i$ is not an isolated vertex of $\overline{G(L)}$. Since $L$ is semidistributive and extremal, by Lemma \ref{lem:canonicaljoingraphiscompgalois} the downward label sets of $L$ are the independence sets of its Galois graph $G(L)$. In a semidistributive lattice, the set of all downward label sets is the same as that of all upward label sets, which are denoted by $U(x)$ for an element $x\in L$ \cite[Corollary 6.4]{Thomas_2019}. By definition of $\gamma_J$, we have $i\in U(i_{*})$. Since $\kappa(i)\neq i_{*}$, we have $|U(i_{*})|\geq 2$. Thus $i$ is part of an independence set of size bigger than two, which proves that it is not an isolated vertex of $\overline{G(L)}$.
\end{proof}

For $L$ a semidistributive extremal lattice, we denote by $K(L)$ the undirected graph formed by the subgraph of $D(L)$ obtained by keeping only the $2$-cycles $x\longleftrightarrow y$ of $D(L)$ and replacing these by an edge joining $x$ and $y$.
The \emph{null graph} is the graph that does not have vertices. An \emph{empty graph} is a graph without edges (but it can have vertices).

\begin{proposition} \label{prop:KLiscomplement}
	Let $L$ be a semidistributive extremal lattice. Then $\overline{G(L)}$ is the disjoint union of the graph $K(L)$ with the empty graph whose vertices are the $i\in \mathrm{JIrr}(L)$ such that $\kappa(i)=i_*$.     
\end{proposition}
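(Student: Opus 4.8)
The plan is to split the vertex set $\mathrm{JIrr}(L)$ of $\overline{G(L)}$ into the two pieces $A := \{i \in \mathrm{JIrr}(L) : \kappa(i) \neq i_*\}$ and $B := \{i \in \mathrm{JIrr}(L) : \kappa(i) = i_*\}$, to show that $\overline{G(L)}$ restricted to $A$ is exactly $K(L)$ while the vertices of $B$ are isolated in $\overline{G(L)}$, and then to read off the claimed decomposition.

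First I would dispatch the part indexed by $B$: \cref{lem:noncritiquesnotimportant} states precisely that the isolated vertices of $\overline{G(L)}$ are exactly the elements of $B$. Consequently no edge of $\overline{G(L)}$ has an endpoint in $B$, so $\overline{G(L)}$ is the disjoint union of the induced subgraph $\overline{G(L)}[A]$ and the empty graph on $B$, the latter being exactly the empty graph whose vertices are the $i\in\mathrm{JIrr}(L)$ with $\kappa(i)=i_*$.

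It then remains to identify $\overline{G(L)}[A]$ with $K(L)$, which I would do by comparing vertices and edges. For vertices: the lemma just before \cref{lem:Galoistocritical} shows that the vertex set of $D(L)$ is $A$, and since $K(L)$ is the undirectedization of a spanning subgraph of $D(L)$ (it keeps all vertices of $D(L)$, only discarding the non-$2$-cycle arrows), its vertex set is also $A$, matching that of $\overline{G(L)}[A]$. For edges: by definition of the complement of a directed graph, a pair $\{i,j\}$ is an edge of $\overline{G(L)}$ exactly when $\{i,j\}$ is an independent set of $G(L)$; hence the edges of $\overline{G(L)}[A]$ are the independent pairs of $G(L)$ contained in $A$. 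On the other hand, by \cref{lem:Galoistocritical}, $D(L)$ is obtained from $G(L)$ by adding a $2$-cycle between every independent pair of $G(L)$ and then deleting the vertices of $B$; since $G(L)$ is acyclic by \cref{prop:acyextremal} it has no $2$-cycles of its own, so the $2$-cycles of $D(L)$ are precisely the independent pairs of $G(L)$ that survive the deletion, i.e.\ the independent pairs contained in $A$. By definition $K(L)$ has exactly these $2$-cycles turned into undirected edges, so the edge sets of $\overline{G(L)}[A]$ and $K(L)$ coincide. Therefore $\overline{G(L)}[A] = K(L)$, and combining with the previous paragraph proves the proposition.

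The only care needed --- and it is minor --- is the bookkeeping around vertex sets: one must be explicit that $K(L)$ retains all vertices of $D(L)$, not merely those incident to a $2$-cycle, so that its vertex set is genuinely $A$; and one must keep ``independent set of $G(L)$'' pinned to two-element subsets throughout. No substantive obstacle is expected, since the proposition is essentially a repackaging of \cref{lem:Galoistocritical} together with \cref{lem:noncritiquesnotimportant}.
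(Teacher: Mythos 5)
Your proof is correct and follows essentially the same route as the paper: invoke \cref{lem:noncritiquesnotimportant} to peel off the isolated vertices $\{i : \kappa(i)=i_*\}$, and then read the identification of the remaining part with $K(L)$ straight out of \cref{lem:Galoistocritical}. The paper's proof is just the two-sentence version of your argument; your extra care about the vertex set of $K(L)$ is harmless (and in fact unnecessary, since by \cref{lem:noncritiquesnotimportant} every vertex of $D(L)$ lies on at least one $2$-cycle, so both readings of the definition of $K(L)$ give vertex set $A$).
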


\begin{proof}
	By Lemma \ref{lem:noncritiquesnotimportant}, we know that the isolated vertices of $\overline{G(L)}$ are the join-irreducible elements $i\in \mathrm{JIrr}(L)$ such that $\kappa(i)=i_*$. Then the proposition follows from Lemma \ref{lem:Galoistocritical}. 
\end{proof}

\begin{proposition}  \label{prop:coversetindset}
Let $L$ be a semidistributive extremal lattice. The cover sets of $C(L)$ are the partitions of the vertices of $K(L)$ into independent sets.  
\end{proposition}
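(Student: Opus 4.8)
The plan is to reduce the statement to a graph-theoretic fact about $K(L)$, by identifying the faces of $C(L)$ with the independent sets of $K(L)$. First recall that, by the description of the critical pairs of a semidistributive lattice, the vertex set of $C(L)$ is $\{j\in\mathrm{JIrr}(L)\mid \kappa(j)\neq j_*\}$, which by \cref{prop:KLiscomplement} (together with \cref{lem:noncritiquesnotimportant}) is exactly the vertex set of $K(L)$. The core claim I would establish is:
$$F\ \text{is a face of}\ C(L)\iff F\ \text{is an independent set of}\ K(L).$$
Granting this, a cover set of $C(L)$ is precisely a family of independent sets of $K(L)$ whose union is $V(K(L))$. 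Every partition of $V(K(L))$ into independent sets is such a family, hence a cover set; and conversely, since $C(L)$ is downward closed, any cover set $\{A_1,\dots,A_m\}$ refines to the partition of $V(K(L))$ given by $A_1,\ A_2\setminus A_1,\ \dots,\ A_i\setminus(A_1\cup\cdots\cup A_{i-1}),\ \dots$ (discarding empty sets), whose blocks are again independent sets and whose number is at most $m$. Thus the partitions of $V(K(L))$ into independent sets are exactly the cover sets that are partitions, and they realize every attainable cover-set cardinality; in particular the minimum size of a cover set of $C(L)$ equals $\chi(K(L))$, which is what is needed afterwards.

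It remains to prove the boxed equivalence. The forward direction is immediate: if $F$ is a face of $C(L)$ then $D(L)[F]$ is acyclic, so it contains no directed $2$-cycle; since by construction the edges of $K(L)$ are exactly the $2$-cycles of $D(L)$, the set $F$ contains no edge of $K(L)$, i.e.\ $F$ is independent in $K(L)$. For the converse, let $F$ be an independent set of $K(L)$, so that for all distinct $i,j\in F$ the pair $\{i,j\}$ is not a $2$-cycle of $D(L)$, i.e.\ $\kappa(i)\not\geq j$ or $\kappa(j)\not\geq i$. Consider any edge $i\to j$ of $D(L)[F]$. By definition of $D(L)$ this means $\kappa(i)\geq j$; since $\{i,j\}$ is not a $2$-cycle we must then have $\kappa(j)\not\geq i$, that is $i\not\leq\kappa(j)$, which by \cref{prop:kappaextremalSD} is exactly the condition for $i\to j$ to be an edge of the Galois graph $G(L)$. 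Hence every directed edge of $D(L)[F]$ is, with the same orientation, an edge of $G(L)$, so $D(L)[F]$ is a directed subgraph of $G(L)$. As $G(L)$ is acyclic by \cref{prop:acyextremal}, $D(L)[F]$ is acyclic, so $F$ is a face of $C(L)$. This proves the equivalence and, with the reduction above, the proposition.

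I expect the main obstacle to be precisely the converse direction: the key insight is that an independent set of $K(L)$ can only span edges of the Galois graph inside $D(L)$ — equivalently, that removing the $2$-cycles from $D(L)$ leaves a subgraph of $G(L)$ — after which acyclicity of the Galois graph finishes it. The only other point needing care is the bookkeeping that the vertex sets of $C(L)$ and $K(L)$ coincide, which follows from the description of the critical pairs together with \cref{prop:KLiscomplement}.
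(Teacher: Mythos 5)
Your proof is correct and follows the same strategy as the paper: establish that the faces of $C(L)$ coincide with the independent sets of $K(L)$, then translate cover sets into partitions into independent sets. However, your write-up is more careful than the paper's at two points, and one of them corrects a genuine error. In the converse direction of the face/independent-set equivalence, the paper asserts that ``the only directed cycles in $D(L)$ are $2$-cycles,'' which is false: for instance, a semidistributive extremal lattice whose Galois graph on $\{1,2,3\}$ has the single arrow $3\to 1$ (such a lattice exists by \cref{thm:extremalrepthm} and \cite{reading2021fundamental}) has $D(L)$ containing the directed $3$-cycle $3\to 1\to 2\to 3$. What your argument establishes instead is the correct and sufficient statement: any edge $i\to j$ of $D(L)$ that is not part of a $2$-cycle is, via \cref{prop:kappaextremalSD}, an edge of $G(L)$, so $D(L)[F]$ is a subgraph of the acyclic graph $G(L)$ whenever $F$ is independent in $K(L)$. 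Second, a cover set of $C(L)$ as defined is just a family of faces whose union covers the vertices, so the paper's step ``we know that $S$ is a partition of the vertices of $K(L)$'' does not hold for arbitrary cover sets; your refinement $A_1,\, A_2\setminus A_1,\, \dots$ makes the passage to partitions precise and, as you note, shows that the minimum cover-set size equals $\chi(K(L))$, which is exactly what \cref{thm:dimension} requires. Both of your improvements should be incorporated; the proposition as literally stated (``the cover sets \emph{are} the partitions'') is slightly too strong, but the equality of minima that you extract is what is used downstream.
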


\begin{proof}
By definition the vertices of $C(L)$ and $K(L)$ are the same. 

Let $S$ be a cover set of $C(L)$. We know that $S$ is a partition of the vertices of $K(L)$, and we want to prove that it is a partition into independent sets. Let $F\in S$. By hypothesis $F$ is a face of $C(L)$, which means that the induced subgraph of $D(L)$ on $F$ does not have a directed cycle. In particular it does not contain $2$-cycles, which proves that $F$ is an independent set of $K(L)$.

Conversely, let $S$ be a partition of $K(L)$ into independent sets. We know that it is a partition of the vertices of $C(L)$. Let $F\in S$. By hypothesis $F$ does not contain $2$-cycles. But by Lemma \ref{lem:Galoistocritical} and the fact that $G(L)$ does not have a directed cycle (Proposition \ref{prop:acyextremal}), it follows that the only directed cycles in $D(L)$ are $2$-cycles. Thus the induced subgraph of $D(L)$ on $F$ does not contain directed cycles, which proves that $F$ is a face of $C(L)$. Thus $S$ is a cover set of $C(L)$. 
\end{proof}

We can now prove the main result of this section (which is Theorem \ref{thm:dimresultintro} from the introduction):

\begin{theorem} \label{thm:dimension}
	Let $L$ be a semidistributive extremal lattice with $|L|>1$, then $\dim(L)~=~\chi (\overline{G(L)})$.
\end{theorem}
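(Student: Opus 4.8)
The plan is to read off the theorem as a short combination of \cref{thm:readingdimcoversets}, \cref{prop:coversetindset} and \cref{prop:KLiscomplement}, using the elementary fact that, for a graph $G$ with nonempty vertex set, the least number of independent sets into which $V(G)$ can be partitioned is exactly $\chi(G)$: a partition of $V(G)$ into $m$ (nonempty) independent sets is precisely a proper $m$-colouring, and conversely the nonempty colour classes of any proper $k$-colouring partition $V(G)$ into at most $k$ independent sets.

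First I would treat the main case, in which $\overline{G(L)}$ has at least one edge. That edge already belongs to $K(L)$ by \cref{prop:KLiscomplement}, so the vertex set of $K(L)$ — which is, by construction, the vertex set of the critical pairs complex $C(L)$ — is nonempty. Now \cref{thm:readingdimcoversets} gives that $\dim(L)$ is the minimum size of a cover set of $C(L)$; \cref{prop:coversetindset} says these cover sets are exactly the partitions of $V(K(L))$ into independent sets; and the fact recalled above turns this minimum into $\chi(K(L))$. Finally, by \cref{prop:KLiscomplement} the graph $\overline{G(L)}$ is $K(L)$ with a (possibly empty) set of isolated vertices adjoined, which does not change the chromatic number since $K(L)$ is nonempty; hence $\chi(K(L)) = \chi(\overline{G(L)})$ and $\dim(L) = \chi(\overline{G(L)})$.

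It then remains to handle the degenerate case where $\overline{G(L)}$ has no edge. By \cref{lem:canonicaljoingraphiscompgalois} this is the canonical join graph of $L$, so the canonical join complex consists only of singletons and the empty face; since every element of $L$ is the join of its canonical join representation, every element is then $\hat 0$ or join-irreducible, and $\mathrm{JIrr}(L)$ must be a chain (the join $k$ of two incomparable join-irreducibles $j,j'$ would be a join-irreducible with $j,j'<k$, whence $j,j'\le k_*$ and $k=j\vee j'\le k_*<k$, absurd). Thus $L$ is a chain, so $\dim(L)=1$; and $\overline{G(L)}$ is edgeless on a nonempty vertex set (nonempty as $|L|>1$), so $\chi(\overline{G(L)})=1$ too. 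I expect this degenerate case to be the only delicate point of the argument — once \cref{prop:coversetindset} and \cref{prop:KLiscomplement} are available, the rest is bookkeeping, so there is no genuinely hard step remaining.
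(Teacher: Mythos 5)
Your proof is correct and takes essentially the same approach as the paper: both reduce the claim to \cref{thm:readingdimcoversets}, \cref{prop:coversetindset} and \cref{prop:KLiscomplement}, and both isolate the degenerate case where $\overline{G(L)}$ is edgeless (equivalently, $L$ is a chain). The only organizational difference is that you split on ``$\overline{G(L)}$ has an edge or not'' and prove the implication ``edgeless $\Rightarrow$ chain,'' whereas the paper splits on ``$L$ is a chain or not'' and simply asserts the contrapositive ``not a chain $\Rightarrow$ $G(L)$ not complete''; your version makes that step explicit, which is if anything a little more careful.
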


\begin{proof}
If $L$ is a chain with at least two elements, then the underlying undirected graph of $G(L)$ is the complete graph on $|L|-1$ vertices. Thus $\overline{G(L)}$ is an empty graph with at least one vertex, and the result follows.    
If $L$ is not a chain, then the underlying undirected graph of $G(L)$ is not the complete graph. Thus, by Proposition \ref{prop:KLiscomplement}, $K(L)$ is not the null graph and $\chi(K(L))=\chi(\overline{G(L)})$. Then we conclude using Theorem \ref{thm:readingdimcoversets} and Proposition \ref{prop:coversetindset} which together give $\chi(K(L)) =\dim(L)$.
\end{proof}

\begin{remark}
We defined the Galois graph for any extremal lattice $L$, without the assumption that $L$ is semidistributive. The lattice $L$ from Figure \ref{fig:counterexamplenotSD} shows that it is necessary to have the semidistributivity in Theorem \ref{thm:dimension}. Indeed, $L$ is extremal but not semidistributive, $\dim(L)=3$ and the complement of its Galois graph is a path on $4$ vertices, thus $\chi(\overline{G(L)})=2$.
\end{remark}

\begin{corollary}[{\cite{Dilworth1950Decomposition}}]
\label{cor:dilworth}
Let $L$ be a distributive lattice. Then $\dim(L)=\mathrm{width}(\mathrm{JIrr}(L))$.    
\end{corollary}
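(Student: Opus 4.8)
The statement to prove is \cref{cor:dilworth}. By \cref{thm:dimension} it suffices, when $|L|>1$, to show that $\chi(\overline{G(L)})=\mathrm{width}(\mathrm{JIrr}(L))$; the case $|L|=1$ is trivial, both sides being $0$. A finite distributive lattice is semidistributive, and it is extremal because the distributive lattices are exactly the extremal graded lattices \cite{MarkowskyExtremal}; hence \cref{thm:dimension} applies and $\dim(L)=\chi(\overline{G(L)})$.

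The core of the argument is to identify $\overline{G(L)}$ with the incomparability graph of the poset $P:=\mathrm{JIrr}(L)$. By Birkhoff's representation theorem $L\cong J(P)$, the lattice of order ideals of $P$, with the join-irreducible attached to $p\in P$ being the principal ideal $I_P(p)$ and the meet-irreducibles being the ideals $P\setminus F_P(p)$. I would first compute $\kappa$ in this model. Since $p$ is maximal in $I_P(p)$, the set $I_P(p)\setminus\{p\}$ is again an order ideal, and it equals $(I_P(p))_{*}$ in $J(P)$; the maximum of $F_L\big(I_P(p)\setminus\{p\}\big)\setminus F_L\big(I_P(p)\big)$ is then the largest order ideal of $P$ that contains $I_P(p)\setminus\{p\}$ but omits $p$, namely $P\setminus F_P(p)$. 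Thus $\kappa(I_P(p))=P\setminus F_P(p)$. Next, using \cref{prop:kappaextremalSD}, the directed graph $G(L)$ has an edge from $I_P(p)$ to $I_P(p')$ exactly when $I_P(p)\not\leq\kappa(I_P(p'))$, and $I_P(p)\leq P\setminus F_P(p')$ iff $I_P(p)\cap F_P(p')=\emptyset$ iff $p'\not\leq p$. Hence there is a directed edge $I_P(p)\to I_P(p')$ iff $p'<p$, the underlying undirected graph of $G(L)$ is the comparability graph of $P$, and by the definition of the complement, $\overline{G(L)}$ is the incomparability graph of $P$.

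Finally, the independent sets of the incomparability graph of $P$ are precisely the chains of $P$, so a proper colouring of $\overline{G(L)}$ is the same thing as a partition of $P$ into chains. By Dilworth's theorem the least number of chains in such a partition equals $\mathrm{width}(P)$, so $\chi(\overline{G(L)})=\mathrm{width}(P)=\mathrm{width}(\mathrm{JIrr}(L))$, which together with $\dim(L)=\chi(\overline{G(L)})$ finishes the proof. The only place where care is needed is the $\kappa$-computation inside $J(P)$: one must be consistent about ideals versus filters and record that $I_P(p)\setminus\{p\}$ is still an order ideal; everything else is routine, and the conclusion then follows by invoking Dilworth's theorem.
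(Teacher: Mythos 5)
Your proof is correct and follows essentially the same route as the paper: identify $\overline{G(L)}$ with the incomparability graph of $\mathrm{JIrr}(L)$, apply Dilworth's theorem to equate its chromatic number with $\mathrm{width}(\mathrm{JIrr}(L))$, and conclude via \cref{thm:dimension}. The only differences are cosmetic — you spell out the computation of $\kappa$ in Birkhoff's model $J(P)$ and handle the $|L|=1$ corner case explicitly, whereas the paper compresses the identification of $G(L)$ with the comparability digraph of $\mathrm{JIrr}(L)$ into a single appeal to the fundamental theorem of finite distributive lattices (and its phrase ``Hasse diagram'' should really read ``comparability digraph,'' as the next sentence makes clear and as your computation confirms).
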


\begin{proof}
By the fundamental theorem of finite distributive lattices, the Galois graph $G(L)$ is isomorphic to the Hasse diagram of the subposet $\mathrm{JIrr}(L)$. Indeed, we have an edge $i\rightarrow j$ in $G(L)$ if and only if $i\geq j$ in the subposet $\mathrm{JIrr}(L)$. Thus $\overline{G(L)}$ is isomorphic to the incomparability graph of $\mathrm{JIrr}(L)$. By Dilworth's decomposition theorem, the chromatic number of this latter graph equals the maximum size of an antichain, which is $\mathrm{width}(\mathrm{JIrr}(L))$. We conclude using Theorem \ref{thm:dimension}.
\end{proof}

For a finite simple graph $G$, we denote by $\omega(G)$ its clique number and by $d(G)$ its maximum degree. The following is well-known and easy:

\begin{lemma}
	\label{lem:graphchro}
	For any simple graph $G$, we have $\omega(G) \leq \chi(G) \leq d(G)+1$.
\end{lemma}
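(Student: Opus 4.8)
The plan is to prove the two inequalities $\omega(G) \leq \chi(G)$ and $\chi(G) \leq d(G)+1$ separately, both by elementary arguments about proper colorings.

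For the first inequality, consider an optimal proper coloring of $G$ using $\chi(G)$ colors, and let $K$ be a clique of size $\omega(G)$. Since any two vertices of $K$ are adjacent, they must receive distinct colors in any proper coloring; hence the coloring uses at least $|K| = \omega(G)$ distinct colors on $K$ alone, so $\chi(G) \geq \omega(G)$.

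For the second inequality, I would give a greedy coloring argument. Order the vertices $v_1, \dots, v_n$ arbitrarily and color them one at a time: when coloring $v_k$, it has at most $d(G)$ neighbors (among all of $V(G)$, hence in particular among $v_1,\dots,v_{k-1}$), so at most $d(G)$ colors are forbidden, and a color from $\{1, \dots, d(G)+1\}$ is available. This produces a proper coloring with at most $d(G)+1$ colors, whence $\chi(G) \leq d(G)+1$.

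Since both steps are entirely standard and short, there is no real obstacle here; the statement is included only because it is invoked in the applications section. I would simply remark that the first bound is immediate from the definition of proper coloring restricted to a clique, and the second follows from the greedy algorithm on any vertex ordering.
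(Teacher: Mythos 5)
Your proof is correct and is precisely the standard textbook argument for both inequalities (clique vertices need distinct colors; greedy coloring on any vertex ordering). The paper itself gives no proof, simply remarking that the lemma is well-known and easy, so there is nothing to reconcile.
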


The number of independent sets of $G(L)$ is the number of elements of $L$ (\cite[Corollary 5.6]{Thomas_2019}). This corresponds to the number of cliques of $\overline{G(L)}$. By Lemma \ref{lem:canonicaljoingraphiscompgalois}, a clique of $\overline{G(L)}$ corresponds to the downward label set, or upward label set using \cite[Corollary 6.4]{Thomas_2019}, of some element of $L$. Thus $\omega(\overline{G(L)})$ is the maximum number of elements that cover, or are covered, by some vertex of $L$. This is a lower bound for the dimension of $L$ by Theorem \ref{thm:dimension} and Lemma \ref{lem:graphchro}. We obtained differently this result in Proposition \ref{prop:dimSDbigger}, proving it is true for any semidistributive lattice.

\section{Applications of our dimension result}
\label{sec:applicationdimension}

In this section, we use Theorem \ref{thm:dimension} to obtain the dimension of some semidistributive extremal lattices that appeared recently in algebraic combinatorics.

\subsection{Hochschild and bubble lattices}
\label{sec:hochschildbubble}

In this subsection we determine the dimension of the Hochschild lattice \cite{Combe2020HochschildLattices,M_hle_2022} and of one of its generalizations called the bubble lattices \cite{mcconville2024bubble,mcconville2025bubble}. The Hochschild lattice is obtained as an orientation of the 1-skeleton of the Hochschild polytopes, which appeared in algebraic topology \cite{rivera2018combinatorial}.

Recall the discussion on extremal lattices from Section \ref{sec:extremallattices}, in particular Theorem \ref{thm:extremalrepthm}, which enables us to define an extremal lattice by giving its Galois graph.

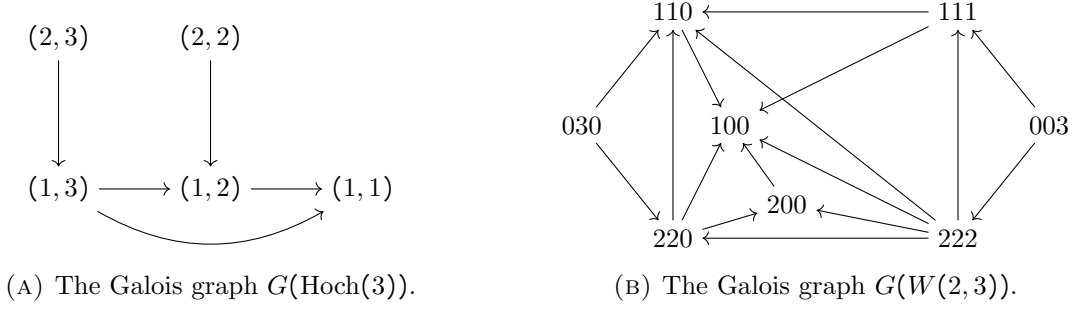
\begin{figure}
	\centering
	\begin{subfigure}[t]{.5\textwidth}
		\centering
		\begin{tikzpicture}\small
			\def\x{1};
			\def\y{1};
			\draw(0*\x,2*\y) node(n1){$(2,3)$};
			\draw(0*\x,0*\y) node(n2){$(1,3)$};
			\draw(2*\x,0*\y) node(n3){$(1,2)$};
			\draw(4*\x,0*\y) node(n4){$(1,1)$};
			\draw(2*\x,2*\y) node(n5){$(2,2)$};
			\draw[->](n1) -- (n2);
			\draw[->](n2) -- (n3);
			\draw[->](n2) to[bend right] (n4);
			\draw[->](n3) -- (n4);
			\draw[->](n5) -- (n3);
		\end{tikzpicture}
		\caption{The Galois graph $G(\mathrm{Hoch}(3))$.}
		\label{fig:galoisHoch3}
	\end{subfigure}%
	\begin{subfigure}[t]{.5\textwidth}
		\centering
\begin{tikzpicture}[scale=1.5]\small
			\def\x{1};
			\def\y{1};
			\draw(0*\x,0*\y) node(n1){$220$};
			\draw(-0.8*\x,1*\y) node(n2){$030$};
			\draw(0.5*\x,1*\y) node(n3){$100$};
			\draw(0*\x,2*\y) node(n4){$110$};
			\draw(1*\x,0.3*\y) node(n5){$200$};
            \draw(2.5*\x,0*\y) node(n6){$222$};
            \draw(2.5*\x,2*\y) node(n7){$111$};
            \draw(3.3*\x,1*\y) node(n8){$003$};
			\draw[->](n1) -- (n3);
                \draw[->](n1) -- (n4);
                \draw[->](n1) -- (n5);
                \draw[->](n2) -- (n1);
                \draw[->](n2) -- (n4);
                \draw[->](n4) -- (n3);
                \draw[->](n5) -- (n3);
                \draw[->](n6) -- (n1);
                \draw[->](n6) -- (n5);
                \draw[->](n6) -- (n3);
                \draw[->](n6) -- (n4);
                \draw[->](n6) -- (n7);
                \draw[->](n7) -- (n3);
                \draw[->](n7) -- (n4);
                \draw[->](n8) -- (n6);
                \draw[->](n8) -- (n7);
		\end{tikzpicture}
		\caption{The Galois graph $G(W(2,3))$.}
		\label{fig:galoisW23}
	\end{subfigure}
	\caption{Two Galois graphs.}
	\label{fig:twogaloisgraphs}
\end{figure}

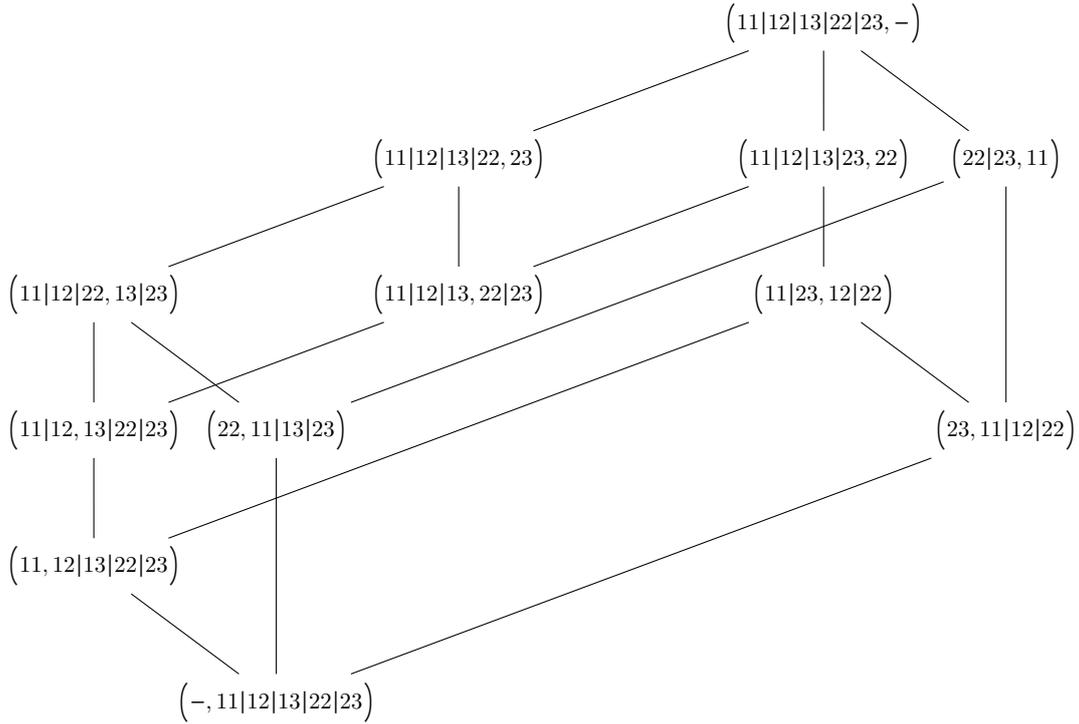
\begin{figure}
	\centering
	\begin{tikzpicture}\small
		\begin{scope}[scale=1.2]
			\def\x{2};
			\def\y{1.5};
			\def\s{.85};
			\draw(3*\x,1*\y) node[scale=\s](n1){$\Bigl(-,11|12|13|22|23\Bigr)$};
			\draw(2*\x,2*\y) node[scale=\s](n2){$\Bigl(11,12|13|22|23\Bigr)$};
			\draw(2*\x,3*\y) node[scale=\s](n3){$\Bigl(11|12,13|22|23\Bigr)$};
			\draw(3*\x,3*\y) node[scale=\s](n4){$\Bigl(22,11|13|23\Bigr)$};
			\draw(7*\x,3*\y) node[scale=\s](n5){$\Bigl(23,11|12|22\Bigr)$};
			\draw(2*\x,4*\y) node[scale=\s](n6){$\Bigl(11|12|22,13|23\Bigr)$};
			\draw(4*\x,4*\y) node[scale=\s](n7){$\Bigl(11|12|13,22|23\Bigr)$};
			\draw(6*\x,4*\y) node[scale=\s](n8){$\Bigl(11|23,12|22\Bigr)$};
			\draw(4*\x,5*\y) node[scale=\s](n9){$\Bigl(11|12|13|22,23\Bigr)$};
			\draw(6*\x,5*\y) node[scale=\s](n10){$\Bigl(11|12|13|23,22\Bigr)$};
			\draw(7*\x,5*\y) node[scale=\s](n11){$\Bigl(22|23,11\Bigr)$};
			\draw(6*\x,6*\y) node[scale=\s](n12){$\Bigl(11|12|13|22|23,-\Bigr)$};
			\draw(n1) -- (n2);
			\draw(n1) -- (n4);
			\draw(n1) -- (n5);
			\draw(n2) -- (n3);
			\draw(n2) -- (n8);
			\draw(n3) -- (n6);
			\draw(n3) -- (n7);
			\draw(n4) -- (n6);
			\draw(n4) -- (n11);
			\draw(n5) -- (n8);
			\draw(n5) -- (n11);
			\draw(n6) -- (n9);
			\draw(n7) -- (n9);
			\draw(n7) -- (n10);
			\draw(n8) -- (n10);
			\draw(n9) -- (n12);
			\draw(n10) -- (n12);
			\draw(n11) -- (n12);
		\end{scope}
	\end{tikzpicture}
	\caption{The lattice of maximal orthogonal pairs of $G(\mathrm{Hoch}(3))$ from Figure \ref{fig:galoisHoch3}, where pairs $(i,j)$ are abbreviated by the words $ij$ and we omitted set brackets and replaced commas by vertical bars.}
	\label{fig:hoch3}
\end{figure}

\begin{figure}
	\centering
	\includegraphics[width=8cm]{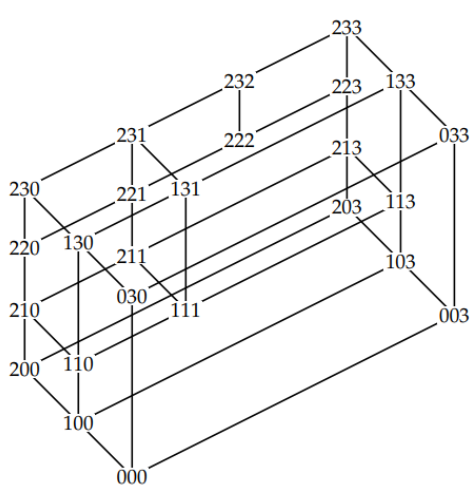}
	\caption{The lattice $W(2,3)$.}
	\label{fig:latticeW23}
\end{figure}

\begin{definition}[{\cite[Theorem 1.1]{M_hle_2022}}]
	\label{def:Hochlattice}
	For any $n>0$, the \emph{Hochschild lattice} $\mathrm{Hoch}(n)$ is an extremal lattice whose Galois graph is isomorphic to the directed graph whose vertices are $\{(i,j)\mid i\in \{1,2\},\,i\leq j\leq n\}$ and the edges are $(i,j)\rightarrow (i',j')$ if $(i,j)\neq (i',j')$ and either we have $i=2$, $i'=1$ and $j=j'$, or we have $i=i'=1$ and $j>j'$.
\end{definition}

We give in Figure \ref{fig:galoisHoch3} the Galois graph of $\mathrm{Hoch}(3)$, and we give this lattice in Figure \ref{fig:hoch3} (this figure is taken from \cite{M_hle_2022}).

\begin{definition}[{\cite[Proposition 4.27]{mcconville2024bubble}}]
	Let $X=\{x_1,\dots,x_m\}$ and $Y=\{y_1,\dots,y_n\}$ be two disjoint linearly ordered alphabets.
	For any $m,n\geq 0$, the \emph{bubble lattice} $\mathrm{Bub}(m,n)$ is an extremal lattice whose Galois graph is the directed graph whose vertices are $X\sqcup Y\sqcup (X\times Y)$ and the edges are $l_1\rightarrow l_2$ if $l_1\neq l_2$ and either:
	\begin{enumerate}
		\item[$\bullet$] $l_1=(x_s,y_t)$ and $l_2=x_s$, or
		\item[$\bullet$] $l_1=y_t$ and $l_2=(x_s,y_t)$, or
		\item[$\bullet$] $l_1=(x_s,y_t)$ and $l_2=(x_{s'},y_{t'})$ with $s\geq s'$ and $t\leq t'$.
	\end{enumerate}
\end{definition}

It is proved in \cite[Proposition 4.19]{mcconville2024bubble} that $\mathrm{Bub}(n-1,1)$ is isomorphic to $\mathrm{Hoch}(n)$. Moreover, these lattices are semidistributive and extremal \cite[Theorem 1.1]{mcconville2024bubble}.

Now we use Theorem \ref{thm:dimension} to obtain the dimension of $\mathrm{Hoch}(n)$. 

\begin{proposition}
	\label{prop:dimHoch}
	We have $\dim(\mathrm{Hoch}(n))=n$ for all $n>0$. 
\end{proposition}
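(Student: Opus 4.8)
The plan is to apply \cref{thm:dimension} and reduce the problem to a chromatic number computation. Since $\mathrm{Bub}(n-1,1)$ is isomorphic to $\mathrm{Hoch}(n)$ and bubble lattices are semidistributive and extremal, $\mathrm{Hoch}(n)$ is a semidistributive extremal lattice with more than one element, so \cref{thm:dimension} gives $\dim(\mathrm{Hoch}(n)) = \chi(\overline{G(\mathrm{Hoch}(n))})$. It then remains to compute this chromatic number from the explicit description of the Galois graph in \cref{def:Hochlattice}.

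First I would describe $\overline{G(\mathrm{Hoch}(n))}$ explicitly. Write $A := \{(1,j) : 1 \le j \le n\}$ and $B := \{(2,j) : 2 \le j \le n\}$, so that the vertex set is $A \sqcup B$ with $|A| = n$ and $|B| = n-1$. Going through the cases of a pair of vertices: for two vertices of $A$ there is always a directed edge in $G(\mathrm{Hoch}(n))$ (the rule $i = i' = 1$, $j > j'$), so $A$ is an independent set of $\overline{G(\mathrm{Hoch}(n))}$; for two vertices of $B$ there is never an edge in $G(\mathrm{Hoch}(n))$, so $B$ is a clique of $\overline{G(\mathrm{Hoch}(n))}$; and $(1,k)$ and $(2,j)$ are joined by a directed edge of $G(\mathrm{Hoch}(n))$ exactly when $k = j$ (the rule $i = 2$, $i' = 1$, $j = j'$), so in $\overline{G(\mathrm{Hoch}(n))}$ the vertex $(2,j)$ is adjacent to $(1,k)$ for every $k \neq j$.

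Then I would prove the two matching bounds. For the lower bound, observe that $(1,1)$ is adjacent in $\overline{G(\mathrm{Hoch}(n))}$ to every vertex of $B$, since $1 \neq j$ for all $j \in \{2,\dots,n\}$; hence $B \cup \{(1,1)\}$ is a clique of size $n$ and $\chi(\overline{G(\mathrm{Hoch}(n))}) \ge n$. For the upper bound, I would exhibit the colouring assigning colour $1$ to $(1,1)$ and colour $j$ to both $(1,j)$ and $(2,j)$ for $2 \le j \le n$, and check that it is proper: there is no constraint inside $A$ (independent set), distinct vertices of $B$ receive distinct colours, and an edge $(1,k)$–$(2,j)$ only occurs when $k \neq j$, in which case the colours $k$ (or $1$, if $k=1$) and $j$ differ. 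This gives $\chi(\overline{G(\mathrm{Hoch}(n))}) \le n$, so $\chi(\overline{G(\mathrm{Hoch}(n))}) = n$ and therefore $\dim(\mathrm{Hoch}(n)) = n$.

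There is no serious obstacle here. The only point requiring care is the bookkeeping in translating the directed-edge rules of \cref{def:Hochlattice} into the adjacency relation of the complement graph, and keeping the index ranges straight — in particular, that $(1,1) \in A$ has no counterpart $(2,1)$ in $B$, which is precisely what makes $B \cup \{(1,1)\}$ a clique of full size $n$ rather than $n-1$.
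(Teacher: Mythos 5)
Your proof is correct and follows the paper's strategy exactly: invoke \cref{thm:dimension} to reduce to a chromatic-number computation, describe $\overline{G(\mathrm{Hoch}(n))}$, and match a clique of size $n$ with a proper $n$-colouring. In fact your identification of the extremal clique is more careful than the paper's: you correctly note that $B\cup\{(1,1)\}$ is a clique precisely because $(2,1)$ does not exist, whereas the paper's proof asserts that $\{(1,i)\}\sqcup B$ is complete for every $i\ge 2$, which is not quite right --- for $i\ge 2$ the vertex $(1,i)$ is non-adjacent to $(2,i)\in B$ in the complement --- so your formulation of the lower bound fixes a small slip in the original. Your upper-bound colouring (colour $j$ on both $(1,j)$ and $(2,j)$, colour $1$ on $(1,1)$) also differs cosmetically from the paper's (one colour for all of $A$, distinct colours on $B$), but both are equally valid proper $n$-colourings.
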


\begin{proof}
	From the Galois graph of $L:=\mathrm{Hoch}(n)$ given in Definition \ref{def:Hochlattice}, we can easily deduce $\overline{G(L)}$. Let $A:=\{(1,i)\mid i\in [n]\}$ and $B:=\{(2,i)\mid 2\leq i\leq n\}$. Then $\overline{G(L)}$ is the graph whose vertices are $A\sqcup B$, and it can be described by the fact that the induced subgraph on $A$ is an empty graph, and for every $i\geq 2$ the induced subgraph on $\{(1,i)\}\sqcup B$ is the complete graph on $n$ vertices. From the latter we obtain $\chi(\overline{G(L)})\geq n$. Moreover, by coloring the vertices of $\{(1,1)\}\sqcup B$ with $n$ different colors, and coloring the vertices of $A$ all with the same color as $(1,1)$, this gives a proper coloring with $n$ colors. Thus $\chi(\overline{G(L)})= n$, which implies the proposition, by Theorem \ref{thm:dimension}.
\end{proof}

In fact, we can directly obtain the dimension of the bubble lattices (which is \cite[Conjecture 4.24]{mcconville2024bubble}) using the bounds on dimension from Proposition \ref{prop:bounddimreading}, which also implies Proposition \ref{prop:dimHoch}.

\begin{proposition}
\label{prop:dimbubble}
We have $\dim(\mathrm{Bub}(m,n))=m+n$ for all $m,n\geq 0$.
\end{proposition}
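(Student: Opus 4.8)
The plan is to apply \cref{thm:dimension}. Since $\mathrm{Bub}(m,n)$ is semidistributive and extremal by \cite[Theorem 1.1]{mcconville2024bubble}, and has more than one element as soon as $m+n\ge 1$ (the case $m=n=0$ being the one-element lattice, of dimension $0=m+n$), it suffices to show that $\chi\bigl(\overline{G(\mathrm{Bub}(m,n))}\bigr)=m+n$. Write $X=\{x_1,\dots,x_m\}$ and $Y=\{y_1,\dots,y_n\}$, so that the vertex set of $G:=G(\mathrm{Bub}(m,n))$ is $X\sqcup Y\sqcup(X\times Y)$. The first observation is that every edge of $G$ is incident to a vertex of $X\times Y$; hence $X\cup Y$ carries no edge of $G$, i.e.\ it is an independent set of $G$ and therefore a clique of $\overline G$. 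By \cref{lem:graphchro} this already gives $\chi(\overline G)\ge\omega(\overline G)\ge m+n$.

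For the reverse inequality I would exhibit an explicit proper coloring of $\overline G$ with $m+n$ colors, equivalently a partition of $X\sqcup Y\sqcup(X\times Y)$ into $m+n$ cliques of the underlying undirected graph of $G$. Set $K_s:=\{x_s\}\cup\{(x_s,y_t):t\in[n]\}$ for each $s\in[m]$, and take the singleton $\{y_t\}$ for each $t\in[n]$. These $m+n$ sets are pairwise disjoint and cover the vertex set; each $\{y_t\}$ is trivially a clique, and $K_s$ is a clique because $(x_s,y_t)\to x_s$ holds for every $t$ while, for $t\ne t'$, exactly one of the arrows $(x_s,y_t)\to(x_s,y_{t'})$ and $(x_s,y_{t'})\to(x_s,y_t)$ is present in $G$. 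Hence $\chi(\overline G)\le m+n$, so $\chi(\overline G)=m+n$, and \cref{thm:dimension} yields $\dim(\mathrm{Bub}(m,n))=m+n$.

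I do not expect a genuine obstacle beyond the careful translation of the three edge rules defining $G(\mathrm{Bub}(m,n))$ into the adjacency structure of $\overline G$ and the verification that the $K_s$ are cliques. In the spirit of the remark preceding the statement, one can instead argue through the bounds of \cref{prop:bounddimreading}: reading off the comparabilities of join-irreducibles from the Galois graph shows that $\mathrm{JIrr}(\mathrm{Bub}(m,n))$ is the disjoint union of $m$ chains $x_s<(x_s,y_n)<\dots<(x_s,y_1)$ together with $n$ isolated points $y_1,\dots,y_n$, whence $\mathrm{width}\bigl(\mathrm{JIrr}(\mathrm{Bub}(m,n))\bigr)=m+n$; combined with the lower bound $\dim\ge m+n$ obtained above (which also follows from \cref{prop:dimSDbigger}, as $X\cup Y$ is the set of covers of some element of $\mathrm{Bub}(m,n)$) this again gives $\dim(\mathrm{Bub}(m,n))=m+n$, and in particular recovers \cref{prop:dimHoch} via the isomorphism $\mathrm{Hoch}(n)\cong\mathrm{Bub}(n-1,1)$.
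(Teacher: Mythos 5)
Your main argument is correct but takes a genuinely different route from the paper. The paper does not invoke \cref{thm:dimension} here at all: it cites \cite[Corollary 4.10]{mcconville2024bubble} for the structure of the subposet $\mathrm{JIrr}(\mathrm{Bub}(m,n))$ (the disjoint union of an $m$-element antichain with $n$ chains), deduces $\mathrm{width}(\mathrm{JIrr})=m+n$ and the existence of $m+n$ atoms, and sandwiches the dimension between the bounds of \cref{prop:bounddimreading} and \cref{prop:dimSDbigger}. You instead compute $\chi\bigl(\overline{G(\mathrm{Bub}(m,n))}\bigr)$ directly from the Galois-graph description: the lower bound comes from observing that every arrow of $G$ is incident to $X\times Y$, so $X\sqcup Y$ is a clique of $\overline G$, and the upper bound from the explicit partition into the $m$ cliques $K_s=\{x_s\}\cup\{(x_s,y_t):t\in[n]\}$ and the $n$ singletons $\{y_t\}$. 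I verified that each $K_s$ is a clique of the underlying undirected graph of $G$ (rule 1 connects $x_s$ to each $(x_s,y_t)$; rule 3 with $s'=s$ connects the pairs with the same first coordinate). This is closer in spirit to the paper's proof of \cref{prop:dimHoch} than to its proof of the present proposition, and has the merit of being self-contained, whereas the paper's argument is shorter once the cited description of $\mathrm{JIrr}(\mathrm{Bub}(m,n))$ is granted.

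One caveat concerns the alternative argument you sketch at the end. You assert that $\mathrm{JIrr}(\mathrm{Bub}(m,n))$ is the disjoint union of $m$ chains $x_s<(x_s,y_n)<\dots<(x_s,y_1)$ together with $n$ isolated points, and propose to ``read off'' these comparabilities from the Galois graph. This disagrees with \cite[Corollary 4.10]{mcconville2024bubble} as quoted in the paper (an $m$-element antichain together with $n$ chains), and the method is unsound in general: for a semidistributive extremal lattice that is not distributive, an arrow $i\to k$ in $G(L)$ records $j_i\not\leq\kappa(j_k)$, not a comparability between $j_i$ and $j_k$, so the partial order on $\mathrm{JIrr}(L)$ cannot simply be extracted from the arrows. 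Both proposed poset structures happen to have width $m+n$, so the numerical endpoint of that alternative is unaffected, but the intermediate structural claim and its claimed derivation should be dropped. Your main argument does not depend on it.
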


\begin{proof}
	In \cite[Corollary 4.10]{mcconville2024bubble}, it is proved that the subposet $\mathrm{JIrr}(\mathrm{Bub}(m,n))$ on the join-irreducible elements of $\mathrm{Bub}(m,n)$ is the disjoint union of an $m$-element antichain with $n$ chains. This proves that $\mathrm{Bub}(m,n)$ has $m+n$ atoms and $\mathrm{width}\big(\mathrm{JIrr}(\mathrm{Bub}(m,n))\big)=m+n$, which is an upper-bound of $\dim(\mathrm{Bub}(m,n))$ by Proposition \ref{prop:bounddimreading}. Since $\mathrm{Bub}(m,n)$ is a semidistributive lattice with $m+n$ atoms, using Proposition \ref{prop:dimSDbigger} we have $\dim(\mathrm{Bub}(m,n))\geq m+n$. This implies the proposition.
\end{proof}

\subsection{$(m,n)$-word lattices}
\label{sec:mnwordlattices}

We focus in this section on another generalization of the Hochschild lattice, called $(m,n)$-word lattices (\cite[Definition 77]{pilaud2025hochschild},\cite{Muehle2024WordLattices}).

\begin{definition}[{\cite[Definition 77]{pilaud2025hochschild}}]
	Let $m,n\geq 0$. The $(m,n)$-words are the words $w=w_1\cdots w_n$ of length $n$ on the alphabet $\{0,1,\dots,m+1\}$ such that $w_1\neq m+1$ and for $s\in [m]$, $w_i=s$ implies $w_j\geq s$ for all $j<i$. The \emph{$(m,n)$-word poset} $W(m,n)$ is the poset on $(m,n)$-words with the product order, which means $w\leq w'$ if $w_i\leq w'_i$ for all $i\in [n]$.
\end{definition}

The poset $W(m,n)$ is a lattice \cite[Corollary 84]{pilaud2025hochschild} and is semidistributive and extremal \cite[Theorem 1.1]{Muehle2024WordLattices}. Similarly to the bubble lattice $\mathrm{Bub}(m,n)$, it is isomorphic to $\mathrm{Hoch}(n)$ when $m=1$ \cite[Example 78]{pilaud2025hochschild}. But these are different generalizations of the Hochschild lattice. See Figure \ref{fig:galoisW23} for the Galois graph of $W(2,3)$, and Figure \ref{fig:latticeW23} for the lattice $W(2,3)$.

\begin{lemma}[{\cite[Lemma 12]{Muehle2024WordLattices}}]
	The join-irreducible elements of $W(m,n)$ are the $(m,n)$-words $a^{(i,j)}$ for $i\in [n]$ and $j\in[m]$, and $b^{(i)}$ for $i\in \{2,\dots,n\}$, where $a^{(i,j)}$ is the $(m,n)$-word with the first $i$ letters equal to $j$ and other letters are $0$, and $b^{(i)}$ is the $(m,n)$-word whose $i$-th letter is $m+1$ and all other letters are $0$.  
\end{lemma}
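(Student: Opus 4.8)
The plan is to use the definition that a join-irreducible element is one covering a unique element, together with the easy remark that if the componentwise maximum of a family of $(m,n)$-words happens to be an $(m,n)$-word, then it is the join of that family in $W(m,n)$ (it is then an upper bound lying in $W(m,n)$, and it is dominated by every other upper bound).

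First I would record that the words in the statement are genuine $(m,n)$-words: $a^{(i,j)}=j^i0^{n-i}$ begins with the letter $j$, which satisfies $j\leq m<m+1$, and its letters in $[m]$ are copies of $j$ standing to the right of other copies of $j$; the word $b^{(i)}=0^{i-1}(m+1)0^{n-i}$ begins with $0$, which is admissible exactly because $i\geq 2$. Then I would pin down the unique lower cover of each. For $b^{(i)}$ this is immediate: any $w<b^{(i)}$ has $w_k=0$ for $k\neq i$ and $w_i\leq m$, and $w_i\in[m]$ would force $w_1\geq w_i\geq 1$, which is impossible as $i\geq 2$; hence $w=\hat 0$, so $b^{(i)}$ is an atom. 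For $a^{(i,j)}$, any $w<a^{(i,j)}$ has $w_k\leq j$ for $k\leq i$ and $w_k=0$ for $k>i$; since $j\in[m]$, the defining condition (a letter $s\in[m]$ at some position forces every earlier letter to be $\geq s$) makes $\{k:w_k=j\}$ an initial segment $\{1,\dots,p\}$ with $p\leq i-1$, whence $w\leq j^p(j-1)^{i-p}0^{n-i}\leq j^{i-1}(j-1)0^{n-i}$. As $j^{i-1}(j-1)0^{n-i}$ is itself an $(m,n)$-word strictly below $a^{(i,j)}$ (equal to $\hat 0$ when $i=j=1$), it is the greatest element below $a^{(i,j)}$; thus $a^{(i,j)}$ covers a unique element and is join-irreducible.

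Next I would prove that every $w\in W(m,n)$ is the join of listed words lying below it, which forces every join-irreducible to belong to the list. For each position $k$ with $w_k=m+1$ one has $k\geq 2$ and $b^{(k)}\leq w$, and for each position $k$ with $w_k=s\in[m]$ the defining condition gives $w_\ell\geq s$ for all $\ell\leq k$, so $a^{(k,s)}\leq w$. The componentwise maximum of all these words is exactly $w$: at any position $\ell$ no chosen word exceeds $w_\ell$ there (for $a^{(k,w_k)}$ with $k\geq\ell$ because $w_k\leq w_\ell$ by the defining condition; for $b^{(k)}$ because its only nonzero entry sits at position $k$, where $w_k=m+1$), while the word indexed by $k=\ell$ attains $w_\ell$ at position $\ell$. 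By the opening remark this maximum is the join in $W(m,n)$, so $w=\bigvee\{\text{listed words}\leq w\}$. Since a join-irreducible cannot be the join of strictly smaller elements, any join-irreducible $w$ is one of the $a^{(i,j)}$ or $b^{(i)}$; conversely all of those are join-irreducible by the previous paragraph, are pairwise distinct, and differ from $\hat 0$, which yields the lemma.

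I do not expect a real obstacle: the only delicate point, recurring in both halves, is the careful bookkeeping with the defining condition on $(m,n)$-words — that a letter in $[m]$ forces every letter to its left to be at least as large — both when identifying the unique lower covers and when checking that the proposed join of $a$'s and $b$'s does not overshoot $w$. The degenerate cases $i=1$, $j=1$, $m=0$, $n=1$ require no separate treatment once the displayed formulas are interpreted literally.
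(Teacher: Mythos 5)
Your proof is correct and complete. The paper does not supply its own argument for this lemma (it is quoted verbatim from \cite[Lemma 12]{Muehle2024WordLattices}), so there is no in-paper proof to compare against; your approach, pinning down the unique lower cover of each candidate ($\hat 0$ for $b^{(i)}$, and $j^{i-1}(j-1)0^{n-i}$ for $a^{(i,j)}$) and then showing that every $(m,n)$-word is the componentwise maximum, hence the join in $W(m,n)$, of the listed words below it, is the natural route and all the steps check out, including the degenerate cases $i=1$, $j=1$, $m=0$, $n=1$.
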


\begin{proposition}[{\cite[Proposition 22]{Muehle2024WordLattices}}]
	\label{prop:canonicaljoinwords}
	The downward label set of $w\in W(m,n)$ is 
    
    \noindent $\{a^{(i,j)} \mid \text{$i$ is the position of the rightmost letter $j$ in $w$, where $j\in [m]$}\} \sqcup \{b^{(i)}\mid w_i=m+1\}$.
\end{proposition}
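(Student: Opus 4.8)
The plan is to read off $D(w)$ from its definition as a downward label set. Since $W(m,n)$ is semidistributive, \cref{lemEquiJSD} applies, so for every cover $y\lessdot w$ the set $I_{W(m,n)}(w)\setminus I_{W(m,n)}(y)$ has a least element and $D(w)=\{\gamma_J(y\lessdot w)\mid y\lessdot w\}$, where $\gamma_J(y\lessdot w)=\min\bigl(I_{W(m,n)}(w)\setminus I_{W(m,n)}(y)\bigr)$. Since the join-irreducibles of $W(m,n)$ are exactly the words $a^{(i,j)}$ and $b^{(i)}$ (recalled just above), the whole statement reduces to classifying the lower covers of a fixed $(m,n)$-word $w$ and, for each one, recognizing which $a^{(i,j)}$ or $b^{(i)}$ is the minimum of $I_{W(m,n)}(w)\setminus I_{W(m,n)}(y)$.

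First I would show that every lower cover $y$ of $w$ differs from $w$ in exactly one letter. If $T=\{x\mid y_x<w_x\}$ had at least two elements, set $x_0=\min T$ and let $z$ be obtained from $y$ by raising its $x_0$-th letter to $w_{x_0}$. Using that $y$ and $w$ lie in $W(m,n)$ and that $x_0$ is the smallest position of disagreement, one checks that $z\in W(m,n)$ — the only defining inequality of $W(m,n)$ that could newly fail concerns positions before $x_0$, where $y$, $z$ and $w$ all agree — and then $y<z<w$ contradicts $y\lessdot w$.

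Now fix a lower cover $y$ differing from $w$ only at a position $c$, with $y_c<w_c$. If $w_c=m+1$ (so $c\geq 2$): because $y$ is itself a legal $(m,n)$-word, $y_c$ is at least every letter of $w$ strictly to the right of $c$ that lies in $[m]$; this dominance forces that no $a^{(i,j)}$ can be below $w$ but not below $y$, so $I_{W(m,n)}(w)\setminus I_{W(m,n)}(y)=\{b^{(c)}\}$ and $\gamma_J(y\lessdot w)=b^{(c)}$. If $w_c=s\in[m]$: again because $y$ is a legal word, the letter $s$ cannot occur to the right of $c$, i.e.\ $c$ is the rightmost occurrence of $s$, and then the absence of a letter strictly between $s-1$ and $s$ forces $y_c=s-1$; a short check gives $I_{W(m,n)}(w)\setminus I_{W(m,n)}(y)=\{a^{(c,s)}\}$, hence $\gamma_J(y\lessdot w)=a^{(c,s)}$. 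For the reverse inclusion: for each letter $s\in[m]$ occurring in $w$ with rightmost occurrence $c$, the word obtained from $w$ by replacing $w_c=s$ with $s-1$ is a lower cover of $w$ carrying the label $a^{(c,s)}$; and for each position $c$ with $w_c=m+1$, the word obtained from $w$ by replacing $w_c$ with the largest value below $m+1$ keeping the word in $W(m,n)$ (such a value exists) is a lower cover carrying the label $b^{(c)}$. Combining the two directions yields exactly the asserted description of $D(w)$.

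I expect the main obstacle to be the one-letter lemma of the second paragraph together with the two legality checks in the third: in the $w_c=m+1$ case one must use that $y\in W(m,n)$ to dominate the later $[m]$-letters, which is precisely what rules out the extra join-irreducibles $a^{(i,j)}$, and in the $w_c\in[m]$ case one must use the same legality to force $c$ to be a rightmost occurrence (equivalently, to exclude lower covers that drop a letter by more than one). Once these are settled the label identifications are immediate, each difference set $I_{W(m,n)}(w)\setminus I_{W(m,n)}(y)$ being a singleton.
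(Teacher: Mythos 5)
The paper cites this statement from \cite{Muehle2024WordLattices} without reproving it, so your argument stands on its own. Your plan --- invoke \cref{lemEquiJSD}, show every lower cover of $w$ changes exactly one letter, classify those covers, and read off the minimum of each difference set $I(w)\setminus I(y):=I_{W(m,n)}(w)\setminus I_{W(m,n)}(y)$ --- is sound, and the one-letter lemma is proved correctly. The gaps are in the label identification. In the case $w_c=m+1$, the dominance you invoke (that $y_c$ is at least every later letter of $w$ lying in $[m]$) holds for every legal word agreeing with $w$ off $c$, not just for covers, and it does not by itself exclude all $a^{(i,j)}$ from the difference set: in $W(1,3)$ with $w=(1,2,0)$, the legal non-cover $y=(1,0,0)$ satisfies this lower bound (which is $0$ here), yet $a^{(2,1)}=(1,1,0)\in I(w)\setminus I(y)$. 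The fact actually needed is that a lower cover at a position $c$ with $w_c=m+1$ must have $y_c=\min_{k<c}w_k$, which combines both legality constraints on $y_c$ with the maximality coming from $y\lessdot w$; then $a^{(i,j)}\leq w$ with $i\geq c$ forces $j\leq\min_{k<c}w_k=y_c$, contradicting $a^{(i,j)}\not\leq y$.

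In the case $w_c=s\in[m]$, the claim $I(w)\setminus I(y)=\{a^{(c,s)}\}$ is false in general: already in $W(1,2)$ with $w=(1,2)$, $c=1$, $s=1$, $y=(0,2)$ one finds $I(w)\setminus I(y)=\{a^{(1,1)},a^{(2,1)}\}$. The correct computation gives $I(w)\setminus I(y)=\{a^{(i,s)}: i\geq c,\ w_k=m+1\text{ for all }c<k\leq i\}$, a nonempty chain whose least element is $a^{(c,s)}$; so the label $\gamma_J(y\lessdot w)=a^{(c,s)}$ is still what you claim, but the singleton ``short check'' as stated is wrong and must be replaced by this description of the chain. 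Both issues are repairable without changing the final answer, which matches the cited statement.
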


\begin{proposition}
We have $\dim(W(m,n))=n$ for all $m,n\geq 0$.
\end{proposition}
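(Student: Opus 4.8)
The plan is to apply \cref{thm:dimension}. As recalled above, $W(m,n)$ is semidistributive and extremal, so for $|W(m,n)|>1$ we have $\dim(W(m,n))=\chi(\overline{G(W(m,n))})$; the degenerate cases ($n=0$, and the chains $W(m,1)$ for $m\ge 1$) are immediate, so I focus on the general case. By \cref{lem:canonicaljoingraphiscompgalois}, $\overline{G(W(m,n))}$ is the canonical join graph of $W(m,n)$, so two join-irreducibles are joined by an edge precisely when they occur together in some downward label set, and by \cref{prop:canonicaljoinwords} the downward label set of $w\in W(m,n)$ is $\{a^{(i,j)}\mid i\text{ is the position of the rightmost letter }j\text{ in }w\}\sqcup\{b^{(i)}\mid w_i=m+1\}$. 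Recall that $\mathrm{JIrr}(W(m,n))$ consists of the $a^{(i,j)}$ for $i\in[n],\ j\in[m]$ and the $b^{(i)}$ for $i\in\{2,\dots,n\}$.

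To obtain the upper bound $\chi(\overline{G(W(m,n))})\le n$, I would color each join-irreducible by its \emph{position}, sending both $a^{(i,j)}$ and $b^{(i)}$ to $i\in[n]$. This is a proper coloring: for any $w\in W(m,n)$ and any position $i$, the downward label set of $w$ contains at most one join-irreducible of color $i$ --- namely $a^{(i,w_i)}$ if $w_i\in[m]$ and $i$ is the rightmost occurrence of the letter $w_i$ in $w$, or $b^{(i)}$ if $w_i=m+1$, and none of them otherwise --- because $w_i$ takes a single value. Hence no two join-irreducibles of the same color lie together in a downward label set, i.e.\ they are non-adjacent in $\overline{G(W(m,n))}$.

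For the lower bound $\chi(\overline{G(W(m,n))})\ge n$ I would exhibit an $n$-clique. Take $w$ to be the $(m,n)$-word of length $n$ whose first letter is $m$ and whose remaining $n-1$ letters all equal $m+1$; it is admissible since $w_1=m\neq m+1$ and the only letter of $w$ lying in $[m]$ is $m$ itself, occurring only at position $1$. By \cref{prop:canonicaljoinwords} its downward label set is $\{a^{(1,m)},b^{(2)},\dots,b^{(n)}\}$, a set of $n$ join-irreducibles; being a face of the canonical join complex, these $n$ vertices are pairwise adjacent in $\overline{G(W(m,n))}$, so $\chi(\overline{G(W(m,n))})\ge n$. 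Combining the two bounds gives $\chi(\overline{G(W(m,n))})=n$, and \cref{thm:dimension} yields $\dim(W(m,n))=n$. Every step is short once the structural inputs are in hand; the only points needing care are verifying that the witnessing word $w$ is a valid $(m,n)$-word and reading off its canonical join representation correctly, and phrasing the position-coloring so it treats the $a$- and $b$-type join-irreducibles uniformly --- essentially the same bookkeeping as in the proof of \cref{prop:dimHoch}, with the position of a letter playing the role of the index $i$ of the pairs $(1,i),(2,i)$ there.
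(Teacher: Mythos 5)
Your proof is correct and takes essentially the same route as the paper's. The coloring you use (sending both $a^{(i,j)}$ and $b^{(i)}$ to their position index $i$) is exactly the paper's coloring by ``position of the rightmost non-zero letter,'' since both $a^{(i,j)}$ and $b^{(i)}$ have their rightmost non-zero letter at position $i$, and your witness word $m(m+1)\cdots(m+1)$ is the same maximum element the paper invokes for the lower bound; your treatment is just a bit more explicit about verifying propriety of the coloring and about the degenerate cases $|W(m,n)|\le 1$.
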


\begin{proof}
	Let $m,n\geq 0$ and let $L:=W(m,n)$. By Theorem \ref{thm:dimension}, we have $\dim(L)=\chi(\overline{G(L)})$, which is the chromatic number of the canonical join graph of $L$. Color a join-irreducible element of $L$ by the position of its rightmost non-zero letter. By Proposition \ref{prop:canonicaljoinwords}, this is a proper coloring with $n$ colors of $\overline{G(L)}$, thus $\dim(L)\leq n$. As the downward label set of the maximum element $m(m+1)\dots (m+1)$ of $L$ has $n$ elements, thus $\chi(\overline{G(L)})\geq n$, and the proposition follows. 
\end{proof}

\begin{remark}
	We saw that the bounds from Proposition \ref{prop:bounddimreading} were enough to find the dimension of the bubble lattices. For $W(m,n)$ it is not enough as \cite[Lemma 24]{Muehle2024WordLattices} gives its subposet on its join-irreducible elements and it follows that $\mathrm{width}\big( \mathrm{JIrr}(W(m,n)) \big)=n+1$, which is strictly bigger than $\dim(W(m,n))=n$.
\end{remark}

\subsection{Parabolic Tamari lattices}
\label{sec:parabolictamari}

In this section, we are interested in the parabolic Tamari lattice \cite{muhleWilliams,M_hle_2021}, which is a generalization of the Tamari lattice.

\begin{figure}
    \centering
    \includegraphics[width=8cm]{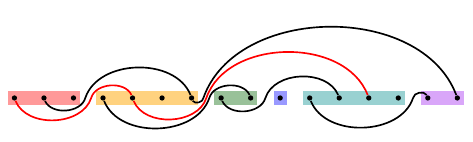}
    \caption{A noncrossing $\alpha$-partition.}
    \label{fig:arcs}
\end{figure}

\begin{figure}
	\centering
	\begin{subfigure}[t]{.5\textwidth}
		\centering
		\includegraphics[width=5cm]{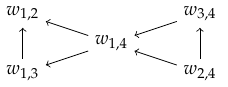}
		\caption{The Galois graph $G(\mathrm{Tam}(1,2,1))$.}
		\label{fig:GaloisgraphTam121}
	\end{subfigure}%
	% \hspace*{0.2cm}
	\begin{subfigure}[t]{.5\textwidth}
		\centering
\begin{tikzpicture}
\node (0) at (0,0) {$\bullet$};
\node (1) at (-1,1) {$\bullet$};
\node (2) at (0,1) {$\bullet$};
\node (3) at (-2,2) {$\bullet$};
\node (4) at (-1,2) {$\bullet$};
\node (5) at (1,2) {$\bullet$};
\node (6) at (-3,3) {$\bullet$};
\node (7) at (-3,4) {$\bullet$};
\node (8) at (-1,4) {$\bullet$};
\node (9) at (-2,5) {$\bullet$};

\path (0) edge[] (1);
\path (0) edge[] (2);
\path (1) edge[] (3);
\path (1) edge[] (4);
\path (2) edge[] (4);
\path (2) edge[] (5);
\path (3) edge[] (6);
\path (4) edge[] (7);
\path (4) edge[] (8);
\path (5) edge[] (8);
\path (6) edge[] (7);
\path (7) edge[] (9);
\path (8) edge[] (9);
\end{tikzpicture}
		\caption{The lattice $\mathrm{Tam}(1,2,1)$.}
		\label{fig:Tam121}
	\end{subfigure}
	\caption{The parabolic Tamari lattice $\mathrm{Tam}(1,2,1)$.}
	\label{fig:parabolictam121}
\end{figure}

 Let $\alpha=(\alpha_1,\dots,\alpha_k)$ be an integer composition of $n>0$. We draw $n$ horizontally aligned nodes labeled $1,\dots,n$ and put vertical bars separating the node $\alpha_1+\cdots +\alpha_i$ from the node $\alpha_1+\cdots +\alpha_i+1$ for all $i\in [k]$. The sets of nodes between two such separating bars are the $\alpha$-regions, they are naturally labeled $1,\dots,k$ from left to right. 

An $\alpha$-arc is a pair $(a,b)$, where $1\leq a<b\leq n$ and $a,b$ are in different $\alpha$-regions. We represent graphically an $\alpha$-arc $(a,b)$ by drawing a curve leaving the bottom of node $a$, staying below the $\alpha$-region containing $a$, moving up and above the following $\alpha$-regions until it reaches the top of node $b$. We will also denote an $\alpha$-arc $(a,b)$ by $w_{a,b}$.
Two $\alpha$-arcs $(a_1,b_1)$ and $(a_2,b_2)$ are \emph{compatible} if $a_1\neq a_2$, $b_1\neq b_2$, and if $a_1<a_2<b_1<b_2$ then either $a_1,a_2$ or $a_2,b_1$ are in the same $\alpha$-region, and if $a_1<a_2<b_2<b_1$ then $a_1,a_2$ are in the same $\alpha$-region. A set of pairwise compatible $\alpha$-arcs are called \emph{noncrossing $\alpha$-partitions}. The set of all noncrossing $\alpha$-partitions is denoted by $\mathrm{Nonc}(\alpha)$. See Figure \ref{fig:arcs} (which is taken from \cite{M_hle_2021}) for an illustration of a noncrossing $\alpha$-partition, where $\alpha$-regions are depicted with different colors. In this figure $\alpha=(3,4,2,1,4,2)$ and the two red $\alpha$-arcs are $(1,5)$ and $(5,13)$.

\begin{definition}[{\cite[Theorem 1.3]{M_hle_2021}}]
	Let $\alpha$ be an integer composition of $n>0$. The \emph{parabolic Tamari lattice} $\mathrm{Tam}(\alpha)$ is an extremal lattice whose Galois graph is isomorphic to the directed graph whose vertices are the $\alpha$-arcs $w_{a,b}$ and the edges are between different $\alpha$-arcs $w_{a_1,b_1} \rightarrow w_{a_2,b_2}$ such that either $a_1$ and $a_2$ are in the same $\alpha$-region and $a_1\leq a_2<b_2\leq b_1$, or $a_1$ and $a_2$ are in different $\alpha$-regions and $a_2< a_1<b_2\leq b_1$, where $a_1$ and $b_2$ are also in different $\alpha$-regions.
\end{definition}

See Figure \ref{fig:parabolictam121} for an example.
This lattice was defined in \cite{muhleWilliams}. The fact that they are semidistributive and extremal is \cite[Theorem 1.1]{M_hle_2021}. If $\alpha=(1,1,\dots,1)$ then $\mathrm{Tam}(\alpha)$ is the Tamari lattice \cite[Proposition 8]{muhleWilliams}.

Using \cite[Proposition 3.5]{M_hle_2021}, it follows that the canonical join graph of $\mathrm{Tam}(\alpha)$, which is $\overline{G(\mathrm{Tam}(\alpha))}$, is the graph whose vertices are the $\alpha$-arcs, with an edge between two compatible such arcs. Thus its cliques are the noncrossing $\alpha$-partitions.

\begin{proposition}
	\label{prop:dimparabolicTamari}
	Let $\alpha=(\alpha_1,\dots,\alpha_k)$ be an integer composition of $n>0$. Denote by $\max(\alpha)$ the biggest integer appearing in $\alpha$. We have $\dim(\mathrm{Tam}(\alpha))=n-\max(\alpha)$.    
\end{proposition}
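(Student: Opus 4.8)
The plan is to apply \cref{thm:dimension}, so that $\dim(\mathrm{Tam}(\alpha)) = \chi(\overline{G(\mathrm{Tam}(\alpha))})$, and then compute this chromatic number. As recalled just above the statement, $\overline{G(\mathrm{Tam}(\alpha))}$ is the graph on the $\alpha$-arcs with an edge between two \emph{compatible} arcs, and its cliques are exactly the noncrossing $\alpha$-partitions. So the task reduces to a purely combinatorial statement: the chromatic number of the "compatibility graph" on $\alpha$-arcs equals $n-\max(\alpha)$.

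For the upper bound $\chi \le n-\max(\alpha)$, I would exhibit a proper coloring by $n-\max(\alpha)$ colors. The natural candidate is to color an arc $w_{a,b}$ by one of its endpoints, adjusted so the number of available colors is right. Concretely: an arc $w_{a,b}$ has $a$ and $b$ in different $\alpha$-regions, so in particular not every node can be the left (or right) endpoint of an arc. I expect the correct recipe is to color $w_{a,b}$ by its left endpoint $a$, noting that $a$ cannot lie in the last $\alpha$-region, OR—more likely, to get exactly $n-\max(\alpha)$—to use a labeling coming from the noncrossing structure: since two arcs sharing an endpoint are \emph{not} compatible (the definition of compatibility requires $a_1 \ne a_2$ and $b_1 \ne b_2$), any color class must consist of arcs that are pairwise compatible \emph{and} share no endpoint... wait, that is backwards. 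Let me restate: a proper coloring of $\overline{G}$ means each color class is an independent set of $\overline{G}$, i.e. a set of pairwise \emph{incompatible} arcs. Two arcs sharing an endpoint are incompatible, so "all arcs with left endpoint $a$" is an independent set. There are exactly $n$ possible left endpoints, but the $\alpha_k$ nodes of the last region... actually every node $a < n$ not in the last region can be a left endpoint, and nodes in the last region cannot; but also a node could fail to be a right endpoint. The clean count: group arcs by left endpoint gives a coloring by (number of nodes that occur as a left endpoint) colors; one then checks this number equals $n - \alpha_k$ in the worst case and, after symmetrizing (left vs. right), that a smarter grouping achieves $n-\max(\alpha)$. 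I would spell this out by taking the region $R$ with $\alpha_R = \max(\alpha)$, coloring each arc by the endpoint lying outside $R$ if possible, and checking the fibers are independent sets; the number of colors is then the number of nodes outside $R$, which is $n-\max(\alpha)$.

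For the lower bound $\chi \ge n-\max(\alpha)$, by \cref{lem:graphchro} it suffices to find a clique of size $n-\max(\alpha)$ in $\overline{G}$, i.e. a noncrossing $\alpha$-partition with $n-\max(\alpha)$ arcs. Here I would write down an explicit maximal noncrossing $\alpha$-partition: fix the region $R$ achieving the maximum, and connect each node outside $R$ to a node of $R$ (e.g. a "fan" from all nodes of every other region into the single region $R$, or a nested family), arranged to be pairwise compatible—this is possible precisely because all these arcs have one endpoint in the common region $R$, which is exactly the escape clause in the compatibility definition ("if $a_1 < a_2 < b_1 < b_2$ then $a_1,a_2$ or $a_2,b_1$ are in the same region", etc.). Such a configuration has one arc per node outside $R$, hence $n - \max(\alpha)$ arcs, all pairwise compatible. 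Alternatively, one can cite a known enumeration: the cardinality of the largest noncrossing $\alpha$-partition is $n - \max(\alpha)$, which should follow from results in \cite{M_hle_2021,muhleWilliams} on the rank/length of $\mathrm{Tam}(\alpha)$ or on $\mathrm{Nonc}(\alpha)$.

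The main obstacle I anticipate is the upper bound: producing a proper coloring with exactly $n-\max(\alpha)$ colors requires choosing the right endpoint-based (or region-based) labeling and then verifying, from the somewhat intricate compatibility conditions, that each color class really is independent in $\overline{G}$ (equivalently, pairwise incompatible in the arc sense)—this verification is a short case analysis on the relative positions of two arcs, but it must be done carefully because the compatibility definition has several cases depending on whether endpoints share a region. The lower bound, by contrast, should be routine once the "fan into the maximal region" construction is written down and checked to be noncrossing. I would therefore organize the proof as: (1) reduce to chromatic number via \cref{thm:dimension}; (2) lower bound by exhibiting an explicit large noncrossing $\alpha$-partition; (3) upper bound by exhibiting an explicit proper coloring, with the compatibility case-check as the technical heart.
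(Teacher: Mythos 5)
Your overall strategy---apply \cref{thm:dimension} to reduce to $\chi(\overline{G(\mathrm{Tam}(\alpha))})$ and then bound that chromatic number with an explicit proper coloring and an explicit clique---matches the paper's proof exactly. For the upper bound, you correctly identify the choice of labeling as the technical heart, but ``the endpoint lying outside $R$'' is not well-defined when both endpoints of an arc lie outside the maximal region $R$, and a careless tie-break is improper: a node $c$ could be the left endpoint of one arc and the right endpoint of another, and such a pair need not be incompatible. The paper's rule fixes this: color $(a,b)$ by $a$ when $a$'s region strictly precedes the maximal region, and by $b$ otherwise; then $a$-colors lie in regions before the maximal one and $b$-colors after it, so two arcs with the same color share it as a same-role endpoint and are therefore incompatible.

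The genuine gap is in the lower bound, and the step you describe as ``routine'' is precisely where it fails. You (and in fact the paper's own proof) ask for a clique of size $n-\max(\alpha)$ in $\overline{G(\mathrm{Tam}(\alpha))}$, i.e.\ a noncrossing $\alpha$-partition of that size, but this need not exist. For $\alpha=(2,1,2)$, so $n=5$ and $n-\max(\alpha)=3$, an exhaustive check of the compatibility conditions on the eight $\alpha$-arcs shows that no three are pairwise compatible: for instance $(1,4)$ and $(3,5)$ cross, but $a_2=3$ lies in region $2$ while $b_1=4$ lies in region $3$, so the escape clause fails, and the proposed ``fan into the maximal region'' cannot be completed (nor does the paper's matching of $j$th nodes of adjacent regions help, as it produces only $\min(2,1)+\min(1,2)=2$ arcs). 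The proposition is nonetheless correct for this $\alpha$, because $\overline{G}$ contains the induced odd cycle $(1,3),(2,4),(1,5),(2,3),(3,4)$, so its chromatic number is $3$ even though its clique number is only $2$. Hence the lower bound cannot be established in general by exhibiting a noncrossing $\alpha$-partition of size $n-\max(\alpha)$; a different argument for $\chi(\overline{G(\mathrm{Tam}(\alpha))})\ge n-\max(\alpha)$ is needed, and your plan, like the paper's proof, is incomplete on this point.
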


\begin{proof}
	Let $i$ such that $\alpha_i=\max(\alpha)$.
	A proper coloring of $\overline{G(\mathrm{Tam}(\alpha))}$ with $n-\max(\alpha)$ colors can be obtained by coloring an $\alpha$-arc $(a,b)$ by $a$ if $a$ is in an $\alpha$-region labeled by $k<i$, and the other $\alpha$-arcs $(a',b')$ which have not yet been assigned a color are colored by $b'$. Thus the colors are labels of nodes that are not in the $i$-th $\alpha$-region, thus this uses at most (in fact exactly) $n-\max(\alpha)$ colors. This is a proper coloring since two $\alpha$-arcs having the same color share the same starting node, or ending node, thus they are not compatible. This proves that $\chi(\overline{G(\mathrm{Tam}(\alpha))})\leq n-\max(\alpha)$.
	
	We still need to prove that $\chi(\overline{G(\mathrm{Tam}(\alpha))})\geq n-\max(\alpha)$. This comes from the fact that we can find a noncrossing $\alpha$-partition with this number of $\alpha$-arcs; for every $r\in [k]$ draw an $\alpha$-arc between the $j^{th}$ node of the $r$-th $\alpha$-region and the $j^{th}$ node of the $(r+1)$-st $\alpha$-region whenever these nodes exist. This forms a noncrossing $\alpha$-partition with $n-\max(\alpha)$ total number of $\alpha$-arcs.
\end{proof}

For example, the planar lattice $\mathrm{Tam}(1,2,1)$ represented in Figure \ref{fig:parabolictam121} is of dimension $2$.

\subsection{On some lattices of torsion classes}
\label{sec:torsionclassesgentletree}

In this section we will mainly focus on the lattices of torsion classes of a gentle tree, but we also include a short discussion about the lattices of torsion classes of the simply laced Dynkin quivers $\mathbf{A}$, $\mathbf{D}$ and $\mathbf{E}$.

\begin{proposition}[{\cite[Theorem 2.10]{Thomas_2019}}]
\label{prop:Arepfinitetorsionclass}
	If $A$ is representation finite and $\mathrm{mod}\,A$ has no cycles, then $\mathrm{Tors}(A)$ is a finite semidistributive extremal lattice. 
\end{proposition}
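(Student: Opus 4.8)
The plan is to establish three facts and combine them: that $\mathrm{Tors}(A)$ is a finite lattice, that it is semidistributive, and that it is extremal. The finiteness and lattice structure are essentially cited: $\mathrm{Tors}(A)$ is always a complete lattice with meet given by intersection (recalled in \cref{sec:representationtheoryalgebra}), and \cite{demonet2019tilting} gives finiteness when $A$ is representation finite with no cycles. So the substance of the proof is semidistributivity and extremality.

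For \emph{semidistributivity}, I would invoke the general theory of torsion classes: for any finite-dimensional algebra, $\mathrm{Tors}(A)$ is a (complete) semidistributive lattice — this is a known result in the lattice theory of torsion classes (see \cite{demonet2023lattice}), so when $\mathrm{Tors}(A)$ is finite it is a finite semidistributive lattice. The join-irreducibles and meet-irreducibles of $\mathrm{Tors}(A)$ are indexed by bricks, via the brick labelling of cover relations. Under the hypotheses (representation finite, no cycles), by the cited result of Dräxler \cite{draexler1991} the indecomposable modules coincide with the bricks, and there are finitely many of them; call this number $r$.

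For \emph{extremality}, the key is to count. The number of bricks equals the number of indecomposables $r$, which equals $|\mathrm{JIrr}(\mathrm{Tors}(A))| = |\mathrm{MIrr}(\mathrm{Tors}(A))|$ (the brick labelling sets up these bijections: each join-irreducible torsion class is the smallest torsion class containing a given brick, and each cover relation is labelled by a brick). To get $\ell(\mathrm{Tors}(A)) = r$, I would argue that a maximal chain of maximal length corresponds to a sequence of cover relations each labelled by a distinct brick, and that one can realize a chain using every brick exactly once. One clean way: in a finite semidistributive lattice, the brick labels along any maximal chain are distinct, giving $\ell \le r$; conversely $\ell \ge \min(|\mathrm{JIrr}|, |\mathrm{MIrr}|) $ is false in general, so instead I would use that under ``representation finite, no cycles'' the lattice $\mathrm{Tors}(A)$ is in fact known to be extremal — this is precisely the content being cited from \cite[Theorem 2.10]{Thomas_2019}, where Thomas--Williams show that such $\mathrm{Tors}(A)$ is trim, hence extremal. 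So the honest structure of the proof is: (1) finiteness from \cite{demonet2019tilting}; (2) semidistributivity from the general theory; (3) extremality (equivalently, length $= r$) by exhibiting a maximal chain through all $r$ bricks, which is where the no-cycles hypothesis enters — it guarantees the poset of bricks admits a linear extension compatible with a maximal chain in $\mathrm{Tors}(A)$.

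The main obstacle I anticipate is step (3): proving that there exists a maximal chain of $\mathrm{Tors}(A)$ whose successive cover relations use all $r$ bricks, i.e. that $\ell(\mathrm{Tors}(A)) = r$ rather than merely $\le r$. This requires knowing that the bricks can be totally ordered so that each brick appears as a label in a chain $\widehat 0 = T_0 \lessdot T_1 \lessdot \cdots \lessdot T_r = \widehat 1$; the no-cycles assumption is exactly what rules out the obstruction (a cycle of non-isomorphisms among indecomposables would force two distinct bricks to be ``simultaneously added'' in incompatible orders, preventing a length-$r$ chain). I would make this precise by induction on $r$, peeling off a brick that is simple-minded with respect to some torsion class (a brick $B$ with $\mathrm{Filt}(B)$ a torsion-free class complement), using that representation-finite-no-cycles algebras have such a brick, and that the corresponding minimal inclusion of torsion classes is a cover relation; the rest follows by applying the inductive hypothesis to the resulting interval, which is again of the required form. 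Everything else — the lattice axioms, the bijections $\mathrm{JIrr} \leftrightarrow \text{bricks} \leftrightarrow \mathrm{MIrr}$ — is standard and can be cited rather than reproved.
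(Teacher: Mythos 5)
The paper offers no proof of this proposition; it is invoked directly as a citation to \cite[Theorem 2.10]{Thomas_2019}, so there is no internal argument to compare yours against. Your decomposition (finiteness from \cite{demonet2019tilting}, semidistributivity from the general theory of torsion classes, extremality via a length-$r$ chain through all bricks) is the right framework. The problem is in step (3): midway you explicitly write that extremality ``is precisely the content being cited from \cite[Theorem 2.10]{Thomas_2019}'', which is circular --- that is the very statement you are supposed to establish. You then partially retract this and sketch an induction, but the sketch has gaps: the existence of the ``simple-minded'' brick to peel off is asserted rather than shown, and the inductive step tacitly uses that an interval of $\mathrm{Tors}(A)$ is again a lattice of torsion classes of an algebra of the required type (true via $\tau$-tilting reduction, but not a free observation). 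The cleaner version of the argument you gesture at is: directedness of $\mathrm{mod}\,A$ gives a total order $M_1,\dots,M_r$ of the indecomposables (which coincide with the bricks, by \cite{draexler1991}) with no wrong-way morphisms; letting $T_i$ be the smallest torsion class containing $M_1,\dots,M_i$ yields a chain of $r$ covers, and combined with $\ell\le r$ (injectivity of the brick labelling along any maximal chain) this gives extremality. None of this appears in the paper, which treats the proposition purely as a black-box citation.
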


This proposition applies to the simply laced Dynkin quivers $\mathbf{A},\,\mathbf{D},\,\mathbf{E}$. Other examples, which contain type $\mathbf{A}$, are the gentle trees (see Section \ref{sec:representationtheoryalgebra}). Let us first focus on the former.

Denote by $\mathrm{Camb}(\vec{Q})$ the Cambrian lattice associated to an orientation of a simply laced Dynkin diagram $Q$ (\cite{reading2006cambrian}). It is proved in \cite[Section 4.2]{Ingalls_Thomas_2009} that if $\vec{Q}$ is a simply laced Dynkin quiver, then $\mathrm{Tors}(\mathbb{K} \vec{Q}) \cong \mathrm{Camb}(\vec{Q})$. For example, it is known that when the path quiver $\mathbf{A}$ is linearly oriented, we obtain the Tamari lattice. See also Figure \ref{fig:latticetorsion} for another example, which is a Cambrian lattice of type $\mathbf{A}_3$. 

If $\vec{Q}$ has $n$ vertices, then $\dim(\mathrm{Camb}(\vec{Q})) \geq n$ using Proposition \ref{prop:dimSDbigger}. Also, by definition, $\mathrm{Camb}(\vec{Q})$ is a subposet of the weak order of the Coxeter group associated to $Q$. In \cite{READING2003265}, it is proved that the dimension of the weak order of type $\mathbf{A}_n$ and $\mathbf{D}_n$ is $n$, and bounds are given for the dimension of the weak orders on the exceptional types $\mathbf{E}_6,\,\mathbf{E}_7,\,\mathbf{E}_8$. It follows that for all orientations of the corresponding Coxeter diagrams, $\dim \big(\mathrm{Camb}(\overrightarrow{\mathbf{A}_n})\big)=n$ and $\dim \big(\mathrm{Camb}(\overrightarrow{\mathbf{D}_n}) \big)=n$. For the exceptional types, with the help of the computer, we obtain:

\begin{proposition}
\label{prop:computercambrian}
For all orientations of the corresponding Coxeter diagrams, we have $\dim \big(\mathrm{Camb}(\overrightarrow{\mathbf{E}_6})\big)=6$, $\dim \big(\mathrm{Camb}(\overrightarrow{\mathbf{E}_7})\big)=7$ and $\dim \big(\mathrm{Camb}(\overrightarrow{\mathbf{E}_8})\big)=8$.   
\end{proposition}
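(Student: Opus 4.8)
The plan is to apply \cref{thm:dimension} directly: since $\mathrm{Camb}(\overrightarrow{\mathbf{E}_n})$ is a semidistributive extremal lattice with more than one element (by \cref{prop:Arepfinitetorsionclass} applied to the path algebra of a simply laced Dynkin quiver, using that such algebras are representation finite and have no cycles), its dimension equals $\chi(\overline{G(\mathrm{Camb}(\overrightarrow{\mathbf{E}_n}))})$, the chromatic number of its canonical join graph. So the computation reduces to a finite graph-coloring problem, which is exactly the kind of thing a computer handles well.

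First I would fix a convenient orientation $\overrightarrow{\mathbf{E}_n}$ and compute the lattice $\mathrm{Tors}(\mathbb{K}\overrightarrow{\mathbf{E}_n}) \cong \mathrm{Camb}(\overrightarrow{\mathbf{E}_n})$, or equivalently its Galois graph $G(\mathrm{Camb}(\overrightarrow{\mathbf{E}_n}))$ on the join-irreducible elements (the bricks, which correspond to positive roots of the root system $\mathbf{E}_n$), using the explicit description of the edges coming from the bijection $\kappa$ and \cref{prop:kappaextremalSD}. From this one extracts the complement $\overline{G}$ and runs a chromatic-number computation. The lower bound $\dim \geq n$ is already available from \cref{prop:dimSDbigger} since $\mathrm{Camb}(\overrightarrow{\mathbf{E}_n})$ has $n$ atoms (equivalently, the downward label set of the maximum element has $n$ elements, so $\overline{G}$ contains a clique of size $n$), so what the computer must verify is the matching upper bound: that $\overline{G(\mathrm{Camb}(\overrightarrow{\mathbf{E}_n}))}$ admits a proper coloring with exactly $n$ colors. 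One should also remark that the chromatic number is invariant under the choice of orientation — this follows from the fact that all Cambrian lattices of a fixed Dynkin type are mutation-equivalent / have isomorphic canonical join complexes up to relabeling, or can simply be checked over the finitely many orientations — so it suffices to treat one representative per type, though in practice checking all orientations is also feasible for $\mathbf{E}_6$ and $\mathbf{E}_7$.

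The main obstacle is purely computational scale rather than conceptual: $\mathbf{E}_7$ has $63$ positive roots, hence $\overline{G}$ has $63$ vertices, and exact chromatic-number computation is NP-hard in general. However, since we already know $\chi \geq n$ from the clique bound, we only need to exhibit one explicit $n$-coloring, which can be found by a SAT/ILP solver or even greedy search seeded by the structure of the root poset; verifying that a given coloring is proper is trivial. For $\mathbf{E}_8$ ($120$ positive roots, $120$-vertex graph) the paper stops at $\mathbf{E}_7$, presumably because the search did not terminate in reasonable time or the authors simply did not push further; this is why the statement is restricted to $\mathbf{E}_6$ and $\mathbf{E}_7$. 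Thus the proof is: invoke \cref{thm:dimension}, note $\dim \geq n$ via \cref{prop:dimSDbigger}, and then report the computer-verified $n$-colorings of the two canonical join graphs, observing the result is independent of orientation.
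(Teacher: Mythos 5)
Your proposal takes essentially the same route as the paper: reduce the question to a chromatic-number computation of $\overline{G(\mathrm{Camb}(\overrightarrow{\mathbf{E}_n}))}$ via \cref{thm:dimension}, note the clique lower bound $\dim \geq n$ from the $n$ atoms (already in the paper's discussion preceding the proposition), and then hand the coloring problem to a computer. That is exactly what the paper does with SageMath, exhaustively over all $32$ orientations of $\mathbf{E}_6$ and all $64$ of $\mathbf{E}_7$ (the Galois graphs having $36$ and $63$ vertices).

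There is, however, one genuine flaw in your writeup that you should not let slip through: you claim that the chromatic number is independent of orientation ``because all Cambrian lattices of a fixed Dynkin type are mutation-equivalent / have isomorphic canonical join complexes up to relabeling.'' This is false. Different orientations of a Dynkin diagram (equivalently, different Coxeter elements) give non-isomorphic Cambrian lattices in general — already in type $\mathbf{A}_3$ the linear and bipartite orientations give non-isomorphic lattices — and their canonical join complexes are likewise not isomorphic. Mutation-equivalence of quivers does not transport lattice or canonical-join-complex isomorphisms. So there is no shortcut to checking a single representative; the claim that $\dim$ is constant across orientations is itself the content being verified, and it must be verified orientation by orientation. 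You do hedge by also offering ``or can simply be checked over the finitely many orientations,'' which is precisely what the paper does and which makes the proof correct — but you should drop the purported invariance argument rather than present it as an alternative justification.
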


\noindent 
{\it Proof (using a computer).}
The computations were performed on the Nibi supercomputer, operated by Calcul Québec and the Digital Research Alliance of Canada (Alliance).
The Cambrian lattices of any orientation of types $\mathbf{E}_6$, $\mathbf{E}_7$, and $\mathbf{E}_8$ have respectively $833$, $4160$, and $25080$ elements. The number of vertices of their respective Galois graphs are $36$, $63$ and $120$. With SageMath, using Theorem \ref{thm:dimension}, we were able to exhaustively check that the Cambrian lattices of the $32$ different orientations of $\mathbf{E}_6$ have all dimension $6$, the Cambrian lattices of the $64$ different orientations of $\mathbf{E}_7$ have all dimension $7$, and the Cambrian lattices of the $128$ different orientations of $\mathbf{E}_8$ have all dimension $8$. The latter required roughly one week of computation.  \qed

\begin{remark}
\label{rmk:cambrian}
For the Cambrian lattices of type $\mathbf{A}$, it is also possible to use a similar combinatorial model to the above discussion about the parabolic Tamari lattice in terms of noncrossing partitions (\cite[Example 4.9]{readingcanonical}, \cite[section 8.3]{barnard2023pop}). In this model, by coloring an arc with the label of its starting node, we prove analogously to Proposition \ref{prop:dimparabolicTamari} that $\dim \big(\mathrm{Camb}(\overrightarrow{\mathbf{A}_n})\big)=n$ for any orientation of $\mathbf{A}_n$.
\end{remark}

Now we focus on the torsion classes of a gentle tree. To see more details about the combinatorics of gentle algebras, see \cite{brustle2020combinatorics,palu2021non}. 
Given a gentle tree $T$, there is an (almost) canonical way to draw it so that arrows always go horizontally to the right or vertically upwards, and such that the relations correspond to a change of direction. (It is almost canonical because for a given arrow, we can choose whether to draw it horizontally or vertically; this forces the direction of all other arrows). From now on, we assume that our gentle trees are drawn in this way (having fixed an orientation of the edges). The horizontal arrows can be thought of as east steps and the vertical ones as north steps. Thus, the relations correspond to taking an east step followed by a north step, or a north step followed by an east step. See Figure \ref{fig:gentletreeexample} for an example of a gentle tree drawn with our conventions.

The indecomposable modules of $T$ correspond to the
walks in the tree which take the unique shortest path in the underlying graph between their two endpoints without containing two steps that form a relation. Those are called the \emph{string modules} and we write $C_\sigma$ for the indecomposable module corresponding to the walk $\sigma$. As a representation this corresponds to putting $\mathbb{K}$ at every vertex visited by $\sigma$, $0$ at every other vertex and the identity map id between any two adjacent copies of $\mathbb{K}$. With the orientation of the gentle tree fixed as above, there is a canonical way to read a walk $\sigma$: from top left to bottom right. Each walk therefore has a well-defined top left endpoint, and a well-defined bottom right endpoint. We refer to them simply as the \emph{left endpoint} and the \emph{right endpoint}. Then, any walk $\sigma$ can be uniquely written as the word recording the arrows $\alpha$ or opposite arrows $\alpha^{-1}$ met on the way from its left endpoint to its right endpoint.

\begin{figure}
    \centering
 \begin{tikzpicture}
\draw (0,0) node(A){$\bullet$};
\draw (1,0) node(B){$\bullet$};
\draw (1,1) node(C){$\bullet$};
\draw[>=stealth, ->, line width=1pt] (A) -- node[midway, below] {$\alpha$} (B);
\draw[>=stealth, ->, line width=1pt] (B) -- node[midway, right] {$\beta_{\psi}$} (C);
\draw[dashed] (1,0.4) arc[start angle=90, end angle=180, radius=0.4cm];
\draw (-1.3,1.3) -- (0,0);
\draw (-0.3,2.3) -- (1,1);
\draw (1,0) -- (2.3,-1.3);

\draw[line width=6pt, red, opacity=0.3] (-1.3,1.3) -- (0,0) -- (1,0) -- (2.3,-1.3);
\draw[line width=6pt, blue, opacity=0.3] (-0.3,2.3) -- (1,1) -- (1,0.2) -- (2.4,-1.2);

\draw (0.8,2) node[blue]{$\sigma'$};
\draw (-0.8,0.2) node[red]{$\sigma$};

\begin{scope}[xshift=5cm, scale=1.2]
\draw (0,0) node(D){$0$};
\draw (1,0) node(E){$\mathbb{K}$};
\draw (1,1) node(F){$\mathbb{K}$};
\draw[>=stealth, ->, line width=1pt] (D) -- node[midway, above] {$0$} (E);
\draw[>=stealth, ->, line width=1pt] (E) -- node[midway, right] {id} (F);
\draw[dashed] (1,0.4) arc[start angle=90, end angle=180, radius=0.4cm];
\draw[dashed] (-1,1) -- (-0.2,0.2);
\draw[dashed] (0,2) -- (0.8,1.2);
\draw[dashed] (1.2,-0.2) -- (2,-1);

\begin{scope}[xshift=4cm]
\draw (0,0) node(G){$\mathbb{K}$};
\draw (1,0) node(H){$\mathbb{K}$};
\draw (1,1) node(I){$0$};
\draw[>=stealth, ->, line width=1pt] (G) -- node[midway, below] {id} (H);
\draw[>=stealth, ->, line width=1pt] (H) -- node[midway, right] {$0$} (I);
\draw[dashed] (1,0.4) arc[start angle=90, end angle=180, radius=0.4cm];
\draw[dashed] (-1,1) -- (-0.2,0.2);
\draw[dashed] (0,2) -- (0.8,1.2);
\draw[dashed] (1.2,-0.2) -- (2,-1);  
\end{scope}

\draw [>=stealth, ->, line width=1pt, bend left=10] (F) to node[midway, above] {$0$} (I);
\draw [>=stealth, ->, line width=1pt, bend left=20] (E) to node[midway, above] {id} (H);
\draw [>=stealth, ->, line width=1pt, bend right=30] (D) to node[midway, above] {$0$} (G);
\end{scope}
 \end{tikzpicture}
    \caption{Illustration of the proof of Lemma \ref{lem:twostringshomortho}, showing on the right a part of the non-zero morphism from $C_{\sigma'}$ to $C_{\sigma}$.}
    \label{fig:proofstring}
\end{figure}
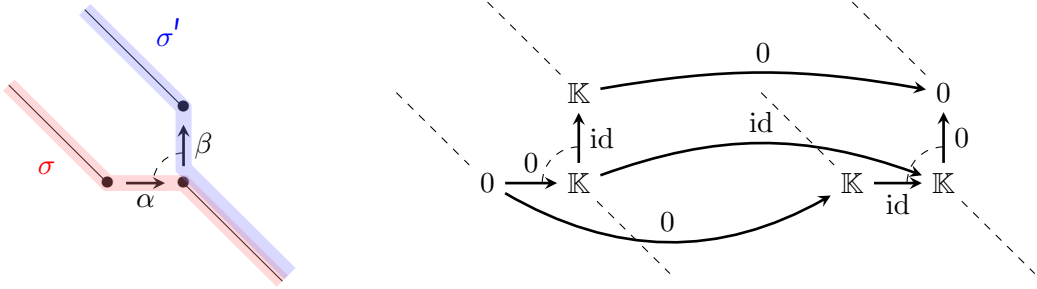

\begin{lemma}
\label{lem:twostringshomortho}
Let $\sigma$ and $\sigma'$ be two different walks on a gentle tree $T$ that share the same right endpoint. Then $C_{\sigma}$ and $C_{\sigma'}$ are not hom-orthogonal.
\end{lemma}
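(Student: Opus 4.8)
The plan is to construct an explicit nonzero morphism between $C_\sigma$ and $C_{\sigma'}$ (in one direction or the other), which immediately shows they are not hom-orthogonal. The key point is that $\sigma$ and $\sigma'$ share the same right endpoint, so they agree near that endpoint — they traverse a common final stretch of arrows — and then split apart at some vertex as we read them backwards from the right endpoint. First I would set up notation: write each walk as a word in arrows and inverse arrows, read from left endpoint to right endpoint, and let $v$ be the vertex where $\sigma$ and $\sigma'$ last coincide when traversed from the right; so the portions of $\sigma$ and $\sigma'$ from $v$ to the common right endpoint are identical, call this common tail $\tau$, and the two walks differ on what happens just before reaching $v$.

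The main step is a case analysis on how $\sigma$ and $\sigma'$ arrive at $v$. Since $T$ is a gentle tree drawn with our conventions (arrows going east or north, relations at changes of direction), at the splitting vertex $v$ one of the walks either stops at $v$ (i.e.\ $v$ is its left endpoint) or continues past $v$ along an edge not used by the other walk. In each configuration I want to realize the string module of the ``shorter tail'' $C_\tau$ (the common piece, or a suitable sub- or quotient string of one of them supported on $\tau$) as a submodule or quotient module of both $C_\sigma$ and $C_{\sigma'}$, or more directly to exhibit a nonzero map from one string module to the other. Concretely, for string modules over a gentle algebra, nonzero homomorphisms $C_{\sigma'}\to C_\sigma$ arise from ``graph maps'': a common substring of $\sigma'$ and $\sigma$ that is a quotient string of $C_{\sigma'}$ (a ``peak'' at the top) and a sub string of $C_\sigma$ (a ``valley'' at the bottom), or vice versa. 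Because the shared portion of the two walks includes their common right endpoint — which is a well-defined \emph{extreme} point in the left-to-right reading — the common tail $\tau$ sits at the bottom-right of whichever walk, and one checks (using the orientation conventions: the right endpoint being reached means the last arrow is oriented ``into'' that endpoint appropriately) that the identity on the $\mathbb K$'s supported on $\tau$ defines a nonzero morphism in the appropriate direction, as illustrated in \cref{fig:proofstring}.

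More precisely, I would argue: collapse $C_\sigma$ onto the substring $C_\tau$ by killing everything outside $\tau$; because $\tau$ is a terminal segment at the right endpoint, this quotient is a well-defined $T$-module (no relation is violated since we only discard one end), so $C_\tau$ is a quotient of $C_\sigma$. Dually, $C_\tau$ includes into $C_{\sigma'}$ as the submodule supported on the same terminal segment (again a genuine submodule because $\tau$ sits at an extreme end). The composite $C_\sigma \twoheadrightarrow C_\tau \hookrightarrow C_{\sigma'}$ is then nonzero, so $\mathrm{Hom}(C_\sigma, C_{\sigma'})\neq 0$. (If the orientation near the right endpoint goes the other way, the roles of sub and quotient are swapped and one gets a nonzero map $C_{\sigma'}\to C_\sigma$ instead; either way the two bricks are not hom-orthogonal.) One subtlety to handle is when $\tau$ is a single vertex, i.e.\ the two walks share only the right endpoint and nothing else; then $C_\tau$ is a simple module $S_v$ at that vertex, and the same argument works since $S_v$ is both a quotient of one string module and a submodule of the other whenever $v$ is the right endpoint of both — one just needs to check the orientation of the last arrow of each walk, and since both walks have $v$ as their right endpoint, the conventions force the same local picture.

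The hard part will be making the case analysis on the orientation of the final arrow(s) at the shared right endpoint fully rigorous, and verifying in each case that ``take the common terminal substring'' genuinely yields a submodule on one side and a quotient on the other — this is where the gentle-tree drawing conventions (east/north arrows, relations at corners) and the specified left-to-right reading of walks are doing real work, and one must be careful that no zero-relation of the algebra is broken when passing to the sub/quotient string. Everything else — that a nonzero graph map between string modules exists once such a compatible common substring is found, and that a nonzero $\mathrm{Hom}$ in either direction contradicts hom-orthogonality by definition — is immediate. I would present the argument with reference to \cref{fig:proofstring}, which already depicts the two walks $\sigma,\sigma'$ sharing a right endpoint and the resulting nonzero morphism between the corresponding string modules.
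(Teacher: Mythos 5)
Your proposal is correct and takes essentially the same route as the paper: both constructions exhibit the graph map that is the identity on the shared terminal segment $\gamma$ (your $\tau$) and zero elsewhere, and both reduce to checking that this map commutes with the arrow actions at the splitting vertex. Two remarks on the sketchy points. First, the assertion that $C_\tau$ is a \emph{quotient} of $C_\sigma$ ``because $\tau$ is a terminal segment at the right endpoint'' is not the right criterion: a suffix $\tau$ of a string $\sigma$ gives a sub\emph{module} of $C_\sigma$ when the deleted boundary letter is a direct arrow (pointing into $\tau$) and a \emph{quotient} when it is an inverse letter (arrow pointing out of $\tau$). Terminality alone does not decide which; your ``or vice versa'' caveat rescues the argument, but the orientation of the boundary letter should be the stated reason. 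Second, and more to the point, the step you flag as ``the hard part'' --- that the two walks necessarily leave $\gamma$ via one direct and one inverse letter, which is what forces the morphism to exist in one consistent direction --- is precisely the content of the paper's parametrization $\sigma=\sigma_1\alpha\gamma$, $\sigma'=\sigma_1'\beta^{-1}\gamma$. It follows from the drawing conventions: reading a walk from left endpoint to right endpoint, every step is east (a forward east arrow) or south (a backward north arrow), since the relation-at-corners rule and the no-backtracking rule together forbid every transition between the $\{\text{east},\text{south}\}$ family and the $\{\text{north},\text{west}\}$ family; and at any vertex there is at most one east arrow entering and at most one north arrow leaving. Hence if both walks reached the splitting vertex $v$ by an east step they would traverse the same arrow, and likewise for two south steps, contradicting maximality of $\gamma$. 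One walk may instead simply begin at $v$, but in that degenerate case the same map still checks out. Filling this in completes your sketch; you have correctly identified the one structural fact the paper uses without comment.
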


\begin{proof}
The walks $\sigma,\sigma'$, sharing the same right endpoint, can be written $\sigma=\sigma_1 \alpha \gamma$ and $\sigma'=\sigma'_1 \beta^{-1} \gamma$ where $\alpha \neq \beta_{\psi}$. Moreover since the underlying undirected graph of $T$ is a tree, we know that the walks $\sigma$ and $\sigma'$ have disjoint supports. Without loss of generality, we can assume that $\alpha$ is an east step and $\beta_{\psi}$ a north step. Then we have a non-zero morphism from $C_{\sigma'}$ to $C_{\sigma}$, which is the identity map on the vertices of $\gamma$ and the zero map on every other vertex. See Figure \ref{fig:proofstring}, with on the right the crucial part of this non-zero morphism.
\end{proof}

\begin{proposition}[{\cite[Theorem 1.8]{barnard2019minimal}}]
\label{prop:semibrickcanonical}
If $A$ is representation finite, the faces of the canonical join complex are the semibricks.
\end{proposition}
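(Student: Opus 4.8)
The plan is to translate the combinatorics of the canonical join complex into representation theory via the \emph{brick labelling} of $\mathrm{Tors}(A)$, and then to recognise the downward label sets as sets of simple objects of wide subcategories. Since $A$ is representation finite it is $\tau$-tilting finite, so $\mathrm{Tors}(A)$ is a finite semidistributive lattice \cite{demonet2023lattice} and its canonical join complex is well defined. I would first recall that each cover relation $\mathcal{S}\lessdot\mathcal{T}$ of $\mathrm{Tors}(A)$ carries a canonical brick label $\beta(\mathcal{S}\lessdot\mathcal{T})$, that $\mathcal{T}\mapsto\beta(\mathcal{T}_{*}\lessdot\mathcal{T})$ identifies the join-irreducible torsion classes with the bricks of $A$, and that under this identification the join-irreducible $\gamma_{J}(\mathcal{S}\lessdot\mathcal{T})$ corresponds to $\beta(\mathcal{S}\lessdot\mathcal{T})$; hence the downward label set of $\mathcal{T}$ becomes the set $B(\mathcal{T}):=\{\beta(\mathcal{S}\lessdot\mathcal{T})\mid\mathcal{S}\lessdot\mathcal{T}\}$ of brick labels of the lower covers of $\mathcal{T}$ \cite{demonet2023lattice}. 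It then suffices to show that the sets $B(\mathcal{T})$, as $\mathcal{T}$ ranges over $\mathrm{Tors}(A)$, are exactly the semibricks of $A$.

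For the inclusion ``faces are semibricks'', I would use that $B(\mathcal{T})$ is the set of simple objects of the wide subcategory $\mathrm{W}(\mathcal{T})$ attached to $\mathcal{T}$ \cite{demonet2023lattice}; as $\mathrm{W}(\mathcal{T})$ is a length abelian category its simples are pairwise hom-orthogonal bricks, i.e. a semibrick. For the converse, given a semibrick $\mathcal{X}$, the extension closure $\mathrm{Filt}(\mathcal{X})$ is an exact abelian, hence wide, subcategory whose simple objects are precisely the elements of $\mathcal{X}$ (the standard filtration argument for semibricks, see \cite{barnard2019minimal}). Because $A$ is $\tau$-tilting finite, every wide subcategory is of the form $\mathrm{W}(\mathcal{T})$ for some torsion class $\mathcal{T}$ (for instance $\mathcal{T}$ the smallest torsion class containing $\mathrm{Filt}(\mathcal{X})$); for this $\mathcal{T}$ one gets $B(\mathcal{T})=\{\text{simples of }\mathrm{W}(\mathcal{T})\}=\mathcal{X}$, so $\mathcal{X}$ is a face. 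Combining the two inclusions gives the proposition.

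The main obstacle is entirely representation-theoretic and is imported from the literature: the identification of the brick labels of the lower covers of $\mathcal{T}$ with the simple objects of $\mathrm{W}(\mathcal{T})$, together with the surjectivity of $\mathcal{T}\mapsto\mathrm{W}(\mathcal{T})$ onto wide subcategories in the $\tau$-tilting finite case. Once these are granted the remaining argument is purely formal. An alternative, slightly more combinatorial route stays inside the lattice: the canonical join complex is flag \cite{barnard2016canonical}, so a face is exactly a clique of the canonical join graph, and one only needs to check that two join-irreducible torsion classes are adjacent if and only if the corresponding bricks are hom-orthogonal; this again reduces, via $\mathrm{Filt}$ of the pair and $\mathrm{W}(-)$, to the same representation-theoretic inputs, now only for two-element sets.
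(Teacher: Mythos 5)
The paper does not prove this proposition; it is stated verbatim as a citation of \cite[Theorem 1.8]{barnard2019minimal}, so there is no ``paper's own proof'' to compare against. Your sketch has to be judged on its own merits.

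Your argument follows the standard route from the literature (Asai, Demonet--Iyama--Reading--Reiten--Thomas, Barnard--Carroll--Zhu) and is correct at the level of detail you provide. The two directions are handled as one would expect: the downward label set of a torsion class $\mathcal{T}$ is identified with the set of brick labels of its lower covers, which is the set of simple objects of a wide subcategory attached to $\mathcal{T}$, and is therefore a semibrick; conversely, every semibrick $\mathcal{X}$ gives, via Ringel's filtration theorem, a wide subcategory $\mathrm{Filt}(\mathcal{X})$ with simples exactly $\mathcal{X}$, and in the $\tau$-tilting finite case every wide subcategory is realised from a torsion class (you correctly point to $\mathcal{T} = T(\mathrm{Filt}(\mathcal{X}))$, the smallest torsion class containing it). The alternative route you mention, reducing via flagness of the canonical join complex to the two-element case, is also sound and is arguably closer in spirit to what one would check in practice. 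The one thing you should tighten if this were a full proof: be explicit about which of the several torsion-class-to-wide-subcategory maps you invoke (you want the Marks--\v{S}\v{t}ov\'{\i}\v{c}ek style $\alpha$), and cite precisely the statement that, for functorially finite $\mathcal{T}$, the brick labels of lower covers of $\mathcal{T}$ coincide with $\mathrm{simp}\,\alpha(\mathcal{T})$ and that $\alpha\circ T = \mathrm{id}$ on wide subcategories; the surjectivity you need is not a formal consequence of the definitions but a nontrivial theorem. None of this is a gap in logic, only in pinning references.
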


Thus for $A$ representation finite, $\overline{G(\mathrm{Tors}(A))}$ is the graph whose vertices are the bricks and we have an edge between $B$ and $B'$ if they are hom-orthogonal.

\begin{figure}
	\centering
	\begin{subfigure}[t]{.5\textwidth}
		\centering
\begin{tikzpicture}[scale=1.4]
\draw (0,0) node (D){$1$};
\draw (1,1) node (A){$3$};
\draw (1,0) node (B){$2$};
\draw (2,0) node (C){$4$};
\draw[dashed] (1,0.4) arc[start angle=90, end angle=180, radius=0.4cm];
\draw[>=stealth, ->, line width=1pt] (B) -- node[midway, right] {$b$} (A);
\draw[>=stealth, ->, line width=1pt] (B) -- node[midway, below] {$c$} (C);
\draw[>=stealth, ->, line width=1pt] (D) -- node[midway, below] {$a$} (B);
\end{tikzpicture}
		\caption{A gentle tree $T$ with relation $ab=0$.}
		\label{fig:gentletreeexample}
	\end{subfigure}%
	\begin{subfigure}[t]{.5\textwidth}
		\centering
\begin{tikzpicture}[x=0.0223cm,y=-0.0223cm,scale=0.6]
   \begin{scope}[every node/.style={circle,inner sep=1pt}]
    \node (0) at (32.500,130.000) {$\mathrm{mod}\, A$};
    \node (1) at (-70.000,190.000) {$\bullet$};
    \node (2) at (32.500,240.000) {$\bullet$};
    \node (3) at (97.500,240.000) {$\bullet$};
    \node (4) at (-97.500,240.000) {$\bullet$};
    \node (5) at (-22.500,240.000) {$\bullet$};
    \node (6) at (-87.500,350.000) {$\bullet$};
    \node (7) at (-22.500,350.000) {$\bullet$};
    \node (8) at (32.500,350.000) {$\bullet$};
    \node (9) at (-87.500,410.000) {$\bullet$};
    \node (10) at (-60.000,495.000) {$\bullet$};
    \node (11) at (87.500,350.000) {$\bullet$};
    \node (12) at (-5.000,495.000) {$\bullet$};
    \node (13) at (-5.000,550.000) {$\bullet$};
   \end{scope}
   \begin{scope}[every node/.style={fill=white}]
    \path (0) edge[] (1);
    \path (0) edge[] (2);
    \path (0) edge[] (3);
    \path (1) edge[] (4);
    \path (1) edge[] (5);
    \path (4) edge[] (6);
    \path (4) edge[] (7);
    \path (5) edge[] (6);
    \path (5) edge[] (8);
    \path (6) edge[] (9);
    \path (7) edge[] (10);
    \path (2) edge[] (7);
    \path (2) edge[] (11);
    \path (3) edge[] (8);
    \path (3) edge[] (11);
    \path (8) edge[] (12);
    \path (9) edge[] (10);
    \path (9) edge[] (12);
    \path (10) edge[] (13);
    \path (11) edge[] (13);
    \path (12) edge[] (13);
   \end{scope}
  \end{tikzpicture}
		\caption{The lattice of torsion classes of $1 \rightarrow 2 \leftarrow 3$.}
		\label{fig:latticetorsion}
	\end{subfigure}
	\caption{}
	\label{fig:exampletorsionclasses}
\end{figure}

\begin{proposition}
\label{prop:dimgentletree}
Let $T$ be a gentle tree having $n$ vertices. Then $\dim(\mathrm{Tors}(T))=n$.    
\end{proposition}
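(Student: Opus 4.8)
The plan is to apply \cref{thm:dimension} together with \cref{prop:dimgentletree}'s combinatorial input on string modules. By \cref{prop:Arepfinitetorsionclass}, $\mathrm{Tors}(T)$ is a semidistributive extremal lattice, so \cref{thm:dimension} gives $\dim(\mathrm{Tors}(T))=\chi(\overline{G(\mathrm{Tors}(T))})$. Using \cref{prop:semibrickcanonical}, the graph $\overline{G(\mathrm{Tors}(T))}$ has vertex set the bricks of $T$ (which, since $T$ is representation finite with no cycles, are exactly its indecomposable string modules $C_\sigma$) and an edge between $C_\sigma$ and $C_{\sigma'}$ precisely when they are hom-orthogonal. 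So the task reduces to showing this graph has chromatic number exactly $n$, where $n=|T_0|$.

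For the upper bound $\chi(\overline{G(\mathrm{Tors}(T))})\le n$, I would color each string module $C_\sigma$ by the vertex of $T$ that is the right endpoint of the walk $\sigma$ (using the canonical reading of walks from top-left to bottom-right fixed by our drawing conventions). This uses at most $n$ colors. It is a proper coloring by \cref{lem:twostringshomortho}: if $C_\sigma$ and $C_{\sigma'}$ are two distinct string modules with the same right endpoint, then they are not hom-orthogonal, hence not adjacent in $\overline{G(\mathrm{Tors}(T))}$. So same-colored modules are never adjacent.

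For the lower bound $\chi(\overline{G(\mathrm{Tors}(T))})\ge n$, I would exhibit a clique of size $n$, i.e. a semibrick with $n$ elements — equivalently (via \cref{prop:semibrickcanonical}) a face of the canonical join complex of size $n$, i.e. a downward label set of size $n$. A natural candidate is the semibrick of simple modules $\{S_1,\dots,S_n\}$, one for each vertex of $T$. The simples are bricks, and they are pairwise hom-orthogonal: a nonzero morphism between two simples would force them isomorphic, so $\mathrm{Hom}(S_i,S_j)=0$ for $i\ne j$. Thus $\{S_i\}_{i\in T_0}$ is a semibrick of size $n$, giving an $n$-clique in $\overline{G(\mathrm{Tors}(T))}$ and hence $\chi\ge n$. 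Combining the two bounds yields $\dim(\mathrm{Tors}(T))=\chi(\overline{G(\mathrm{Tors}(T))})=n$.

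The only real subtlety is bookkeeping around the conventions: one must be sure that "right endpoint" is well-defined for every string (which is exactly what the fixed drawing of the gentle tree guarantees, as explained before \cref{lem:twostringshomortho}), and that \cref{lem:twostringshomortho} applies to \emph{all} pairs of distinct strings sharing a right endpoint — which it does. I expect the clique construction to be the part most worth double-checking: one should confirm the set of simples really is a semibrick (pairwise hom-orthogonality of simples is immediate, but it is worth stating) and that it indeed corresponds to a face of the canonical join complex via \cref{prop:semibrickcanonical}, so that it is genuinely an $n$-clique in $\overline{G(\mathrm{Tors}(T))}$. Everything else is a direct invocation of \cref{thm:dimension,prop:Arepfinitetorsionclass,prop:semibrickcanonical,lem:twostringshomortho}.
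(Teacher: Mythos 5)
Your proposal matches the paper's proof essentially verbatim: both establish the upper bound by coloring each string module with its right endpoint and invoking \cref{lem:twostringshomortho}, and both establish the lower bound by observing that the $n$ simples form a semibrick (you justify pairwise hom-orthogonality via Schur-type reasoning, the paper via disjoint supports, which amount to the same thing). Nothing to flag.
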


\begin{proof}
Since a gentle tree is representation finite, Proposition \ref{prop:semibrickcanonical} applies. The category $\mathrm{mod}\, A$ has $n$ simples and they form a semibrick as their supports are disjoint. Thus $\dim(\mathrm{Tors}(T))~\geq~n$. To conclude, let us prove that $\dim(\mathrm{Tors}(T))\leq n$. For this we find a proper coloring of $\overline{G(\mathrm{Tors}(T))}$ with $n$ colors. Since $T$ is a gentle tree, every brick of $T$ is a string representation $C_{\sigma}$, thus it has a right endpoint. Color each brick with the label of its right endpoint. This gives a coloring with $n$ colors of $\overline{G(\mathrm{Tors}(T))}$, which is proper by Lemma \ref{lem:twostringshomortho}. This proves the proposition.
\end{proof}

\begin{corollary}
\label{cor:dimCambrianA}
We have $\dim \big(\mathrm{Camb}(\overrightarrow{\mathbf{A_n}})\big)=n$ for any orientation of $\mathbf{A}_n$.   
\end{corollary}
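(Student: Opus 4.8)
The plan is to realize every type-$\mathbf{A}_n$ Cambrian lattice as the lattice of torsion classes of a gentle tree, and then quote \cref{prop:dimgentletree}. First I would observe that for any orientation $\overrightarrow{\mathbf{A_n}}$, the path algebra $A=\mathbb{K}\overrightarrow{\mathbf{A_n}}$, equipped with the zero ideal $I=0$, is a gentle algebra. The ideal $I=0$ is admissible: since $\overrightarrow{\mathbf{A_n}}$ is a linear quiver on $n$ vertices, $R_Q^n=0\subseteq I\subseteq R_Q^2$, and $I$ is (vacuously) generated by zero relations of length two. The remaining gentle-algebra axioms are immediate for a linear quiver: each vertex of $\mathbf{A}_n$ is incident to at most two edges, so it has at most one incoming and one outgoing arrow, and all the conditions involving $\alpha\beta\in I$, $\gamma\alpha\in I$, $\alpha\beta\notin I$, $\gamma\alpha\notin I$ hold trivially (the ``$\notin I$'' conditions since through a given vertex there is at most one path of length two). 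As the underlying undirected graph of $\mathbf{A}_n$ is a path, hence a tree, this exhibits $A=\mathbb{K}\overrightarrow{\mathbf{A_n}}$ as a gentle tree on $n$ vertices.

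Next, \cref{prop:dimgentletree} applied to this gentle tree gives $\dim\big(\mathrm{Tors}(\mathbb{K}\overrightarrow{\mathbf{A_n}})\big)=n$. Finally, by the identification $\mathrm{Tors}(\mathbb{K}\vec{Q})\cong\mathrm{Camb}(\vec{Q})$ for simply laced Dynkin quivers recalled above from \cite[Section 4.2]{Ingalls_Thomas_2009}, specialized to $\vec{Q}=\overrightarrow{\mathbf{A_n}}$, we conclude $\dim\big(\mathrm{Camb}(\overrightarrow{\mathbf{A_n}})\big)=n$, as claimed.

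There is no real obstacle here: the only thing to verify is the (routine) checking that a linearly oriented $\mathbf{A}_n$ quiver with $I=0$ satisfies the gentle-algebra axioms, after which the corollary is a direct combination of \cref{prop:dimgentletree} with the torsion-class/Cambrian identification. As an alternative that avoids representation theory entirely, one can reprove it via the noncrossing-partition model for type-$\mathbf{A}$ Cambrian lattices mentioned in \cref{rmk:cambrian}: the canonical join graph is the compatibility graph of the relevant arcs, coloring each arc by the label of its starting node yields a proper coloring with $n$ colors, and a maximal noncrossing partition supplies an $n$-clique, exactly as in the proof of \cref{prop:dimparabolicTamari}; then \cref{thm:dimension} gives $\dim=n$.
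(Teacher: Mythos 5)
Your proposal is correct and takes essentially the same approach as the paper, which deduces the statement in one line from \cref{prop:dimgentletree} together with the Ingalls--Thomas identification $\mathrm{Tors}(\mathbb{K}\vec{Q})\cong\mathrm{Camb}(\vec{Q})$, noting that an oriented path quiver is a gentle tree on $n$ vertices; you simply unpack the verification that $\mathbb{K}\overrightarrow{\mathbf{A_n}}$ with $I=0$ satisfies the gentle-tree axioms. One small wording slip in that unpacking: a middle vertex of $\overrightarrow{\mathbf{A_n}}$ oriented as a source or sink has two outgoing or two incoming arrows, so it is \emph{not} true that every vertex has at most one incoming and at most one outgoing arrow; however, the gentle-algebra axiom only requires at most two of each, which does follow directly from the path having at most two edges per vertex, so your conclusion is unaffected.
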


\begin{proof}
This follows from Proposition \ref{prop:dimgentletree} since $\mathrm{Camb}(\overrightarrow{\mathbf{A_n}})$ is the lattice of torsion classes of an orientation of the path quiver $\mathbf{A}_n$, which is a particular gentle tree having $n$ vertices.    
\end{proof}

\section{Open questions}
\label{sec:openquestions}

We finish by giving some open questions.

In \cite[Figure 9]{barnard2016canonical} an example is given of an extremal congruence uniform lattice of dimension 3 obtained from the doubling of a chain, thus of dimension 1, of an extremal congruence uniform lattice of dimension 2. This is a counterexample to the observation that often $\dim(L[C]) = \dim(L)$ when the interval $C$ satisfies $\dim(C)<\dim(L)$.

\begin{question}
	Let $L$ be a lattice. Under which condition on an interval $C$ of $L$ do we have $\dim(L[C]) = \dim(L)$?   
\end{question}

In Theorem \ref{thm:CUextremalshellable}, we proved the equivalence between extremality and shellability in the case of congruence uniform lattices. For the more general case of semidistributive lattices, we know that extremality implies shellability (Corollary \ref{CorExtremalSD}). What about the converse?

\begin{question}
	Is it true that any semidistributive shellable lattice is extremal?    
\end{question}

Another direction to generalize Theorem \ref{thm:CUextremalshellable} is to look at join-congruence uniform and meet-congruence uniform lattices. We proved in Corollary \ref{cor:mainresultLBUB} that any join-congruence uniform lattice that is shellable is join-extremal. What about the converse:

\begin{question}
	Is it true that any join-congruence uniform lattice that is join-extremal is shellable?
\end{question}

In Section \ref{sec:canonicaljoingraph} we proved that an induced subgraph of a canonical join graph is not always a canonical join graph, even if the initial canonical join graph can be obtained from a semidistributive lattice that is extremal. For similar questions, it would be nice to have an affirmative answer to the following question, as it would restrict the class of lattices we have to look at:

\begin{question}
Let $G$ be the canonical join graph of a semidistributive lattice. Is it true that $G$ can also be obtained as the canonical join graph of an extremal semidistributive lattice? 
\end{question}

Different lattices that we considered in this paper can be obtained as the orientation of the 1-skeleton of a polytope by choosing a generic cost vector (see the precise definition in \cite{hersh2024posets}). In our examples, all the ones coming from a $n$-dimensional polytope were of order dimension $n$. Since in these examples those lattices are all semidistributive we know that we have the lower bound $n$ for their order dimension by Proposition \ref{prop:dimSDbigger}.

\begin{question}
	\label{question:poly}
	If a lattice $L$ is obtained as an orientation of the 1-skeleton of a $n$-dimensional polytope, is it true that $\dim(L)=n$? If it is false,  does it hold with the extra hypothesis that $L$ is semidistributive and/or extremal?
\end{question}

Note that it seems that the face lattice associated to a polytope behaves very differently since for any $n\geq 4$ there exist $n$-dimensional polytopes whose face lattice has arbitrarily large order dimension, and for $n=3$ the face lattice has order dimension less than or equal to 4 \cite{brightwell1993order}.

The lattices of torsion classes are \emph{Hasse-regular} lattices, meaning any element $x$ in their Hasse diagram has the same degree, which is the sum of the number of elements $x$ covers with the number of elements that cover $x$. A lattice is called \emph{$r$-Hasse-regular} if the degree of any element is $r$.

\begin{question}
	Is it true that any semidistributive $r$-Hasse-regular lattice is of dimension $r$? If not, is it true if the lattice is additionally congruence uniform?
\end{question}

\bibliographystyle{alpha}
\bibliography{references}

\end{document}